\documentclass[a4paper,11pt]{amsart}
\usepackage{quiver}
\usepackage{pdflscape}

\usepackage{amsthm,amsmath,amssymb,stmaryrd,enumerate,euscript,mathrsfs,a4,array, microtype, stmaryrd, url}
\usepackage{multirow}
\usepackage{arydshln}
\usepackage{enumitem}
\usepackage{pictexwd,dcpic,graphicx,cite}
\usepackage[export]{adjustbox}
\usepackage{subcaption}
\usepackage[latin1]{inputenc}
\usepackage{indentfirst}
\usepackage{calligra}
\usepackage{graphicx}
\usepackage{amssymb}
\usepackage{fancyhdr}
\usepackage{amsmath}
\usepackage{latexsym}
\usepackage{amsthm}
\usepackage[all]{xy} 
\usepackage{amscd}
\usepackage{hyperref}
\usepackage{tikz-cd}
\usepackage{tikz}
\usetikzlibrary{arrows.meta}
\usetikzlibrary{nfold}
\usepackage{thmtools}
\usepackage{cleveref}
\usepackage[a4paper,top=3cm,bottom=3cm,inner=2.5cm,outer=2.5cm]{geometry}

\usepackage{tikz}
\usetikzlibrary{positioning,decorations.markings,arrows.meta,calc,fit,quotes,cd,math,arrows,backgrounds,shapes.geometric}
\usepackage{amsmath,mathtools}
\usepackage{xparse}

\makeatletter
\newenvironment{smallermatrix}[1][c]
{\null\,\vcenter\bgroup
  \Let@\restore@math@cr\default@tag
  \baselineskip0pt \lineskip0.4pt \lineskiplimit0pt
  \ialign\bgroup\if#1l\else\hfil\fi$\m@th\scriptstyle##$\if#1r\else\hfil\fi&&\thickspace\hfil
  $\m@th\scriptstyle##$\hfil\crcr
}{%
  \crcr\egroup\egroup\,%
}
\makeatother

\NewDocumentCommand{\ts}{O{c} e{^?_}}{
  \begin{smallermatrix}[#1]
  \mathstrut\IfValueT{#2}{#2} \\
  \mathstrut\IfValueT{#3}{#3} \\
  \mathstrut\IfValueT{#4}{#4}
  \end{smallermatrix}%
}

\makeatletter
\def\hlinewd#1{%
\noalign{\ifnum0=`}\fi\hrule \@height #1 \futurelet
\reserved@a\@xhline}
\makeatother

\graphicspath{ {images/} }

\newtheorem{theorem}{Theorem}[section]
\newtheorem*{theorem*}{Theorem}
\newtheorem{prop}[theorem]{Proposition}

\newtheorem{lemma}[theorem]{Lemma}
\newtheorem{cor}[theorem]{Corollary}

\newtheorem{conjecture}[theorem]{Conjecture}

\theoremstyle{definition} 
\newtheorem{defn}[theorem]{Definition}
\newtheorem{defn/propn}[theorem]{Definition/Proposition}

\theoremstyle{remark}
\newtheorem{rmk}[theorem]{Remark}
\newtheorem{term}[theorem]{Terminology}
\newtheorem{eg}[theorem]{Example}
\newtheorem{egs}[theorem]{Examples}

\newtheorem*{remark*}{Remark}
\newtheorem*{remarks*}{Remarks}

\newtheorem*{notation*}{Notation}
\newtheorem*{convention*}{Convention}

\crefname{appendix}{appendix}{Appendices}
\Crefname{appendix}{Appendix}{Appendices}

\newcommand{\cat}{\mathcal} 

\newcommand{\catA}{\cat{A}}

\newcommand{\ca}{\cat{A}}
\newcommand{\cb}{\cat{B}}
\newcommand{\cc}{\cat{C}}
\newcommand{\cd}{\cat{D}}
\newcommand{\ce}{\cat{E}}

\newcommand{\cx}{\cat{X}}

\newcommand{\ck}{\mathcal{K}}

\newcommand{\tcat}{\mathbf}

\newcommand{\Gray}{\tcat{Gray}}
\newcommand{\GCat}{\tcat{G}\text{-}\tcat{Cat}}

\newcommand{\GCatm}{\mathbb{L}\tcat{ax}}

\newcommand{\GCatmp}{\mathbb{P}\tcat{sd}}
\newcommand{\GCatmpshl}{\mathbb{L}\tcat{Sharp}}
\newcommand{\GCatmpshr}{\mathbb{R}\tcat{Sharp}}
\newcommand{\GCatmpsh}{\mathbb{S}\tcat{harp}}

\newcommand{\Coalg}[1]{#1\text{-}\tcat{Coalg}}
\newcommand{\Free}{\tcat{Free}}
\newcommand{\Ho}{\textnormal{Ho}}
\newcommand{\leftsharp}{\otimes_\sharp}

\newcommand{\midfourcat}{moderate 4-category}

\newcommand{\aeq}{\textrm{ae}} 
\newcommand{\aexi}{\Theta} 

\newcommand{\twocat}{2\text{-}\tcat{Cat}}
\newcommand{\RGph}{\tcat{R}\text{-}\tcat{Gph}}
\newcommand{\Gph}{\tcat{Gph}}
\newcommand{\Cat}{\tcat{Cat}}
\newcommand{\Set}{\mathbf{Set}}

\newcommand{\lax}{\mathbf{Lax}}
\newcommand{\psd}{\mathbf{Psd}}
\newcommand{\adje}{{AE}}

\newcommand{\rev}{\mathit{rev}}

\def\black{\color{blue}}
\def\black{\color{black}}

\makeatletter
\protected\def\xvcenter{%
  \hbox\bgroup$\everyvbox{\everyvbox{}\aftergroup\m@th\aftergroup$\aftergroup\egroup}%
  \vcenter
}

\DeclareRobustCommand{\midscript}[1]{
  \mathchoice{\mid@script\scriptstyle{#1}}
    {\mid@script\scriptstyle{#1}}
    {\mid@script\scriptscriptstyle{#1}}
    {\mid@script\scriptscriptstyle{#1}}
}
\newcommand{\mid@script}[2]{
  \vcenter{\hbox{$\m@th#1#2$}}
}

\makeatletter

\makeindex
\title{Semi-strict 4-categories as enriched categories}
\author{John Bourke}
\address{Department of Mathematics and Statistics, Masaryk University, Kotl\'a\v rsk\'a 2, Brno 61137, Czech Republic}
\email{bourkej@math.muni.cz}

\author{Gabriele Lobbia}
\address{Departmental of Mathematics, University of Milan, Via Saldini, 50, 20133 Milano, Italy}
\email{gabriele.lobbia@unimi.it}

\begin{document}
\maketitle

\begin{abstract}
We introduce several flavours of semi-strict $4$-category as enriched categories.   This is done using the framework of enrichment over skew monoidal categories of Gray-categories and their associated multicategories, as developed in our previous work.  We give the axioms for semi-strict $4$-categories in elementary terms and describe a wide range of examples.  In particular, we answer the question of what exactly Gray-categories and the higher transformations between them form --- namely, a left-sharp $4$-category $\GCat^4$.  In addition, we describe examples arising from braided monoidal $2$-categories, monoidal Gray-categories (herein defined) and Gray-categories with structure.  We use the concepts introduced here and the relationships between them to unify various concepts in the literature, such as Crans' $4$-teisi.  

In the second part of the paper, we investigate homotopical aspects.  In particular, we classify the equivalence $n$-cells in the left-sharp $4$-category $\GCat^4$ for each $n$.   Using this, we show that the left-sharp skew monoidal structure on the category of Gray-categories induces a symmetric monoidal closed structure on the homotopy category of Gray-categories.
\end{abstract}

%
%

\tableofcontents

\section{Introduction}

The Gray-tensor product $\otimes_p$ of $2$-categories has many useful features.  One of these is that categories enriched in it, called Gray-categories, provide a framework for $3$-category theory which is theoretically powerful, sufficiently general and yet sufficiently simple to be amenable to calculations.  Let us flesh out the key points which make the theory appealing.
\begin{enumerate}
\item \emph{(Powerful theory.)} The Gray-tensor product forms a symmetric monoidal closed structure $(\twocat,\otimes_p,1)$ on the category of $2$-categories.  This means that the full power of classical enriched category theory \cite{KellyG:bascec} --- including enriched adjunctions, weighted limits and so on --- applies to Gray-categories as a special case.
\item \emph{(Sufficient generality.)} Each weak $3$-category (aka tricategory) is equivalent to a Gray-category, though not necessarily to a strict $3$-category \cite{Gordon1995Coherence}.  For this reason, Gray-categories are often called \emph{semi-strict} $3$-categories and provide a sufficiently general framework.
\item \emph{(Ease of calculation.)} Gray-categories are almost strict, involving only a single additional $3$-dimensional datum which accounts for the failure of the middle $4$-interchange for $2$-cells.  Since their homs are strict $2$-categories, one can use traditional 2-categorical pasting diagrams for calculations.  Such calculations can also be carried out using the powerful $3$-dimensional graphical calculus \cite{Hummon, Barrett}.
\item \emph{(Natural examples.)} Thanks to (1) above, there is a Gray-category of $2$-categories, $2$-functors, \emph{pseudonatural transformations} and modifications on which many further examples are based.  There is also a close connection between pseudonaturality and braidings, which has the effect that doubly degenerate Gray-categories capture braided strict monoidal categories.  More recently it has been shown that $3$-dimensional defect TQFT's lead naturally to Gray-categories with duals \cite{Carqueville}.
\end{enumerate}

The need for a $4$-dimensional analogue of Gray-categories grew out of research on braided monoidal $2$-categories in the 1990s.  With motivations coming from theoretical physics, braided monoidal 2-categories were first defined by Kapranov and Voevodsky  \cite{Kapranov}.  The definition was then refined by several authors \cite{Baez1996,Day}, with the final axioms added by Sjoerd Crans \cite{Crans1998Generalized}.  It was recognised by Baez and Neuchl \cite{Baez1996} that braided monoidal $2$-categories should be \emph{semi-strict 4-categories} with one object and one arrow, for some as-yet-undefined notion of semi-strict 4-category.

Accordingly, Crans set about developing such a theory.   In particular, he described a monoidal structure on the category of Gray-categories \cite{Crans1999A-tensor} and defined his notion of semi-strict $4$-category, called a $4$-tas \cite{Crans2000On}, to be a category enriched in (the pseudo-variant) of this monoidal structure.  However, there were a couple of unappealing features here.  Perhaps a minor one was that braided monoidal $2$-categories were more general than his $4$-teisi.  A more fundamental issue is that the monoidal structure was not closed on either side, which meant that the category of Gray-categories was not enriched over itself and, in particular, not a $4$-tas.  It was furthermore recognised that no monoidal biclosed structure on Gray-categories suitable for a theory of semi-strict $4$-categories could exist \cite{Bourke2015A-cocategorical}.

Our solution to these problems, described in the present paper, is based upon the emergence, in the early 2010's, of the new concept of a \emph{skew monoidal category}.  Discovered independently by researchers in quantum groups \cite{Szlachanyi}  and theoretical computer science  \cite{Altenkirch}, they relax the classical notion of a monoidal category by allowing non-invertible coherence constraints
$$\alpha\colon (a \otimes b) \otimes c \to a \otimes (b \otimes c),\, l\colon i \otimes a \to a,\, r\colon a \to a \otimes i$$
and have found diverse applications in the intervening years (for instance, see \cite{Bourke2016Skew, Lack2018Operadic, Shoikhet2018, Garner2021Cartesian}).  One of these, described in our earlier paper \cite{BourkeLobbia}, was the construction of several skew monoidal structures on the category $\GCat$ of Gray-categories.  Here we will focus on two of these, which we call the \emph{left-sharp} and \emph{flexible} skew monoidal structures.  Both are closed, overcoming the earlier obstruction, meaning that $\GCat$ is self-enriched --- in particular, it forms a particular kind of \emph{semi-strict} 4-category $\GCat^4$: namely, a left-sharp $4$-category as introduced here.  In fact, one of our main contributions will be to use enrichment over these bases to introduce several interconnected flavours of semi-strict $4$-category, depicted in the diagram below

\[\begin{tikzcd}[ampersand replacement=\&]
	\&\&\& {\textnormal{Left-sharp}} \&\&\& \\
	{\textnormal{4-teisi}} \&\& {\textnormal{Sharp}} \&\& {\textnormal{Moderate}} \&\& {\textnormal{Loose}} \\
	\&\&\& {\textnormal{Right-sharp}}
	\arrow[hook, from=1-4, to=2-5]
	\arrow[hook, from=2-1, to=2-3]
	\arrow[hook, from=2-3, to=1-4]
	\arrow[hook, from=2-3, to=3-4]
	\arrow[hook, from=2-5, to=2-7]
	\arrow[hook, from=3-4, to=2-5]
\end{tikzcd}\]

and to use them to draw together the existing literature on the topic.  We believe that these include the main notions of semi-strict $4$-category that merit further investigation.  Let us mention a few features and highlights.
\begin{itemize}
\item The most general kind of $4$-category we introduce are the loose $4$-categories.  These are defined as categories enriched in the multicategory of loose multimorphisms underlying the flexible skew monoidal structure.  Since this multicategory is symmetric biclosed, loose $4$-categories admit lots of convenient constructions --- opposites, enriched representable functors and so on. 
\end{itemize}
The other kinds of semi-strict $4$-category we introduce are then obtained by imposing natural further axioms on the loose $4$-categories.
\begin{itemize}
\item Moderate $4$-categories are loose $4$-categories in which whiskering by $1$-cells is strictly functorial.  We give a complete list of their axioms in elementary terms in the appendix.
\item Braided monoidal $2$-categories, in the sense of Crans, turn out to be doubly-degenerate moderate $4$-categories.  Taking this as an inspiration, we also introduce monoidal Gray-categories as one-object moderate $4$-categories.
\item Adding further (dual) axioms leads to the notions of left-sharp and right-sharp $4$-categories, whose intersection we call the sharp $4$-categories.  From these, we obtain Crans' $4$-teisi by imposing a single further invertibility constraint.
\item A particular focus will be on the left-sharp $4$-categories.  These are key since our leading example of a semi-strict $4$-category, namely $\GCat^4$, is itself left-sharp.  They are also well-behaved since they precisely categories enriched in the left-sharp skew monoidal structure whilst right-sharp ones are categories enriched in its reverse, the right-sharp skew monoidal structure.  
\end{itemize}

After introducing these various flavours of $4$-category, we turn to homotopical aspects.  A key further feature of the Gray-tensor product is that it equips the category $\twocat$ of $2$-categories with the structure of a monoidal model structure, and so induces a symmetric monoidal closed structure on the homotopy category $\Ho(\twocat)$ of $2$-categories.  In \Cref{thm:smchomotopy} we show that the left-sharp skew monoidal structure induces a genuine symmetric monoidal closed structure on the homotopy category $\Ho(\GCat)$ of Gray-categories.

Let us now give a more detailed overview of the paper.  In \Cref{prelims1}, we describe background material on $2$-categories and Gray-categories.  \Cref{sect:base} describes the necessary background on multicategories, skew monoidal categories and the flexible and left-sharp skew monoidal structures from our earlier paper \cite{BourkeLobbia}.  Enriching in these, in \Cref{sect:flavours}, we introduce the different kinds of semi-strict $4$-category, develop their basic theory and describe a wide range of examples. 

In \Cref{hoprelim} we turn to homotopy theory, beginning with the necessary background concerning Lack's model structure on Gray-categories \cite{Lack2011A-quillen}.  \Cref{sect:replacing} can be viewed as a first step in the direction of coherence theorems for $4$-categories.  It is devoted to understanding the extent to which pseudomaps between Gray-categories can be replaced by strict ones, with the core result being \Cref{thm:replacement}.  In \Cref{thm:char-equiv-in-GCat_s} we use this to characterise the equivalence $n$-cells in the $4$-categories $\GCat^4$.  In \Cref{axiommc} we use the above to prove that the left-sharp skew monoidal structure interacts well with the model structure for Gray-categories \cite{Lack2011A-quillen} and in \Cref{thm:smchomotopy} prove that it induces a symmetric monoidal closed structure on the homotopy category of Gray-categories).   

Finally, in \Cref{app:axioms-4-cats}, we describe the axioms for a moderate $4$-category in elementary terms.

\black
\subsection{Related work}
Shortly after the publication of \cite{BourkeLobbia}, Miranda \cite{MirandaThesis} in his thesis independently described several enriching structures for Gray-categories and, furthermore, investigated enriched categories over these as semistrict 4-categories.  Whilst none of these concepts match those introduced here, his detailed comparison with the literature was immensely valuable to us, since slight refinements of two of his results enabled us to draw precise comparisons between moderate $4$-categories and braided monoidal $2$-categories and between sharp $4$-categories and $4$-teisi.  See Section \ref{sect:flavours} for more on these results and \Cref{mirandacomparison} for a more detailed comparison with Miranda's work.

Whilst our final manuscript was in preparation, Fukuda and Shu \cite{Fukuda} defined the concept of a Gray-type 4-category, which are a slightly restricted form of the moderate $4$-categories introduced here.  For a further comparison, see \Cref{fukudacomparison}.\black

%
%
%
%
%

\subsection*{Acknowledgements}

The authors thank Nils Carqueville, Nick Gurski and Lorenz Weber for useful feedback on a preliminary draft.

\section{Preliminaries on 2-categories and Gray-categories}\label{prelims1}

In the present section, we recall a few basic concepts that we will use about $2$-categories and Gray-categories, and take the opportunity to fix some notation.

\subsection{2-categories}\label{sect:prelim2}
Let $\Cat$ denote the cartesian closed category of small categories.  A $2$-category, by definition, is a $\Cat$-enriched category.  

As standard concepts of enriched category theory, we have the notions of enriched functor and enriched natural transformation between $2$-categories, which we call $2$-functors and $2$-natural transformations.  These form the $1$-cells and $2$-cells of the $2$-category $\twocat$ of small $2$-categories.  Equivalences and adjoint equivalences in this $2$-category are called $2$-equivalences and adjoint $2$-equivalence, and are the standard enriched notions.  There is also the more general notion of biequivalence, which we recall in Section \ref{sect:2model}.  Beyond the above usage, we will treat $\twocat$ as a mere category and henceforth use the notation $\twocat$ for the category of small $2$-categories.

Whilst $\twocat$ is itself cartesian closed, it also admits a second symmetric monoidal closed structure, called the \emph{Gray tensor product}.  We write $\Gray := (\twocat,\otimes_p,1)$ for this monoidal structure where $\otimes_p$ denotes the Gray-tensor product, as described in Section 3.2 of \cite{Gurskibook} or Section 4 of \cite{Gordon1995Coherence}.  An important point is that the internal hom for this monoidal structure is the $2$-category $\psd^2(\ca,\cb)$ of 2-functors, \emph{pseudonatural transformations} and modifications.

\subsection{Equivalences in $2$-categories}

In the various flavours of higher-category that we will consider, we speak of equivalence $n$-cells for $n \geq 1$.  In a $2$-category $\ca$, an equivalence $2$-cell is, by definition, an invertible $2$-cell. An equivalence $1$-cell is the standard notion of equivalence mentioned above: that is, a $1$-cell $f \colon A \to B$ for which there exists a $1$-cell $g \colon B \to A$ and equivalence $2$-cells $gf \simeq 1_A$ and $fg \simeq 1_B$.

We will also speak of adjoint equivalence $n$-cells.  For $n=2$,  these are $2$-cells equipped with an inverse.  Adjoint equivalence 1-cells are the usual adjoint equivalences in a 2-category, which we sometimes just call adjoint equivalences.
These will play a particularly important role and appear in the various flavours of semi-strict $4$-category that we introduce in \Cref{sect:flavours}.

\black
\begin{notation*}
We now set some notation for adjoint equivalences in a $2$-category $\ca$.
\begin{itemize}
\item We use the notation $\aexi:=(\epsilon,f\dashv g,\eta) \colon x \to y$ to denote an adjoint equivalence in $\ca$ pointing in the direction of its right adjoint $g \colon x \to y$.
\item  Given adjoint equivalences $\aexi=(\epsilon,f\dashv g,\eta) \colon x \to y$ and $\aexi'=(\epsilon',f'\dashv g',\eta') \colon y \to z$ the composite adjoint equivalence is given by $\aexi' \cdot \aexi := (\epsilon\cdot f\epsilon' g,\,ff'\dashv g'g,\,g'\eta f'\cdot\eta')\colon x\to z$.  
\item For $x \in \ck$, the identity adjoint equivalence is $1^\aeq_x := 1_x \dashv 1_x\colon x \to x$.  
\item With this structure, we obtain the category $\adje(\ca)$ of adjoint equivalences in $\ca$.
\item Given $\aexi=(\epsilon,f\dashv g,\eta)\colon X\to Y$ we can form the \emph{reverse} adjoint equivalence $\aexi^\rev := (\eta^{-1},g\dashv f,\epsilon^{-1})\colon Y\to X$.  This describes a contravariant involution $(-)^{\rev} \colon \adje(\ca)^{op} \to \adje(\ca)$.
\item A 2-functor $F\colon \ca \to \cb$ induces a functor $\adje(F) \colon \adje(\ca) \to \adje(\cb)$ sending $\aexi=(\epsilon,f\dashv g,\eta)$ to $F\aexi:= (F\epsilon,Ff\dashv Fg,F\eta)$.  This satisfies $(F\aexi)^{\rev} = F(\aexi^{\rev})$.
\end{itemize} 
\end{notation*}

An important fact is that each equivalence $n$-cell in a $2$-category underlies an adjoint equivalence $n$-cell.  For the case $n=2$, this is trivial and for the case $n=1$, it is a basic $2$-categorical fact.
\black

%
%

\subsection{Gray-categories}\label{sect:graycats}

A Gray-category $\ca$ is a category enriched in $\Gray = (\twocat,\otimes_p,1)$.  We presume some familiarity with Gray-categories, but take the opportunity to fix our notation.  In particular, a Gray-category $\ca$ amounts to:
\begin{enumerate}[label=\roman*]
\item A category $\ca_1$ of $0$ and $1$-cells with composition of $f\colon x \to y$ and $g\colon y \to z$ denoted by $gf\colon x \to z$, and with identities denoted by $1_x\colon x \to x$.
\item\label{ax:strict-hom} For $0$-cells $x,y$, a $2$-category $\ca(x,y)$ whose objects are the $1$-cells of $\ca_1$, and whose $n$-cells are referred to as the $(n+1)$-cells of $\ca$.
\item Given $1$-cells $f\colon w \to x$ and $g\colon y \to z$, there are \emph{whiskering} $2$-functors $- \circ f\colon \ca(x,y) \to \ca(w,y)$ and $g \circ -\colon \ca(x,y) \to \ca(x,z)$ whose action on objects is composition in $\ca_1$.  These whiskering operations are functorial in $\ca_1$ and compatible, in the sense that $(g \circ -) \circ f = g \circ (- \circ f)$.
\item Given a pair of $2$-cells 
\[\begin{tikzcd}
	x & y & z
	\arrow[""{name=0, anchor=center, inner sep=0}, "f", curve={height=-12pt}, from=1-1, to=1-2]
	\arrow[""{name=1, anchor=center, inner sep=0}, "{f'}"', curve={height=12pt}, from=1-1, to=1-2]
	\arrow[""{name=2, anchor=center, inner sep=0}, "g", curve={height=-12pt}, from=1-2, to=1-3]
	\arrow[""{name=3, anchor=center, inner sep=0}, "{g'}"', curve={height=12pt}, from=1-2, to=1-3]
	\arrow["\phi"{xshift=0.1cm}, between={0.2}{0.8}, Rightarrow, nfold, from=0, to=1]
	\arrow["\psi"{xshift=0.1cm}, between={0.2}{0.8}, Rightarrow, nfold, from=2, to=3]
\end{tikzcd}\]\black
	there is an invertible $3$-cell
\[\begin{tikzcd}[ampersand replacement=\&]
	gf \& {g'f} \\
	{gf'} \& {g'f'}
	\arrow[""{name=0, anchor=center, inner sep=0}, "{\psi\circ f}", from=1-1, to=1-2]
	\arrow["{g\circ\phi}"', from=1-1, to=2-1]
	\arrow["{g'\circ\phi}", from=1-2, to=2-2]
	\arrow[""{name=1, anchor=center, inner sep=0}, "{\psi\circ f'}"', from=2-1, to=2-2]
	\arrow["{\psi:\phi}", shift right=2, between={0.4}{0.6}, Rightarrow, nfold, from=0, to=1]
\end{tikzcd}\]
\black
	called the interchange map.
	The interchange maps satisfy several equations, for which we refer the reader to \cite[Definition~1.1]{GambLob:psm}.\footnote{\black Notice that the direction of the interchange isomorphism here is the opposite with respect to both \cite{BourkeLobbia} and \cite[Definition~1.1]{GambLob:psm}.  This choice of convention matches the viewpoint of pseudo-natural transformations as lax natural transformations (rather than oplax ones) and fits better with our treatment of pseudo-transformations of Gray-categories as certain lax transformations.  Since in \cite{BourkeLobbia} we used the dashed arrows notation throughout for interchange cells, the two papers can be read consistently --- the only exception to this is in a single axiom for a pseudomap in Appendix A.1.(vii), whose interchange map (according to the present convention) should be replaced by its inverse.}
\end{enumerate}



\begin{notation*}
Our diagram style follows that of Gurski's book \cite{Gurskibook}.  In diagrams, we will abuse notation for whiskering $2/3$-cells by a $1$-cell and only write $1\circ-$ and $-\circ1$, as in the diagram below. 
\begin{center}
\begin{tikzcd}[ampersand replacement=\&]
	gf \& {g'f} \\
	{gf'} \& {g'f'}
	\arrow[""{name=0, anchor=center, inner sep=0}, "{\psi\circ1}", from=1-1, to=1-2]
	\arrow["{1\circ\phi}"', from=1-1, to=2-1]
	\arrow["{1\circ\phi}", from=1-2, to=2-2]
	\arrow[""{name=1, anchor=center, inner sep=0}, "{\psi\circ1}"', from=2-1, to=2-2]
	\arrow[between={0.3}{0.7}, Rightarrow, nfold, dashed, from=0, to=1]
\end{tikzcd}
\hspace{0.5cm}
\begin{tikzcd}[ampersand replacement=\&]
	gf \& {gf'} \\
	{g'f} \& {g'f'}
	\arrow[""{name=0, anchor=center, inner sep=0}, "{1\circ\phi}", from=1-1, to=1-2]
	\arrow["{\psi\circ1}"', from=1-1, to=2-1]
	\arrow["{\psi\circ1}", from=1-2, to=2-2]
	\arrow[""{name=1, anchor=center, inner sep=0}, "{1\circ\phi}"', from=2-1, to=2-2]
	\arrow[between={0.3}{0.7}, Rightarrow, nfold, dashed, from=0, to=1]
\end{tikzcd}
\end{center}
Finally, in some larger diagrams, where the context will make it clear, we will avoid labelling the interchange and its inverse and instead indicate it with a dashed double arrow again as in the above two diagrams. 
\end{notation*}

By standard enriched category theory, we have the $2$-category of Gray-categories,  Gray-functors and Gray-enriched-natural transformation.  Equivalences and adjoint equivalences in this $2$-category are called Gray-enriched equivalences and Gray-enriched adjoint equivalences, respectively.  There is also the more general notion of triequivalence, which we recall in Section \ref{sect:Graymodel}.

We write $\GCat$ for the category of small Gray-categories and Gray-functors.  
An important role will also be played by the more general pseudomaps of Gray-categories, which we recall in \Cref{sect:pseudomap}.

\subsection{Equivalences in Gray-categories}\label{sect:equiv-gray}

For $n \in \{2,3\}$, an equivalence $n$-cell in a Gray-category $\ca$ is, by definition, an equivalence $(n-1)$-cell in the appropriate hom $2$-category $\ca(x,y)$.  An equivalence $1$-cell $f \colon X \to Y$ is a $1$-cell for which there exists $g \colon Y \to X$ and equivalence $2$-cells $gf \simeq 1_X$ and $gf \simeq 1_Y$.  

Similarly, for $n \in \{2,3\}$, an adjoint equivalence $n$-cell in $\ca$ is an adjoint equivalence $(n-1)$-cell in the appropriate hom $2$-category $\ca(x,y)$.  An adjoint equivalence $1$-cell is an adjoint biequivalence in the sense of Section 3 of \cite{Lack2011A-quillen}.

\black As in the $2$-categorical setting, each equivalence $n$-cell in a Gray-category underlies an adjoint equivalence $n$-cell.  For the cases $n=2,3$, this follows from the corresponding fact for $2$-categories.  That each equivalence $1$-cell can be made into an adjoint equivalence $1$-cell is proven in \cite[Proposition 3.1]{Lack2011A-quillen}.\black

\subsection{Pseudomaps of Gray-categories}\label{sect:pseudomap}
%
%

In addition to Gray-functors, we will consider the more general notion of pseudo-Gray-functor \cite{GohlaB:mapp} which we will call a \emph{pseudomap}.  We denote a pseudomap with a wobbly arrow $\ca \rightsquigarrow \cb$ to distinguish it from the strict maps $\ca \to \cb$, i.e. the Gray-functors.

Like a Gray-functor, a pseudomap $F\colon \ca \rightsquigarrow \cb$ involves an object assignment $x \mapsto Fx$ together with $2$-functors $F_{x,y} \colon \ca(x,y) \to \cb(Fx,Fy)$ for all $x,y \in \ca$.  In addition, it comes equipped with a cocycle $F^2$, that is,
	\begin{itemize}
	\item  for any pair of 1-cells $f_1\colon x\to y$ and $f_2\colon y\to z$ in $\ca$, an invertible 2-cell 
	$$F^2_{f_2,f_1}\colon Ff_2\circ Ff_1\to F(f_2\circ f_1)$$
	 in $\cb$ satisfying the axioms listed in Appendix A.1 of \cite{BourkeLobbia}. 
	\end{itemize}  

Pseudomaps admit a natural composition and form the morphisms of the category $\GCat_p$ (see \cite[Section 3.2]{BourkeLobbia} or \cite{GohlaB:mapp} for further details).

\begin{rmk}\label{normalpseudo}
We point out that pseudomaps are not very weak at all: they preserve identity $1$-cells strictly and their cocycles are strictly associative.  Indeed, they generalise \emph{normal pseudofunctors} in $2$-category theory in the sense that a pseudomap between Gray-categories in which all $3$-cells are identities is precisely a normal pseudofunctor between the underlying $2$-categories.
\end{rmk}

\begin{lemma}\label{pseudomapEquivs}
A pseudomap $F \colon \ca \rightsquigarrow \cb$ preserves equivalence $n$-cells for each $n$.
\end{lemma}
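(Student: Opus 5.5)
We treat the cases $n=3$, $n=2$ and $n=1$ separately. Gray-categories have only $n\le 3$, and for $n\geq 4$ the only equivalence $n$-cells are identities, which are trivially preserved.

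For $n \in \{2,3\}$ the argument is immediate. By definition, an equivalence $n$-cell of $\ca$ is an equivalence $(n-1)$-cell in some hom $2$-category $\ca(x,y)$. The pseudomap $F$ acts on this hom by a genuine $2$-functor $F_{x,y}\colon \ca(x,y)\to\cb(Fx,Fy)$. Any $2$-functor preserves invertible $2$-cells, and it preserves equivalence $1$-cells: it sends the pseudo-inverse and the invertible $2$-cells $gf\cong 1$, $fg\cong 1$ to data of the same kind, using $F_{x,y}(gf)=F_{x,y}g\circ F_{x,y}f$ and $F_{x,y}(1)=1$ inside the hom. So these cases need nothing about the cocycle.

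The case $n=1$ is the main one. Let $f\colon x\to y$ be an equivalence $1$-cell with pseudo-inverse $g\colon y\to x$ and equivalence $2$-cells $\alpha\colon gf\simeq 1_x$ and $\beta\colon fg\simeq 1_y$. These are equivalence $1$-cells in the homs $\ca(x,x)$ and $\ca(y,y)$. I would take $Fg$ as the pseudo-inverse of $Ff$ and form the composite
\[
Fg\circ Ff \xrightarrow{F^2_{g,f}} F(gf) \xrightarrow{F\alpha} F(1_x)=1_{Fx}.
\]
Here $F^2_{g,f}$ is an invertible $2$-cell. By the case $n=2$, $F\alpha$ is an equivalence $2$-cell in $\cb(Fx,Fx)$. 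The equality $F(1_x)=1_{Fx}$ holds because pseudomaps preserve identities strictly (\Cref{normalpseudo}). Equivalence $1$-cells in the $2$-category $\cb(Fx,Fx)$ are closed under composition, and an invertible $2$-cell is in particular an equivalence there. Hence the composite is an equivalence $2$-cell $Fg\circ Ff\simeq 1_{Fx}$ in $\cb$. The symmetric argument with $F^2_{f,g}$ and $F\beta$ gives $Ff\circ Fg\simeq 1_{Fy}$, so $Ff$ is an equivalence $1$-cell.

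There is no real obstacle. The only point to check is that the definition of equivalence $1$-cell in a Gray-category asks only for equivalence $2$-cells $gf\simeq 1$ and $fg\simeq 1$, not adjoint or coherence data. This is so by the definition in \Cref{sect:equiv-gray}, so no axioms of $F^2$ beyond invertibility are needed.
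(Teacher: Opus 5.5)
Your proposal is correct and follows essentially the same route as the paper: the cases $n=2,3$ come from the hom $2$-functors $F_{x,y}$ preserving equivalence $1$- and $2$-cells. The case $n=1$ uses the composite $Fg\circ Ff \cong F(gf)\simeq F(1_x)=1_{Fx}$, combining the invertible cocycle with strict preservation of identities, together with its symmetric counterpart.
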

\begin{proof}
Pseudomaps are locally $2$-functors.  As such they preserve equivalence $n$-cells for $n=2,3$ since $2$-functors preserve equivalence $n$-cells for $n=1,2$.  
Consider an equivalence $1$-cell $f \colon x \to y \in \ca$.  Then there exists $g \colon y \to x$ and equivalence 2-cells $g \circ f \simeq 1_x$ and $f \circ g \simeq 1_y$.  By the above, these are sent to equivalence $2$ -cells $F(g \circ f) \simeq F(1_x)$ and $F(f \circ g) \simeq F(1_y)$.  Now pseudomaps of Gray-categories preserve $1$-cell composition up to an invertible $2$-cell and strictly preserve identity $1$-cells; thus we have a composite equivalence $2$-cell $Fg \circ Ff \cong F(g \circ f) \simeq F(1_x) = 1_{Fx}$ and likewise a composite equivalence $2$-cell $Ff \circ Fg \simeq 1_{Fy}$, proving that $Ff$ is an equivalence $1$-cell.
\end{proof}

\subsection{Lax transformations, pseudo-transformations and higher cells}

Between pseudomaps $F, G \in \GCat_p(\ca,\cb)$, there are lax and pseudo-transformations and further higher cells.  We recall the data involved in these concepts now, and refer the reader to \cite{BourkeLobbia} Appendix A for the axioms they must satisfy.

\begin{itemize}
\item A \emph{lax transformation} $\alpha\colon F\to G$ consists of:
	\begin{itemize}
	\item For any object $x\in\ca$, a 1-cell $\alpha_x\colon Fx\to Gx$ in $\cb$.
	\item For any 1-cell $f\colon x\to y$ in $\ca$, a 2-cell  in $\cb$ of the form
\[\begin{tikzcd}
	{Fx} & {Gx} \\
	{Fy} & {Gy.}
	\arrow[""{name=0, anchor=center, inner sep=0}, "{\alpha_x}", from=1-1, to=1-2]
	\arrow[""{name=1, anchor=center, inner sep=0}, "{\alpha_Y}"', from=2-1, to=2-2]
	\arrow["Ff"', from=1-1, to=2-1]
	\arrow["{Gf}", from=1-2, to=2-2]
	\arrow["{\alpha_f}"{xshift=0.1cm}, shorten <=10pt, shorten >=10pt, Rightarrow, from=0, to=1]
\end{tikzcd}\]

	\item For any 2-cell $\phi\colon f\to f'$ in $\ca$, a 3-cell $\alpha_\phi$ in $\cb$ of the form
\[\begin{tikzcd}
	{Gf\alpha_x} & {\alpha_y Ff} \\
	{Gf\alpha_x} & {\alpha_y Ff.}
	\arrow[""{name=0, anchor=center, inner sep=0}, "{\alpha_f}", from=1-1, to=1-2]
	\arrow[""{name=1, anchor=center, inner sep=0}, "{\alpha_{f'}}"', from=2-1, to=2-2]
	\arrow["G\phi\circ1"', from=1-1, to=2-1]
	\arrow["{1\circ F\phi}", from=1-2, to=2-2]
	\arrow["{\alpha_\phi}"{xshift=0.1cm}, shorten <=10pt, shorten >=10pt, Rightarrow, from=0, to=1]
\end{tikzcd}\]
	\item For any pair of 1-cells $f_1\colon x\to y$ and $f_2\colon y\to z$ in $\ca$, an invertible 3-cell $\alpha^2_{f_2,f_1}$ in $\cb$ of the form
\[\begin{tikzcd}
	{Gf_2Gf_1\alpha_x} & {Gf_2\alpha_y Ff_1} & {\alpha_z Ff_2Ff_1} \\
	{G(f_2f_1) \alpha_x} && {\alpha_zF(f_2f_1).}
	\arrow["{1\circ\alpha_{f_1}}", from=1-1, to=1-2]
	\arrow["{G^2_{f_2,f_1}\circ1}"', from=1-1, to=2-1]
	\arrow["{\alpha_{f_2}\circ1}", from=1-2, to=1-3]
	\arrow["{1\circ F^2_{f_2,f_1}}", from=1-3, to=2-3]
	\arrow[""{name=0, anchor=center, inner sep=0}, "{\alpha_{f_2f_1}}"', from=2-1, to=2-3]
	\arrow["{\alpha^2_{f_2,f_1}}"{xshift=0.1cm}, shorten <=7pt, shorten >=7pt, Rightarrow, from=1-2, to=0]
\end{tikzcd}\]
	\end{itemize}
This data has to satisfy the seven axioms listed in \cite[Appendix A.2]{BourkeLobbia}.   

A \emph{pseudo-transformation} is a lax transformation such that each $2$-cell $\alpha_f$ is equipped with the structure of an adjoint equivalence $\aexi^\alpha_f := \alpha^o_f \dashv \alpha_f\colon Gf\alpha_x \to \alpha_yFf$\black.  Furthermore the adjoint equivalence is required to be the identity if $f$ is an identity, and the $3$-cell $\alpha_\phi$ is required to be invertible.  
%
\item A \emph{modification} $\Lambda\colon\alpha\to\beta$ consists of:
	\begin{itemize}
	\item For any object $x\in\ca$, a 2-cell $\Lambda_x\colon \alpha_x\to \beta_x$ in $\cb$.
	\item For any 1-cell $f\colon x\to y$ in $\ca$, a 3-cell $\Lambda_f$ in $\cb$ of the form
\[\begin{tikzcd}
	{Gf\alpha_x} & {Gf\beta_x} \\
	{\alpha_y Ff} & {\beta_y Ff.}
	\arrow["{\alpha_f}"', from=1-1, to=2-1]
	\arrow["{\beta_{f}}", from=1-2, to=2-2]
	\arrow[""{name=0, anchor=center, inner sep=0}, "{1\circ\Lambda_x}", from=1-1, to=1-2]
	\arrow[""{name=1, anchor=center, inner sep=0}, "{\Lambda_y\circ1}"', from=2-1, to=2-2]
	\arrow["{\Lambda_f}"{xshift=0.1cm}, shorten <=8pt, shorten >=8pt, Rightarrow, from=0, to=1]
\end{tikzcd}\]
	\end{itemize}
This data has to satisfy the three axioms listed in  Appendix A.3 of \cite{BourkeLobbia}.  It is a \emph{pseudo-modification} if the $3$-cell $\Lambda_f$ is invertible.

\item A \emph{perturbation} $\Theta\colon\Lambda\to\Gamma$ consists of, for any object $x\in\ca$, a 3-cell $\Theta_x\colon \Lambda_x\to\Gamma_x$ in $\cb$ subject to the single axiom in Axiom A.4 of \cite{BourkeLobbia}.
\end{itemize}

In the present paper we are concerned only with the pseudo-variations of the above.  We let  $\psd_p(\ca,\cb)$ denote the Gray-category of pseudomaps, pseudo-transformations, pseudo-modifications and perturbations.  Its full sub-Gray-category of strict maps is denoted by $\psd_s(\ca,\cb)$.

\black
\subsection{More on pseudo-transformations: oplax and semi-strict aspects}\label{sect:semistrict}
We now look at a few finer details about pseudo-transformations.  These will not play any role in defining semi-strict $4$-categories in \Cref{sect:flavours} but only in the later homotopical parts of the paper, and the reader is recommended to skip them at a first reading.

Firstly, we remark that the presentation of pseudo-transformations as particular lax transformation is biased.   Indeed a pseudo-transformation $\alpha \colon F \to G$ also gives rise to an \emph{oplax transformation} $\alpha^o \colon F \to G$, whose components at $0$-cells $\alpha_x$ are as before, but whose components at $1$-cells are now the left adjoints $\alpha^o_f$ of the adjoint equivalences $\alpha^o_f \dashv \alpha_f$.

The component of $\alpha^o$ at composable $1$-cells is the mate
\begin{equation}\label{eq:mate}
\begin{tikzcd}[ampersand replacement=\&]
	\& {\alpha_Z Ff_2Ff_1} \&\& {} \&\& \\
	\& {Gf_2\alpha_Y Ff_1} \& {} \\
	{(\alpha^{o})^2_{f_2,f_1}:=} \& {G{f_2}G{f_1}\alpha_X} \&\& {Gf_2\alpha_Y Ff_1} \&\& {\alpha_Z Ff_2Ff_1} \\
	\& {G({f_2}{f_1}) \alpha_X} \&\& {} \& {} \& {\alpha_ZF(f_2f_1)} \\
	\&\&\&\& {} \& {G({f_2}{f_1}) \alpha_X}
	\arrow["{{\alpha^o_{f_2}}\circ1}"', from=1-2, to=2-2]
	\arrow[curve={height=-12pt}, equals, nfold, from=1-2, to=3-6]
	\arrow["{\eta_{f_2}\circ1}"'{pos=0.6}, between={0.5}{0.8}, Rightarrow, nfold, from=1-4, to=3-4]
	\arrow["{1\circ\alpha^o_{f_1}}"', from=2-2, to=3-2]
	\arrow[curve={height=-12pt}, equals, nfold, from=2-2, to=3-4]
	\arrow[""{name=0, anchor=center, inner sep=0}, "{1\circ\alpha_{f_1}}"{description}, from=3-2, to=3-4]
	\arrow["{G^2_{f_2,f_1}\circ1}"', from=3-2, to=4-2]
	\arrow["{\alpha_{f_2}\circ1}"{description}, from=3-4, to=3-6]
	\arrow["{{\alpha}^2_{f_2,f_1}}", between={0.3}{0.7}, Rightarrow, nfold, from=3-4, to=4-4]
	\arrow["{1\circ F^2_{f_2,f_1}}", from=3-6, to=4-6]
	\arrow["{\alpha_{f_2f_1}}"{description}, from=4-2, to=4-6]
	\arrow[curve={height=12pt}, equals, nfold, from=4-2, to=5-6]
	\arrow["{\epsilon_{f_2f_1}}"', between={0.1}{0.7}, Rightarrow, nfold, from=4-5, to=5-5]
	\arrow["{{\alpha^o_{f_2f_1}}}", from=4-6, to=5-6]
	\arrow["{1\circ\eta_{f_1}}"'{pos=0.4}, between={0.3}{0.7}, Rightarrow, nfold, from=2-3, to=0]
\end{tikzcd}
\end{equation}
through the adjoint equivalences and $\alpha^o_{\phi}$ is obtained in a similar fashion --- see Proposition 3.1 of \cite{BourkeLobbia2}.  The same result shows that pseudo-transformations can be seen as interacting pairs of lax and oplax transformations, with similar results for higher transfors.  In particular, this symmetric, or unbiased, description of the $n$-cells of $\psd_p(\ca,\cb)$ has the consequence that $\psd_p(\ca,\cb)$ is the internal hom of a \emph{symmetric} biclosed multicategory (see Corollary 6.13 of \cite{BourkeLobbia2}.)  Biclosedness will play an important role in the later homotopical parts of the paper, beginning in \Cref{hoprelim}. 

An important but delicate point concerns the concept of a \emph{semi-strict pseudo-transformation}, which we now turn to addressing.

\begin{prop}\label{semistrictequivalence}
Let $F,G \colon \ca \rightrightarrows \cb$ be strict maps of Gray-categories and consider a pseudo-transformation $\alpha \colon F \to G$.  The following are equivalent.
\begin{enumerate}
\item The $3$-cells $\alpha^2_{f_2,f_1}$ are identities and we have the equality of adjoint equivalences $\aexi^\alpha_{f_2f_1} = (\aexi^{\alpha}_{f_2} \circ 1)\cdot (1 \circ \aexi^{\alpha}_{f_1})$.
\item Both the $3$-cells $\alpha^2_{f_2,f_1}$ and their mates $(\alpha^{o})^2_{f_2,f_1}$ in \eqref{eq:mate} are identities.
\item The $3$-cells $(\alpha^{o})^2_{f_2,f_1}$ are identities and we have the equality of adjoint equivalences $\aexi^\alpha_{f_2f_1} = (\aexi^{\alpha}_{f_2} \circ 1)\cdot (1 \circ \aexi^{\alpha}_{f_1})$.
\end{enumerate}
\end{prop}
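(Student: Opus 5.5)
The argument rests on the mate formula \eqref{eq:mate}. Since $F,G$ are strict, $F^2$ and $G^2$ are identities, so \eqref{eq:mate} says that $(\alpha^o)^2_{f_2,f_1}$ is the pasting
\[
\epsilon_{f_2f_1}\;\ast\;\alpha^2_{f_2,f_1}\;\ast\;(1\circ\eta_{f_1})\;\ast\;(\eta_{f_2}\circ 1).
\]
Its domain is $(1\circ\alpha^o_{f_1})\cdot(\alpha^o_{f_2}\circ 1)$ and its codomain is $\alpha^o_{f_2f_1}$. Here $\eta,\epsilon$ are the unit and counit of the adjoint equivalences $\aexi^\alpha_f$.

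I would first record the standard fact about adjoint equivalences in the $2$-category $\cb(Fx,Gz)$. Take two adjoint equivalences $\aexi=(\epsilon,u^o\dashv u,\eta)$ and $\aexi'=(\epsilon',v^o\dashv v,\eta')$, and suppose $u=v$. Then the mate of $1_u$, namely the composite $v^o\Rightarrow v^o u u^o = v^o v u^o \Rightarrow u^o$ built from $\eta$ and $\epsilon'$, is an identity if and only if $\aexi=\aexi'$. One direction is immediate from the triangle identities. For the converse, if this canonical comparison is the identity then $u^o=v^o$. The units then agree by uniqueness: for a fixed right adjoint and a fixed left adjoint, the unit determines the counit and conversely. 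Within the fixed pair $u^o\dashv u$ this is bijective, so equality of the mate-comparison with an identity forces $\eta=\eta'$ and $\epsilon=\epsilon'$.

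\textbf{(1)$\Rightarrow$(2).} Assume $\alpha^2$ is the identity. Then $(\alpha^o)^2_{f_2,f_1}$ is exactly the canonical comparison between the left adjoints of $\aexi^\alpha_{f_2f_1}$ and of the composite $(\aexi^\alpha_{f_2}\circ1)\cdot(1\circ\aexi^\alpha_{f_1})$. For this one must check that whiskering is a $2$-functor, so the whiskered adjunctions are adjunctions with whiskered unit and counit, and that the composite unit and counit match the formula for $\aexi'\cdot\aexi$ in the Notation. The two adjoint equivalences are equal by hypothesis, so the comparison is an identity.

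\textbf{(2)$\Rightarrow$(1).} With $\alpha^2$ the identity, the same identification shows that the canonical comparison between these two adjoint equivalences, which share the right adjoint $\alpha_{f_2f_1}=(\alpha_{f_2}\circ1)(1\circ\alpha_{f_1})$, is the identity. The uniqueness fact then gives equality of the adjoint equivalences.

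\textbf{(2)$\Leftrightarrow$(3).} This is the dual argument. If the adjoint equivalences agree, then $\alpha^2$ is recovered from $(\alpha^o)^2$ as its inverse mate, pasting with $\eta_{f_2f_1}$ and the whiskered counits $\epsilon_{f_1}$, $\epsilon_{f_2}$. So $(\alpha^o)^2$ being the identity forces $\alpha^2$ to be the identity, which gives (3)$\Rightarrow$(2). Conversely, (2) implies (1), which supplies the equality of adjoint equivalences, and so (2)$\Rightarrow$(3).

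The main obstacle is bookkeeping rather than ideas. One must verify that the pasting \eqref{eq:mate} with $\alpha^2=1$ really coincides on the nose with the mate comparison for the composite adjoint equivalence as defined in the Notation, with the correct order and whiskering conventions. Equally, one must check that when $\alpha^2$ is not assumed trivial, mating is a bijection, so that $(\alpha^o)^2=1$ can be transported back.
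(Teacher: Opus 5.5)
Your proposal is correct and follows essentially the paper's argument. You prove (1)$\Rightarrow$(2) via the triangle identities once $\alpha^2_{f_2,f_1}$ is trivial, (2)$\Rightarrow$(1) by reading off equal right and left adjoints from the two identity $3$-cells and then invoking uniqueness of units and counits, and (2)$\Leftrightarrow$(3) dually; your direct mate-bijectivity argument for (3)$\Rightarrow$(2) is fine.

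Two small points. First, in (2)$\Rightarrow$(1) you should say explicitly that the mate being an identity \emph{is} one triangle identity between the composite unit $(1\circ\eta_{f_1}).(\eta_{f_2}\circ 1)$ and $\epsilon_{f_2f_1}$. That is what pins down $\eta_{f_2f_1}$; ``the unit determines the counit'' alone does not. Second, with the conventions of \eqref{eq:mate} (and your own pasting formula), the mate runs from $\alpha^o_{f_2f_1}$ to $(1\circ\alpha^o_{f_1})\cdot(\alpha^o_{f_2}\circ 1)$ rather than the reverse, though this does not affect the argument.
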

\begin{proof}
Suppose (1).  Since $F$ and $G$ are strict, the two central vertical $1$-cells in \eqref{eq:mate} are identities and so, by assumption, is the $3$-cell filling the central square.  By the equality of adjoint equivalences, the mate \eqref{eq:mate} is an identity by the triangle equations for an adjunction.  Thus (2) holds.  Conversely supposing (2), we must show that the adjoint equivalences in (1) are equal.  Since  $\alpha^2_{f_2,f_1}$ is an identity their right adjoints coincide, whilst since $(\alpha^{o})^2_{f_2,f_1}$ is an identity, their left adjoints coincide (again noting that the two central vertical $1$-cells are identities).  It remains to show that the units and counits coincide.  Since \eqref{eq:mate} is the identity, $(1 \circ \eta_{f_1}). (\eta_{f_2}\circ 1)$ satisfies one of the triangle equation for the adjunction $\aexi^\alpha_{f_2f_1} = (\epsilon_{f_2f_1}, \alpha^o_{f_2f_1} \dashv   \alpha_{f_2f_1}, \eta_{f_2f_1})$.  By uniqueness of $\eta_{f_2f_1}$ with this property, it follows that $\eta_{f_2f_1} = (1 \circ \eta_{f_1}). (\eta_{f_2}\circ 1)$.  Given the equality of units, the dual argument shows that the counits are equal, proving (1).  The equivalence of (2) and (3) is dual.
\end{proof}

\begin{defn}\label{semistrictpseudotransformation}
Let $F,G \colon \ca \rightrightarrows \cb$ be strict maps of Gray-categories.  A pseudo-transformation $\alpha \colon F \to G$ is \emph{semi-strict} if it satisfies the equivalent conditions of \Cref{semistrictequivalence}.
\end{defn}

\begin{rmk}
Our definition above strengthens \cite[Definition 8.1.3]{MirandaSemistrict} which defines a pseudo-transformation to be semi-strict merely when the $3$-cells $\alpha^2_{f_2,f_1}$ are identities.  The extra condition on the equality of adjoint equivalences will be needed in \Cref{semistrictasstrict} where we show that semi-strict transformations correspond to certain Gray-functors.  To see that it is not redundant, let $\ca = \{0 \to 1 \to 2\}$ and $\cb = \twocat$, viewed as a Gray-category.  Consider an equivalence $u: \cc \to \cd$ of ordinary categories with distinct adjoint equivalence inverses $f \dashv u$ and $f' \dashv u$.  Now define $F \colon \ca \to \twocat$  by $F0=F1=\mathbf 1$, the terminal category, and $F2 = \Cat$, with $F(0 \to 2) = F(1 \to 2) = \cc \colon \mathbf 1 \to \Cat$ the 2-functor selecting $\cc$ and define $G \colon \ca \to \twocat$ identically, but substituting $\cc$ for $\cd$.   We then define a pseudo-transformation $\alpha \colon F \to G$ with each component $\alpha_x \colon Fx \to Gx$ the identity for $x = 0,1,2$.  Then $\aexi^\alpha_{0 \to 1}$ must be the identity adjoint equivalence, whilst we set $\aexi^\alpha_{1 \to 2} = f \dashv u$ and $\aexi^\alpha_{0 \to 2} = f' \dashv u$.  Finally, setting $\aexi^\alpha_{0 \to 1,1 \to 2} = 1_{u}$ produces a pseudo-transformation which is semi-strict in the sense of \cite{MirandaSemistrict} but not in the stronger sense defined above since $f' \neq f$.
\end{rmk}
\black

\section{Skew monoidal categories and multicategories of Gray-categories}\label{sect:base}

In this section, we recall several skew monoidal categories and multicategories of Gray-categories from \cite{BourkeLobbia}.  In Section \ref{sect:flavours}, we will enrich in these structures to define our notions of semi-strict $4$-category.  Since we will primarily take a multicategorical approach, we start by briefly recalling a few basics about multicategories.

\subsection{Background on multicategories}
Multicategories were introduced by Lambek in \cite{Lambek} and generalise categories by allowing morphisms with many inputs.  In particular, a multicategory $\mathbb V$ has objects and sets $\mathbb V_n(\overline{a};b)$ of $n$-ary multimaps from n-tuples of objects $\overline{a} = (a_1,\ldots,a_n)$ to objects $b$.  In addition, it has unary identities and substitution operations that allows the composition of multimaps.  A further good introductory text is \cite{LeinsterBook}.

\subsubsection{Closed multicategories}
Given a multicategory $\mathbb V$, it is said to be left closed (often just called closed) if given $b,c \in \mathbb V$, there exists an internal hom object $[b,c]$ and binary map $e_{b,c} \colon [b,c],b \to c$ such that the induced function $$e_{b,c} \circ_1 - \colon \mathbb V_n(\overline{a};[b,c]) \to \mathbb V_{n+1}(\overline{a},b;c)$$ obtained by substituting in the first variable is a bijection.  Dually, there are right closed multicategories: these are defined by objects $[b,c]$ and binary maps $e_{b,c} \colon b,[b,c] \to c$ such that the induced function $e_{b,c} \circ_2 - \colon \mathbb V_n(\overline{a};[b,c]) \to \mathbb V_{n+1}(b,\overline{a};c)$ obtained by substituting in the second position is a bijection.  The multicategory $\mathbb V$ is said to be biclosed if it is left and right closed.

\subsubsection{Symmetric multicategories} The multicategory $\mathbb V$ is said to be symmetric if it equipped with actions $\sigma.- \colon \mathbb V_n(\overline{a};b) \to \mathbb V_n(a_{\sigma 1},\ldots,a_{\sigma n};b)$ of elements $\sigma$ of the symmetric group $S_n$ compatible with the substitution operations.  

A useful fact is that a symmetric multicategory is left closed if and only if it is right closed: for instance, if the binary maps $e_{b,c} \colon [b,c],b \to c$ equip it with left-closed structure, then the binary maps $(1,2).e_{b,c} \colon b,[b,c] \to c$ obtained by applying the transposition $(1,2) \in S_2$ equip it with a right-closed structure.  In this paper, we are interested in two main examples, both of which are left closed and one of which is symmetric and hence biclosed.

\subsubsection{Categories enriched in a multicategory} A category $\ca$ enriched in the multicategory $\mathbb V$ has objects $x,y,z \ldots$, hom-objects $\ca(x,y) \in \mathbb V$ together with composition and identity multimaps $$\circ_{x,y,z} \colon \ca(y,z),\ca(x,y) \to \ca(x,z) \hspace{0.5cm} \textnormal{and} \hspace{0.5cm} 1_x \colon \diamond \to \ca(x,x)$$ satisfying the evident associativity and unit equations.  A $\mathbb V$-enriched functor $F \colon \ca \to \cb$ involves an assignment on objects together with unary morphisms $F_{x,y} \colon \ca(x,y) \to \cb(Fx,Fy)$ in $\mathbb V$ compatible with the composition and identity multimaps.  (See Section 2 of \cite{LeinsterGeneralised} for a general framework for enriched categories including the above.)

The following lemma gathers the few basic facts about multicategory-enriched categories that we will need into one place.

\begin{lemma}\label{basicenrichment}
Let $\mathbb V$ be a left closed multicategory.
\begin{enumerate}
\item Then there is a $\mathbb V$-enriched category, $\mathcal V$, with objects those of $\mathbb V$ and hom-objects $[a,b]$ given by the internal hom in $\mathbb V$.
\item If $\mathcal A$ is a $\mathbb V$-category, and $a \in \mathcal A$, there is a $\mathbb V$-enriched functor $\mathcal A(a,-) \colon \mathcal A \to \mathcal V$.
\item If $\mathbb V$ is symmetric, then each $\mathbb V$-category $\mathcal A$ has an opposite $\mathbb V$-category $\mathcal A^{op}$.  
\item Under the same assumptions, given an object $a \in \mathcal A$, we then also have a $\mathbb V$-functor $\mathcal A(-,a) \colon \mathcal A^{op} \to \mathcal V$.
\end{enumerate}
\end{lemma}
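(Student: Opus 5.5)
Each of the four parts comes from the universal property of the internal hom, and is proved by the standard enriched-category arguments transported to multicategories. Throughout we use the bijection $e_{b,c}\circ_1 - \colon \mathbb V_n(\overline{a};[b,c]) \cong \mathbb V_{n+1}(\overline{a},b;c)$ to define maps into internal homs by transposition. Equations between such maps are checked after postcomposing with evaluation, where they reduce to associativity and unitality of substitution in $\mathbb V$.

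For (1), the identity $1_a\colon \diamond \to [a,a]$ is defined as the transpose of the identity unary map $a \to a$. Composition $[b,c],[a,b] \to [a,c]$ is the transpose of the ternary map $[b,c],[a,b],a \to c$ obtained by substituting $e_{a,b}$ into the second input of $e_{b,c}$. For associativity, we compare the two quaternary maps $[c,d],[b,c],[a,b] \to [a,d]$ after transposing. Both become the iterated evaluation $[c,d],[b,c],[a,b],a \to d$, so they agree by injectivity of transposition. The unit laws reduce in the same way to the defining equation of $1_a$.

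For (2), the unary map $\ca(x,y) \to [\ca(a,x),\ca(a,y)]$ is taken to be the transpose of the composition multimap $\circ_{a,x,y}\colon \ca(x,y),\ca(a,x)\to\ca(a,y)$. Compatibility with composition then becomes, after transposing twice, exactly associativity of composition in $\ca$. Compatibility with identities is the left unit law of $\ca$.

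For (3), set $\ca^{op}(x,y) := \ca(y,x)$, with composition the binary map $\ca(y,x),\ca(z,y) \to \ca(z,x)$ given by $(1\,2).\circ_{z,y,x}$, and identities unchanged. Associativity and unitality follow from those of $\ca$ together with compatibility of the symmetric group actions with substitution. The only point requiring care is that applying the transposition to each of two nested composites yields the permutation $(1\,3)$ on both sides of the associativity equation. This is the one bookkeeping step I expect to need care with, though it is routine.

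For (4), apply (2) to $\ca^{op}$, noting that $\ca^{op}(a,-)$ has components $\ca(-,a)$. Left closedness of $\mathbb V$ suffices for this. Alternatively, since $\mathbb V$ is symmetric it is also right closed, which gives a direct description if desired. The main obstacle overall is merely organizing the transposition arguments rigorously; there is no genuine difficulty.
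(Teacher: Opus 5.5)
Your proposal is correct and follows the paper's proof essentially verbatim: you define the composition and identities of $\mathcal V$ and the action of $\mathcal A(a,-)$ on homs by the same transposes, you build $\mathcal A^{op}$ with the same composition $(1\,2).\circ$ and unchanged units, and you obtain (4) by applying (2) to $\mathcal A^{op}$. One small slip: the composition of $\mathcal A^{op}$ has domain $\mathcal A^{op}(y,z),\mathcal A^{op}(x,y)=\mathcal A(z,y),\mathcal A(y,x)$, which is the domain of $(1\,2).\circ_{z,y,x}$, whereas the order $\mathcal A(y,x),\mathcal A(z,y)$ you wrote is the domain of $\circ_{z,y,x}$ itself.
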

\begin{proof}
The arguments are standard and we only outline the constructions.
\begin{enumerate}
\item The binary map $m_{a,b,c} \colon [b,c],[a,b] \to [a,c]$ for $\mathcal V$ is the adjoint of the composite multimap $e_{b,c} \circ_2 e_{a,b} \colon [b,c],[a,b],a \to c$.
The identity nullary map $1_a \colon \diamond \to [a,a]$ is the adjoint of the unary identity $1_{a} \colon a \to a$ in $\mathbb V$.  
\item The $\mathbb V$-enriched functor $\mathcal A(a,-)$ maps $b$ to  $\mathcal A(a,b)$ and its action $\mathcal A(b,c) \to [\ca(a,b),\ca(a,c)]$ on homs is the adjoint of $m_{a,b,c} \colon \mathcal A(b,c),A(a,b) \to A(a,c)$.
\item $\ca^{op}$ has the same objects as $\ca$ but $\ca^{op}(a.b) := \ca(b,a)$.  Its composition $\ca^{op}(b,c),\ca^{op}(a,b) \to \ca^{op}(a,c)$ is given $(1,2) . \circ_{c,b,a} \colon \ca(c,b),\ca(b,a) \to \ca(c,a)$ whilst its unit map is unchanged.
\item Take $\ca(-,a) = \ca^{op}(a,-)$ using (2).
\end{enumerate}
\end{proof}

\subsubsection{Representable multicategories}\label{representable}
Many fundamental tensor products, such as the tensor product of vector spaces or of abelian groups, are constructed as classifying objects for certain kinds of multilinear maps.  Building on this idea, Hermida \cite[Theorem 9.8]{Hermida} showed that monoidal categories correspond to certain multicategories called \emph{representable multicategories}.  

The notion of representable multicategory depends upon the concept of a universal multimap, which is a multimap $\theta_{\overline{a}} \colon \overline{a} \to m(\overline{a})$ in $\mathbb V$ for which the induced function $\mathbb V_1(m(\overline{a});b) \to \mathbb V_n(\overline{a};b)$ is a bijection for all $b$.  (If $\theta_{\overline{a}} \colon \overline{a} \to m(\overline{a})$ is of arity $n$, we follow \cite{BourkeLack:skew} in calling it an $n$-ary map classifier.)

The multicategory $\mathbb V$ is then said to be \emph{representable} if it admits universal multimaps and they are closed under multicategorical substitution.  Given a representable multicategory $\mathbb V$, the corresponding monoidal category \cite[Definition 9.6]{Hermida} is described as follows.
\begin{itemize}
\item Its underlying category is the category $\mathbb V_1$ of unary morphisms in $\mathbb V$;
\item The tensor product $a \otimes b$ is the codomain of the binary map classifier $\theta_{a,b} \colon (a,b) \to a \otimes b$ whilst the unit $i$ arises from the nullary map classifier $u \colon \diamond \to i$;
\item The associator $\alpha_{a,b,c} \colon (a \otimes b) \otimes c \cong a \otimes (b \otimes c)$ is the unique unary map for which $\alpha_{a,b,c} \circ_1 (\theta_{a \otimes b,c} \circ_1 \theta_{a,b}) = \theta_{a, b \otimes c} \circ_2 \theta_{b,c}$.
\item The unit isomorphisms $i \otimes a \cong a \cong a \otimes i$ are induced by the fact that each of these three objects classifies unary maps with domain $a$.
\end{itemize}
If $\mathbb V$ is furthermore closed/symmetric, the associated monoidal category is closed/symmetric, with the same internal hom $[a,b]$ and with braiding $s_{a,b} \colon a \otimes b \to b \otimes a$ the unique map for which $s_{a,b} \circ \theta_{a,b} = (1,2). \theta_{b,a}$.  We will use this fact in \Cref{thm:smchomotopy} to prove that the homotopy category of Gray-categories is symmetric monoidal closed.


\subsection{Background on skew monoidal categories}
Skew monoidal categories were introduced independently in \cite{Altenkirch} and \cite{Szlachanyi}, with motivations coming respectively from the study of relative monads and bialgebroids over rings.  Here we recall the definitions and associated multicategorical aspects. 
\subsubsection{Skew monoidal categories}

A left skew monoidal category is a category $\cc$ together with a functor $\otimes\colon\cc\times \cc \to \cc$, a unit object $i \in \cc$, and natural families $\alpha \colon(a \otimes b) \otimes c \to a \otimes (b \otimes c)$, $l \colon i \otimes a \to a$ and $r \colon a \to a \otimes i$ satisfying five axioms described in \cite{Szlachanyi}.  There are also right skew monoidal categories in which $\alpha,l$ and $r$ have their orientiations reversed.   In this paper, our primary focus will be on left skew monoidal categories.  Monoidal categories are precisely those skew monoidal categories for which $\alpha$, $l$ and $r$ are invertible.  

The left skew monoidal $(\cc,\otimes,i)$ is said to be:
\begin{itemize}
\item \emph{left normal} if $l$ is invertible;
\item  \emph{right normal} if $r$ is invertible;
\item \emph{normal} if it is both left and right normal;
\item \emph{closed} (or left closed) if each endofunctor $- \otimes a \colon \cc \to \cc$ has a right adjoint;
\item \emph{symmetric} if it is equipped with a natural isomorphism $s_{a,b,c} \colon(a \otimes b) \otimes c \to (a \otimes c) \otimes b$ satisfying the five equations in \cite{BourkeLack:symmetric}.
\end{itemize}

A category $\ca$ enriched over the skew monoidal $\cc$ involves a class of objects $ob\ca$ together with hom-objects $\ca(x,y) \in \cc$ for $x,y \in ob\ca$ plus composition and unit maps $\ca(y,z) \otimes \ca(x,y) \to \ca(x,z)$ and $i \to \ca(x,x)$ in $\cc$ satisfying the standard associativity and unit equations.

\subsubsection{Skew multicategories associated to skew monoidal categories}

Just as monoidal categories correspond to certain multicategories (the representable ones), skew monoidal categories correspond to certain \emph{skew multicategories} (the left representable ones).  

Skew multicategories, introduced in \cite{BourkeLack:skew} differ from classical multicategories in that they have two kinds of multimap (tight and loose).  We will not recall the definition here because they are not really needed in this paper.  Rather, the reader only needs to know that if $\mathbb C$ is a skew multicategory, then it has an underlying ordinary multicategory $\mathbb C^l$ containing its loose multimaps.  In particular, each skew monoidal category $(\cc,\otimes,i)$ has an associated multicategory $\mathbb C^l$.  

Properties of a skew monoidal category correspond to properties of the associated multicategory, with the key ones summarised below.
\begin{itemize}
\item If $(\cc,\otimes,i)$ is closed then the multicategory $\mathbb C^l$ is left closed by \cite[Theorem 6.4 ]{BourkeLack:skew}.
\item If $(\cc,\otimes,i)$ is symmetric, then $\mathbb C^l$ is a symmetric multicategory by \cite[Theorem 5.9 ]{BourkeLack:symmetric}.
\item Combining the above, if $(\cc,\otimes,i)$ is symmetric and closed, then $\mathbb C^l$ is symmetric and left closed, and so bi-closed.
\end{itemize}

There are two skew monoidal structures of interest in the present paper, which we call the flexible skew monoidal structure $(\GCat,\otimes_p,\mathbf{1})$ and the left-sharp skew monoidal structure $(\GCat,\leftsharp,\mathbf{1})$.  Both of these were introduced in \cite{BourkeLobbia}.  In the following sections, we describe the key aspects of these skew monoidal categories and their associated multicategories, in which we will enrich.

\subsection{The flexible skew monoidal structure and its associated multicategory $\GCatmp^l$} In  \cite[Theorem 6.3]{BourkeLobbia}, we constructed a skew monoidal structure $(\GCat,\otimes_p,\mathbf{1})$ that we here call the \emph{flexible skew monoidal structure}.  We recall some of its properties now.
\begin{itemize}
\item The unit is the terminal Gray-category $\mathbf{1}$.
\item It is closed, with internal hom the Gray-category $\psd_p(\ca,\cb)$ of pseudomaps and pseudo-transformations.\begin{footnote}{Note that in \cite{BourkeLobbia} and \cite{BourkeLobbia2} we write $\psd(\ca,\cb)$ for what we here call $\psd_p(\ca,\cb)$.  Here we adopt the notation $\psd_p(\ca,\cb)$ to distinguish it from the sub Gray-category $\psd_s(\ca,\cb)$ of strict maps, which will also be important.}\end{footnote}
\item It is \emph{right normal}.  (This corresponds to the fact that $\psd_p(\mathbf{1},\cb) \cong \cb$.)
\item It is a \emph{symmetric} skew monoidal category, as established in Theorem 6.12 of \cite{BourkeLobbia2}. 
\end{itemize}

The associated multicategory of loose multimaps is called $\GCatmp^l$.  This was described in \cite{BourkeLobbia} up to dimension 4, and we will recall its low-dimensional structure.  
\begin{itemize}
\item The objects of $\GCatmp^l$ are Gray-categories.
\item Nullary maps $x \colon \diamond \to \ca$ are objects $x \in \ca$.
\item Unary maps of $\GCatmp^l$ are pseudomaps $F \colon \ca \rightsquigarrow \cb$.  In particular, the underlying category of unary maps in $\GCatmp^l$ is the category $\GCat_p$ of Gray-categories and pseudomaps.
\item A binary map $F \colon \ca,\cb \rightsquigarrow \cc$ consists of:
\begin{itemize}
\item at $a\in\ca$ a pseudomap $F^a\colon \ca\rightarrow \cb$, and at $b\in\cb$ a pseudomap $F_b\colon\catA\rightarrow\cc$ satisfying $F^a(b)=F_b(a)=:F(a,b)$;
\item at $A \colon a \to a^{\prime}$ and $B \colon b \to b^{\prime}$, a $2$-cell 
\[\begin{tikzcd}
	{F(a,b)} & {F(a',b)} \\
	{F(a,b')} & {F(a',b')}
	\arrow[""{name=0, anchor=center, inner sep=0}, "{F_b(A)}", from=1-1, to=1-2]
	\arrow["{F^a(B)}"', from=1-1, to=2-1]
	\arrow["{F^{a'}(B)}", from=1-2, to=2-2]
	\arrow[""{name=1, anchor=center, inner sep=0}, "{F_{b'}(A)}"', from=2-1, to=2-2]
	\arrow["{A_B}"', between={0.2}{0.8}, Rightarrow, from=0, to=1]
\end{tikzcd}\]
\black equipped with the structure of an adjoint equivalence $\aexi^A_B := A^o_B \dashv A_B$;\black
\item additional invertible $3$-cells $F^A_{B_2,B_1}, F_B^{A_2,A_1}$, $\alpha_B$ and $A_\beta$ satisfying equations described in Section 6.3 and Appendix D.1 of \cite{BourkeLobbia}.
\end{itemize}
\item its ternary and $4$-ary maps are as described in Section 6.3 of \cite{BourkeLobbia}.
\end{itemize}
The reader will not need to know the details of the multicategory $\GCatmp^l$ for the reading of this paper.   The following result will be of considerable importance, particularly in the later homotopical parts of the paper, so we outline the proof for the readers' convenience.

\begin{cor}[Corollary 6.13 of \cite{BourkeLobbia2}]\label{prop:biclosed}
The multicategory $\GCatmp^l$ is symmetric biclosed with internal hom $\psd_p(\ca,\cb)$.  In particular, we have isomorphisms $\lambda_{\ca,B,C} \colon \psd_p(\ca,\psd_p(\cb,\cc)) \rightsquigarrow \psd_p(\cb,\psd_p(\ca,\cc)) \in \GCat_p$ natural in each variable.
\end{cor}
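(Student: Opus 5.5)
The plan is to assemble the statement from the general facts about skew multicategories recalled above, applied to the flexible skew monoidal structure $(\GCat,\otimes_p,\mathbf{1})$, rather than to work directly with the explicit multimaps of $\GCatmp^l$.

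First, the flexible structure is closed, with internal hom $\psd_p(\ca,\cb)$. By \cite[Theorem 6.4]{BourkeLack:skew}, the loose multicategory $\GCatmp^l$ is therefore left closed with the same internal hom. Concretely, I would identify the evaluation binary map $e \colon \psd_p(\cb,\cc),\cb \rightsquigarrow \cc$ as the loose binary map corresponding to the identity. I would then check that substitution in the first variable gives bijections $\GCatmp^l_n(\overline{\ca};\psd_p(\cb,\cc)) \cong \GCatmp^l_{n+1}(\overline{\ca},\cb;\cc)$. For $n=1$ this bijection is the statement that a loose binary map $\ca,\cb \rightsquigarrow \cc$ is the same as a pseudomap $\ca \rightsquigarrow \psd_p(\cb,\cc)$. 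Under this correspondence the adjoint equivalence $2$-cells $A_B$ match the pseudo-transformation components, and the $3$-cells $F^A_{B_2,B_1}$, $F^{A_2,A_1}_B$, $\alpha_B$, $A_\beta$ match the cocycle and transformation data.

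Second, the flexible structure is symmetric by \cite[Theorem 6.12]{BourkeLobbia2}. Hence \cite[Theorem 5.9]{BourkeLack:symmetric} makes $\GCatmp^l$ a symmetric multicategory. As recalled earlier, a symmetric left closed multicategory is right closed, with right evaluation $(1,2).e$, so $\GCatmp^l$ is symmetric biclosed.

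Third, I would derive the isomorphism $\lambda$ by the Yoneda lemma in the category of unary maps, which is $\GCat_p$. For any $\cx$, iterating left closedness gives natural bijections
$$\GCat_p(\cx,\psd_p(\ca,\psd_p(\cb,\cc)))\cong \GCatmp^l_3(\cx,\ca,\cb;\cc).$$
Applying the transposition of the last two variables gives a bijection with $\GCatmp^l_3(\cx,\cb,\ca;\cc)$, and hence with $\GCat_p(\cx,\psd_p(\cb,\psd_p(\ca,\cc)))$. These bijections are natural in $\cx$ because the symmetric action is compatible with substitution. Yoneda then yields an invertible pseudomap $\lambda_{\ca,\cb,\cc}$. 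Its naturality in $\ca,\cb,\cc$ follows from the naturality of the closedness bijections and of the symmetric action in the other arguments.

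The main obstacle is not this formal assembly. It is the input that the flexible skew structure is symmetric, that is, constructing $s \colon (\ca\otimes_p\cb)\otimes_p\cc\cong(\ca\otimes_p\cc)\otimes_p\cb$ and verifying its axioms. Equivalently, one must show that ternary loose maps are invariant under swapping the last two variables. This relies on the unbiased description of pseudo-transformations as interacting lax/oplax pairs (via mates such as \eqref{eq:mate}), which removes the asymmetry between the two variables. Here I would simply invoke the cited results.
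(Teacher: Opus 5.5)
Your proposal is correct and follows essentially the paper's own argument. The paper also invokes \cite[Theorem 6.12]{BourkeLobbia2} to get that $\GCatmp^l$ is symmetric biclosed, and obtains $\lambda$ by observing that the two iterated internal homs represent ternary maps $\cx,\ca,\cb \rightsquigarrow \cc$ and $\cx,\cb,\ca \rightsquigarrow \cc$, which correspond under the symmetric action. Your explicit Yoneda bookkeeping in $\GCat_p$ just spells out that one-line step in more detail.
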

\begin{proof}
Since $(\GCat,\otimes_p,\mathbf{1})$ is symmetric skew monoidal closed by Theorem 6.12 of \cite{BourkeLobbia2}, its associated multicategory $\GCatmp^l$ is symmetric biclosed.  In any symmetric biclosed multicategory, we have a unary isomorphism relating the iterated internal homs as above.  Indeed the left and right hand sides above respectively represent $4$-ary maps $\cd, \ca,\cb \rightsquigarrow \cc$ and $\cd, \cb,\ca \rightsquigarrow \cc$ and in a symmetric multicategory these are in natural bijection.
\end{proof}

\subsection{The left-sharp skew monoidal structure and its associated multicategory $\GCatmpshl$}\label{sect:sharpmulti}
In  \cite[Theorem 7.6]{BourkeLobbia}, we constructed a left skew monoidal structure $(\GCat,\leftsharp,\mathbf{1})$ that we here call the \emph{left-sharp skew monoidal structure}.

 We recall some of its properties now.
\begin{itemize}
\item The unit is the terminal Gray-category $\mathbf{1}$.
\item It is closed, with internal hom the Gray-category $\psd_s(\ca,\cb)$ of Gray-functors and pseudo-transformations.
\item It is both left and right normal.
\end{itemize}

Its associated multicategory $\GCatmpshl$ is, by definition, the wide sub-multicategory $\GCatmpshl \hookrightarrow \GCatmp^l$ of left-sharp multimaps.  In particular:
\begin{itemize}
\item All nullary maps are left-sharp.
\item The left-sharp unary maps are the Gray-functors $F \colon \ca \to \cb$.
\item A binary map $F \colon \ca,\cb \rightsquigarrow \cc$ is left-sharp when the associated pseudomaps $F^a\colon \cb\rightarrow \cb$ and $F_b\colon\catA\rightarrow\cc$ are strict, the invertible $3$-cells $F^{A_2,A_1}_{B}$ are identities \black and we have the equality of adjoint equivalences $\aexi^{A_2A_1}_B = (1 \circ \aexi^{A_1}_B) \cdot (\aexi^{A_2}_B \circ 1)$ depicted below.

\black
\[\begin{tikzcd}
	{\aexi^{A_2A_1}_B =} & {F^{a''}_{B}F^{A_2}_{b}F^{A_1}_{b}} & {F_{b'}^{A_2}F^{a'}_{B}F^{A_1}_{b}} & {F_{b'}^{A_2}F_{b'}^{A_1}F^{a}_{B}}
	\arrow[""{name=0, anchor=center, inner sep=0}, "{(A_2)_B \circ 1}"', curve={height=12pt}, from=1-2, to=1-3]
	\arrow[""{name=1, anchor=center, inner sep=0}, "{(A_2)^o_B \circ 1}"', curve={height=12pt}, from=1-3, to=1-2]
	\arrow[""{name=2, anchor=center, inner sep=0}, "{1 \circ (A_1)_B }"', curve={height=12pt}, from=1-3, to=1-4]
	\arrow[""{name=3, anchor=center, inner sep=0}, "{1 \circ (A_1)^o_B }"', curve={height=12pt}, from=1-4, to=1-3]
	\arrow["\bot"{description}, draw=none, from=0, to=1]
	\arrow["\bot"{description}, draw=none, from=3, to=2]
\end{tikzcd}\]
(\emph{Note that here we are using the corrected definition of \cite[Section 7]{BourkeLobbiaCorrected}, in which the equation concerning the equality of adjoint equivalences was added.})
\end{itemize}

As the multicategory associated to the left-sharp skew monoidal structure, $\GCatmpshl$ is closed with internal hom $\psd_s(\ca,\cb)$.  The following result relates left-sharp and loose multimaps via their associated internal homs.

\begin{lemma}\label{sharploose}
A loose multimap $F \colon \overline{\ca},\cb \rightsquigarrow \cc$ is left-sharp if and only if its adjoint $\overline{F} \colon \overline{\ca} \rightsquigarrow \psd_p(\cb,\cc)$ factors through $J_{\cb,\cc} \colon \psd_s(\cb,\cc) \hookrightarrow \psd_p(\cb,\cc)$ as a left-sharp multimap $\overline{\ca} \to \psd_s(\cb,\cc)$.
\end{lemma}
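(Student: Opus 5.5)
The plan is to argue by induction on the length of $\overline{\ca} = (\ca_1,\ldots,\ca_n)$, comparing the two closed structures directly. Both multicategories are left closed: $\GCatmp^l$ with internal hom $\psd_p(\cb,\cc)$, and its wide sub-multicategory $\GCatmpshl$ with internal hom $\psd_s(\cb,\cc)$. The key input is a description of the left-sharpness conditions on an $(n+1)$-ary loose multimap $F$, with last variable $\cb$, in terms of its transpose $\overline{F}$. Such a condition either constrains the behaviour of $F$ in the $\cb$-variable alone, with the other variables held fixed at objects, or constrains the interaction of $\cb$ with the earlier variables, or constrains the earlier variables among themselves.

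\emph{Base cases.} For $n=0$ the loose map is a pseudomap $F\colon \cb \rightsquigarrow \cc$. It is left-sharp exactly when it is a Gray-functor, which is exactly the statement that the corresponding object of $\psd_p(\cb,\cc)$ lies in $\psd_s(\cb,\cc)$. For $n=1$ we unwind a binary map $F\colon \ca,\cb \rightsquigarrow \cc$. Its transpose $\overline{F}\colon \ca \rightsquigarrow \psd_p(\cb,\cc)$ sends $a\mapsto F^a$, a $1$-cell $A$ to the pseudo-transformation with components $F_b(A)$ and $A_B$, and higher cells similarly. The cocycle of $\overline{F}$ is assembled from the $3$-cells $F_B^{A_2,A_1}$. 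We then match conditions as follows:
\begin{itemize}
\item $\overline{F}$ lands on objects in $\psd_s$ precisely when each $F^a$ is strict;
\item $\overline{F}$ is a strict Gray-functor, i.e.\ a left-sharp unary map, precisely when each $F_b$ is strict, the cells $F_B^{A_2,A_1}$ are identities, and composition of pseudo-transformations is preserved on the nose.
\end{itemize}
Composition of pseudo-transformations in $\psd_p$ composes the adjoint equivalences as $(1\circ \aexi^{A_1}_B)\cdot(\aexi^{A_2}_B\circ 1)$. Strict preservation of composites is therefore exactly the added equality $\aexi^{A_2A_1}_B = (1 \circ \aexi^{A_1}_B) \cdot (\aexi^{A_2}_B \circ 1)$. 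Since $\psd_s(\cb,\cc)\hookrightarrow\psd_p(\cb,\cc)$ is full on higher cells, no further conditions arise.

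\emph{Inductive step.} For higher arity we use the inductive description of the multicategories in \cite{BourkeLobbia}, where $(n+1)$-ary maps are built from $n$-ary and binary ones. The left-sharp conditions split into two groups:
\begin{enumerate}
\item conditions internal to $\overline{\ca}$, which say that $\overline{F}$ is left-sharp as an $n$-ary map into $\psd_p(\cb,\cc)$;
\item conditions involving $\cb$, which, by the binary computation applied to each pair $(\ca_i,\cb)$, say that the relevant components of $\overline{F}$ are strict maps and strict transformation data, i.e.\ lie in $\psd_s(\cb,\cc)$.
\end{enumerate}
Because $J_{\cb,\cc}$ is a full inclusion on $n$-cells for $n\ge1$ and injective on objects, factorisation through it is a property and not extra structure. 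Left-sharpness of the factored map is then equivalent to the combined conditions in (1) and (2).

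\emph{Main obstacle.} The hardest step is the bookkeeping for the inductive step. One must confirm that left-sharpness of a multimap into $\psd_s(\cb,\cc)$ and into $\psd_p(\cb,\cc)$ coincide on the shared data. This needs care, since the evaluation maps differ a priori: the evaluation $\psd_s(\cb,\cc),\cb\to\cc$ is the restriction of the evaluation for $\psd_p$. I would first try to avoid explicit higher-arity formulas. The route is to show that $J_{\cb,\cc}$ composed with the left-sharp evaluation equals the loose evaluation restricted along $J$, and that $J$ reflects left-sharpness of multimaps. The universal properties of both internal homs then yield the stated equivalence formally.
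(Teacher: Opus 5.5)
Your closing suggestion is the paper's proof, and once you adopt it the induction is unnecessary.

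\textbf{The formal route.} Beyond the given closedness of $\GCatmpshl$, the only substantive input is the identity $e^{\sharp}=e^{l}\circ_1 J_{\cb,\cc}$. That is, the evaluation $\psd_s(\cb,\cc),\cb\to\cc$ of $\GCatmpshl$ is the loose evaluation restricted along $J$. The paper imports this from the $n=1$ case of the proof that $\GCatmpshl$ is closed \cite{BourkeLobbiaCorrected}.

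Your binary computation is exactly that $n=1$ case and suffices on its own. It shows that $G\mapsto e^l\circ_1(J\circ G)$ is a bijection, natural in $\ca$, from Gray-functors $\ca\to\psd_s(\cb,\cc)$ to left-sharp binary maps $\ca,\cb\to\cc$. By Yoneda, $e^l\circ_1 J=e^\sharp\circ_1 K$ for an automorphism $K$ of $\psd_s(\cb,\cc)$, so $e^l\circ_1 J$ may serve as the evaluation.

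Now, for every $n$, consider the square with:
\begin{itemize}
\item top $e^\sharp\circ_1-\colon\GCatmpshl_n(\overline{\ca};\psd_s(\cb,\cc))\to\GCatmpshl_{n+1}(\overline{\ca},\cb;\cc)$;
\item bottom $e^l\circ_1-$;
\item left $J\circ_1-$;
\item right the inclusion into $\GCatmp^l_{n+1}(\overline{\ca},\cb;\cc)$.
\end{itemize}
It commutes by associativity of substitution. Both horizontals are bijections, so it is a pullback, and this is precisely the statement, including uniqueness of the left-sharp factorisation. In particular you do not need a separate claim that $J$ reflects left-sharpness.

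\textbf{The induction as written.} The explicit inductive step would not go through as stated. The conditions do not split into ``internal to $\overline{\ca}$, equivalently left-sharpness of $\overline{F}$'' and ``involving $\cb$''.
\begin{itemize}
\item Strictness of $\overline{F}$ in a variable $\ca_i$ already encodes the left-sharp conditions for the pair $(\ca_i,\cb)$.
\item The equality of adjoint equivalences demanded of $\overline{F}$ for a pair $(\ca_i,\ca_j)$ is an equality of modifications in $\psd_p(\cb,\cc)$. Their components at $1$-cells $B$ of $\cb$ are incubators of $F$.
\end{itemize}
Such incubator conditions are not of the binary form you list for $F$. They would have to be extracted from the axioms for higher-arity maps, and the formal argument avoids this bookkeeping entirely.
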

\begin{proof}
Consider a strict map $F \colon \ca \to \psd_s(\cb,\cc)$.  By the $n=1$ case in the proof of \cite[Theorem 7.5]{BourkeLobbiaCorrected}, the left-sharp multimap $\overline{F} \colon \ca,\cb \to \cc$ obtained through the closedness bijection $\GCatmpshl_1(\ca,\psd_p(\cb,\cc)) \cong \GCatmpshl_2(\ca,\cb;\cc)$ is the loose multimap obtained by applying the closedness bijection $\GCatmp^{l}_1(\ca,\psd_p(\cb,\cc)) \cong \GCatmp^{l}_2(\ca,\cb;\cc)$ to the composite $J \circ F \colon \ca \to \psd_s(\cb,\cc) \hookrightarrow \psd_p(\cb,\cc)$.  

Now taking the identity on $\psd_s(\cb,\cc)$ in the above, this says precisely that the universal binary map $e^{\sharp} \colon \psd_s(\cb,\cc),\cb \to \cc$ for $\GCatmpshl$ is obtained as the restriction $e^{l} \circ_1 J \colon \psd_s(\cb,\cc),\cb \rightsquigarrow \cc$ where $e^{l} \colon \psd_p(\cb,\cc),\cb \rightsquigarrow \cc$ is the universal loose binary map for the closed structure on $\GCatmp^l$.

As such, for all $n$ we obtain a commutative square

\[\begin{tikzcd}
	{\GCatmpshl_{n}(\overline{\ca};\psd_s(\cb,\cc))} & {\GCatmpshl_{n}(\overline{\ca},\cb;\cc)} \\
	{\GCatmp^{l}_{n}(\overline{\ca};\psd_p(\cb,\cc))} & {\GCatmp^{l}_{n}(\overline{\ca},\cb;\cc)}
	\arrow["{e^{\sharp} \circ_1-}", from=1-1, to=1-2]
	\arrow["{J \circ_1-}"', from=1-1, to=2-1]
	\arrow[hook, from=1-2, to=2-2]
	\arrow["{e^l \circ_1-}"', from=2-1, to=2-2]
\end{tikzcd}\]

Now, by closedness, the two horizontals are bijections.  And since any commutative square in which two parallel morphisms are isomorphisms is automatically a pullback, the above square is a pullback, proving the claim.
\end{proof}

\begin{term}
In this paper, we use the term \emph{left-sharp} for what in \cite{BourkeLobbiaCorrected} we called \emph{sharp}.  Therefore, we use \emph{left-sharp skew monoidal structure} and \emph{left sharp multimaps} instead of \emph{sharp skew monoidal structure} and \emph{sharp multimaps}.  The reason for this change is that \emph{left-sharpness} is a one-sided condition, as is evident from the description of the left-sharp binary maps above.  In \Cref{sect:flavours}, we will also consider the dual \emph{right-sharpness} and we reserve \emph{sharp} for the intersection of \emph{left-sharp} and \emph{right-sharp}.  In addition, we use the notation $\GCatmpshl$ for the multicategory which in \cite{BourkeLobbiaCorrected} we called $\GCatmp^{\sharp}$. 
\end{term}

\section{Flavours of semi-strict 4-category}\label{sect:flavours}

In the present section, we describe several new notions of semi-strict $4$-category and connect them with the existing literature.  We give a wide range of examples and develop the basic results.  

\subsection{Loose $4$-categories}
We begin by defining the most general kind of $4$-category that we will consider, the loose $4$-categories.

\begin{defn}
A \emph{loose $4$-category} is a category enriched over the multicategory $\GCatmp^l$.  
\end{defn}

Accordingly, a loose $4$-category $\ca$ has objects $x,y,z,\ldots$, hom-Gray-categories $\ca(x,y)$, loose multimaps $$\circ_{x,y,z} \colon \ca(y,z),\ca(x,y) \rightsquigarrow \ca(x,z) \hspace{0.5cm} \textnormal{and} \hspace{0.5cm} 1_x \colon \diamond\rightsquigarrow \ca(x,x)$$ satisfying the usual associativity and unit laws.  It follows that each loose $4$-category $\ca$ has an underlying category $\ca_1$ with composition denoted by $f \circ g$ or just $fg$.  

\begin{eg}[\emph{The loose $4$-category of Gray-categories and pseudomaps}]
Since $\GCatmp^l$ is closed, by Lemma \ref{basicenrichment}, there is a loose $4$-category $\GCat_p^4$ whose objects are Gray-categories and with homs $\psd_p(\ca,\cb)$: that is, its $1$-cells are pseudomaps, $2$-cells are pseudo-transformations, $3$-cells are pseudo-modifications and $4$-cells are perturbations.  
\end{eg}

\begin{eg}[\emph{The opposite loose $4$-category}]
Let $\ca$ be a loose $4$-category.  Since  $\GCatmp^l$ is symmetric, by \Cref{basicenrichment},we can form the opposite loose $4$-category $\ca^{op}$ which has the same objects as $\ca$ but $\ca^{op}(x,y) = \ca(y,x)$.
\end{eg}

\begin{defn}
Let $\ca$ and $\cb$ be loose $4$-categories.  A loose $4$-functor $F \colon \ca \rightsquigarrow \cb$ is a $\GCatmp^l$-enriched functor.
\end{defn}

As such, a loose $4$-functor $F \colon \ca \rightsquigarrow \cb$ involves pseudomaps $F_{x,y} \colon \ca(x,y) \rightsquigarrow \cb(Fx,Fy)$ which commute strictly with the composition and identity multimaps.  We say that $F$ is a \emph{strict} $4$-functor if each $F_{x,y}$ is a Gray-functor, in which case we denote it by a straight arrow  $F \colon \ca \to \cb$.

\begin{eg}[\emph{Representables}]\label{eg:rep}
If $\ca$ is a loose $4$-category, then by \Cref{basicenrichment}, we can form the \emph{representable} loose $4$-functors $\ca(x,-) \colon \ca \rightsquigarrow \GCat_p^4$ and $\ca(-,x) \colon \ca^{op} \rightsquigarrow \GCat_p^4$.
\end{eg}




\begin{notation*}Before going further, let us set out some basic notation for a loose $4$-category $\ca$. 
\begin{itemize}
	 \item Our basic system for $0$-cells, $1$-cells and $2$-cells will follow the notation below.
	\[\begin{tikzcd}
		w && x && y && z
		\arrow[""{name=0, anchor=center, inner sep=0}, "c", curve={height=-12pt}, from=1-1, to=1-3]
		\arrow[""{name=1, anchor=center, inner sep=0}, "{c'}"', curve={height=12pt}, from=1-1, to=1-3]
		\arrow[""{name=2, anchor=center, inner sep=0}, "b", curve={height=-12pt}, from=1-3, to=1-5]
		\arrow[""{name=3, anchor=center, inner sep=0}, "{b'}"', curve={height=12pt}, from=1-3, to=1-5]
		\arrow[""{name=4, anchor=center, inner sep=0}, "a", curve={height=-12pt}, from=1-5, to=1-7]
		\arrow[""{name=5, anchor=center, inner sep=0}, "{a'}"', curve={height=12pt}, from=1-5, to=1-7]
		\arrow["C", between={0.2}{0.8}, Rightarrow, from=0, to=1]
		\arrow["B", between={0.2}{0.8}, Rightarrow, from=2, to=3]
		\arrow["A", between={0.2}{0.8}, Rightarrow, from=4, to=5]
	\end{tikzcd}\]
	\begin{itemize}
	\item Objects will be denoted with $w,x,y,z$.
	\item $A$'s will always be on the right of $B$'s and similarly for $C$'s and so on. 
	\item  For composable 2-cells we will use increasing subscripts, e.g. $A_1\colon a\to a'$ and $A_2\colon a'\to a''$. 
	\item Parallel $n$-cells will be denoted using $(-)'$, e.g. $a,a'\colon y\to z$ and $A,A'\colon a\to a'$. 
	\end{itemize}
	
	\item We denote 3-cells by $\gamma\colon C\to C'$, $\beta\colon B\to B'$, and $\alpha\colon A\to A'$, and 4-cells by $\Theta\colon\beta\to\beta'$ and $\Lambda\colon\alpha\to\alpha'$.
	\item The hom-Gray-categories $\ca(x,y)$ involve whiskering operations by $1$-cells --- that is, $2$-cells in $\ca$ --- which will be denoted by  $-.B$ or $A.-$.  In particular, for composable $2$-cells $B_1\colon b\to b'$ and $B_2\colon b'\to b''$ (along a 1-cell $b'$) 2-cells we write $B_2.B_1\colon b\to b''$.  Given vertically composable 3-cells $\beta_1\colon B\to B'$ and $\beta_2\colon B'\to B''$ (i.e. 2-cells in $\ca(x,y)$), we denote their composite by $\beta_2\ast\beta_1\colon B\to B''$.  No notation for vertical composition of $4$-cells will be needed since such composites will be represented using pasting diagrams.  Finally, the interchanger in $\ca(x,y)$ will be called the \emph{$3$-interchanger}, since it describes a $4$-cell $\alpha \colon \beta$ associated to a pair of \emph{$3$-cells} $\alpha$ and $\beta$.  This distinguishes it from the \emph{2-interchanger} $A_B$ introduced in \Cref{unpacked} below.
%
%

\item The loose bimap $-\circ- \colon \ca(y,z),\ca(x,y) \rightsquigarrow \ca(x,z)$ involves pseudomaps $a \circ - \colon \ca(x,y) \rightsquigarrow \ca(x,z)$ and $- \circ b \colon \ca(y,z) \rightsquigarrow \ca(x,z)$ describing whiskering by $1$-cells, whose action on $n$-cells we denote by $\circ$ or juxtaposition.    For instance, we will write $a\circ b=ab\colon x\to z$ for a composite $1$-cell and $a \circ B = aB \colon ab\to ab'$ for a whiskered $2$-cell.
\end{itemize}
\end{notation*}

\subsection{The stricter flavours}
In this section, we introduce the stricter flavours of $4$-category that we will consider.  We do so by imposing additional conditions on loose $4$-categories.

\begin{defn}
A \emph{moderate $4$-category} $\ca$ is a loose $4$-category such that each multimap $\circ_{x,y,z} \colon \ca(y,z),\ca(x,y) \rightsquigarrow \ca(x,z)$ is a strict Gray-functor in each variable.
\end{defn}

\begin{rmk}Moderate $4$-categories are equally categories enriched over the sub-multicategory of $\GCatmp^l$ containing loose multimaps which are strict in each variable.  This sub-multicategory is symmetric, but not closed on either side.
\end{rmk}

\begin{rmk}[\emph{The unpacked definition}]\label{unpacked}
Since moderate $4$-categories are a bit more manageable than loose ones, and since they include most of our examples, we spell out the data and axioms for a moderate 4-category in elementary terms.  
A \midfourcat\ $\ca$ consists of:
\begin{itemize}
	\item a collection $\ca_0$ of objects (also called $0$-cells);
	\item for any $x,y\in\ca_0$, a Gray-category $\ca(x,y)$ of (1/2/3/4)-cells;
	\item for any $x\in\ca_0$, a $1$-cell $1_x\in\ca(x,x)$;
	\item for any $x,y,z\in\ca_0$ a loose bimap $-\circ-\colon\ca(y,z),\ca(x,y)\rightsquigarrow \ca(x,z)$ which is strict in each variable, i.e. the data of
\begin{enumerate}
	\item For each 1-cell $a \in \ca(y,z)$, a Gray-functor $a \circ - \colon \ca(x,y) \to \ca(x,z)$;
	\item For each 1-cell $b \in \ca(x,y)$, a Gray-functor $- \circ b \colon \ca(y,z) \to \ca(x,z)$;
	\item For 2-cells $A\colon a\to a' \in \ca(y,z)$ and $B\colon b\to b' \in \ca(x,y)$, a $3$-cell 
	\[\begin{tikzcd}
		ab && {a'b} \\
		{ab'} && {a'b'}
		\arrow[""{name=0, anchor=center, inner sep=0}, "Ab", from=1-1, to=1-3]
		\arrow["aB"', from=1-1, to=2-1]
		\arrow["{a'B}", from=1-3, to=2-3]
		\arrow[""{name=1, anchor=center, inner sep=0}, "{Ab'}"', from=2-1, to=2-3]
		\arrow["{A_B}", between={0.2}{0.8}, Rightarrow, from=0, to=1]
	\end{tikzcd}\]
	called the \emph{2-interchanger} equipped with the structure of an adjoint equivalence $\aexi^A_B:=\,A_B^o\dashv A_B$;
	\item At $(A_2,A_1:B)$, 
		an invertible $4$-cell
\[\begin{tikzcd}[ampersand replacement=\&]
	ab \& {a'b} \& {a''b} \&\& ab \& {a'b} \& {a''b} \\
	{ab'} \& {a'b'} \& {a''b'} \&\& {ab'} \& {a'b'} \& {a''b'}
	\arrow["{A_1b}", from=1-1, to=1-2]
	\arrow["aB"', from=1-1, to=2-1]
	\arrow["{A_2b}", from=1-2, to=1-3]
	\arrow["{(A_2.A_1)_B}", Rightarrow, nfold, from=1-2, to=2-2]
	\arrow[""{name=0, anchor=center, inner sep=0}, "{a'B}", from=1-3, to=2-3]
	\arrow[""{name=1, anchor=center, inner sep=0}, "{A_1b}", from=1-5, to=1-6]
	\arrow[""{name=2, anchor=center, inner sep=0}, "aB"', from=1-5, to=2-5]
	\arrow[""{name=3, anchor=center, inner sep=0}, "{A_2b}", from=1-6, to=1-7]
	\arrow["{a'B}"{description}, from=1-6, to=2-6]
	\arrow["{a'B}", from=1-7, to=2-7]
	\arrow["{A_1b'}"', from=2-1, to=2-2]
	\arrow["{A_2b'}"', from=2-2, to=2-3]
	\arrow[""{name=4, anchor=center, inner sep=0}, "{A_1b'}"', from=2-5, to=2-6]
	\arrow[""{name=5, anchor=center, inner sep=0}, "{A_2b'}"', from=2-6, to=2-7]
	\arrow["{(A_2,A_1)_B}", between={0.4}{0.6}, Rightarrow, scaling nfold=3, from=0, to=2]
	\arrow["{(A_1)_B}"{description}, between={0.2}{0.8}, Rightarrow, nfold, from=1, to=4]
	\arrow["{(A_2)_B}"{description}, between={0.2}{0.8}, Rightarrow, nfold, from=3, to=5]
\end{tikzcd}\]
		
		\item At $(A:B_1,B_2)$, an invertible $4$-cell
		\[\begin{tikzcd}[ampersand replacement=\&]
			ab \&\& {a'b} \&\&\& ab \&\& {a'b} \\
			{ab'} \&\& {a'b'} \&\&\& {ab'} \&\& {a'b'} \\
			{ab''} \&\& {a'b''} \&\&\& {ab''} \&\& {a'b''}
			\arrow[""{name=0, anchor=center, inner sep=0}, "Ab", from=1-1, to=1-3]
			\arrow["{aB_1}"', from=1-1, to=2-1]
			\arrow["{a'B_1}", from=1-3, to=2-3]
			\arrow[""{name=1, anchor=center, inner sep=0}, "Ab", from=1-6, to=1-8]
			\arrow["{aB_1}"', from=1-6, to=2-6]
			\arrow["{a'B_1}", from=1-8, to=2-8]
			\arrow[""{name=2, anchor=center, inner sep=0}, "{Ab'}"{description}, from=2-1, to=2-3]
			\arrow["{aB_2}"', from=2-1, to=3-1]
			\arrow["{A_{(B_2,B_1)}}"{yshift=0.1cm}, between={0.3}{0.7}, Rightarrow, scaling nfold=3, from=2-3, to=2-6]
			\arrow["{a'B_2}", from=2-3, to=3-3]
			\arrow["{aB_2}"', from=2-6, to=3-6]
			\arrow["{a'B_2}", from=2-8, to=3-8]
			\arrow[""{name=3, anchor=center, inner sep=0}, "{Ab''}"', from=3-1, to=3-3]
			\arrow[""{name=4, anchor=center, inner sep=0}, "{Ab''}"', from=3-6, to=3-8]
			\arrow["{A_{B_1}}", between={0.3}{0.7}, Rightarrow, from=0, to=2]
			\arrow["{A_{B_2.B_1}}", shift right=4, between={0.4}{0.6}, Rightarrow, from=1, to=4]
			\arrow["{A_{B_2}}", between={0.3}{0.7}, Rightarrow, from=2, to=3]
		\end{tikzcd}\]
		
		\item At $(\alpha:B)$, an invertible $4$-cell
\[\begin{tikzcd}[ampersand replacement=\&]
	ab \&\& {a'b} \&\&\& ab \&\& {a'b} \\
	\\
	{ab'} \&\& {a'b'} \&\&\& {ab'} \&\& {a'b'}
	\arrow[""{name=0, anchor=center, inner sep=0}, "Ab", curve={height=-12pt}, from=1-1, to=1-3]
	\arrow[""{name=1, anchor=center, inner sep=0}, "{A'b}"', curve={height=12pt}, from=1-1, to=1-3]
	\arrow["aB"', from=1-1, to=3-1]
	\arrow[""{name=2, anchor=center, inner sep=0}, "{a'B}", from=1-3, to=3-3]
	\arrow[""{name=3, anchor=center, inner sep=0}, "Ab", curve={height=-12pt}, from=1-6, to=1-8]
	\arrow[""{name=4, anchor=center, inner sep=0}, "aB"', from=1-6, to=3-6]
	\arrow["{a'B}", from=1-8, to=3-8]
	\arrow[""{name=5, anchor=center, inner sep=0}, "{A'b'}"', curve={height=12pt}, from=3-1, to=3-3]
	\arrow[""{name=6, anchor=center, inner sep=0}, "{A'b'}"', curve={height=12pt}, from=3-6, to=3-8]
	\arrow[""{name=7, anchor=center, inner sep=0}, "{Ab'}", curve={height=-12pt}, from=3-6, to=3-8]
	\arrow["{\alpha b}", between={0.2}{0.8}, Rightarrow, nfold, from=0, to=1]
	\arrow["{A'_B}", between={0.3}{0.7}, Rightarrow, nfold, from=1, to=5]
	\arrow["{\alpha_B}", between={0.3}{0.7}, Rightarrow, scaling nfold=3, from=2, to=4]
	\arrow["{A_B}", between={0.3}{0.7}, Rightarrow, nfold, from=3, to=7]
	\arrow["{\alpha b'}", between={0.2}{0.8}, Rightarrow, nfold, from=7, to=6]
\end{tikzcd}\]
		
		\item At $(A:\beta)$, an invertible $4$-cell
\[\begin{tikzcd}[ampersand replacement=\&]
	ab \&\& {a'b} \&\&\& ab \&\& {a'b} \\
	\\
	{ab'} \&\& {a'b'} \&\&\& {ab'} \&\& {a'b'}
	\arrow[""{name=0, anchor=center, inner sep=0}, "Ab", from=1-1, to=1-3]
	\arrow[""{name=1, anchor=center, inner sep=0}, "{aB'}"', curve={height=18pt}, from=1-1, to=3-1]
	\arrow[""{name=2, anchor=center, inner sep=0}, "aB", curve={height=-18pt}, from=1-1, to=3-1]
	\arrow[""{name=3, anchor=center, inner sep=0}, "{a'B}"{description}, curve={height=-18pt}, from=1-3, to=3-3]
	\arrow[""{name=4, anchor=center, inner sep=0}, "Ab", from=1-6, to=1-8]
	\arrow[""{name=5, anchor=center, inner sep=0}, "{aB'}"{description}, curve={height=18pt}, from=1-6, to=3-6]
	\arrow[""{name=6, anchor=center, inner sep=0}, "{a'B}", curve={height=-18pt}, from=1-8, to=3-8]
	\arrow[""{name=7, anchor=center, inner sep=0}, "{a'B'}"', curve={height=18pt}, from=1-8, to=3-8]
	\arrow[""{name=8, anchor=center, inner sep=0}, "{Ab'}"', from=3-1, to=3-3]
	\arrow[""{name=9, anchor=center, inner sep=0}, "{Ab'}"', from=3-6, to=3-8]
	\arrow["{a\beta}"', between={0.3}{0.7}, Rightarrow, nfold, from=2, to=1]
	\arrow["{A_B}", shift left=5, between={0.3}{0.7}, Rightarrow, nfold, from=0, to=8]
	\arrow["{A_{\beta}}", between={0.3}{0.7}, Rightarrow, scaling nfold=3, from=3, to=5]
	\arrow["{A_{B'}}"', shift right=5, between={0.3}{0.7}, Rightarrow, nfold, from=4, to=9]
	\arrow["{a'\beta}"', between={0.3}{0.7}, Rightarrow, nfold, from=6, to=7]
\end{tikzcd}\]
		
	\end{enumerate}
\end{itemize}
This data needs to satisfy the axioms in \Cref{app:axioms-4-cats}. 
\end{rmk}

We are now in a position to define the remaining flavours of semi-strict $4$-category.

\begin{defn}\label{variations}
Let $\ca$ be a moderate $4$-category.  We say that $\ca$ is:
\begin{itemize}
\item a \emph{left-sharp} $4$-category  if each $4$-cell $(A_2,A_1)_B$ is the identity \black and we have the equality of adjoint equivalences $\aexi^{A_2A_1}_B = (1 \circ \aexi^{A_1}_B) \cdot (\aexi^{A_2}_B \circ 1)$;\black
\item  a \emph{right-sharp} $4$-category if each $4$-cell $A_{(B_2,B_1)}$ is the identity \black and we have the equality of adjoint equivalences $\aexi^{A}_{B_2B_1} = (\aexi^{A}_{B_2} \circ 1)\cdot (1 \circ \aexi^{A}_{B_1})$;\black
\item a \emph{sharp} $4$-category if it is left-sharp and right-sharp;
\item a \emph{4-tas} \cite{Crans2000On} if it is sharp and each adjoint equivalence $\aexi^A_B$ is an isomorphism.
\item a \emph{strict} $4$-category if is a $4$-tas such that each of $A_B$, $\alpha_B$ and $A_\beta$ is the identity.
\end{itemize}
\end{defn}

\begin{rmk}[\emph{A word on the axioms}]
Apart from the relatively trivial ones --- concerning identities and asserting that whiskering by $1$-cells is functorial and associative --- there are $15 = (6 \times 2) + 3$ further axioms for a moderate $4$-category.  These are the $15$ axioms depicted using 2-categorical pasting diagrams in \Cref{app:axioms-4-cats}.  

Of these, there are $6$ pairs of dual axioms, so ignoring duals, there are 9 substantial axioms.  None of these are particularly complex, with only Axiom \eqref{ax:4-cat_A_1,A_2:B_1,B_2} containing a composite of length greater than $3$ (it has length $5$).  

The situation simplifies further if we restrict our attention to some the stricter flavours described in \Cref{variations}.  For instance, in a left-sharp $4$-category, the invertible $4$-cells $(A_1,A_1)_B$ and $A_{(B_2,B_1)}$ are replaced by identity $4$-cells (thus are replaced by a pair of equations).  In this case, the two dual axioms \eqref{ax:4-cat_A_1,A_2,A_3:B} and \eqref{ax:4-cat_A:B_1,B_2,B_3} are  then redundant, whilst many further axioms simplify considerably: for instance, Axiom \eqref{ax:4-cat_A_1,A_2:B_1,B_2} reduces from involving a composite of length $5$ to a ``composite" of length $1$.
\end{rmk}


The following diagram depicts the relationships between the non-strict flavours of semi-strict $4$-categories.

\[\begin{tikzcd}[ampersand replacement=\&]
	\&\&\& {\textnormal{Left-sharp}} \&\&\& \\
	{\textnormal{4-teisi}} \&\& {\textnormal{Sharp}} \&\& {\textnormal{Moderate}} \&\& {\textnormal{Loose}} \\
	\&\&\& {\textnormal{Right-sharp}}
	\arrow[hook, from=1-4, to=2-5]
	\arrow[hook, from=2-1, to=2-3]
	\arrow[hook, from=2-3, to=1-4]
	\arrow[hook, from=2-3, to=3-4]
	\arrow[hook, from=2-5, to=2-7]
	\arrow[hook, from=3-4, to=2-5]
\end{tikzcd}\]

We now investigate the different flavours in more detail.

\subsubsection{Moderate $4$-categories}
We begin by describing several examples of moderate $4$-categories.
\begin{eg}[\emph{Gray-categories}]
A Gray-category is precisely a moderate $4$-category in which all $4$-cells are identities.  
On the other hand loose $4$-categories in which all $4$-cells are identities are more general than Gray-categories, since their whiskering operations  $a \circ - \colon \ca(x,y) \rightsquigarrow \ca(x,z)$ and $- \circ b \colon \ca(y,z) \rightsquigarrow \ca(x,z)$ by $1$-cells $a$ and $b$ are normal pseudo-functors in the sense of $2$-category theory (see \Cref{normalpseudo}) rather than strict $2$-functors.
\end{eg}

\begin{eg}[\emph{Braided monoidal 2-categories}]\label{braided}

A definition of (semi-strict) braided monoidal 2-category was first given by Kapranov and Voevodsky \cite{Kapranov}, improved by Baez and Neuchl \cite{Baez1996}, Day and Street \cite{Day} and refined to the form we will consider by Crans \cite{Crans1998Generalized}, who added further degeneracy conditions.  Gurski \cite{Gurski2011} proved that each braided monoidal bicategory is equivalent to a braided monoidal 2-category.

We now show that moderate $4$-categories with a single $0$-cell and $1$-cell are precisely the same as braided monoidal $2$-categories, in Crans' sense \cite{Crans1998Generalized}.  \black This correspondence was inspired by \cite[Example 3.1.22]{MirandaThesis} and perfects that result --- see \Cref{mirandacomparison} for further details.\black

To get started, we start by observing that the data for such a moderate $4$-category $\ca$ starts with a single object $\bullet$ together with a $1$-object Gray-category $\cb = \ca(\bullet,\bullet)$.  

The $1$-object Gray-category equally specifies a \emph{Gray-monoid} $(\cc,.,I)$, where $\cc = \cb(1_\bullet,1_\bullet)$ is the $2$-category, the object $I$ is the identity $2$-cell on $1_\bullet$, and the tensor product $2$-functor $. \colon \cc \otimes_p \cc \to \cc$ with domain the Gray-tensor product encodes vertical composition $A.B$ of $2$-cells in $\ca$.

The remaining data for the moderate $4$-category $\ca$ are the seven pieces of data (1-7) for the composition bimap $\circ \colon \ca(\bullet,\bullet),  \ca(\bullet,\bullet) \rightsquigarrow \ca(\bullet,\bullet)$ in \Cref{unpacked}.  (1-2) provide whiskering Gray-functors by $1$-cells in $\ca$.  However, since all $1$-cells are identities, the axioms for \emph{unitality of the composition bimap} in a moderate $4$-category (\Cref{bimapunit}) ensure that these are the identity Gray-functors, and so redundant.

As such we are left with the five pieces of data $A_B$, $A_\beta$, $\alpha_B$, $A_{(B_2,B_1)}, (A_1,A_2)_B$.  Most fundamental is the adjoint equivalence $3$-cell

\[\begin{tikzcd}
		ab && {a'b} \\
		{ab'} && {a'b'}
		\arrow[""{name=0, anchor=center, inner sep=0}, "Ab", from=1-1, to=1-3]
		\arrow["aB"', from=1-1, to=2-1]
		\arrow["{a'B}", from=1-3, to=2-3]
		\arrow[""{name=1, anchor=center, inner sep=0}, "{Ab'}"', from=2-1, to=2-3]
		\arrow["{A_B}", between={0.2}{0.8}, Rightarrow, from=0, to=1]
	\end{tikzcd}\]
which specifies an adjoint equivalence $1$-cell $$A_B \colon B.A \to A.B$$ in $\cc$.  The remaining four pieces of data take the form of invertible $2$-cells

\[\begin{tikzcd}[ampersand replacement=\&]
	{B.A} \& {A.B} \&\& {B.A} \& {A.B} \\
	{B.A'} \& {A'.B} \&\& {B'.A} \& {A.B'}
	\arrow["{A_B}", from=1-1, to=1-2]
	\arrow[""{name=0, anchor=center, inner sep=0}, "{1.\alpha}"', from=1-1, to=2-1]
	\arrow[""{name=1, anchor=center, inner sep=0}, "{\alpha.1}", from=1-2, to=2-2]
	\arrow["{A_B}", from=1-4, to=1-5]
	\arrow[""{name=2, anchor=center, inner sep=0}, "{\beta.1}"', from=1-4, to=2-4]
	\arrow[""{name=3, anchor=center, inner sep=0}, "{1.\beta}", from=1-5, to=2-5]
	\arrow["{A'_B}"', from=2-1, to=2-2]
	\arrow["{A'_B}"', from=2-4, to=2-5]
	\arrow["{\alpha_B}", between={0.2}{0.8}, Rightarrow, nfold, from=0, to=1]
	\arrow["{A_\beta}"', between={0.2}{0.8}, Rightarrow, nfold, from=3, to=2]
\end{tikzcd}\]

and 

\[\begin{tikzcd}[ampersand replacement=\&]
	{B.A_2.A_1} \&\& {A_2.A_1.B} \&\& {B_2.A.B_1} \& \\
	\& {A_2.B.A_1} \&\& {B_2.B_1.A} \&\& {A.B_2.B_1}
	\arrow[""{name=0, anchor=center, inner sep=0}, "{(A_2.A_1)_B}", from=1-1, to=1-3]
	\arrow["{(A_2)_B.1}"'{pos=0.3}, from=1-1, to=2-2]
	\arrow["{A_{B_2}.1}", from=1-5, to=2-6]
	\arrow["{1.(A_1)_B}"', from=2-2, to=1-3]
	\arrow["{1.A_{B_1}}", from=2-4, to=1-5]
	\arrow[""{name=1, anchor=center, inner sep=0}, "{A_{B_2.B_1}}"', from=2-4, to=2-6]
	\arrow["{(A_2,A_1)_B}", shift right=2, between={0.2}{1}, Rightarrow, nfold, from=0, to=2-2]
	\arrow["{A_{(B_2,B_1)}}"', shift left=2, between={0}{0.8}, Rightarrow, nfold, from=1-5, to=1]
\end{tikzcd}\]

in the $2$-category $\cc$.  

On the other hand a braided monoidal $2$-category structure on the Gray-monoid $(\cc,.,I)$ begins with a pseudo-natural adjointt equivalence $R$\begin{footnote}{In fact Crans \cite{Crans1998Generalized} requires $R$ to be a pseudonatural equivalence, without explicitly mentioning adjointness.  Later authors' \cite{Gurski2011, SchommerPries} explicitly include the adjointness here as it makes the definition of braided monoidal 2-category \emph{algebraic} and we shall do the same.}\end{footnote} as below
\[\begin{tikzcd}[ampersand replacement=\&]
	{\cc \otimes_p \cc} \&\& \cc \\
	\& {\cc \otimes_p \cc}
	\arrow[""{name=0, anchor=center, inner sep=0}, "{.}", from=1-1, to=1-3]
	\arrow["s"', from=1-1, to=2-2]
	\arrow["{.}"', from=2-2, to=1-3]
	\arrow["R"', between={0.2}{1}, Rightarrow, nfold, from=0, to=2-2]
\end{tikzcd}\]
where $s$ is the symmetry isomorphism for the Gray-tensor product.

On objects, this amounts to adjoint equivalences $R_{A,B} \colon A.B \to B.A$ and, accordingly,  we set $R_{A,B} := B_A \colon A.B \to B.A$ to begin the comparison. The additional data for the pseudonatural adjoint equivalence $R$ is specified by the invertible $3$-cells $\alpha_B$ and $A_\beta$ as above.

The remaining data for a braided monoidal $2$-category is specified by two invertible modifications $\tilde{R}_{(-|-,-)}$ and $\tilde{R}_{(-,-|-)}$ whose components $\tilde{R}_{(B|A_2,A_1)}$ and  $\tilde{R}_{(B_2, B_1|A)}$ are exactly as our two final invertible $2$-cells above with the exception that $\tilde{R}_{(B_2, B_1|A)}$ is oriented in the opposite direction to our $A_{(B_2,B_1)})$.   As such, we set $\tilde{R}_{(B|A_2,A_1)} := {(A_2,A_1)_B}$ and $\tilde{R}_{(B_2, B_1|A)}:= (A_{(B_2,B_1)})^{-1}$.

In summary, we have a bijection between the data for a moderate $4$-category $\ca$ with a single $0$-cell and $1$-cell (with unitality of $\circ$ already imposed) and that of a braided monoidal $2$-category $\cc$.  Finally we compare the axioms.

To begin with, Crans requires that $R$ is pseudonatural.  Pseudonaturality of $R$ in each variable gives three axioms each --- our bimap axioms' \eqref{ax:4-cat_A:Theta}, \eqref{ax:4-cat_A:beta_1,beta_2}, \eqref{ax:bin_D_1a:B} in the first and the dual ones \eqref{ax:4-cat_Lambda:B}, \eqref{ax:4-cat_alpha_1,alpha_2:B}, \eqref{ax:bin_D_A:1b}  in the second --- plus an additional naturality equation at the interchanger $2$-cell in $\cc \otimes_p \cc$, amounting to our Axiom \eqref{ax:4-cat_alpha:beta}.

Crans further requires that $\tilde{R}_{(-|-,-)}$ and $\tilde{R}_{(-,-|-)}$ are modifications (in each variable) corresponding to six conditions, one for each position: namely our bimap axioms \eqref{ax:4-cat_alpha,A_2:B}, \eqref{ax:4-cat_A_1,alpha:B}, \eqref{ax:4-cat_A_1,A_2:beta}, \eqref{ax:4-cat_alpha:B_1,B_2}, \eqref{ax:4-cat_A:beta,B_2} and \eqref{ax:4-cat_A:B_1,beta}.

Crans has four further (non-degeneracy) axioms which are the most complex.  The first three, drawn in Diagrams 2.4, 2.5 and 2.6 of \cite{Crans1998Generalized} correspond to our bimap axioms \eqref{ax:4-cat_A_1,A_2,A_3:B}, \eqref{ax:4-cat_A:B_1,B_2,B_3} and \eqref{ax:4-cat_A_1,A_2:B_1,B_2} whilst the fourth one, Diagram 2.7 of \cite{Crans1998Generalized}, corresponds to our incubator axiom \eqref{ax:4-cat_A_B_C}.

Crans also imposes 8 degeneracy axioms.  The first two state that the pseudonatural transformations $R_{I,-}$ and $R_{-,I}$ are the identity.  Since a pseudonatural transformation involves two pieces of data, this yields four equations.  These are four of our degeneracy equations for a bimap --- namely \eqref{ax:bin_D_1a:B}, \eqref{ax:bin_D_1a:bet}, \eqref{ax:bin_D_A:1b} and \eqref{ax:bin_D_alph:1b} --- of which the first and third were covered already.  Crans' other six degeneracy axioms state that the modifications $\tilde{R}_{(-|-,-)}$ and $\tilde{R}_{(-,-|-)}$ have identity component when $I$ is plugged into a variable.  These correspond to our final bimap degeneracy axioms \eqref{ax:bin_D_A1A2:B}, 
\eqref{ax:bin_D_A:B1B2} (which are two equations each), \eqref{ax:bin_D_A1A2:1b} and \eqref{ax:bin_D_1a:B1B2}.  

This covers all of Crans' axioms from \cite{Crans1998Generalized}.  On the side of moderate $4$-categories, we still have the associativity axioms of \Cref{bimapassociativity} which concern associativity of whiskering by $1$-cells, but since all $1$-cells are identities and we have assumed that the composition bimap is unital, these are redundant.  This completes the correspondence.

\end{eg}

\black
\begin{rmk}[Comparison with the Gray-4-categories of Fukuda and Hu]\label{fukudacomparison}
Fukuda and Hu \cite{Fukuda} recently introduced their notion of a Gray-4-category, motivated by the notions of 3-crossed module and Gray-3-groups.   Their \emph{Gray 4-categories} are precisely moderate 4-categories in which the adjoint equivalence $3$-cell $\aexi^A_B = A^o_B \dashv A_B$ is an isomorphism.\begin{footnote}{We note that their degeneracy axiom 2.46 does not seem to parse.  In order to make the comparison precise, our assumption is that their 2.46 is intended to capture our axioms \eqref{ax:bin_D_1a:B}, \eqref{ax:bin_D_1a:bet}, \eqref{ax:bin_D_A:1b} and \eqref{ax:bin_D_alph:1b}.  In addition, they have different conventions concerning the orientation of several of the invertible structural 4-cells, which lead to axioms that appear slightly different, but are equivalent. }
\end{footnote}
 In Lemma 3.4 of \cite{Fukuda}, they state that doubly degenerate Gray-4-categories coincide with braided monoidal 2-categories.  In fact, they correspond to those braided monoidal 2-categories for which $A_B$ is an isomorphism.
\black
\end{rmk}
\black

To our knowledge, monoidal Gray-categories have not previously been defined within the literature.  However, having seen above that braided monoidal $2$-categories are moderate $4$-categories with a single $0$-cell and $1$-cell, it is natural to make the following definition.

\begin{defn}
A monoidal Gray-category is a moderate $4$-category with one object.
\end{defn}

We now spell out what the concept means in more elementary terms.  A monoidal Gray-category $(\ca,\otimes,i)$ amounts to:
\begin{itemize} 
\item a Gray-category $\ca$;
\item a binary operation $\otimes$ on objects (denoted by juxtaposition $a \otimes b := ab$) and a unit object $i$ which are associative and unital;
\item extensions of these object assignments to Gray-functors $a-\colon \ca \to \ca$ and $-b \colon \ca \to \ca$ for each $a,b \in \ca$;
\item a $2$-cell $A_B$
	\[\begin{tikzcd}
		ab && {a'b} \\
		{ab'} && {a'b'}
		\arrow[""{name=0, anchor=center, inner sep=0}, "Ab", from=1-1, to=1-3]
		\arrow["aB"', from=1-1, to=2-1]
		\arrow["{a'B}", from=1-3, to=2-3]
		\arrow[""{name=1, anchor=center, inner sep=0}, "{Ab'}"', from=2-1, to=2-3]
		\arrow["{A_B}", between={0.2}{0.8}, Rightarrow, from=0, to=1]
	\end{tikzcd}\]
\black equipped with the structure of an adjoint equivalence $\aexi^A_B = A^o_B \dashv A_B$ as in Item 3 of \Cref{unpacked} (now a $2$-cell rather than a $3$-cell) plus the remaining data in Items' 4-7 \Cref{unpacked} (again with cell dimension decreased by $1$), all satisfying the same axioms.
\end{itemize}

\begin{eg}
Since braided monoidal $2$-categories are the moderate $4$-categories with a single object and $1$-cell, braided monoidal $2$-categories are precisely the monoidal Gray-categories with a single object.
\end{eg}

In \Cref{monoidalGray} and \Cref{centre} we will give examples of monoidal Gray-categories and braided monoidal $2$-categories arising from left-sharp $4$-categories.

\subsubsection{Left-sharp $4$-categories}

These are particularly important since Gray-categories, Gray-functors, pseudo-transformations and their higher transfors form a left-sharp $4$-category.  They are also well-behaved thanks to the following proposition.
\begin{prop}
Left-sharp $4$-categories are precisely:
\begin{enumerate}
\item categories enriched over the multicategory $\GCatmpshl$ of left-sharp multimaps;
\item categories enriched over the left-sharp skew monoidal structure $(\GCat,\leftsharp,\mathbf{1})$.
\end{enumerate}
\end{prop}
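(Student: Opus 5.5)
Part (1) is a direct comparison of definitions, and part (2) then follows from (1) together with the general correspondence between skew monoidal categories and their associated multicategories.

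For (1), a category enriched over $\GCatmpshl$ consists of hom Gray-categories $\ca(x,y)$, left-sharp binary maps $\circ_{x,y,z} \colon \ca(y,z),\ca(x,y) \to \ca(x,z)$ and nullary maps $1_x \colon \diamond \to \ca(x,x)$, subject to associativity and unit equations. Since $\GCatmpshl \hookrightarrow \GCatmp^l$ is a wide sub-multicategory, these equations are the same ones that hold in $\GCatmp^l$. Such a structure is therefore a loose $4$-category whose composition bimaps happen to be left-sharp.

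By the description of left-sharp binary maps in \Cref{sect:sharpmulti}, a loose bimap is left-sharp precisely when three things hold:
\begin{itemize}
\item it is strict in each variable;
\item the $4$-cells $F^{A_2,A_1}_B$, which are exactly the cells $(A_2,A_1)_B$ of \Cref{unpacked}, are identities;
\item the adjoint equivalences satisfy $\aexi^{A_2A_1}_B = (1 \circ \aexi^{A_1}_B)\cdot(\aexi^{A_2}_B \circ 1)$.
\end{itemize}
The first condition makes the loose $4$-category moderate. The second and third are exactly the clauses defining a left-sharp $4$-category in \Cref{variations}. Nullary maps impose nothing, since all of them are left-sharp. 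Enriched functors match as well, since unary left-sharp maps are Gray-functors, although the statement only concerns objects.

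The one step needing care is bookkeeping: I must check that the naming and orientation conventions for $(A_2,A_1)_B$ and for the composite adjoint equivalence in \Cref{unpacked} agree with those in \Cref{sect:sharpmulti}. In particular, the order of composition in $(1 \circ \aexi^{A_1}_B)\cdot(\aexi^{A_2}_B \circ 1)$ must match the displayed diagram. Given the matching notation, I expect this to be routine.

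For (2), I will use that $\GCatmpshl$ is the multicategory associated to the skew monoidal category $(\GCat,\leftsharp,\mathbf{1})$. By the general theory of \cite{BourkeLack:skew}, categories enriched in a skew monoidal category coincide with categories enriched in its associated skew multicategory. Under this correspondence the composition maps are tight binary maps $\ca(y,z),\ca(x,y) \to \ca(x,z)$, which correspond to morphisms $\ca(y,z)\leftsharp\ca(x,y) \to \ca(x,z)$, and the units are tight nullary maps $\mathbf{1} \to \ca(x,x)$.

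This is where the main obstacle lies: enrichment in $\GCatmpshl$ uses loose multimaps, whereas the skew monoidal enrichment uses tight ones. I will resolve it using left normality of $(\GCat,\leftsharp,\mathbf{1})$. For a left-normal skew monoidal category, every loose multimap is tight, because looseness only concerns the first input, and left normality identifies loose maps with tight maps via $\mathbf{1}\leftsharp a \cong a$. The tight and loose multicategories therefore coincide, and (2) follows from (1).

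If this general fact is not quotable, I will instead argue directly at low arity. Morphisms $a\leftsharp b \to c$ correspond to left-sharp binary maps, since $\leftsharp$ classifies them by the description of $\GCatmpshl$ as the associated multicategory in \cite{BourkeLobbia}. Morphisms $\mathbf{1}\to\ca(x,x)$ correspond to objects of $\ca(x,x)$, which are the nullary maps. The skew associator and unitors then translate the associativity and unit equations, as in the comparison for representable multicategories recalled in \Cref{representable}.
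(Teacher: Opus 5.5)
Your proposal is correct and follows the paper's argument. For (1), the paper likewise observes that $\GCatmpshl \hookrightarrow \GCatmp^l$ is a sub-multicategory containing all nullary maps, so the only extra requirement is that the composition bimaps be left-sharp, which unpacks to the defining clauses of a left-sharp $4$-category. For (2), it likewise appeals to the correspondence between enrichment in the representing skew monoidal structure $(\GCat,\leftsharp,\mathbf{1})$ and enrichment in $\GCatmpshl$; your use of its left normality to identify loose and tight multimaps of positive arity makes explicit a step the paper leaves implicit.
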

\begin{proof}
For (1), by definition a loose $4$-category $\ca$ is a category enriched in the multicategory $\GCatmp^l$.  Since the inclusion multifunctor $\GCatmpshl \hookrightarrow \GCatmp^l$ is full on nullary arrows, to say that $\ca$ is enriched in $\GCatmp^l$ is equally to say that the composition bimaps $-\circ -\colon \ca(y,z),\ca(x,y) \to \ca(x,z) \in \GCatmp^l$ belong to $\GCatmpshl$: i.e. are left-sharp.  By the description of left-sharp bimaps in \Cref{sect:sharpmulti}, this is precisely to say that $\ca$ is a left-sharp $4$-category.

For (2), since the multicategory $\GCatmpshl$ is represented by the left-sharp skew monoidal structure $(\GCat,\leftsharp,\mathbf{1})$, categories enriched over the multicategory $\GCatmpshl$ coincide with categories enriched over the representing skew monoidal structure $(\GCat,\leftsharp,\mathbf{1})$.
\end{proof}

\begin{term}
One justification for the terminology \emph{left-sharp} vs \emph{right-sharp} is that left-sharp $4$-categories are categories enriched in the left skew monoidal structure $(\GCat,\leftsharp,\mathbf{1})$ whereas right-sharp $4$-categories are categories enriched in a right skew monoidal structure (see \Cref{sect:rightsharp} below).
\end{term}

\begin{eg}[\emph{The left-sharp $4$-category $\GCat^4$ of Gray-categories and Gray-functors}]
An example of fundamental importance is the left-sharp $4$-category $\GCat^4$ since it answers the question of what Gray-categories and their higher transfors form.

$\GCat^4$ arises as follows.  Since the multicategory  $\GCatmpshl$ is left-closed, by Lemma \ref{basicenrichment}, it gives rise to a $\GCatmpshl$-enriched category (that is, a left-sharp $4$-category) $\GCat^4$ whose objects are Gray-categories and with homs $\psd_s(\ca,\cb)$: that is, its $1$-cells are strict Gray-functors, $2$-cells are pseudo-transformations, $3$-cells are pseudo-modifications and $4$-cells are perturbations.  

The inclusion $\GCat^4 \hookrightarrow \GCat_p^4$ is a strict $4$-functor (which follows formally using that $\GCatmpshl \hookrightarrow \GCatmp$ is a multifunctor).
\end{eg}

\begin{rmk}
$\GCat^4$ is left-sharp but it is \emph{not} right-sharp!
\end{rmk}

\begin{eg}[\emph{Gray-categories with structure}]
Gray-categories with additional structure can sometimes be assembled into left-sharp $4$-categories, living over $\GCat^4$.  One simple class of examples is as follows.  Since $\Gray$ is a symmetric monoidal closed category, we can consider Gray-weighted limits, which include simple examples like products but also more exotic ones, such as algebra objects for pseudomonads, as considered in \cite{Lack2000ACoherent}.  Given a class $\Phi$ of weights, there is a left-sharp $4$-category $\Phi$-\textbf{Lim} of Gray-categories with $\Phi$-limits, $\Phi$-limit preserving functors, pseudo-transformations, pseudo-modifications and perturbations.   This comes equipped with a strict forgetful $4$-functor $U \colon  \Phi\textnormal{-}\textbf{Lim} \to \GCat^4$.
\end{eg}

\begin{eg}[\emph{Monoidal Gray-categories of endofunctors}]\label{monoidalGray}
For each Gray-category $\ca$, the Gray-category $\psd_s(\ca,\ca)$ of Gray-functors and pseudo-transformations is an example of a monoidal Gray-category.  Indeed, since $\GCat^4$ is a left-sharp (and so moderate) 4-category, restricting it to a moderate $4$-category with just one object $\ca$, produces the example at hand since $\GCat^4(\ca,\ca) := \psd_s(\ca,\ca)$.  

Note that  $\psd_s(\ca,\ca)$ is a \emph{left-sharp} monoidal Gray-category since $\GCat^4$ is a left-sharp $4$-category.  
\end{eg}

\begin{eg}[\emph{The centre of a Gray-monoid}]\label{centre}
Let $(\cc, .,i)$ be a Gray-monoid, viewed as a $1$-object Gray-category $\Sigma \cc$.  By the preceding example, $\psd_s(\Sigma \cc,\Sigma \cc)$ is a monoidal Gray-category, whence $\psd_s(\Sigma \cc,\Sigma \cc)(Id,Id)$ is a braided monoidal $2$-category.  Indeed, the braided monoidal $2$-category $\mathcal{Z}(\cc) = \psd_s(\Sigma \cc,\Sigma \cc)(Id,Id)$ is the \emph{centre} of $\cc$ as described in Theorem 3.1 of \cite{Crans1998Generalized} in elementary terms. This observation was inspired by Remark 3.1.23 of Miranda \cite{MirandaThesis} and again perfects that result --- see \Cref{mirandacomparison} for further details.\black

\end{eg}

Based upon the good homotopical properties of the left-sharp skew monoidal structure $(\GCat,\leftsharp,\mathbf{1})$ established in \Cref{sect:homotopycat}, we expect that the following is true.

\begin{conjecture}
Each weak $4$-category is equivalent to a left-sharp $4$-category.
\end{conjecture}

Some first evidence for the conjecture is given in \Cref{strictification}.  \black A stronger conjecture, also plausible, would be that each weak is equivalent to a left-sharp $4$-category, or even a $4$-tas. \black  Note that even if this were true, left-sharp $4$-categories are still important, since the left-sharp $\GCat^4$ is where the representables of a sharp $4$-category (or $4$-tas) take their values.


\subsubsection{Right-sharp $4$-categories}\label{sect:rightsharp}

We mention these only briefly, since they are dual to the left-sharp case.  A few relevant facts are the following.
\begin{enumerate}
\item A loose $4$-category $\ca$ is left-sharp if and only if its opposite $\ca^{op}$ is right-sharp.  So, for instance, $(\GCat^4)^{op}$ is right-sharp.
\item Consistent with our notation, right-sharp $4$-categories are categories enriched in a \emph{right skew monoidal category}.  Namely, if $(\cc,\otimes,I)$ is a left skew monoidal category, then we can form the \emph{reverse} right skew monoidal category $(\cc,\otimes,i)^{rev}$. This has the same underlying category $\cc$ and unit $i$ but tensor product given by $a \otimes_{rev} b := b \otimes a$ (see Section 2 of \cite{Szlachanyi}).  Now a right left-sharp $4$-category is precisely a category enriched in $(\GCat,\leftsharp,\mathbf{1})^{rev}$, which we call the \emph{right-sharp skew monoidal structure}.
\item Right-sharp $4$-categories are likewise categories enriched in the multicategory $\GCatmpshr$ of \emph{right-sharp multimaps}: by definition a loose multimap $F \colon \ca_1,\ldots,\ca_n \rightsquigarrow \cb$ is right-sharp if $rev_n.F \colon \ca_n,\ldots,\ca_1 \rightsquigarrow \cb$ is left-sharp, where $rev_n \in S_n$ is the permutation which reverse the order of elements. (Since $\GCatmp^l$ is a symmetric multicategory, it follows from the symmetry axioms that right-sharp multimaps are closed under substitution and hence that  $\GCatmpshr \hookrightarrow  \GCatmp^{l}$ is indeed a sub-multicategory.). In elementary terms, a loose binary map $F \colon \ca,\cb \rightsquigarrow \cc$ is right-sharp if it satisfies the dual axioms to those for a left-sharp one: that is, if the invertible $3$-cells $F^A_{B_2,B_1}$ are identities and we have the equality of adjoint equivalences $\aexi^{A}_{B_2B_1} = (\aexi^{A}_{B_2} \circ 1)\cdot (1 \circ \aexi^{A}_{B_1})$.
\end{enumerate}

\subsubsection{Sharp $4$-categories}

Apart from Gray-categories (which can be viewed as sharp $4$-categories whose $4$-cells are identities), none of the significant examples we consider are sharp.  Ultimately, this comes from the fact that our guiding example, $\GCat^4$ is left-sharp but not sharp.  

Nonetheless, we do expect examples to emerge.  In particular, the first-named author is involved in work with Carqueville and Weber to show that $4$-dimensional defect TQFTs give rise to sharp $4$-categories, extending the construction of a Gray-category from a $3$-dimensional defect TQFT described in \cite{Carqueville}.


 Sharp $4$-categories are categories enriched in the multicategory  $\GCatmpsh$ of sharp multimaps, which is the intersection $\GCatmpshl \cap \GCatmpshr$ of the sub-multicategories $\GCatmpshl, \GCatmpshr \hookrightarrow \GCatmp$ of left-sharp and right-sharp multimaps.  The multicategory of sharp multimaps is not closed on either side nor is it symmetric.

\subsubsection{$4$-teisi}

In \cite[Section 2.2]{Crans2000On}, Crans gave an elementary definition of a structure called a $4$-tas (plural $4$-teisi).  Though his efficient but terse $1$-dimensional algebraic notation is quite different at first appearance, $4$-teisi are \emph{exactly} sharp $4$-categories for which the adjoint equivalence $3$-cells $\aexi^A_B$ are isomorphisms. Here we summarise the correspondence.
\begin{itemize}
\item Crans' basic setup involves a $4$-dimensional reflexive globular set, with three kinds of functions --- vertical composition, whiskering and \emph{horizontal composition} --- on cells, each of which is denoted by $- \#_n -$ for some $i$.\begin{footnote}{We note that the whiskering function $\#_n$ for whiskering $i$-cells by $(n+1)$-cells should have $i \leq 4$, not $i < 4$ as claimed in \cite{Crans2000On}.  This is because whiskering of $4$-cells needs to be defined, for instance in order that whiskering by $1$-cells is a well-defined Gray-functor as claimed in Axiom (iii).}\end{footnote}
\item Vertical composition and whiskering have the usual meaning.  The \emph{horizontal composition} operations are the ``dimension raising ones": that is, they specify exactly our cells $A \#_0 B = A_B$, $A \#_0 \beta = A_\beta$, $\alpha \#_0 B = \alpha_B$ and the local interchanger $\alpha \#_1 \beta$ for the hom Gray-categories.
\item To begin, his axioms (vi.a-c) specify that the cells $A_B$, $A_\beta$, and $\alpha_B$ are invertible and have the correct source and target, matching those described in \Cref{unpacked}.
\item Returning to the start, his first axioms (i-iv) express that the structure so defined is a reflexive $4$-d globular set whose homs are Gray-categories and whose whiskering operations are Gray-functors.  
\item Axioms (vii), (viii) and (ix) are precisely the (non-degeneracy) bimap axioms (\Cref{bimapaxioms}) for a sharp $4$-category.  (Which axioms corresponds to which is clear from the labelling.). There are a couple of comments to make here.
\begin{itemize}
\item Firstly, the equalities in Axiom (viii.a-b) express that the $4$-cells $A_{(B_1,B_2)}$ and $(A_1,A_2)_B$ are identities, as required in a sharp $4$-category.  Note also that the two Axioms' \eqref{ax:4-cat_A_1,A_2,A_3:B} and \eqref{ax:4-cat_A:B_1,B_2,B_3} are also redundant for a sharp $4$-category.
\item Crans' Axiom (vii.a) does not parse as written, since, on each side of the equation, it whiskers a $4$-cell by a $3$-cell on the wrong side (using $\#_2$).  Fixing this, the correct form is
$$[(\delta \#_0 \gamma') \#_2(\,(g' \#_0 \Gamma) \#_1 (\delta \#_0 f)\,)] \#_3 (\delta \#_0 \varphi) =  \newline (\delta \#_0 \varphi')\#_3 [(\,(\delta \#_0 f') \#_1(g \#_0 \Gamma)\,) \#_2 (\delta \#_0 \gamma)]$$
 which is our Axiom \eqref{ax:4-cat_A:Theta}.
\end{itemize}
\item Axiom (xi) captures the degeneracy axioms for the composition bimap (\Cref{bimapdegeneracy}).
\item Axiom (v) expresses unitality of the composition bimap (\Cref{bimapunit}).  
\item Axiom (x) captures associativity of the composition bimap (\Cref{bimapassociativity}).
\end{itemize}

Crans conjectured that each weak $4$-category is equivalent to a $4$-tas, but, to the best of our knowledge, no progress on this conjecture has since been made.

\subsubsection{Lax $4$-categories}\label{laxsetting}
Let us briefly mention that in addition to the flexible skew monoidal structure $(\GCat,\otimes_p,1)$, in \cite[Theorem 4.8]{BourkeLobbia} we described a further skew monoidal structure $(\GCat,\otimes_l,1)$ whose internal hom $\lax(\ca,\cb)$ captures \emph{lax transformations}.  Its associated skew multicategory $\GCatm$ was described explicitly in \emph{loc.cit.} up to dimension $4$.  Playing the same game as before leads to lax analogues of the various flavours of semi-strict $4$-categories introduced in \Cref{variations}.  To be precise, we define a \emph{loose lax $4$-category} to be a category $\ca$ enriched in $\GCatm^l$.  It is said to be \emph{moderate} if the lax binary map $\circ_{x,y,z} \colon \ca(y,z), \ca(x,y) \rightsquigarrow \ca(x,z)$ is strict in each variable.  In elementary terms, moderate lax $4$-categories have the same structure as moderate $4$-categories, except we just have a $3$-cell $A_B$ (not an adjoint equivalence $3$-cell) and the $4$-cells $\alpha_B$ and $A_\beta$ need not be invertible.  The axioms are the same except that any equalities of adjoint equivalence $3$-cells are replaced merely by equalities of $3$-cells.  Making the evident further restrictions, we obtain notions of \emph{(sharp/left-sharp/right-sharp)}-lax-4-categories.  

\subsection{Comparison with Miranda's semi-strict $4$-categories}\label{mirandacomparison}

In his thesis \cite{MirandaThesis}, Miranda discussed three enriching frameworks for semi-strict $4$-categories.
\begin{enumerate}
\item a closed, non-monoidal, structure $(\GCat,[-,?],1)$ on $\GCat$ whose hom contains \emph{semi-strictly decomposable} pseudo-transformations (later published in \cite{MirandaSemistrict});
\item a monoidal left closed structure $(\GCat_F,[-,?],1)$ on a different base (Section 3.1.2 of \cite{MirandaThesis});
\item a monoidal (non-closed) structure $(\GCat,\otimes)$ which he called the Crans' monoidal structure (see Section 3.1.1 of \cite{MirandaThesis}) although it is a variant of the monoidal structure considered by Crans.
\end{enumerate}

Categories enriched in $(\GCat,[-,?],1)$ amount to left-sharp $4$-categories with additional non-algebraic conditions about semi-strict decomposability (see Remark 3.1.20 of \emph{loc.cit.}).  

A more restrictive class are categories enriched in $(\GCat_F,[-,?],1)$, which are called \emph{weak interchange $4$-categories}.  Since $\GCat_F$ consists of the $1$-free Gray-categories (see \Cref{sect:freegraycats}) which are coalgebraic structures rather than algebraic ones, categories enriched in $(\GCat_F,[-,?],1)$ involve a coalgebraic aspect.  

These issues prevent either of these kinds of enriched category being monadic over $4$-dimensional globular sets, and in particular they are not definable by a $4$-dimensional globular operad \cite{Batanin1998Monoidal, LeinsterBook}, which we regard as an important requirement for any algebraic notion of $4$-category.  This is by contrast with the various flavours of semi-strict $4$-category introduced here, as can be shown using the theory of presentations for globular operads of Griffiths \cite{Griffiths}.
On the other hand, just as every bicategory is biequivalent not just to any $2$-category but to a cofibrant one (one whose underlying category is free), each weak $4$-category should ultimately be equivalent to one satisfying additional freeness conditions such as those in the definition of a weak interchange $4$-category.

The third structure considered by Miranda is called $(\GCat,\otimes)$.  Since only the tensor product $\otimes$ is described in \cite{MirandaThesis}, a proper comparison is not possible.  The tensor product $\ca \otimes \cb$ almost forms a binary map classifier for the multicategory $\GCatmpsh$ of sharp multimaps.  However, since it is based upon his definition of semi-strict pseudo-transformation (which is weaker than that considered here \Cref{semistrictpseudotransformation}), the resulting enriched categories would necessarily be slightly weaker than sharp $4$-categories since they would not require the two equalities of adjoint equivalences for a sharp 4-category.

As already noted, several of the results in this section refine and were inspired by those of Miranda.  In particular, in \cite[Example 3.1.22]{MirandaThesis} he describes how his doubly-degenerate $(\GCat,[-,?],1)$-enriched categories are braided monoidal $2$-categories satisfying additional freeness and decomposability conditions.  Likewise, in  \cite[Remark 3.1.23]{MirandaThesis} he describes a connection with Crans' centre construction, though it does not satisfy his freeness and decomposability conditions in general.  Working with moderate $4$-categories in  \Cref{braided} and with the left-sharp 4-category $\GCat^4$ in \Cref{centre} enabled us to perfect these results.\black
\black

\subsection{Equivalences in a loose $4$-category}

For $n \in \{2,3,4\}$, an equivalence $n$-cell in a loose $4$-category $\ca$ is, by definition, an equivalence $(n-1)$-cell in the appropriate hom Gray-category $\ca(x,y)$.  An equivalence $1$-cell $f \colon x \to y$ is a $1$-cell for which there exists $g \colon y \to x$ and equivalence $2$-cells $gf \simeq 1_x$ and $gf \simeq 1_y$.  

\begin{lemma}\label{looseequivalence}
A loose $4$-functor $F\colon\ca \rightsquigarrow \cb$ between loose $4$-categories preserves equivalence $n$-cells for each $n$. 
\end{lemma}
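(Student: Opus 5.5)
The plan mirrors the proof of \Cref{pseudomapEquivs}, one dimension up. A loose $4$-functor $F\colon\ca\rightsquigarrow\cb$ consists of pseudomaps $F_{x,y}\colon\ca(x,y)\rightsquigarrow\cb(Fx,Fy)$ that commute strictly with the composition and identity multimaps. I will split the argument into the case $n\in\{2,3,4\}$ and the case $n=1$.

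For $n\in\{2,3,4\}$, an equivalence $n$-cell in $\ca$ is by definition an equivalence $(n-1)$-cell in some hom Gray-category $\ca(x,y)$. Its image under $F$ is its image under the pseudomap $F_{x,y}$. By \Cref{pseudomapEquivs}, pseudomaps preserve equivalence $m$-cells for every $m$, so the image is an equivalence $(n-1)$-cell of $\cb(Fx,Fy)$, that is, an equivalence $n$-cell of $\cb$. Nothing further is needed here.

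For $n=1$, let $f\colon x\to y$ be an equivalence $1$-cell, with $g\colon y\to x$ and equivalence $2$-cells $gf\simeq 1_x$ and $fg\simeq 1_y$. These $2$-cells are equivalence $1$-cells in the Gray-categories $\ca(x,x)$ and $\ca(y,y)$. Applying $F_{x,x}$ and $F_{y,y}$ and using \Cref{pseudomapEquivs} again gives equivalence $2$-cells $F(gf)\simeq F(1_x)$ and $F(fg)\simeq F(1_y)$ in $\cb$.

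The main point to verify is how $F$ interacts with composition of $1$-cells and with identities. Here the loose setting is actually easier than that of pseudomaps: since $F$ commutes strictly with the composition multimap $\circ_{x,y,z}$, evaluating at the nullary maps picking out $g$ and $f$ gives $F(gf)=Fg\circ Ff$ on the nose. Likewise, strict compatibility with the unit multimap $1_x$ gives $F(1_x)=1_{Fx}$. Hence $Fg\circ Ff\simeq 1_{Fx}$ and $Ff\circ Fg\simeq 1_{Fy}$ are equivalence $2$-cells, and $Ff$ is an equivalence $1$-cell.

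The only subtlety I anticipate is justifying that these strict equalities on $1$-cells follow from the equations of an enriched functor. This comes from substituting nullary maps into the compatibility equation for composition, namely $F_{x,z}\circ_1 \circ_{x,y,z}=\circ_{Fx,Fy,Fz}\circ(F_{y,z},F_{x,y})$, and from the analogous equation for the unit multimap.
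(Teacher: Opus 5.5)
Your proposal is correct and follows the paper's proof: the hom pseudomaps $F_{x,y}$ together with \Cref{pseudomapEquivs} handle equivalence $n$-cells for $n\in\{2,3,4\}$, and strict preservation of $1$-cell composition and identities then handles $n=1$. Your extra remark, that these strict equalities come from evaluating the enriched-functor equations at nullary maps, is a correct elaboration of what the paper calls trivial.
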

\begin{proof}
By definition, the hom-maps $F_{x,y} \colon \ca(x,y) \rightsquigarrow \cb(Fx,Fy)$ of $F$ are pseudomaps of Gray-categories.   Since by Lemma \ref{pseudomapEquivs}, pseudomaps preserve $(1/2/3)$-equivalences, $F$ itself preserves $(2/3/4)$-equivalences.  Since $F$ preserves composition of $1$-cells and identity $1$-cells strictly, and preserves equivalence $2$-cells, it trivially preserves equivalence $1$-cells.
\end{proof}

\begin{prop}
Let $\ca$ be a loose $4$-category.  
\begin{enumerate}
\item Then $\ca$ is a moderate $4$-category if and only the loose $4$-functors $\ca(x,-)\colon \ca \rightsquigarrow \GCat_p^4$ and $\ca(-,x)\colon \ca^{op} \rightsquigarrow \GCat_p^4$ of \Cref{eg:rep} factor through the inclusion $\GCat^4 \hookrightarrow \GCat_p^4$ as loose $4$-functors $\ca(x,-)\colon \ca \rightsquigarrow \GCat^4$ and  $\ca(-,x)\colon \ca^{op} \rightsquigarrow \GCat^4$.  
\item If $\ca$ is left-sharp, then $\ca(x,-)\colon \ca \rightsquigarrow \GCat^4$ is a strict $4$-functor.
\end{enumerate}
\end{prop}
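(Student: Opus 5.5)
The plan is to unwind the constructions of \Cref{basicenrichment}. By that lemma, the representable $\ca(x,-)$ sends $y$ to $\ca(x,y)$. Its action on homs, $\ca(y,z) \rightsquigarrow \psd_p(\ca(x,y),\ca(x,z))$, is the adjoint, under the left-closed structure of $\GCatmp^l$, of the composition bimap $\circ_{x,y,z} \colon \ca(y,z),\ca(x,y) \rightsquigarrow \ca(x,z)$. Dually, $\ca(-,x) = \ca^{op}(x,-)$ has hom action adjoint to $(1,2).\circ_{z,y,x}$, the bimap with its variables swapped.

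The first step is to recall how the closedness bijection $\GCatmp^l_2(\ca,\cb;\cc) \cong \GCatmp^l_1(\ca,\psd_p(\cb,\cc))$ acts on a binary map $F$. An object $a$ is sent to the pseudomap $F^a \colon \cb \rightsquigarrow \cc$ (partial application in the second variable), and the higher data of $F$ supply the higher cells. The adjoint therefore factors through the full sub-Gray-category $\psd_s(\cb,\cc) \hookrightarrow \psd_p(\cb,\cc)$ exactly when every $F^a$ is strict. For $\ca(x,-)$ this says each $a \circ - \colon \ca(x,y) \to \ca(x,z)$ is a Gray-functor. For $\ca(-,x)$, the swap shows that the factorisation holds exactly when each $- \circ b$ is a Gray-functor.

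Since $\psd_s \hookrightarrow \psd_p$ is full on higher cells, a pseudomap into $\psd_p$ factoring on objects through $\psd_s$ factors as a pseudomap, and the loose $4$-functor structure restricts along the strict inclusion $\GCat^4 \hookrightarrow \GCat_p^4$. Combining the two conditions gives that both representables factor if and only if $\circ$ is strict in each variable, which is the definition of a moderate $4$-category. This proves (1).

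For (2), suppose $\ca$ is left-sharp, so $\circ_{x,y,z}$ is a left-sharp bimap. Apply \Cref{sharploose} with $\overline{\ca} = \ca(y,z)$ and $\cb = \ca(x,y)$. It says the adjoint of a left-sharp map factors through $J$ as a left-sharp unary map $\ca(y,z) \to \psd_s(\ca(x,y),\ca(x,z))$. Left-sharp unary maps are Gray-functors, so each hom-map of $\ca(x,-)$ is strict, and $\ca(x,-)$ is a strict $4$-functor.

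The main obstacle is checking that the closedness adjoint's object assignment really is $a \mapsto F^a$, with the variable conventions matching $e \circ_1 -$. In the left-sharp case \Cref{sharploose} already handles this. In the moderate case I would reduce to the description of the universal binary map $e^l$ from \cite{BourkeLobbia}, namely evaluation with $e^l(F,-) = F$.
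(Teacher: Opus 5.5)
Your proposal is correct and follows the paper's proof. For (1), the paper likewise notes that the representables send a $1$-cell $f$ to the pseudomaps $f\circ -$ and $-\circ f$, and that $\GCat^4 \hookrightarrow \GCat_p^4$ is identity on objects and full above dimension $1$, so factoring amounts to strictness of these whiskerings. For (2), the paper applies \Cref{basicenrichment} directly to the left closed multicategory $\GCatmpshl$ (whose unary maps are Gray-functors) instead of citing \Cref{sharploose}; since \Cref{sharploose} is exactly the compatibility of the two closedness bijections that the paper's one-line argument leaves implicit, this is the same argument.
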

\begin{proof}

For (1), let $f  \colon y \to z \in \ca$.  By definition, $\ca$ is moderate just when the induced pseudomaps $\ca(x,f) := f \circ - \colon \ca(x,y) \rightsquigarrow \ca(x,z)$ and $\ca(-,y):= - \circ f \colon \ca(z,x) \rightsquigarrow \ca(z,x)$ are strict --- that is,  $1$-cells of $\GCat^4$. Since $\GCat^4 \hookrightarrow \GCat_p^4$ is identity on objects and full in dimensions above $1$, this is equally to say that each of $\ca(x,-)$ and $\ca(-,x)$ factor through $\GCat^4$ as claimed.  

For (2),  that  $\ca(x,-)\colon \ca \to \GCat_p^4$ is a strict $4$-functor is just an application of \Cref{basicenrichment} to the left closed multicategory $\GCatmpshl$ since $\GCatmpshl$-enriched functors are strict $4$-functors (again because the unary maps in $\GCatmpshl$ are Gray-functors.)
\end{proof}

\begin{cor}\label{cor:RepEquiv}
For $\ca$ a moderate $4$-category, the loose $4$-functors $\ca(x,-) \colon \ca \rightsquigarrow \GCat^4_s$ and $\ca(-,x) \colon \ca^{op} \rightsquigarrow \GCat^4$ preserve equivalence $n$-cells for each $n \in \{1,\ldots,4\}$.
\end{cor}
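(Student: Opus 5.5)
The statement should follow by combining the preceding Proposition with \Cref{looseequivalence}. No new homotopical input is needed.

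First, since $\ca$ is moderate, part (1) of the preceding Proposition shows that the representables of \Cref{eg:rep} factor through $\GCat^4 \hookrightarrow \GCat_p^4$. This gives loose $4$-functors $\ca(x,-)\colon \ca \rightsquigarrow \GCat^4$ and $\ca(-,x)\colon \ca^{op}\rightsquigarrow \GCat^4$. (We read $\GCat^4_s$ in the statement as $\GCat^4$.) Here $\GCat^4$ is a left-sharp, hence moderate, hence loose $4$-category. Applying \Cref{looseequivalence} to these loose $4$-functors between loose $4$-categories shows that they preserve equivalence $n$-cells for $n\in\{1,\ldots,4\}$.

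For the second functor there is one extra point. Its domain is $\ca^{op}$, so we must make sure that the equivalence cells of $\ca^{op}$ are the ones we intend. For $n\geq 2$ the homs of $\ca^{op}$ are the homs of $\ca$ with the same underlying Gray-categories, so equivalence $n$-cells coincide. For $n=1$, the definition of equivalence $1$-cell is symmetric in reversing $1$-cells: $f\colon x\to y$ with quasi-inverse $g$ and equivalence $2$-cells $gf\simeq 1$, $fg\simeq 1$ in $\ca$ gives the same data in $\ca^{op}$. The composite in $\ca^{op}$ is the composite in $\ca$ with the order swapped, and the identities are unchanged.

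The only step that needs care is this one. We should check that the opposite construction of \Cref{basicenrichment}(3), which applies the transposition to the composition bimap, still has the same underlying composition of $1$-cells up to reversal. That holds because the symmetric action does not change the object assignment of a binary map. Alternatively one can simply state the corollary for equivalence cells of $\ca^{op}$, in which case no check is needed and the result is immediate from \Cref{looseequivalence}.
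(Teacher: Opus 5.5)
Your proposal is correct and is exactly the paper's argument: the corollary follows immediately from part (1) of the preceding Proposition, which factors the representables through $\GCat^4$, together with \Cref{looseequivalence}, and reading $\GCat^4_s$ as $\GCat^4$ is the right interpretation. Your extra check on equivalence $1$-cells in $\ca^{op}$ is harmless but unnecessary, since $\ca^{op}$ is itself a loose $4$-category and \Cref{looseequivalence} applies to it directly.
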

\begin{proof}
This follows immediately from the preceding two results.
\end{proof}

In Section \ref{sect:char}, we characterise the equivalence $n$-cells for $n \geq 2$ in $\GCat^4$ and $\GCat_p^4$.  We also characterise the equivalence $1$-cells in $\GCat^4$ between cofibrant Gray-categories as the triequivalences.  This is essential in \Cref{sect:homotopycat} where we show that the left-sharp skew monoidal structure induces a symmetric monoidal closed structure on the homotopy category of Gray-categories.

\section{Homotopical preliminaries}\label{hoprelim}
In this section, we recall Lack's model structures for 2-categories \cite{Lack2002A-quillen} and Gray-categories \cite{Lack2011A-quillen}.  In the latter case, we describe a slight modification of Lack's path object which is closely connected to the internal hom $\psd_s(\ca,\cb)$ for the left-sharp skew monoidal structure.

\subsection{The Quillen model structure for 2-categories}\label{sect:2model}

In \cite{Lack2002A-quillen}, Lack described a Quillen model structure on the category $\twocat$ of $2$-categories and $2$-functors.  The weak equivalences are the biequivalences: that is, those $2$-functors $F \colon \ca \to \cb$ for which each hom functor $F \colon \ca(x,y) \to \cb(Fx,Fy)$ is an equivalence of categories and, moreover, for which $F$ is bi-essentially surjective on objects: this means that given $x \in \cb$, there exists $y \in \ca$ and an equivalence $1$-cell $Fx \simeq y$.  

A $2$-functor is a trivial fibration just when it is surjective on objects, full on $1$ and  fully faithful on $2$-cells.  The fibrations are the so-called \emph{equiv-fibrations}, which are those having the lifting property with respect to the two generating trivial cofibrations. 
To define these, for $n=0,1,2$ we let $D^2_n$ denote the free 2-category containing an n-cell and for $n=1,2$, we let $E^2_n$ denote the free $n$-adjoint equivalence.  For $n=1,2$, we let $j^2_n \colon D^2_n \to E^2_n$ denote the $2$-functor selecting either of the two generating equivalence $n$-cells and let $\rho_n \colon D^2_n \to D^2_{n+1}$ select the source $n$-cell of the free $(n+1)$-cell for $n=0,1$.  The two generating trivial cofibrations are then the composites $l^2_n = j^2_n \circ \rho^2_n \colon D^2_i \to D^2_{i+1} \to E^2_{i+1}$ for $i=0,1$.

Notable facts concerning the model structure are that all objects are fibrant and that the cofibrant objects are those $2$-categories whose underlying category is free on a graph.  Furthermore the Gray-tensor product $(\twocat,\otimes_p,1)$ equips $\twocat$ with the structure of a monoidal model category.

\subsection{The Quillen model structure for Gray-categories}\label{sect:Graymodel}
Building on the above, in \cite{Lack2011A-quillen} Lack described a Quillen model structure on $\GCat$ for Gray-categories.  This time the weak equivalences are the triequivalences: those Gray-functors $F \colon \ca \to \cb$ for which are locally biequivalences and tri-essentially surjective, meaning surjective up to an equivalence $1$-cell (or biequivalence) in $\cb$.  

This time $F \colon \ca \to \cb$ is a trivial fibration if it is surjective on objects, full on $1$- and $2$-cells and fully faithful on $3$-cells. 
As before, the fibrations can be captured by a lifting property against the three generating trivial cofibrations, which is the approach we shall take in practice.  To under the generating trivial cofibrations, for $n=0,1,2,3$, we let $D^3_n$ denote the free Gray-category containing an $n$-cell and for $n=1,2,3$, we let $E^3_n$ denote the free adjoint equivalence $1$-cell.  For $n=1,2,3$, we let $j^3_n \colon D^3_n \to E^3_n$ denote the $Gray$-functor selecting either of the two generating equivalence $n$-cells and
let $\rho_n \colon D^3_n \to D^3_{n+1}$ select the source $n$-cell of the free $(n+1)$-cell for $n=0,1,2$.  The three generating  trivial cofibrations are then the composites $l_n = j_n \circ \rho_n \colon D^3_i \to D^3_{i+1} \to E^3_{i+1}$ for $i=0,1,2$.

All objects are fibrant.   Moreover, the cofibrant objects are characterised in \cite[Corollary 9.4]{Lack2011A-quillen} as those Gray-categories whose underlying sesqui-category is free on a $2$-computad.  We will not need the full strength of this, but only to know that such a Gray-category, by definition, has underlying category free on a graph.

Finally, functorial cofibrant replacements exist (by Quillen's small object argument) since the model structure is combinatorial.  Therefore we fix  $C \colon \GCat \to \GCat$ to be a functorial cofibrant replacement functor and $T_{\ca} \colon C\ca \to \ca$ the components of the associated natural transformation.  

\subsection{The path object for the model structure for Gray-categories}\label{sect:pathobject}

We now describe a path object functor $P$ for the model structure on Gray-categories and prove some useful properties of it.  The path object $P\ca$ modifies the path object $P^L\ca$ of \cite[Proposition~4.1]{Lack2011A-quillen} only by changing the definition of 1-cells: instead of considering an equivalence we take an adjoint equivalence.  Our reason for making this change is that $P\ca$ will then be a full sub-Gray-category of $\psd_s(D^3_1,\ca)$.

Expanding out the definition of $\psd_s(D^3_1,\ca)$, we have that
\begin{itemize}
\item an object of $\psd_s(D^3_1,\ca)$ is a $1$-cell $A\colon a_0\to a_1$;
\item 1-cells $f\colon A\to B$ in $\psd_s(D^3_1,\ca)$, with $A\colon a_0\to a_1$ and $B\colon b_0\to b_1$, consists of a pair of 1-cells in $f_0\colon a_0\to b_0$ and $f_1\colon a_1\to b_1$  together with an equivalence $2$-cell $f$ as below
\[\begin{tikzcd}
	{a_0} & {b_0} \\
	{a_1} & {b_1}
	\arrow[""{name=0, anchor=center, inner sep=0}, "{f_0}", from=1-1, to=1-2]
	\arrow["A"', from=1-1, to=2-1]
	\arrow["B", from=1-2, to=2-2]
	\arrow[""{name=1, anchor=center, inner sep=0}, "{f_1}"', from=2-1, to=2-2]
	\arrow["f", between={0.4}{0.6}, Rightarrow, nfold, from=0, to=1]
\end{tikzcd}\]
equipped with the structure of an adjoint equivalence  $f^o \dashv f$.
\item a 2-cell $\alpha\colon f\to g$ consists of 2-cells $\alpha_0\colon f_0\to g_0$ and $\alpha_1\colon f_1\to g_1$ together with an invertible 3-cell as below
\[\begin{tikzcd}
	{Bf_0} & {Bg_0} \\
	{f_1A} & {g_1A}
	\arrow[""{name=0, anchor=center, inner sep=0}, "{1\circ\alpha_0}", from=1-1, to=1-2]
	\arrow["f"', from=1-1, to=2-1]
	\arrow["g", from=1-2, to=2-2]
	\arrow[""{name=1, anchor=center, inner sep=0}, "{\alpha_1\circ1}"', from=2-1, to=2-2]
	\arrow["\alpha", between={0.3}{0.7}, Rightarrow, nfold, from=0, to=1]
\end{tikzcd}\]

\item 3-cells $\phi\colon\alpha\to\beta$ consist of 3-cells $\phi_0 \colon\alpha_0 \to\beta_0 $ and $\phi_1 \colon\alpha_1 \to\beta_1$ satisfying the standard condition for a perturbation.\end{itemize}

\begin{defn}
$P\ca\hookrightarrow\psd_s(D^3_1,\ca)$ is the full sub-Gray-category whose objects are the equivalences $A \colon a_0 \to a_1$.
\end{defn}\black

We denote by $s,t\colon P\ca\to\ca$ the Gray-functors defined by 
\[\begin{tikzcd}[ampersand replacement=\&]
	P\ca \& {\psd_s(D^3_1,\ca)} \& \ca
	\arrow[hook, from=1-1, to=1-2]
	\arrow["s", shift left, from=1-2, to=1-3]
	\arrow["t"', shift right, from=1-2, to=1-3]
\end{tikzcd}\]
where $s,t\colon\psd_s(D^3_1,\ca)\to\ca$ are the maps picking the source and target. 

\begin{lemma}\label{lem:functorial}
The path object extends to a functor $P \colon \GCat_p \to \GCat_p$ such that the components $s,t \colon P\ca \to \ca$ are natural in $\ca$.  Furthermore, $P$ sends Gray-functors to Gray-functors.
\end{lemma}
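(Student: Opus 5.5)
The plan is to obtain functoriality of $P$ from functoriality of the internal hom $\psd_s(D^3_1,-)$, or more precisely of $\psd_p(D^3_1,-)$, in its second variable. Since the flexible skew monoidal structure $(\GCat,\otimes_p,\mathbf 1)$ is closed with internal hom $\psd_p$, and the multicategory $\GCatmp^l$ is closed (\Cref{prop:biclosed}), the representable $\psd_p(D^3_1,-)$ is functorial on unary maps of $\GCatmp^l$, that is, on $\GCat_p$. Concretely, a pseudomap $F \colon \ca \rightsquigarrow \cb$ induces, by postcomposition, $F_* \colon \psd_p(D^3_1,\ca) \rightsquigarrow \psd_p(D^3_1,\cb)$. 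It is the adjoint of the loose binary map $F \circ_1 e \colon \psd_p(D^3_1,\ca), D^3_1 \rightsquigarrow \cb$.

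The first task is to check that $F_*$ restricts to $P$. An object of $\psd_s(D^3_1,\ca)$ is a strict map $D^3_1 \to \ca$, namely a $1$-cell $A \colon a_0 \to a_1$. Since $D^3_1$ has no nontrivial composable pairs of $1$-cells and pseudomaps preserve identities strictly, the composite $F \circ A$ is again strict, given by the $1$-cell $FA$. Thus $F_*$ restricts to $\psd_s(D^3_1,\ca) \rightsquigarrow \psd_s(D^3_1,\cb)$, using that $\psd_s(D^3_1,-)$ is full in $\psd_p(D^3_1,-)$ above dimension $0$. By \Cref{pseudomapEquivs}, $F$ preserves equivalence $1$-cells, so it maps objects of $P\ca$ into $P\cb$. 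Because $P\ca$ is a full sub-Gray-category, $F_*$ restricts to a pseudomap $PF \colon P\ca \rightsquigarrow P\cb$. Preservation of identities and composition follows from functoriality of postcomposition, which holds by uniqueness of adjoints in the closed multicategory.

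For naturality of $s$ and $t$, note that $s$ and $t$ are induced by precomposition with the two Gray-functors $\mathbf 1 \rightrightarrows D^3_1$, using $\psd_p(\mathbf 1,\ca) \cong \ca$ (right normality). Pre- and postcomposition commute, since in a closed multicategory both are obtained by substitution into the evaluation map and substitution is associative. Hence $s \circ PF = F \circ s$, and likewise for $t$. Alternatively, one checks this directly: both composites send a cell of $P\ca$ to $F$ applied to its $0$-component.

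Finally, if $F$ is a Gray-functor, then $F\circ_1 e$ is left-sharp, since $e^\sharp$ is left-sharp and $F$ is a left-sharp unary map. By \Cref{sharploose}, the adjoint $F_*$ then factors as a strict map into $\psd_s(D^3_1,\cb)$, so $PF$ is a Gray-functor. One can also see this directly: the cocycle of $F_*$ at a composite $g \circ f$ of $1$-cells in $P\ca$ is built from $F^2_{g_i,f_i}$ and the $2$-cells $F(B)$, and it is an identity when $F^2$ is. I expect the main obstacle to be bookkeeping. One must confirm that the postcomposition pseudomap really restricts to the strict-map sub-Gray-category, and that its cocycles, which are built from $F^2$ and the adjoint equivalence data of $1$-cells of $P\ca$, are well defined. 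I would handle this by relying on the multicategorical adjunction rather than writing out the formulas.
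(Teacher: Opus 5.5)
Your proposal is correct and follows essentially the paper's route. Functoriality of $\psd_p(D^3_1,-)$ on $\GCat_p$ comes from closedness of $\GCatmp^l$, the restriction to $P$ uses \Cref{pseudomapEquivs}, naturality of $s,t$ comes from bifunctoriality of the internal hom, and strictness of $PF$ for a Gray-functor $F$ comes from the left-sharp closed structure. The paper phrases that last step as functoriality of $\psd_s(D^3_1,-)$ on $\GCat$, while you route it through \Cref{sharploose}. The paper shortens your restriction step by noting at the outset that $\psd_s(D^3_1,\ca)=\psd_p(D^3_1,\ca)$, since every pseudomap out of $D^3_1$ is strict; this observation is also what justifies your identifying the loose evaluation $e$ with $e^\sharp$.
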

\begin{proof}
Observe that $\psd_s(D^3_1,\ca) = \psd_p(D^3_1,\ca)$ since $D^3_1$ contains no non-identity composable $1$-cells.  Certainly we have a functor $\psd_p(D^3_1,-) \colon \GCat_p \to \GCat_p$ as the internal hom of the closed multicategory $\GCatmp$, for which the restriction maps $s,t \colon \psd_p(D^3_1,\ca) \rightrightarrows \ca$ are natural in $F \colon \ca \rightsquigarrow \cb$.  Now since by Lemma \ref{pseudomapEquivs} each pseudomap $F \colon \ca \rightsquigarrow \cb$ preserves $1$-cell equivalences, $\psd_p(D^3_1,F)$ restricts to a pseudomap $PF \colon P\ca \to P\cb$ commuting with the full sub Gray-category inclusions.  It remains to prove that $P$ sends Gray-functors to Gray-functors.  This follows since $\psd_s(D^3_1,-) \colon \GCat \to \GCat$ is a functor --- the internal hom functor for the left-sharp skew monoidal structure.
\end{proof}

\begin{prop}\label{prop:PA-path-obj}
$P\ca$ is a path object for the model structure on $\GCat$.
\end{prop}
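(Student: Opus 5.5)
To show $P\ca$ is a path object, we must factor the diagonal $\Delta\colon \ca \to \ca\times\ca$ as a weak equivalence $c\colon \ca \to P\ca$ followed by a fibration $(s,t)\colon P\ca \to \ca\times\ca$. The plan is to compare with Lack's path object $P^L\ca$ of \cite[Proposition~4.1]{Lack2011A-quillen}. Its $1$-cells are squares filled by a mere equivalence $2$-cell, whereas ours carry a chosen adjoint equivalence structure; the higher cells and the composites are the same. So there is an evident forgetful Gray-functor $U\colon P\ca \to P^L\ca$ over $\ca\times\ca$ which is the identity on objects.

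For the factorisation, define $c\colon \ca \to P\ca$ by sending $x$ to $1_x$. A $1$-cell $f$ goes to the square $(f,f)$ filled by the identity adjoint equivalence $1^\aeq$, and higher cells go to the evident pairs with identity fillers. Strictness of $c$ is immediate, since composites of identity adjoint equivalences are identities. Then $(s,t)\circ c = \Delta$ holds by construction, and $s\circ c = 1_\ca$.

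\emph{Weak equivalence.} Since $s c = 1$ and triequivalences satisfy 2-out-of-3, it suffices to show that $s\colon P\ca \to \ca$ is a triequivalence. We do this via $U$: Lack shows the analogous $s^L\colon P^L\ca\to\ca$ is a triequivalence and $s = s^L U$, so it is enough that $U$ is a triequivalence. $U$ is bijective on objects and on $3$-cells. It is surjective on $1$-cells, because every equivalence $2$-cell underlies an adjoint equivalence (\Cref{sect:equiv-gray}). It is an isomorphism on the hom-$2$-categories at the level of $2$- and $3$-cells, since these only refer to the right adjoint $f$. Hence each local $2$-functor $U\colon P\ca(A,B)\to P^L\ca(UA,UB)$ is surjective on objects and fully faithful, so a biequivalence, and $U$ is a triequivalence. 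Alternatively, one can repeat Lack's argument verbatim.

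\emph{Fibration.} We check the right lifting property of $(s,t)$ against the generating trivial cofibrations $l_n\colon D^3_n \to E^3_{n+1}$ for $n=0,1,2$.
\begin{itemize}
\item For $n=1,2$, a lifting problem amounts to an adjoint equivalence $2$- or $3$-cell in $\ca\times\ca$ together with a lift of its source to $P\ca$. We then solve the problem in $\psd_s(D^3_1,\ca)$, using the invertibility of the relevant mate and interchange cells to produce the filler $3$-cell, and check that it lands in the full sub-Gray-category $P\ca$. No new choices of adjoint structure on the filler are needed, because $P\ca$ is full on $2$-cells and above.
\item For $n=0$, we are given an object $A\colon a_0\to a_1$ of $P\ca$ and adjoint equivalences $e_i\colon a_i\to b_i$ in $\ca$. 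We must produce $B\colon b_0\to b_1$ and an adjoint equivalence $1$-cell of $P\ca$ over $(e_0,e_1)$. Take $B = e_1 A e_0^{\bullet}$, where $e_0^\bullet$ is the adjoint pseudo-inverse. The filling $2$-cell $B e_0 \Rightarrow e_1 A$ is then built from the counit, so it is an equivalence and carries an adjoint equivalence structure. Then $B$ is an equivalence as a composite of equivalences. Finally, we show this $1$-cell of $P\ca$ is an equivalence $1$-cell, and promote it to an adjoint biequivalence using \cite[Proposition 3.1]{Lack2011A-quillen}. Here we must be careful that the promotion does not change its image under $(s,t)$.
\end{itemize}

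The main obstacle is this last case, lifting an adjoint biequivalence. Lack's proof handles the analogous step for $P^L\ca$, and the cleanest route is to transfer it along $U$. Given a lift in $P^L\ca$, we choose adjoint structures on its filler $2$-cells, which is possible since $U$ is surjective on $1$-cells and an isomorphism above. This turns it into a lift in $P\ca$. The crucial point to check is that the free adjoint biequivalence $E^3_1$ only constrains cells which $U$ reflects isomorphically. The same transfer argument also gives the cases $n=1,2$ at once, so in practice we would prove all three lifting properties by lifting in $P^L\ca$ and then pulling back along $U$.
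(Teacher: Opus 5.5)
Your final route is the paper's: compare with Lack's $P^L\ca$ via the forgetful Gray-functor $U$. This $U$ is identity on objects, full on $1$-cells (equivalences can be promoted to adjoint equivalences) and fully faithful on $2$- and $3$-cells, so it is a trivial fibration, and the weak-equivalence part then follows by 2-out-of-3. Your ``transfer of lifts along $U$'' is exactly the lifting of the cofibrations $l_n$ against this trivial fibration, which the paper packages as $(s,t)=Q^L\circ U$ being a composite of fibrations, so the direct case-by-case lifting sketch can be dropped.
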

\begin{proof}
Lack's path object, which we will call $P^L\ca$, is defined exactly as above with the exception that its $1$-cells involve equivalence $2$-cells rather than chosen adjoint equivalence $2$-cells, as in $PA$.  As such, we have a forgetful Gray-functor $U$ as depicted in the diagram below.  
\[\begin{tikzcd}
	\ca && {P\ca} \\
	&& {P^L\ca} \\
	&& {\ca^2}
	\arrow["W", from=1-1, to=1-3]
	\arrow["{W^L}"{description}, from=1-1, to=2-3]
	\arrow["\Delta"', from=1-1, to=3-3]
	\arrow["U", from=1-3, to=2-3]
	\arrow["{Q^L}", from=2-3, to=3-3]
\end{tikzcd}\]

Here $W^L$ and $Q^L$ are the path object structure for $P^L\ca$.  In particular, by \cite[Proposition~4.1]{Lack2011A-quillen} we know that $W^L$ is a weak equivalence and $Q^L$ a fibration as required of a path object.  We have the evident factorisation of $W^L$ through $P\ca$ as $W$ whilst $Q^L \circ U = (s,t) \colon P\ca \to \ca^2$.

Since fibrations are closed under composition and using the 2-out-of-3 rule, it suffices to prove that $U$ is a trivial fibration. This follows by the fact that any equivalence in a $2$-category can be promoted to an adjoint equivalence.  Hence $U$ is full on 1-cells and clearly it is identity on objects and fully faithful on 2- and 3-cells by construction.
\end{proof}

\begin{rmk}\label{pathprojection}
As for any path object, it follows that $s,t \colon P\ca \rightrightarrows \ca$ are weak equivalences (i.e. triequivalences).
\end{rmk}

\black \begin{prop}\label{prop:p-maps-into-PA}
There is a bijection between pseudo-transformations $\alpha \colon F \to G \in \psd_p(\cx,\ca)$ and pseudomaps $\overline{\alpha} \colon \cx \rightsquigarrow \psd_p(D^3_1,\ca)$ making the diagram below left commute.
\[\begin{tikzcd}
	&& {\psd_p(D^3_1,\ca)} &&& {P\ca} \\
	\cx && \ca & \cx && \ca
	\arrow["t", shift left, from=1-3, to=2-3]
	\arrow["s"', shift right, from=1-3, to=2-3]
	\arrow["t", shift left, from=1-6, to=2-6]
	\arrow["s"', shift right, from=1-6, to=2-6]
	\arrow["{\overline{\alpha}}", shift left, curve={height=-6pt}, squiggly, from=2-1, to=1-3]
	\arrow["F", shift left, squiggly, from=2-1, to=2-3]
	\arrow["G"', shift right, squiggly, from=2-1, to=2-3]
	\arrow["{\overline{\alpha}}", shift left, curve={height=-6pt}, squiggly, from=2-4, to=1-6]
	\arrow["F", shift left, squiggly, from=2-4, to=2-6]
	\arrow["G"', shift right, squiggly, from=2-4, to=2-6]
\end{tikzcd}\]
The pseudo-transformation $\alpha$ has each $1$-cell component $\alpha_x$ an equivalence if and only if the corresponding $\overline{\alpha}$ as above left factors further through $P\ca$ --- that is, giving a bijection between pointwise equivalences $\alpha$ and diagrams as above right.
\end{prop}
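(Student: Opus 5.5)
The plan is to obtain the bijection from the closedness of the multicategory $\GCatmp^l$ together with its symmetry (\Cref{prop:biclosed}). A pseudomap $\overline{\alpha} \colon \cx \rightsquigarrow \psd_p(D^3_1,\ca)$ corresponds, by left closedness, to a loose binary map $\cx, D^3_1 \rightsquigarrow \ca$. Applying the transposition $(1,2)$, this is the same as a binary map $D^3_1,\cx \rightsquigarrow \ca$. By closedness again, that is a pseudomap $D^3_1 \rightsquigarrow \psd_p(\cx,\ca)$, and since $D^3_1$ has no nontrivial composable $1$-cells, this is simply a $1$-cell of $\psd_p(\cx,\ca)$: a pseudo-transformation between two pseudomaps. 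Equivalently, one can use the isomorphism $\lambda$ of \Cref{prop:biclosed} together with $\psd_p(\mathbf 1,-)\cong \mathrm{id}$ (right normality).

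Next I would track the boundary. The maps $s,t \colon \psd_p(D^3_1,\ca) \rightrightarrows \ca$ are $\psd_p(d_0,\ca)$ and $\psd_p(d_1,\ca)$, precomposition with the two endpoint inclusions $\mathbf 1 \rightrightarrows D^3_1$. By naturality of the closedness bijections in the representable variable, $s\circ\overline{\alpha}$ and $t\circ\overline{\alpha}$ correspond to the restrictions of the binary map along $\mathbf 1 \to D^3_1$. These are exactly the source and target pseudomaps of the transformation $\alpha$. Hence the condition $s\overline\alpha = F$, $t\overline\alpha = G$ cuts the bijection down to transformations $F\to G$, which is the left-hand statement.

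As a sanity check (and an alternative direct route) I would unpack the correspondence explicitly. For $x\in\cx$, $\overline{\alpha}(x)$ is the $1$-cell $\alpha_x \colon Fx \to Gx$. On a $1$-cell $f$, $\overline{\alpha}(f)$ is the square $(Ff,Gf,\aexi^\alpha_f)$, whose adjoint equivalence structure matches the one required of $1$-cells of $\psd_s(D^3_1,\ca)$. On $2$-cells, $\overline\alpha(\phi) = (F\phi,G\phi,\alpha_\phi)$, and the cocycle of $\overline{\alpha}$ is $(F^2,G^2,\alpha^2)$. The transformation axioms then match the pseudomap axioms for $\overline{\alpha}$ together with the axioms for cells of $\psd_p(D^3_1,\ca)$. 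The hard part is getting this bookkeeping right, in particular checking that the conventions for the transposition produce lax rather than oplax components. The abstract argument sidesteps most of that, so I would rely on it and use the explicit description only to identify $\overline\alpha(x)=\alpha_x$.

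Finally, for the pointwise statement: $P\ca$ is a \emph{full} sub-Gray-category of $\psd_p(D^3_1,\ca)$ on those objects that are equivalences. A pseudomap $\overline{\alpha}$ therefore factors (necessarily uniquely) through $P\ca$ iff every object $\overline{\alpha}(x)=\alpha_x$ is an equivalence $1$-cell. The composite with $s,t$ is unchanged, since $s,t$ on $P\ca$ are restrictions. This gives the bijection for the diagram on the right.
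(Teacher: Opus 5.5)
Your argument is correct and is essentially the paper's own. You view $\alpha$ as a (necessarily strict) map $D^3_1 \to \psd_p(\cx,\ca)$, transport it across the biclosedness isomorphism of \Cref{prop:biclosed}, and use naturality in the endpoint maps $D^3_0 \rightrightarrows D^3_1$ to get $s\overline{\alpha}=F$ and $t\overline{\alpha}=G$. The second part then follows from fullness of $P\ca$ together with $\overline{\alpha}(x)=\alpha_x$.

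Your ``sanity check'' is, however, wrong in detail and would not work as a direct route. A $1$-cell $\alpha_x\to\alpha_y$ of $\psd_p(D^3_1,\ca)$ needs a $2$-cell $\alpha_y\circ Ff\Rightarrow Gf\circ\alpha_x$, which is the opposite direction to $\alpha_f$. So, as \Cref{explicitbiclosed} records, $\overline{\alpha}(f)$ carries the left adjoint $\alpha^o_f$ with the reversed adjoint equivalence $(\aexi^\alpha_f)^{\rev}$. Likewise the cocycle's $3$-cell component is the mate $(\alpha^o)^2_{f_2,f_1}$ of \eqref{eq:mate}, not $\alpha^2_{f_2,f_1}$. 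This is harmless here, since you only use the object-level identification, but it matters later, e.g.\ in \Cref{semistrictasstrict}.
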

\begin{proof}
The pseudo-transformation $\alpha \colon F \to G$ specifies a pseudomap (equally a strict map) $\alpha \colon D^3_1 \to \psd_p(\cx,\ca)$ whose restriction along the two inclusions $D^3_0 \rightrightarrows D^3_1$ gives $F$ and $G$ respectively.  Applying the biclosedness isomorphism $\psd_p(D^3_1,\psd_p(\cx,\ca)) \cong \psd_p(\cx,\psd_p(D^3_1,\ca))$ of \Cref{prop:biclosed}
and using its naturality produces the $\overline{\alpha}$ proving the first claim.

On objects $\overline{\alpha}(x) = \alpha_x \colon Fx \to Gx$ and so $\overline{\alpha}$ factors through $P\ca$ if and only if each $\alpha_x$ is an equivalence, proving the second claim.
\end{proof}

\begin{rmk}\label{explicitbiclosed}
To avoid potential confusion, let us mention that while the pseudomap $\overline{\alpha} \colon \cx \rightsquigarrow \psd_p(D^3_1,\ca)$ has value $\alpha_x$ at $x \in \cx$, at a $1$-cell $f \colon x \to y \in \cx$ its value is not $\alpha_f$ but its left adjoint $\alpha^o_f$ as depicted below
\[\begin{tikzcd}
	Fx & Fy \\
	Gx & Gy
	\arrow[""{name=0, anchor=center, inner sep=0}, "Ff", from=1-1, to=1-2]
	\arrow["{\alpha_x}"', from=1-1, to=2-1]
	\arrow["{\alpha_y}", from=1-2, to=2-2]
	\arrow[""{name=1, anchor=center, inner sep=0}, "Gf"', from=2-1, to=2-2]
	\arrow["{\alpha_f^o}", between={0.3}{0.7}, Rightarrow, nfold, from=0, to=1]
\end{tikzcd}\]
equipped with the reverse adjoint equivalence $\aexi^{\overline{\alpha}}_f = (\aexi^{\alpha}_f)^{\rev}$.  Furthermore, the modification $\overline{\alpha}^2_{f_2,f_1} \colon \overline{\alpha}_{f_2}\overline{\alpha}_{f_1} \to \overline{\alpha}_{f_2f_1}$ has 2-cell components $F^2_{f_2,f_1}$, $G^2_{f_2,f_1}$ and $3$-cell component the mate $(\alpha^o)^2_{f_2,f_1}$ of \eqref{eq:mate}.  
This is essentially because the biclosedness isomorphism $\psd_p(D^3_1,\psd_p(\cx,\ca)) \cong \psd_p(\cx,\psd_p(D^3_1,\ca))$ interchanges the vertical and horizontal directions.  For full details on the biclosedness bijection, generalising the above description, we refer the reader to Remark 5.10 of \cite{BourkeLobbia2}.
\end{rmk}\black

\black \section{Strictifying pseudomaps and pseudo-transformations}\label{sect:replacing}

In the present section, we show that if $\ca$ is a cofibrant Gray-category (or more generally $1$-free), then each pseudomap $F \colon \ca \rightsquigarrow \cb$ can be replaced by an isomorphic strict one, and we explore the functoriality and naturality of this construction.  The main result here in this regard is \Cref{thm:replacement} which we then apply to obtain a similar result for pseudo-transformations in \Cref{thm:replacement2}.  
These strictification results can be viewed as a first towards replacing loose $4$-categories by equivalent left-sharp ones (see \Cref{strictification}).  From the point of view of the present paper, these results will be essential in establishing the characterisation of equivalences in \Cref{sect:char} and later homotopical results.\black

%

A key point in our analysis is the identification of the coalgebras for the pseudomap classifier comonad $Q$, as those Gray-categories which are $1$-free (see Section \ref{sect:freegraycats}).  Before doing so, in Section \ref{sect:freecats} we first describe the simpler situation of free categories on reflexive graphs.

\subsection{Free categories and reflexive graphs}\label{sect:freecats}

Let $\Gph$ denote the category of directed graphs and $\RGph$ the category of reflexive directed graphs.  Our main focus is on the latter.

Consider the adjunction 
\[\begin{tikzcd}
	\RGph && \Cat
	\arrow[""{name=0, anchor=center, inner sep=0}, "F", shift left=2, from=1-1, to=1-3]
	\arrow[""{name=1, anchor=center, inner sep=0}, "U", shift left=2, from=1-3, to=1-1]
	\arrow["\dashv"{anchor=center, rotate=-90}, draw=none, from=0, to=1]
\end{tikzcd}\]
between small categories and reflexive graphs.  The left adjoint sends a reflexive graph $\cx$ to the category $F\cx$ with the same objects as $\cx$ and whose morphisms are either identities $1_x \colon x \to x$ or composable sequences $\overline{f} = [f_1,\ldots,f_n]:x \to y$ of non-identity morphisms in $\cx$.  

Categories of the form $F\cx$ admit a standard intrinsic characterisation as \emph{free categories}, in the sense defined below --- see, for instance \cite{RiehlVerity, Campbell}.

\begin{defn}
Let $\ca$ be a category.  
\begin{itemize}
\item An arrow $f \colon x \to y$ is \emph{atomic} if it is not an identity and admits no non-trivial factorisations: that is, if $f = gh$, then $g$ or $h$ is an identity.
\item $\ca$ is \emph{free} if each non-identity arrow can be written uniquely as a composite of atomics and no identity admits a non-trivial factorisation.
\item A functor $F \colon \ca \to \cb$ between free categories is \emph{free} if it sends atomics to atomics or identities.  
\end{itemize}
\end{defn}

We write $\Free \hookrightarrow \Cat$ for the subcategory of free categories and free functors between them.

\begin{rmk}
Note that the property of being free is really a property rather than structure.  Indeed, if $U$ is a \emph{basis} of atomics for a free category (in the obvious sense) then the factorisation property into $U$'s ensures that $U$ consists of all the atomics. 
\end{rmk}

\begin{lemma}
The left adjoint $F \colon \RGph \to \Cat$ factors through $\Free$ as an equivalence of categories $\RGph \simeq \Free$.
\end{lemma}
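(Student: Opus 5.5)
We prove the lemma in three steps: the image of $F$ lies in $\Free$, the factored functor is fully faithful, and it is essentially surjective. First, for a reflexive graph $\cx$, every non-identity morphism of $F\cx$ is a sequence $[f_1,\ldots,f_n]$ of non-identity edges with $n\geq 1$. Composition is concatenation, with identities acting as units, so the length $n$ is additive under composition of non-identity morphisms. Hence a composite of two non-identity morphisms is never an identity. It follows that the atomics are exactly the length-one sequences $[f]$ with $f$ a non-identity edge of $\cx$. Each non-identity morphism $[f_1,\ldots,f_n]$ then factors as $[f_n]\cdots[f_1]$ (in the paper's composition order), and this factorisation into atomics is unique because sequences are determined by their entries. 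Thus $F\cx$ is free. Next, a morphism of reflexive graphs $h\colon\cx\to\cy$ sends each edge either to a non-identity edge or to an identity. So $Fh$ sends each atomic $[f]$ to an atomic $[hf]$ or to an identity, which means $Fh$ is a free functor. This gives the factorisation $F\colon\RGph\to\Free$.

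For full faithfulness, consider a free functor $G\colon F\cx\to F\cy$. It restricts to a map on objects, and it sends each edge $f$ (viewed as the atomic $[f]$) to an atomic of $F\cy$, hence to a non-identity edge of $\cy$, or else to an identity. Identity edges of $\cx$ go to identities. This defines a morphism of reflexive graphs $h\colon\cx\to\cy$. Since $G$ and $Fh$ agree on atomics and every non-identity morphism is a composite of atomics, we get $G=Fh$. Conversely, $h$ is recovered from $Fh$ by restricting to objects and atomics. So the factored functor is bijective on hom-sets.

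For essential surjectivity, let $\ca$ be a free category. Define a reflexive graph $\cx$ whose objects are those of $\ca$, whose non-identity edges are the atomics of $\ca$, and whose identity edges are the identity arrows of $\ca$. There is a comparison functor $F\cx\to\ca$ that is the identity on objects and sends $[f_1,\ldots,f_n]$ to the composite $f_n\cdots f_1$. This functor is bijective on non-identity arrows by the unique factorisation property. It is also bijective on identities: no composite of atomics can be an identity, because the freeness condition forbids non-trivial factorisations of identities. Hence the comparison is an isomorphism, and in fact it is itself a free functor, since it sends atomics to atomics.

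The only delicate point is the bookkeeping of identities. We must use the condition that identities admit no non-trivial factorisation both to show that $F\cx$ is free and to show that the comparison functor is injective, and we must check that reflexivity is exactly what lets a free functor send atomics to identities. The rest is routine.
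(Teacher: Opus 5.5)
Your proof is correct and follows the same route as the paper: $F$ lands in $\Free$, the factored functor is full (free functors are exactly those induced by reflexive graph maps) and faithful, and it is essentially surjective because each free category is free on its reflexive graph of atomics and identities. You supply the routine checks that the paper leaves implicit, namely length additivity and the fact that no composite of atomics is an identity.
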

\begin{proof}
$F \colon \RGph \to \Cat$ takes its image in the subcategory $\Free$ of $\Cat$ by construction.  The induced factorisation $\RGph \to \Free$ is (1) \emph{essentially surjective on objects} since each free category is free on its reflexive graph of atomics/identities, (2) \emph{full} since  free morphisms are precisely those arising from the corresponding reflexive graph maps and (3) \emph{faithful} since $F$ is, by a trivial calculation.
\end{proof}

We now turn to the induced comonad $FU$ on $\Cat$.  The following fact is discussed in \cite{Dawson,Campbell} but since the details are left to the reader, we describe the key aspects of the proof.

\begin{lemma}\label{comonadic}
The adjunction $F \dashv U$ is comonadic.
\end{lemma}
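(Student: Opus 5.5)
To prove that $F \dashv U$ is comonadic, I would show directly that the comparison functor $K \colon \RGph \to \Coalg{FU}$ is an equivalence. Recall that $K$ sends $\cx$ to $(F\cx, F\eta_{\cx})$, where $\eta$ is the unit of the adjunction. A coalgebra for $FU$ on a category $\ca$ is a functor $\delta \colon \ca \to FU\ca$ satisfying counit and coassociativity laws. Since $F$ is faithful and the unit $\eta_{\cx} \colon \cx \to UF\cx$ is a monomorphism of reflexive graphs, $K$ is faithful. The real content is in essential surjectivity and fullness. Both follow once we show that every coalgebra $(\ca,\delta)$ is isomorphic to $K$ of a reflexive graph, and that coalgebra morphisms between free categories are exactly the free functors. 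Combined with the equivalence $\RGph \simeq \Free$ of the preceding lemma, this yields $\RGph \simeq \Coalg{FU}$.

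The plan for the coalgebra analysis is as follows. Given $\delta \colon \ca \to FU\ca$, the counit law $\epsilon \circ \delta = 1$ says that $\delta(f) = [f_1,\ldots,f_n]$ is a factorisation of $f$, i.e.\ $f = f_n \cdots f_1$ with each $f_i$ a non-identity arrow of $\ca$, or else $\delta(f)$ is an identity. In the latter case $f$ is an identity, because $\epsilon\delta(f) = f$.

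Call $f$ \emph{basic} if $\delta(f) = [f]$. Coassociativity says that applying $FU\delta$ to $\delta(f)$, which concatenates the lists $\delta(f_i)$, agrees with applying $\delta_{FU}$ to $\delta(f)$. Here $\delta_{FU} = F\eta U$ sends $[f_1,\ldots,f_n]$ to the list of singleton words $[[f_1],\ldots,[f_n]]$. Comparing the two, each $\delta(f_i) = [f_i]$, so every $f_i$ is basic.

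Functoriality of $\delta$ then gives the rest:
\begin{itemize}
\item $\delta(gh) = \delta(g)\delta(h)$ is the concatenation, so a composite of $n$ basic arrows has $\delta$-value of length $n$.
\item In particular, identities have no non-trivial factorisation into non-identities: if $1 = gh$ with $g,h$ non-identities, then $\delta(1)$ would be a list of length at least $2$, yet it is the empty word.
\item Factorisations into basics are unique, since applying $\delta$ to any such factorisation recovers it.
\end{itemize}
Hence $\ca$ is free with atomics exactly the basic arrows. Indeed, a basic arrow cannot factor non-trivially, since its $\delta$-value has length $1$. Conversely, an atomic $f$ has $\delta(f)$ of length $1$, so it is basic. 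Thus $\delta$ is forced to be the canonical comultiplication of the free category on its graph of atomics, and $(\ca,\delta) \cong K(\text{atomics})$.

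For morphisms, a coalgebra map $G$ satisfies $\delta G = FUG\,\delta$. Applied to an atomic $f$, the right-hand side is $[Gf]$ when $Gf$ is not an identity, so $Gf$ is atomic. When $Gf$ is an identity, it is the empty word, because $FU$ sends identity arrows of $U\ca$ to identities. So coalgebra maps are precisely the free functors, which gives fullness.

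The main obstacle I expect is bookkeeping with reflexivity, namely making sure identities are handled correctly by $FU$. The graph $U\ca$ contains the identities as its distinguished loops, and $F$ does not create new words from them; I need that the words in $F\cx$ are sequences of non-identity edges, as described above. With that convention fixed, the computations above go through routinely.
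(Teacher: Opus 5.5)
Your proof is correct, but it follows a different route from the paper.

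\textbf{The paper's route.} The paper applies Beck's comonadicity theorem. It checks that $F$ reflects isomorphisms and preserves coreflexive equalisers. For the latter, it factors the free-category functor on graphs as $F^g = F\circ F^r$, through the free reflexive graph functor $F^r\colon\Gph\to\RGph$. It then reduces a coreflexive pair in $\RGph$ to one of the form $F^r l', F^r r'$. Finally it invokes that $F^g$ preserves connected limits, being the free functor for a familially representable monad.

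\textbf{Your route.} You work directly with $FU$-coalgebras.
\begin{itemize}
\item The counit law makes $\delta(f)$ a factorisation of $f$.
\item Coassociativity forces every letter of $\delta(f)$ to be basic.
\item Functoriality, since composition in $FU\ca$ is concatenation, shows that the basic arrows are exactly the atomics. Hence $\ca$ is free and $\delta$ is forced to be decomposition into atomics.
\item Coalgebra maps are then precisely the free functors, so $K$ is essentially surjective and full, with faithfulness inherited from $F$.
\end{itemize}

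\textbf{What each buys.} Your argument is elementary and self-contained, and it proves more than the lemma. It shows directly that a category carries at most one $FU$-coalgebra structure, that it carries one exactly when it is free, and that coalgebra morphisms are the free functors. This is the isomorphism $\Free\cong\Coalg{FU}$ of Proposition~\ref{free}, obtained without the paper's 2-out-of-3 argument for discrete isofibrations. The paper's proof is shorter and more conceptual. It would adapt to other situations where the left adjoint preserves connected limits. However, it leans on Beck's theorem and on the external result about familially representable monads, and it leaves the reflection of isomorphisms and the reduction step to the reader.

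\textbf{A correction to your write-up.} $FU\delta$ does not concatenate the lists $\delta(f_i)$; concatenation would produce a morphism of $FU\ca$, not of $FUFU\ca$. Instead, $FU\delta$ sends $[f_1,\ldots,f_n]$ to the word of words $[\delta(f_1),\ldots,\delta(f_n)]$ in $FUFU\ca$, with any entries that are identities of $FU\ca$ deleted. Comparing this with $[[f_1],\ldots,[f_n]]$, which has length $n$ and no identity entries, is what forces $\delta(f_i)=[f_i]$ for every $i$. This is exactly the identity bookkeeping you anticipated, and it goes through.
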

\begin{proof}
By Beck's comonadicity theorem, it suffices to show that $F$ reflects isomorphisms and preserves coreflexive equalisers, and the first point is easily checked.  For the latter, consider the commutative diagram of left adjoints

\[\begin{tikzcd}
	\Gph && \RGph \\
	&& \Cat
	\arrow["{F^r}", from=1-1, to=1-3]
	\arrow["{F^g}"', from=1-1, to=2-3]
	\arrow["F", from=1-3, to=2-3]
\end{tikzcd}\]

where $F^g$ is the free category on a graph and $F^r$ is the free reflexive graph construction, which has $F^r(X)_0 = X_0$ and $F^r(X)_1 = X_0 + X_1$.  Since coproducts commute with equalisers in $\Set$, therefore $F^r$ preserves them.

Now $F^r$ is essentially surjective on objects since each reflexive graph is free on the graph obtained by discarding its identities.  Therefore, it suffices to consider a coreflexive pair of the form $l,r \colon F^r X \rightrightarrows F^r Y$.  By coreflexivity, both $l$ and $r$ are split mono.  Although $F^r$ is not full, it is full on monos since they not only preserve, but also reflect, identities.  There $l = F^r l'$ and $r = F^r r'$.  We now form the equaliser $E \to X$ of $l',r' \colon X \rightrightarrows Y$ in $\Gph$ which is preserved by $F^r$.  Therefore, $F$ will preserve the equaliser $F^r E$ just when $F^g$ preserves the coreflexive equaliser in $\Gph$.  But $F^g$ preserves all equalisers (or connected limits more generally) since it is the free functor to the category of algebras for a familially representable monad --- see Example C.3.3 of \cite{LeinsterBook}.

\end{proof}

Combining the above results we obtain:

\begin{prop}\label{free}
There is an isomorphism of categories $\Free \cong \Coalg{FU}$ commuting with the forgetful functors to $\Cat$.
\end{prop}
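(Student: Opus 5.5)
By \Cref{comonadic}, the comparison functor $K \colon \RGph \to \Coalg{FU}$ is an equivalence of categories. It sends $\cx$ to $F\cx$ with coalgebra structure $F\eta_\cx \colon F\cx \to FUF\cx$, where $\eta$ is the unit of $F \dashv U$. The preceding lemma gives an equivalence $\RGph \simeq \Free$ which is also induced by $F$. Composing, I obtain an equivalence $\Free \simeq \Coalg{FU}$ commuting with the forgetful functors to $\Cat$ up to isomorphism. To upgrade this to an isomorphism commuting strictly, I will use the standard fact that an equivalence over a base which is an isofibration on both sides can be strictified. More concretely, I will check by hand the three conditions below.

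The plan is to show three things about the forgetful functor $V \colon \Coalg{FU} \to \Cat$.
\begin{itemize}
\item It is injective on objects up to the correct image. Each free category carries exactly one $FU$-coalgebra structure, and the categories admitting one are exactly the free ones.
\item A functor between free categories is a coalgebra morphism exactly when it is free.
\item $V$ is faithful, which is automatic for comonads.
\end{itemize}
From these, $\Coalg{FU}$ is isomorphic to the subcategory $\Free$ of $\Cat$, over $\Cat$.

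For the objects, a coalgebra $\gamma \colon \ca \to FU\ca$ has underlying category isomorphic to some $F\cx$ by comonadicity. This is because the equivalence $\RGph \simeq \Coalg{FU}$ gives $(\ca,\gamma) \cong (F\cx, F\eta)$ as coalgebras, so $\ca$ is free, the property being invariant under isomorphism. For uniqueness of $\gamma$ on a given free category $\ca$, I will use the counit law $\epsilon \gamma = 1$ and coassociativity. The counit law says $\gamma(f)$ is a path in $U\ca$ whose composite is $f$. Coassociativity, with $\gamma$ a functor, should force $\gamma$ to send atomics to length-one paths, that is, to the arrow itself. Then functoriality and unique factorisation into atomics determine $\gamma$ as the map sending $f = a_n\cdots a_1$ (with the $a_i$ atomic) to $[a_1,\ldots,a_n]$. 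I expect this step to be the main obstacle.

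To handle it, I would first try transporting along the isomorphism $(\ca,\gamma) \cong (F\cx,F\eta)$. Under this isomorphism, $\gamma$ corresponds to $F\eta$, which visibly sends generators to singleton paths. The atomics of $\ca$ correspond to the generators of $F\cx$, since atomicity is intrinsic by the remark on bases. Hence $\gamma$ sends atomics to singletons, and this determines it. This argument avoids the direct coassociativity computation.

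For the morphisms, let $G \colon \ca \to \cb$ be a functor between free categories carrying these canonical coalgebra structures. Then $G$ commutes with the coalgebra maps iff $FUG$ sends the singleton path $[a]$ of an atomic $a$ to $\gamma_\cb(Ga)$. That singleton path is $[Ga]$, of length one, or the empty path when $Ga$ is an identity, since $FU$ discards identities. On the other side, $\gamma_\cb(Ga)$ is the atomic decomposition of $Ga$. These agree precisely when $Ga$ is atomic or an identity, that is, when $G$ is free. Checking on atomics suffices because both composites are functors and atomics generate. This yields $\Free \cong \Coalg{FU}$ over $\Cat$.
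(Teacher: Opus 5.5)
Your argument is correct, but after the first paragraph it takes a different route from the paper. The paper does no computation. It notes that $\Free\hookrightarrow\Cat$ is itself a left adjoint (through $\Free\simeq\RGph$) inducing the same comonad $FU$, which gives a comparison functor $K\colon\Free\to\Coalg{FU}$ over $\Cat$. Comonadicity and 2-out-of-3 make $K$ an equivalence. Since both $U^{FU}$ and $\Free\hookrightarrow\Cat$ are discrete isofibrations, so is $K$, and a discrete isofibration which is an equivalence is an isomorphism.

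You use comonadicity only to know that each coalgebra is isomorphic to some $(F\cx,F\eta_{\cx})$. You then identify $\Coalg{FU}$ by hand as a subcategory of $\Cat$: the underlying category is free, the structure map is forced to be atomic decomposition, and the coalgebra morphisms are exactly the free functors. Your check on atomics is right, including the case where $Ga$ is an identity and $FUG$ returns the empty path.

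The paper's route is shorter and purely formal. The content you verify explicitly is hidden there in the fullness claim of the lemma $\RGph\simeq\Free$ and in the unproved description of the coalgebra structure. Your route makes explicit the facts used later, e.g.\ that $1$-free morphisms are coalgebra maps.

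Three small points:
\begin{enumerate}
\item You should say why every free category actually carries the atomic-decomposition coalgebra. Either transport $F\eta_{\cx}$ along $\ca\cong F\cx$, or check the counit and coassociativity laws directly.
\item The step you flagged as the main obstacle is easier than you feared. Once $\ca$ is known to be free, the counit law alone gives $\gamma(a)=[a]$ for atomic $a$. A path of $n\geq 2$ non-identities composing to $a$ would give a non-trivial factorisation of $a$, because identities admit no non-trivial factorisations.
\item The ``standard fact'' in your first paragraph, stated for mere isofibrations, would only yield an equivalence commuting strictly over $\Cat$. It is the uniqueness of lifts (discreteness) that gives an isomorphism, as in the paper. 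Your by-hand argument does not depend on it.
\end{enumerate}
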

\begin{proof}

Consider the following commutative diagram.  

\[\begin{tikzcd}
	\RGph && \Free && {\Coalg{FU}} \\
	&& \Cat
	\arrow["\simeq", from=1-1, to=1-3]
	\arrow["\simeq", curve={height=-24pt}, from=1-1, to=1-5]
	\arrow["F"', from=1-1, to=2-3]
	\arrow["K", from=1-3, to=1-5]
	\arrow[hook, from=1-3, to=2-3]
	\arrow["{U^{FU}}", from=1-5, to=2-3]
\end{tikzcd}\]

First note that $\Free \hookrightarrow \Cat$ is a left adjoint, the composite of $\Free \simeq \RGph$ and $F \colon \RGph \to \Cat$, and the induced comonad is again $FU$.  Hence, we obtain the canonical comparison $K$ in the right horizontal.  Explicitly, given a free category $\ca$, the induced functor for the coalgebra structure $\ca \to FU\ca$ acts by decomposing non-identities into their atomic components.  

Likewise the upper functor $\RGph \to \Coalg{FU}$ is the canonical comparison, which is an equivalence by Lemma \ref{comonadic}.  By $2$-out-of-$3$ for equivalences, $K$ is therefore an equivalence.  

In fact it is an isomorphism.  Indeed, like any forgetful functor from a category of coalgebras, $U^{FU}$ is a discrete isofibration --- that is, admits unique transport of structure along isomorphisms.  Furthermore, so does the inclusion $\Free \hookrightarrow \Cat$ since isomorphisms preserve atomic arrows.  Therefore, by $2$-out-of-$3$ for discrete isofibrations, $K$ is one too.  Since any discrete isofibration which is also an equivalence is an isomorphism of categories, this proves the claim.
\end{proof}

\subsection{The pseudomap classifier comonad $Q$ and $1$-free Gray-categories}\label{sect:freegraycats}
The main comonad $Q$ of interest to us resides on $\GCat$ and is obtained by lifting the comonad $FU$ on $\Cat$ in a simple way.  We now recall its construction, which first appeared in \cite{GohlaB:mapp}.

The starting observation is that the forgetful functor $(-)_1 \colon \GCat \to \Cat$ sending a Gray-category to its underlying category is a Grothendieck fibration, whose cartesian morphisms are those Gray-functors which are fully faithful on $2$-cells and $3$-cells.  Concretely,  the cartesian lifting $\cb^\star_F$ of $F \colon \cb \to \ca_1$ has the same underlying category as $\cb$ whilst $\cb^\star_F(x,y)(f,g) := \ca(Fx,Fy)(Ff,Fg)$.  Furthermore, the lifted Gray-functor $\cb^\star_F \to \ca$ then acts as $F$ on underlying categories and is the identity on $2$ and $3$-cells.

By \cite[Theorem 2.14]{GohlaB:mapp}, any comonad can be lifted along a Grothendieck fibration by taking the cartesian liftings of the counit and comultiplication.  In our setting, this produces the comonad $Q$ on $\GCat$, as below, and simply means that the counit and multiplication for $Q$ are the same as for $FU$ and have trivial action on $2$-cells and $3$-cells.

\[\begin{tikzcd}[ampersand replacement=\&]
	{QQ\ca} \&\& {Q\ca} \&\& \ca \\
	{FUFU\ca_1} \&\& {FU\ca_1} \&\& {\ca_1}
	\arrow["{\Delta_\ca}"', from=1-3, to=1-1]
	\arrow["{p_{\ca}}", from=1-3, to=1-5]
	\arrow["{(-)_1}"', between={0.1}{0.9}, maps to, from=1-3, to=2-3]
	\arrow["{F\eta_{U\ca_1}}", from=2-3, to=2-1]
	\arrow["{\epsilon_{\ca_1}}"', from=2-3, to=2-5]
\end{tikzcd}\]

In particular, the counit $p_{\ca} \colon Q\ca \to \ca$ is the identity on objects, sends a composable sequence $\overline{f} = [f_1,\ldots,f_n]:x \to y$ to its composite $f_n \circ \ldots \circ f_1$ and is the identity on $2$-cells and $3$-cells.  

\begin{rmk}
The reason that $Q$ is important for us is that, as established in \cite[Lemma 2.25]{GohlaB:mapp}, the category $\GCat_p$ is the Kleisli category for $Q$, with $J \colon\GCat \to \GCat_p$ the canonical inclusion to the Kleisli category.  
Accordingly, when referring to $Q$, we sometimes call it the \emph{pseudomap classifier}.  

Its universal property from the Kleisli adjunction is that there is a universal pseudomap $q_{\ca} \colon \ca \rightsquigarrow Q\ca$ through which each pseudomap $F \colon \ca \rightsquigarrow \cb$ factors uniquely, as depicted below.
\[\begin{tikzcd}
	{Q\ca} && \\
	\ca && \cb
	\arrow["{F'}", from=1-1, to=2-3]
	\arrow["{q_{\ca}}", squiggly, from=2-1, to=1-1]
	\arrow["F"', squiggly, from=2-1, to=2-3]
\end{tikzcd}\]

Though not required, we note that $q_{\ca} \colon \ca \rightsquigarrow Q\ca$ is described as follows: it is the identity on objects, sends identity $1$-cells to identity $1$-cells, and sends non-identities $f \colon x \to y$ to words $[f] \colon x \to y$.  It is also the identity on $2$-cells and $3$-cells.  The cocycle $q^2_{g,f}$ is given by $1_{gf} \colon [g] \circ [f] \to [gf]$.  The construction of $F'$ is described in \cite[Lemma 2.25]{GohlaB:mapp}.\end{rmk}

In Proposition~\ref{prop:coalgebra}, we identify the $Q$-coalgebras as the $1$-free Gray-categories, which we define now.

\begin{defn}
The category $\Free_1$ of $1$-free Gray-categories and $1$-free functors is defined as the pullback below.
\[\begin{tikzcd}
	{\Free_1} && \Free \\
	\GCat && \Cat
	\arrow[from=1-1, to=1-3]
	\arrow[hook, from=1-1, to=2-1]
	\arrow[hook, from=1-3, to=2-3]
	\arrow["{(-)_1}"', from=2-1, to=2-3]
\end{tikzcd}\]
That is,  $\ca$ is $1$-free if its underlying category $\ca_1$ is free whilst $F\colon \ca \to \cb$ is $1$-free if its underlying functor is free.  
\end{defn}

\begin{egs}
\begin{enumerate}
\item If the underlying category of $\ca$ is free on a directed graph, then $\ca$ is $1$-free.  This is because each category which is free on a graph is free on a reflexive graph.
\item Each cofibrant Gray-category is $1$-free.  Indeed, by \cite[Corollary 9.4]{Lack2011A-quillen}, the cofibrant Gray-categories are those whose underlying sesqui-category is free on a $2$-computad, which in particular mean that their underlying category is free on a graph.  Therefore this follows from the previous point.
\item Each of the generating cofibrations and trivial cofibrations in $\GCat$ is a $1$-free morphism between $1$-free Gray-categories.  Indeed, all $1$-cells in the domain or codomain of the generating cofibrations, described in \Cref{sect:Graymodel}, are identities or atomic, so being a $1$-free morphism follows automatically.
The generating trivial cofibrations are the maps $l_n \colon D^3_i \to E^3_{i+1}$ for $i=0,1,2$ of \Cref{sect:Graymodel}.  The underlying functors of $l_1$ and $l_2$ are both the identity on the generic parallel pair of $1$-cells $\{0 \rightrightarrows 1\}$ and so $1$-free.  The underlying functor of $l_0 \colon D^3_0 \to E^3_{1}$ simply selects the object $0$ of the free category on the graph $0 \leftrightarrows 1$ and so is also $1$-free.

\end{enumerate}
\end{egs}

\begin{prop}\label{prop:coalgebra}
There is an isomorphism of categories $\Free \cong \Coalg{Q}$ commuting with the forgetful functors to $\GCat$. 
\end{prop}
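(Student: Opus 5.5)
The statement should read $\Free_1 \cong \Coalg{Q}$ over $\GCat$. The plan is to deduce it from \Cref{free} by a general argument about lifting comonads along Grothendieck fibrations. The key facts are these. $Q$ is the lift of $FU$ along the fibration $(-)_1 \colon \GCat \to \Cat$. Its counit $p_\ca$ and comultiplication $\Delta_\ca$ are cartesian morphisms lying over $\epsilon_{\ca_1}$ and $F\eta_{U\ca_1}$. Finally, $Q\ca$ is the cartesian lift of $p$-data, so $(Q\ca)_1 = FU\ca_1$.

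First, I will construct a functor $\Coalg{Q} \to \Coalg{FU}$ over the functor $(-)_1$. A $Q$-coalgebra $c \colon \ca \to Q\ca$ maps to $c_1 \colon \ca_1 \to FU\ca_1$. The coalgebra axioms are preserved because $(-)_1$ sends $p$ and $\Delta$ to $\epsilon$ and $F\eta U$. This yields a commutative square from $\Coalg{Q}$ to the pullback of $\Coalg{FU} \cong \Free$ along $(-)_1$, which is exactly $\Free_1$ by \Cref{free}. The claim is that this comparison is an isomorphism.

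Next, I will prove bijectivity on objects. Take a Gray-category $\ca$ together with an $FU$-coalgebra structure $\gamma \colon \ca_1 \to FU\ca_1$. I must show there is a unique Gray-functor $c \colon \ca \to Q\ca$ over $\gamma$ that satisfies the counit law, and that it automatically satisfies coassociativity. Since $p_\ca$ is cartesian, Gray-functors $c$ over $\gamma$ with $p_\ca c = 1_\ca$ correspond bijectively to factorisations of $1_\ca$ through $p_\ca$ over $\epsilon\gamma = 1$. There is exactly one such factorisation, by the universal property of cartesian arrows. Concretely, $c$ decomposes $1$-cells into atomics and is the identity on $2$- and $3$-cells. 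Coassociativity $\Delta c = Qc \circ c$ holds because both sides are maps into $QQ\ca$ lying over the same functor (coassociativity of $\gamma$). Composing each with the cartesian map $p_{Q\ca}$, or with $Qp\circ$, gives the same map, namely $c$. Therefore both sides coincide by uniqueness of factorisations through cartesian maps. For this I need that $Qp_\ca$ or $p_{Q\ca}$ is cartesian, which holds since cartesian maps are exactly those that are fully faithful on $2$- and $3$-cells, and $Q$ preserves that property.

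On morphisms, a Gray-functor $G \colon \ca \to \cb$ between coalgebras is a coalgebra map iff $c_\cb G = QG\, c_\ca$. Both sides lie over functors that agree exactly when $G_1$ is an $FU$-coalgebra map, that is, a free functor. Both then compose with the cartesian $p_\cb$ to give $G$, so they agree, again by uniqueness. Hence the morphisms also correspond.

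The main obstacle is the coassociativity step: checking carefully which structure maps are cartesian, so that the uniqueness argument applies. The fallback is a direct verification. All the maps act as the identity on $2$- and $3$-cells, so equalities reduce to equalities on underlying categories, which follow from \Cref{free}.
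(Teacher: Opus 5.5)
Your proposal is correct and follows essentially the same route as the paper, which also uses \Cref{free} to reduce to showing that the square $\Coalg{Q}\to\Coalg{FU}$ over $(-)_1$ is a pullback, and derives this from the cartesianness of the counit $p$. You are also right that the statement should read $\Free_1\cong\Coalg{Q}$; the only cosmetic difference is that for coassociativity you cancel the cartesian map $p_{Q\ca}$ (or $Qp_\ca$), whereas the paper cancels the composite $p_\ca\circ p_{Q\ca}$, and either works.
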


By definition of $\Free_1$ and Proposition \ref{free} the left rectangle below is the composite of two pullback squares and so a pullback itself.
\begin{proof}
\[\begin{tikzcd}
	{\Free_1} & \Free & {\Coalg{FU}} && {\Coalg{Q}} & {\Coalg{FU}} \\
	\GCat & \Cat & \Cat && \GCat & \Cat
	\arrow[from=1-1, to=1-2]
	\arrow[hook, from=1-1, to=2-1]
	\arrow["\cong", from=1-2, to=1-3]
	\arrow[hook, from=1-2, to=2-2]
	\arrow["{U^{FU}}", from=1-3, to=2-3]
	\arrow[from=1-5, to=1-6]
	\arrow["{U^Q}"', from=1-5, to=2-5]
	\arrow["{U^{FU}}", from=1-6, to=2-6]
	\arrow["{(-)_1}"', from=2-1, to=2-2]
	\arrow[equals, from=2-2, to=2-3]
	\arrow["{(-)_1}"', from=2-5, to=2-6]
\end{tikzcd}\]
Therefore it suffices to prove that the square above right is a pullback.  This is a routine formal consequence of the construction of $Q$ via lifting $FU$ through the fibration $(-)_1$.  To see this, let us consider what it means to equip $\ca$ with the structure of a $Q$-coalgebra.  Since $p_{\ca}$ is a cartesian lifting of $\epsilon_{\ca_1}$ along $(-)_1$, the diagram below 
\[\begin{tikzcd}
	& {Q\ca} &&&& {FU\ca_1} \\
	\ca & \ca &&& {\ca_1} & {\ca_1}
	\arrow[""{name=0, anchor=center, inner sep=0}, "{p_{\ca}}", from=1-2, to=2-2]
	\arrow["{\epsilon_{\ca_1}}", from=1-6, to=2-6]
	\arrow["a", dashed, from=2-1, to=1-2]
	\arrow["1"', from=2-1, to=2-2]
	\arrow[""{name=1, anchor=center, inner sep=0}, "{a_1}", dashed, from=2-5, to=1-6]
	\arrow["1"', from=2-5, to=2-6]
	\arrow["{(-)_1}", between={0.3}{0.7}, maps to, from=0, to=1]
\end{tikzcd}\]
exhibits a bijection between sections of $p_{\ca}$ and sections $a_1$ of $\epsilon_{\ca_1}$.  That $a$ is a $Q$-coalgebra if and only if $a_1$ is an $FU$-coalgebra follows easily using that $p_{\ca} \circ p_{Q\ca}$ is cartesian and that $\Delta_\ca$ is a lifting of $\epsilon_{\ca_1}$.  In particular, we obtain a bijective correspondence between $Q$-coalgebra structures on $\ca$ and $FU$-coalgebra structures on $\ca_1$.  A corresponding statement holds for morphisms of coalgebras.

\end{proof}

\subsection{2-trivial fibrations and icons}

We now turn to an important class of morphism called (normal) $2$-trivial fibrations.  These are important because each such Gray-functor admits a pseudomap section, which will prove crucial in Section \ref{subsect:replacing}.  

\begin{defn}
A Gray-functor $F \colon \ca \to \cb$ is called a $2$-trivial fibration if it is surjective on objects, full on $1$-cells and fully faithful both on $2$-cells and $3$-cells. (In other words, a trivial fibration which is also faithful on $2$-cells.)

A normal cleavage for a $2$-trivial fibration is a \emph{lifting function} $\phi$ giving:
\begin{itemize}
\item for each $x \in \cb$, an object $\phi x \in \ca$ such that $G( \phi x) = x$;
\item for each $x,y \in \ca$ and $1$-cell $f \colon Gx \to Gy \in \cb$, a $1$-cell $\phi(x,f,y) \colon x \to y$ such that $G \phi(x,f,y) = f$ and satisfying the normality condition $\phi(x,1_{Gx},x) = 1_x$.
\end{itemize}

A normal $2$-trivial fibration  $(F,\phi) \colon \ca \to \cb$ is a $2$-trivial fibration equipped with a normal cleavage.  We sometimes abuse notation by writing $\phi(f) \colon x \to y$ instead of $\phi(x,f,y)$ when the context is clear.  

\end{defn}

\begin{rmk}{We mention that normal $2$-trivial fibrations are the right class of an algebraic weak factorisation system (awfs) on $\GCat$ whose cofibrant replacement comonad is $Q$.  This perspective is further explored in the proof of \cite[Proposition 3.6]{BourkeLobbia}.  Note that in \cite{BourkeLobbia}, normal $2$-trivial fibrations were called algebraic surjections. }\end{rmk}

\begin{eg}
Each $2$-trivial fibration admits a normal cleavage, though it is not canonical.
\end{eg}

\begin{eg}\label{counitnormal}
The counit $p_{\ca} \colon Q\ca \to \ca$ is a normal $2$-trivial fibration when equipped with the lifting function $\phi x = x$ on objects and, on maps $f$, $\phi(x,f,y) = [f]$ if $f$ is non-identity and $\phi(x,f,y) = 1_x$ if it is the identity.  
\end{eg}


\begin{defn}
Let $\eta \colon F \to G \in \psd_p(\ca,\cb)$ be a pseudo-transformation. It is said to be an icon if the 1-cell components $\eta_a \colon Fa \to Ga$ are identities.
\end{defn}

\begin{lemma}\label{section}
Let $F \colon \ca \to \cb$ be a $2$-trivial fibration.
\begin{enumerate}[leftmargin=*, label=(\arabic*)]
\item Then $F$ admits a section in $\GCat_p$.  More precisely, each normal cleavage $\phi$ on $F$ induces a canonical pseudomap $G  \colon \cb \rightsquigarrow \ca$ in $\GCat_p$ such that $F G  = 1$.
\item Suppose that $F$ is furthermore the identity on objects.  Given pseudomaps $G,H \colon \cc \rightsquigarrow \ca$ such that $F G = F H$, there exists a unique invertible icon $\lambda \colon G \to H$ such that $F \lambda = id$.  
\end{enumerate}
\end{lemma}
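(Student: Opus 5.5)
For (1), I will build $G$ directly from the normal cleavage $\phi$. On objects set $Gx := \phi x$. On $1$-cells $f \colon x \to y$ of $\cb$ set $Gf := \phi(\phi x, f, \phi y)$. This is legitimate because $F\phi x = x$. Normality of $\phi$ gives $G(1_x) = 1_{Gx}$, so $G$ preserves identities strictly, as a pseudomap must.

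On $2$-cells and $3$-cells, $F$ is fully faithful on the relevant hom-sets. So for a $2$-cell $\theta \colon f \to f'$ in $\cb$ there is a unique $2$-cell $G\theta \colon Gf \to Gf'$ with $F(G\theta) = \theta$, and similarly for $3$-cells. Uniqueness of these lifts means all $2$-categorical structure of the homs is preserved. Hence each $G_{x,y}$ is a $2$-functor, namely the inverse of the isomorphism $\ca(Gx,Gy)(Gf,Gf') \cong \cb(x,y)(f,f')$ assembled with the object map $\phi$.

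The cocycle $G^2_{g,f} \colon Gg \circ Gf \to G(gf)$ is the unique $2$-cell lying over the identity $1_{gf}$. This works because $F(Gg\circ Gf) = gf = F(G(gf))$ strictly. It is invertible, since its inverse is the lift of the identity in the other direction.

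Every pseudomap axiom from Appendix A.1 of \cite{BourkeLobbia} is an equation between $2$-cells or $3$-cells of $\ca$. Each side of such an equation is mapped by $F$ to the corresponding expression in $\cb$ built from identity cocycles and strict structure. Those expressions agree because $\cb$ is a Gray-category and $1_\cb$ is a Gray-functor. Faithfulness of $F$ on $2$- and $3$-cells then forces the equation in $\ca$, including associativity and unitality of $G^2$ and the interchange compatibility. Finally $FG = 1$ on all cells by construction, and $F$ applied to $G^2$ is the identity cocycle. So the composite pseudomap $FG$ is the strict identity.

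For (2), with $F$ the identity on objects, $FG = FH$ forces $Gc = Hc$ for each object $c$. So I take the $1$-cell components $\lambda_c := 1_{Gc}$, which makes $\lambda$ an icon. For a $1$-cell $f$, the component $\lambda_f \colon Gf \to Hf$ must satisfy $F\lambda_f = 1_{FGf}$ (since $F\lambda = id$), and $FGf = FHf$. So $\lambda_f$ is forced to be the unique $2$-cell over the identity, which is invertible. Take its inverse as the left adjoint; the adjoint equivalence data is forced similarly, and is the identity when $f$ is an identity by normality.

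The $3$-cell components $\lambda_\phi$ and $\lambda^2_{f_2,f_1}$ are likewise the unique lifts of identity $3$-cells. Their source and target become equal after applying $F$, because $F$ is strict and $FG = FH$ including the cocycles. The transformation axioms and the required invertibility hold again by faithfulness on $3$-cells. Uniqueness of $\lambda$ is immediate, since every component was forced.

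The main obstacle I expect is bookkeeping rather than ideas. One must check that in each axiom both sides really do have the same image under $F$, which depends on $F$ being a strict Gray-functor so that it preserves whiskerings and interchangers on the nose. I would handle this uniformly with a single principle: lifts along a functor fully faithful on $2$- and $3$-cells are unique, so any diagram of lifts commutes whenever its image commutes.
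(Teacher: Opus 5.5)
Your proposal is correct and is essentially the paper's own proof. $G$ is built from the normal cleavage on $0$- and $1$-cells and extended by unique lifts on $2$- and $3$-cells; $G^2_{g,f}$ is the lift of $1_{gf}$, and every pseudomap axiom is checked after applying $F$, which is faithful on $2$- and $3$-cells. In (2), every component of $\lambda$ is forced to be the unique lift of an identity, which gives both existence and uniqueness.

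The one point you leave implicit is that $\lambda$ is invertible as a $1$-cell of $\psd_p(\cc,\ca)$, not merely that its $3$-cell components are invertible. The paper handles this by building $\theta\colon H\to G$ symmetrically. Since $\psd_p(\cc,F)$ is a Gray-functor, $F(\theta\circ\lambda)=F\theta\circ F\lambda=id$, so your uniqueness argument forces $\theta\circ\lambda=id_G$, and likewise $\lambda\circ\theta=id_H$.
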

\begin{proof}
\begin{enumerate}[leftmargin=*, label=(\arabic*)]
\item 
Since each $2$-trivial fibration admits a normal cleavage, it suffices to prove the second statement.  This is the content of \cite[Lemma 3.4]{BourkeLobbia} but we include the proof for the reader's convenience.  

On objects, we set $G (x) = \phi (x)$ and, at $f \colon x \to y$, we define $G  f = \phi(f) \colon G x \to G y$ so that $F(G x) = x$ and $F(G f) = f$.  Since $F$ is fully faithful on $2$-cells and $3$-cells, there exists a unique way in which to complete $G $ to a morphism of $3$-graphs such that $FG  = 1$.  The remaining piece of data for a pseudomap is given by the invertible $2$-cell depicted below left

\[\begin{tikzcd}[ampersand replacement=\&]
	{G x} \&\& Gy \&\& x \&\& y \\
	\\
	\&\& Gz \&\&\&\& z
	\arrow["Gf", from=1-1, to=1-3]
	\arrow[""{name=0, anchor=center, inner sep=0}, "{G(gf)}"', from=1-1, to=3-3]
	\arrow[""{name=1, anchor=center, inner sep=0}, "Gg", from=1-3, to=3-3]
	\arrow["f", from=1-5, to=1-7]
	\arrow[""{name=2, anchor=center, inner sep=0}, "gf"', from=1-5, to=3-7]
	\arrow["g", from=1-7, to=3-7]
	\arrow["{G^2_{g,f}}"'{pos=0.6}, between={0}{0.7}, Rightarrow, nfold, from=1-3, to=0]
	\arrow[between={0.3}{0.6}, maps to, from=1, to=2]
	\arrow[between={0}{0.7}, equals, nfold, from=1-7, to=2]
\end{tikzcd}\]

Using that $F$ is fully faithful on $2$-cells, this is defined to be the unique $2$-cell (necessarily invertible) sent by $F$ to the identity on $g \circ f$.  

With regards the equations for $G$ to be a pseudomap, note that by the normality condition for $\phi$, $G $ preserves identity $1$-cells.  Now all of the remaining equations concern equalities between parallel $2$-cells or $3$-cells in $\ca$.  Since $F$ is faithful on $2$-cells and $3$-cells, it suffices to show that each of these equations hold after application of $F$, whereupon they are sent to the corresponding equation for $F G $ to be a pseudomap.  And these hold since $FG $ is the identity on $\cb$. 
\item
The argument is similar to the previous part.  Since $F$ is the identity-on-objects and $FG = FH$, we must have $Ga=Ha$ for all $x \in \ca$, and so we set the components $\lambda_x$ to be the identities, as required.  

To extend $\lambda$ to an icon, it remains to give adjoint equivalence $2$-cells $\lambda_f \colon Gf \cong Hf$ plus invertible $3$-cells of the form $\lambda^2_{g,f}$ and $\lambda_{\phi}$ satisfying certain equalities of $2$-cells and $3$-cells.  These are uniquely determined by the fact that $F$ is fully faithful on $2$-cells and $3$-cells and that $F G = F  H$.  Indeed, given $f \colon x \to y$, the parallel $1$-cells $Gf$ and $Hf$ satisfy $FGf = FHf$.  Then as $F$ is a 2-trivial fibration, there exists a unique invertible $2$-cell $\lambda_f \colon Gf \cong Hf$ such that $F\lambda_f$ is the identity: this isomorphism the adjoint equivalence 2-cell for the pseudo-transformation $\lambda$.  The construction of $\lambda^2_{g,f}$ and $\lambda_{\phi}$ is similar.  That the equations for a pseudo-transformation are satisfied follows from the fact that  $F$ is faithful on $2$-cells and $3$-cells and that the identity $F G=F H$ is a pseudo-transformation.

For invertibility of $\lambda$, observe that a symmetric argument products a unique icon $\theta \colon H \to G$ such that $F\theta = id$.  We claim that $\theta \circ \lambda = id_G$ in $\psd_p(\cc,\ca)$.  Certainly, both sides have the same (identity) $1$-cell components, so it remains to check they have the same $2$-cell and $3$-cell components.  Since $F$ is faithful on $2$-cells and $3$-cells, this is equally to show that $F(\theta \circ \lambda) = F(id_G)$.  But as $F$ is strict, so is $\psd_p(\cc,F)$, so $F(\theta \circ \lambda) = F\theta \circ F\lambda = id_{FG} \circ id_{FG} =  id_{FG} = F(id_G)$ as required.  Similarly $\lambda \circ \theta = id_F$.
\end{enumerate}
\end{proof}

%
%

\subsection{Replacing pseudomaps by isomorphic strict maps}\label{subsect:replacing} 
The following is our main result about replacing pseudomaps by strict ones.

\begin{theorem}\label{thm:replacement}
Let $\ca$ be $1$-free. 
\begin{enumerate}[leftmargin=*, label=(\arabic*)]
\item Given a pseudomap $F \colon \ca \rightsquigarrow \cb$ there is a strict map $K(F) \colon \ca \to \cb$ and invertible icon $\rho_F \colon K(F) \cong F$.  
\item If $F$ is strict, then $K(F)=F$ and $\rho_F=id$.
\item The assignment is natural in strict maps $\cb \to \cc$ and in $1$-free morphisms $\cd \to \ca$.
\end{enumerate}
\end{theorem}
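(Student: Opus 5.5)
We use the $Q$-coalgebra structure that $1$-freeness provides. By \Cref{prop:coalgebra}, a $1$-free $\ca$ carries a canonical $Q$-coalgebra structure $a \colon \ca \to Q\ca$, a Gray-functor with $p_{\ca} a = 1$. It is the identity on objects and sends a non-identity $1$-cell to the word of its atomic factors. A pseudomap $F \colon \ca \rightsquigarrow \cb$ factors uniquely as $F = F' \circ q_{\ca}$ with $F' \colon Q\ca \to \cb$ strict. We then define
$$K(F) := F' \circ a \colon \ca \to \cb,$$
which is strict because it is a composite of Gray-functors.

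For the icon, we compare $q_{\ca}$ and $a$ as pseudomaps $\ca \rightsquigarrow Q\ca$. The counit $p_{\ca}$ is a $2$-trivial fibration (\Cref{counitnormal}) that is the identity on objects. Moreover $p_{\ca} q_{\ca} = 1 = p_{\ca} a$, since $q_{\ca}$ followed by the counit is the identity. By \Cref{section}(2) there is therefore a unique invertible icon $\lambda \colon a \to q_{\ca}$ with $p_{\ca}\lambda = id$. We set $\rho_F := F' \lambda \colon F' a \to F' q_{\ca} = F$. This is whiskering of a pseudo-transformation by the strict map $F'$, i.e. applying $\psd_p(\ca,F')$, so $\rho_F$ is again an invertible icon. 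This proves (1).

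For (2), suppose $F$ is strict. Then $F' = F p_{\ca}$, since $F p_{\ca} q_{\ca} = F$ and the factorisation is unique. Hence $K(F) = F p_{\ca} a = F$, and $\rho_F = F p_{\ca}\lambda = F(id) = id$.

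For (3), first take a strict $H \colon \cb \to \cc$. Uniqueness of the factorisation gives $(HF)' = HF'$, so $K(HF) = HK(F)$ and $\rho_{HF} = H\rho_F$.

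Now take a $1$-free morphism $G \colon \cd \to \ca$. By \Cref{prop:coalgebra} it is a coalgebra morphism, so $QG \circ a_{\cd} = a_{\ca} \circ G$. Since $q$ is the unit of the Kleisli adjunction, it is natural with respect to strict maps: $QG \circ q_{\cd} = q_{\ca} \circ G$ as pseudomaps. It follows that $(FG)' = F' \circ QG$, because this Gray-functor composed with $q_{\cd}$ gives $F'q_{\ca}G = FG$. Hence
$$K(FG) = F'\, QG\, a_{\cd} = F'\, a_{\ca}\, G = K(F)\, G.$$
For the icons, both $QG \circ \lambda_{\cd}$ and $\lambda_{\ca} \circ G$ are invertible icons $a_{\ca} G \to q_{\ca} G$ sent by $p_{\ca}$ to the identity. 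Here we use that $p_{\ca}\, QG = G\, p_{\cd}$ and that $p_{\cd}\lambda_{\cd} = id$. The uniqueness clause of \Cref{section}(2) then forces them to be equal, which gives $\rho_{FG} = \rho_F G$.

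The main obstacle is the bookkeeping in naturality along $G$. We must check that whiskering pseudo-transformations by strict maps on either side is compatible with the uniqueness statement of \Cref{section}(2), and that $q$ is natural in strict maps. The latter we verify directly from the explicit description of $q$: both composites are the identity on $2$-cells and $3$-cells, send a $1$-cell $f$ to $[Gf]$ (or to an identity when $Gf$ is one), and have identity cocycles.
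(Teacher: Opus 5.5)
Your proposal is correct and follows the paper's proof essentially step for step. The steps are: $K(F)=F'\circ a$; $\rho_F=F'\circ\lambda_{\ca}$, where $\lambda_{\ca}\colon a\cong q_{\ca}$ is the unique invertible icon over $p_{\ca}$ given by \Cref{section}(2); $F'=F\circ p_{\ca}$ when $F$ is strict; and naturality in a $1$-free $G$ via the coalgebra-morphism square, naturality of $q$, and the uniqueness clause of \Cref{section}(2). One small correction to your final paragraph: the cocycle $q^2_{g,f}\colon [g]\circ[f]\to[gf]$ is not an identity $2$-cell of $Q\ca$, since its source and target are different words, but the $2$-cell given by $1_{gf}$ in $\ca$; your direct check of $QG\circ q_{\cd}=q_{\ca}\circ G$ still goes through with that reading.
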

\begin{proof}
\begin{enumerate}[leftmargin=*, label=(\arabic*)]
\item Since $\ca$ is $1$-free, by Proposition \ref{prop:coalgebra} it has a coalgebra structure $a \colon \ca \to Q\ca$.  Now the pseudomap $F\colon \ca \rightsquigarrow \cb$ factors as $F' \circ q_{\ca} \colon \ca \rightsquigarrow Q\ca \to \cb$ for the unique strict map $F^{\prime}$ and we set $K(F) = F^{\prime} \circ a$.  

\[\begin{tikzcd}
	{Q\ca} &&&& \\
	\ca && \cb && {}
	\arrow["{F'}", curve={height=-6pt}, from=1-1, to=2-3]
	\arrow[""{name=0, anchor=center, inner sep=0}, "a", shift left=4, from=2-1, to=1-1]
	\arrow[""{name=1, anchor=center, inner sep=0}, "{q_{\ca}}"', shift right=5, squiggly, from=2-1, to=1-1]
	\arrow["F"', squiggly, from=2-1, to=2-3]
	\arrow["{\lambda_\ca}", shift right, between={0.2}{0.8}, Rightarrow, from=0, to=1]
\end{tikzcd}\]

Now $p_{\ca} \circ q_{\ca} = 1_{\ca} = p_{\ca} \circ a$.  Since $p_{\ca}$ is the identity on objects, by Lemma \ref{section}, there is a unique invertible icon $\lambda_{\ca} \colon a \cong q_\ca$ for which $p_{\ca}\circ  \lambda_{\ca}$ is the identity.  Then $\rho_F := F^{\prime} \circ \lambda_{\ca}$ is the invertible icon.

\item If $F$ is strict, then $F' = F \circ p_{\ca}$ since both are strict maps which yield $F$ when precomposed by $q_{\ca}$.  Therefore  $K(F) = F \circ p_{\ca} \circ a = F$ and $\rho_F = F \circ p_{\ca} \circ \lambda_{\ca} = id_F$.

\item The naturality situation is encapsulated in the following diagram.

\[\begin{tikzcd}
	{Q\cd} && {Q\ca} &&&& \\
	\cd && \ca && \cb && \cc
	\arrow["QG", from=1-1, to=1-3]
	\arrow["{F'}", curve={height=-6pt}, from=1-3, to=2-5]
	\arrow[""{name=0, anchor=center, inner sep=0}, "d", shift left=4, from=2-1, to=1-1]
	\arrow[""{name=1, anchor=center, inner sep=0}, "{q_{\cd}}"', shift right=4, squiggly, from=2-1, to=1-1]
	\arrow["G"', from=2-1, to=2-3]
	\arrow[""{name=2, anchor=center, inner sep=0}, "a", shift left=4, from=2-3, to=1-3]
	\arrow[""{name=3, anchor=center, inner sep=0}, "{q_{\ca}}"', shift right=5, squiggly, from=2-3, to=1-3]
	\arrow["F"', squiggly, from=2-3, to=2-5]
	\arrow["H"', from=2-5, to=2-7]
	\arrow["{\lambda_\cd}", shift right, between={0.2}{0.8}, Rightarrow, from=0, to=1]
	\arrow["{\lambda_\ca}", shift right, between={0.2}{0.8}, Rightarrow, from=2, to=3]
\end{tikzcd}\]

Let us first consider $H \colon \cb \to \cc$ strict.  By the universal property of $q_\ca$, we have that $(H \circ F)' = H \circ F'$.  The claim follows immediately. 

Secondly consider $G \colon \cd \to \ca$ a $1$-free morphism.  Let us show that the left square of the above diagram serially commutes, which is precisely what we need.

By \Cref{prop:coalgebra}, $G$ is a morphism of the associated coalgebras, so that $QG \circ d = a \circ G$ whilst by naturality of the unit components $q_{\ca}$, we also have that $q_{\ca} \circ G = QG \circ q_{\cd}$.  Next we show that $\lambda_\ca \circ G = QG \circ \lambda_{\cd}$.  By Lemma \ref{section} Part 2, it suffices to show that both yield the identity when postcomposed by $p_{\ca}$.  The lhs does so by definition of $\lambda_\ca$ whilst $p_{\ca} \circ QG \circ \lambda_{\cd} = G \circ p_{\cd} \circ \lambda_{\cd}$ which equals the identity by definition of $\lambda_\cd$.  Thus the left part of the above diagram serially commutes, as claimed.

Then to obtain naturality in $G$, now observe that by the universal property of $q_{\cd}\colon  \cd \rightsquigarrow Q\cd$, the above commutativity implies that $(F \circ G)'= F' \circ QG$, but then $K(F \circ G) := (F \circ G)' \circ d = F' \circ QG \circ d = F' \circ a \circ G = K(F) \circ G$.  Likewise $\rho_{F \circ G} = \rho_F \circ G$ by an analogous argument.
\end{enumerate}
\end{proof}

\begin{prop}\label{prop:adjointEquiv}
Let $\ca$ be $1$-free.  Then the full inclusion $J \colon \psd_s(\ca,\cb) \to \psd_p(\ca,\cb)$ forms part of a Gray-enriched adjoint equivalence $(\rho, J \dashv K,1)$ with counit $\rho$ and identity unit.  The adjoint equivalences are natural in strict maps $\cb \to \cc$ and in $1$-free morphisms $\cd \to \ca$.
\end{prop}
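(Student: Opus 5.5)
The plan is to build $K$ as a Gray-functor $\psd_p(\ca,\cb) \to \psd_s(\ca,\cb)$ out of the construction of \Cref{thm:replacement}. The cleanest route avoids working cell by cell. Fix the coalgebra structure $a \colon \ca \to Q\ca$ from \Cref{prop:coalgebra}. By \cite[Lemma 2.25]{GohlaB:mapp}, restriction along $q_\ca$ should give an isomorphism of Gray-categories $\psd_s(Q\ca,\cb) \cong \psd_p(\ca,\cb)$, sending $F' \mapsto F'\circ q_\ca$. This is the Gray-enriched form of the Kleisli universal property; I would justify it from closedness of $\GCatmpshl$ and $\GCatmp^l$ together with the description of $Q$ as the pseudomap classifier. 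I then define $K$ as the composite
\[\psd_p(\ca,\cb) \cong \psd_s(Q\ca,\cb) \xrightarrow{\psd_s(a,\cb)} \psd_s(\ca,\cb).\]
Here $\psd_s(a,\cb)$ is a Gray-functor because it is the internal hom functor of the left-sharp structure applied to a strict map. On objects $K$ agrees with $K(F)=F'\circ a$ from \Cref{thm:replacement}.

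Next, the counit. For each $F$ I take $\rho_F = F'\circ\lambda_\ca \colon K(F)=F'a \to F'q_\ca = F$. These components are whiskerings of the single invertible icon $\lambda_\ca \colon a\cong q_\ca$ by the variable $F'$. So $\rho$ should arise from the loose binary composition $\psd_p(\ca,Q\ca),\psd_s(Q\ca,\cb)\rightsquigarrow\psd_p(\ca,\cb)$ evaluated at the $2$-cell $\lambda_\ca$ in the first variable. The resulting pseudo-transformation $\psd_s(a,\cb)\Rightarrow\psd_s(q_\ca,\cb)$ has invertible components.

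The main obstacle is that the adjoint equivalence must be \emph{Gray-enriched}. This requires $\rho$ to be a strict Gray-natural transformation $JK \Rightarrow 1$ whose components are the icons $\rho_F$, rather than merely pseudonatural. Naturality of $\rho$ against higher cells $\alpha \colon F\to G$ in $\psd_p(\ca,\cb)$ amounts to the $2$-interchanger $(\alpha')_{\lambda_\ca}$ of the composition bimap being an identity. I expect this to hold because $\lambda_\ca$ is an icon: its $1$-cell components are identities, so the relevant whiskering cells are trivial. Concretely, I would verify that for an icon $\lambda$ and any transformation $\alpha'$, the composite $\alpha'\lambda$ equals $\lambda\alpha'$ on the nose, checking component by component using the explicit formulas for horizontal composition in $\GCat_p^4$. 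If this fails in general, the fallback is to use that $p_\ca\lambda_\ca=\mathrm{id}$ and to reduce through the uniqueness clause of \Cref{section}(2).

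With $\rho$ strictly natural and invertible, the remaining checks are formal. \Cref{thm:replacement}(2) gives $KJ=1$ and $\rho J=\mathrm{id}$, since for strict $F$ we have $F'=Fp_\ca$ and $p_\ca\lambda_\ca=\mathrm{id}$. So the unit is the identity, and the triangle identities reduce to $\rho J = 1$ and $K\rho = 1$. The second holds because $K(\rho_F)$ is $F'\lambda_\ca a$ restricted, which is the identity since $p_\ca\lambda_\ca=\mathrm{id}$ and $a$ is a coalgebra map. Naturality in strict $\cb\to\cc$ and in $1$-free $\cd\to\ca$ follows from \Cref{thm:replacement}(3), extended from objects to all cells via the same universal property of $q$.
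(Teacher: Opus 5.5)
Your proposal has two genuine gaps, and both sit exactly where the Gray-enriched structure is supposed to come from.

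\textbf{First gap: the isomorphism through $Q\ca$ does not exist.} Restriction along $q_\ca$ does not give an isomorphism $\psd_s(Q\ca,\cb)\cong\psd_p(\ca,\cb)$. The universal property of $q_\ca$ is only the $1$-categorical Kleisli bijection on pseudomaps, and it does not extend to higher cells. Take $\ca$ to be the free category on $0\xrightarrow{f_1}1\xrightarrow{f_2}2$, viewed as a locally discrete Gray-category. A pseudo-transformation $\alpha'$ between strict maps $Q\ca\to\cb$ then has:
\begin{itemize}
\item an adjoint-equivalence component at the word $[f_1,f_2]$;
\item two separate invertible $3$-cells, $(\alpha')^2_{[f_2],[f_1]}$ and $\alpha'_{\lambda}$, where $\lambda\colon[f_2f_1]\cong[f_1,f_2]$ is the comparison $2$-cell;
\item no axiom linking those two $3$-cells.
\end{itemize}
The restriction $\alpha'q_\ca$ only remembers the components at words of length one and the single $3$-cell $(\alpha'q_\ca)^2_{f_2,f_1}$ obtained by pasting those two. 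So in general you can alter the component at $[f_1,f_2]$, compensating in the two $3$-cells, without changing $\alpha'q_\ca$. Restriction is therefore not injective on $1$-cells, and the first arrow in your definition of $K$ does not exist as stated.

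\textbf{Second gap: $\rho$ is not strictly natural.} Even granting a Gray-functor of the form $\alpha\mapsto\alpha'a$, the components $\rho_F=F'\lambda_\ca$ are not strictly natural, and interchange with an icon is not trivial. Compare the pseudo-transformations $(\alpha'q_\ca)\cdot(F'\lambda_\ca)$ and $(G'\lambda_\ca)\cdot(\alpha'a)$:
\begin{itemize}
\item their $1$-cell components agree;
\item at a $1$-cell $f$, their $2$-cell components are $(1\circ F'(\lambda_\ca)_f)\cdot\alpha'_{[f]}$ and $\alpha'_{af}\cdot(G'(\lambda_\ca)_f\circ 1)$;
\item these differ by the invertible $3$-cell $\alpha'_{(\lambda_\ca)_f}$.
\end{itemize}
Being an icon only makes the object components $\alpha'_{(\lambda_\ca)_x}=\alpha'_{1}$ trivial. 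For non-atomic $f$, the $2$-cell $(\lambda_\ca)_f\colon[f]\Rightarrow a(f)$ of $Q\ca$ is not an identity, so $\alpha'_{(\lambda_\ca)_f}$ need not be one either. Your fallback via \Cref{section}(2) does not apply either. That lemma compares icons between pseudomaps into the domain of a $2$-trivial fibration, whereas here you are comparing two pseudo-transformations into $\cb$.

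\textbf{The missing idea.} You need no global construction of $K$ on higher cells. Because $J$ is Gray-fully faithful and each $\rho_F$ is an isomorphism in $\psd_p(\ca,\cb)$, $K$ is forced on homs by conjugation:
\[K(\alpha):=J^{-1}(\rho_G^{-1}\cdot\alpha\cdot\rho_F).\]
The standard enriched argument then makes $K$ a Gray-functor and $\rho$ Gray-natural by construction. After that:
\begin{itemize}
\item \Cref{thm:replacement}(2) gives $KJ=1$ and $\rho J=\mathrm{id}$;
\item $K\rho=\mathrm{id}$ is then formal, since $\rho_{KF}=\mathrm{id}$;
\item naturality in $\cb\to\cc$ and $\cd\to\ca$ is \Cref{thm:replacement}(3).
\end{itemize}
This is the paper's proof.
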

\begin{proof} $J$ is essentially surjective on objects by \Cref{thm:replacement}.  Since it is enriched fully faithful, it is an enriched equivalence.  The structure of the adjoint equivalence is as follows.  The preceding proposition provides a section $K$ on objects to the inclusion $J$ and isomorphisms $\rho_F \colon J K (F) \cong F$.  By a standard enriched categorical argument since $J$ is fully faithful, there then exists a unique extension of $K$ to a Gray-functor for which the $\rho_F$ form the components of a Gray-enriched natural isomorphism.  By \Cref{thm:replacement} we have $\rho_J = id$ whilst $K \rho = id$ follows from $K$'s construction as a Gray-functor, so giving the counit and identity unit of an adjoint equivalence.  The claimed naturality property amount to precisely those stated in the third part of \Cref{thm:replacement}.
 \end{proof}
 
 
 
 \black
 
 \subsection{Replacing pseudo-transformations by isomorphic semi-strict ones}\label{subsect:replacing2} 
 
 We now use the same techniques as above to replace pseudo-transformations by isomorphic semi-strict ones.  The key to achieving this will be the following proposition which allows us see semi-strict pseudo-transformations as strict Gray-functors.

  \begin{prop}\label{semistrictasstrict}
 Let $F,G \colon \ca \rightrightarrows \cb$ be strict maps.  A pseudo-transformation  $\alpha \colon F \to G$ is semi-strict just when the associated pseudomap $\overline{\alpha} \colon \ca \rightsquigarrow \psd_s(D^3_1,\cb)$ is a strict Gray-functor.  
 \end{prop}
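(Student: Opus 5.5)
We plan to compare, piece by piece, the pseudomap structure of $\overline{\alpha}$ described in \Cref{explicitbiclosed} with the definition of a strict map. Recall that $\overline{\alpha}\colon \ca \rightsquigarrow \psd_s(D^3_1,\cb)$ is a strict Gray-functor exactly when two things hold: it preserves composition of $1$-cells strictly as a morphism of underlying $3$-graphs, and its cocycle $\overline{\alpha}^2_{f_2,f_1}$ is an identity. Identities are already preserved by any pseudomap. The $2$-functors $\overline{\alpha}_{x,y}$ on homs impose no further constraint. Indeed, a pseudomap whose cocycle is the identity is precisely a Gray-functor, since the pseudomap axioms then reduce to those of a Gray-functor.

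First, we unpack composition of $1$-cells in $\psd_s(D^3_1,\cb)$. A $1$-cell there is a square with a chosen adjoint equivalence $2$-cell. The composite of $(f_0,f_1,\aexi)$ with $(g_0,g_1,\aexi')$ is $(g_0f_0, g_1f_1, (\aexi'\circ 1)\cdot(1\circ \aexi))$, that is, pasting of the adjoint equivalences. By \Cref{explicitbiclosed}, $\overline{\alpha}(f)$ has components $Ff, Gf$ and adjoint equivalence $(\aexi^\alpha_f)^{\rev}$. So $\overline{\alpha}(f_2)\circ\overline{\alpha}(f_1)$ and $\overline{\alpha}(f_2f_1)$ agree on $0$-components, because $F,G$ are strict. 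They agree as $1$-cells if and only if $(\aexi^\alpha_{f_2f_1})^{\rev}$ equals the pasting of $(\aexi^\alpha_{f_2})^{\rev}$ and $(\aexi^\alpha_{f_1})^{\rev}$ in the appropriate order. Since $(-)^{\rev}$ is a contravariant involution on $\adje$ compatible with whiskering (using $F(\aexi^{\rev})=(F\aexi)^{\rev}$ for the whiskering $2$-functors), this is equivalent to the equation $\aexi^\alpha_{f_2f_1} = (\aexi^{\alpha}_{f_2} \circ 1)\cdot (1 \circ \aexi^{\alpha}_{f_1})$ of \Cref{semistrictequivalence}.

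Second, the cocycle. Again by \Cref{explicitbiclosed}, $\overline{\alpha}^2_{f_2,f_1}$ is the modification with $2$-cell components $F^2_{f_2,f_1}$ and $G^2_{f_2,f_1}$, which are identities since $F,G$ are strict. Its $3$-cell component is the mate $(\alpha^o)^2_{f_2,f_1}$ of \eqref{eq:mate}. Hence $\overline{\alpha}^2$ is the identity if and only if each $(\alpha^o)^2_{f_2,f_1}$ is an identity. The proviso is that the source and target $1$-cells must already coincide, which is exactly the first step. Combining the two steps, $\overline{\alpha}$ is strict if and only if condition (3) of \Cref{semistrictequivalence} holds, that is, if and only if $\alpha$ is semi-strict.

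The main obstacle is the bookkeeping in the first step. We must check carefully that the order of pasting and the reversal convention in \Cref{explicitbiclosed}, where horizontal and vertical are interchanged, turn the equation on reversed adjoint equivalences into precisely the stated equation rather than a mirrored one. We would verify this directly from the composite formula for adjoint equivalences in the Notation, using $(\aexi'\cdot\aexi)^{\rev}=\aexi^{\rev}\cdot\aexi'^{\rev}$, and by appealing to Remark 5.10 of \cite{BourkeLobbia2} for the explicit form of the biclosedness bijection on $1$-cells and cocycles.
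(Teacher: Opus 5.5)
Your proposal is correct and follows the paper's proof essentially step for step. Strictness of $\overline{\alpha}$ amounts to the cocycle $\overline{\alpha}^2_{f_2,f_1}$ being an identity. That splits into two things: the components $F^2_{f_2,f_1}$, $G^2_{f_2,f_1}$ (trivially identities) and $(\alpha^o)^2_{f_2,f_1}$ being identities, and the source and target $1$-cells carrying the same adjoint equivalence, which $(-)^{\rev}$ converts into the equation of condition (3) of \Cref{semistrictequivalence}.

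The only slip is in your composite formula. With the paper's right-to-left convention for $\cdot$, the composite of $(f_0,f_1,\aexi)$ followed by $(g_0,g_1,\aexi')$ carries $(1\circ\aexi)\cdot(\aexi'\circ 1)$. So $\overline{\alpha}_{f_2}\overline{\alpha}_{f_1}$ carries $(1\circ(\aexi^{\alpha}_{f_1})^{\rev})\cdot((\aexi^{\alpha}_{f_2})^{\rev}\circ 1)$, whose reverse is exactly $(\aexi^{\alpha}_{f_2}\circ 1)\cdot(1\circ\aexi^{\alpha}_{f_1})$, as you anticipated.
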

 \begin{proof}
Like any pseudomap, $\overline{\alpha} \colon \ca \rightsquigarrow \psd_s(D^3_1,\cb)$ will be strict just when the cocycle $2$-cells $\overline{\alpha}^2_{f_2,f_1} \colon \overline{\alpha}_{f_2}\overline{\alpha}_{f_1} \to \overline{\alpha}_{f_2f_1} \in \psd_s(D^3_1,\cb)$ are identities.   And as for any $2$-cell in $\psd_s(D^3_1,\cb)$, this will be the case just when (1) the components of $\overline{\alpha}^2_{f_2,f_1}$ are identities \emph{and moreover} (2) its source $ \overline{\alpha}_{f_2}\overline{\alpha}_{f_1}$ and target $ \overline{\alpha}_{f_2f_1}$ have the same associated adjoint equivalences.
 Now following \Cref{explicitbiclosed} the components of ${\overline{\alpha}}^2_{f_2,f_1}$ are the $2$-cells $F^2_{f_2,f_1}$, $G^2_{f_2,f_1}$ and $3$-cell $(\alpha^\circ)^2_{f_2,f_1}$.  Since $F$ and $G$ are assumed to be strict, the first two are identities so that (1) will hold just when the $3$-cell $(\alpha^o)^2_{f_2,f_1}$ is an identity. 

 Now given (1), by the formulae in \Cref{explicitbiclosed} the adjoint equivalences in (2)  are $(1 \circ (\aexi^{\alpha}_{f_1})^{\rev}) \cdot ((\aexi^{\alpha}_{f_2})^{\rev} \circ 1)$ and $(\aexi^{\alpha}_{f_2f_1})^{\rev}$ respectively.  Since taking reverses is contravariant functorial and preserved by $1$-cell whiskering (or indeed by any $2$-functor), the former is $((\aexi^{\alpha}_{f_2} \circ 1) \cdot(1 \circ (\aexi^{\alpha}_{f_1}) )^{\rev}$.  Now since $(-)^{\rev}$ is involutive, (2) will therefore hold just when $\aexi^{\alpha}_{f_2f_1} = (\aexi^{\alpha}_{f_2} \circ 1) \cdot(1 \circ (\aexi^{\alpha}_{f_1})$.  Therefore by \Cref{semistrictequivalence}, (1) and (2) hold iff and only if $\alpha$ is semi-strict.
 \end{proof}%

\begin{rmk}
The main issue with semi-strict pseudo-transformations is that they are not closed under composition in $\psd_p(\ca,\cb)$ in general, and so not the $1$-cells of a Gray-category.  However, given strict maps $F$ and $G$, we can form the form the full sub-$2$-category $\psd_s^{ss}(\ca,\cb)(F,G)$ of $\psd_s(\ca,\cb)(F,G)$ with objects the semi-strict pseudo-transformations from $F$ to $G$.
\end{rmk}

\black
\begin{theorem}\label{thm:replacement2}
If $\ca$ is $1$-free, then the inclusion $\psd_s^{ss}(\ca,\cb)(F,G) \hookrightarrow \psd_s(\ca,\cb)(F,G)$ is a $2$-equivalence of $2$-categories.
\end{theorem}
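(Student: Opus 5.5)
The inclusion is full on $1$-cells and $2$-cells, since $\psd_s^{ss}(\ca,\cb)(F,G)$ is by definition a full sub-$2$-category. So it is $2$-fully faithful, and it suffices to show it is essentially surjective up to isomorphism. Concretely, I will show that every pseudo-transformation $\alpha \colon F \to G$ is isomorphic, in the $2$-category $\psd_s(\ca,\cb)(F,G)$, to a semi-strict one. A $2$-fully faithful, essentially surjective $2$-functor is a $2$-equivalence, which gives the claim.

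The plan is to transport the problem into the setting of Gray-functors using \Cref{semistrictasstrict}, and then apply \Cref{thm:replacement}.
\begin{enumerate}
\item Given $\alpha$, form the associated pseudomap $\overline{\alpha} \colon \ca \rightsquigarrow \psd_s(D^3_1,\cb)$ of \Cref{prop:p-maps-into-PA}. It satisfies $s\overline{\alpha} = F$ and $t\overline{\alpha} = G$; recall $\psd_s(D^3_1,\cb)=\psd_p(D^3_1,\cb)$.
\item Since $\ca$ is $1$-free, \Cref{thm:replacement} provides a strict map $K(\overline{\alpha}) \colon \ca \to \psd_s(D^3_1,\cb)$ and an invertible icon $\rho \colon K(\overline{\alpha}) \cong \overline{\alpha}$.
\item By naturality in strict maps (part (3) of \Cref{thm:replacement}), applied to the strict maps $s,t$, we get $sK(\overline{\alpha}) = K(s\overline{\alpha}) = K(F) = F$, using part (2) since $F$ is strict. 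Likewise $tK(\overline{\alpha}) = G$, and $s\rho = \rho_F = id$, $t\rho = id$.
\item By the bijection of \Cref{prop:p-maps-into-PA}, $K(\overline{\alpha})$ corresponds to a pseudo-transformation $\beta \colon F \to G$. Since $K(\overline{\alpha})$ is a strict Gray-functor, $\beta$ is semi-strict by \Cref{semistrictasstrict}.
\end{enumerate}

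It remains to produce an invertible $2$-cell $\beta \cong \alpha$ in $\psd_s(\ca,\cb)(F,G)$, that is, an invertible pseudo-modification. Here I would use the biclosedness isomorphism of \Cref{prop:biclosed} one dimension up. The invertible icon $\rho$ is a $1$-cell of $\psd_p(\ca,\psd_p(D^3_1,\cb))$ lying over identities under $s$ and $t$. Transposing it along
\[\psd_p(\ca,\psd_p(D^3_1,\cb)) \cong \psd_p(D^3_1,\psd_p(\ca,\cb)),\]
naturally in the second variable and compatibly with restriction to the two endpoints of $D^3_1$, gives a $1$-cell of $\psd_p(D^3_1,\psd_p(\ca,\cb))$. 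Its endpoints are the transposes of $K(\overline{\alpha})$ and $\overline{\alpha}$, namely $\beta$ and $\alpha$. Its $2$-cell components over the two objects of $D^3_1$ are $s\rho$ and $t\rho$, both identities. Such a $1$-cell amounts to a square in $\psd_p(\ca,\cb)$ whose vertical sides are identities on $F$ and $G$, filled by a pseudo-modification. This is exactly a pseudo-modification $\beta \to \alpha$. It is invertible because $\rho$ is invertible and the transposition is functorial, so the inverse icon transposes to its inverse.

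The main obstacle is this last step: making precise that biclosedness transposes \emph{invertible icons over identities} into \emph{invertible pseudo-modifications} between the corresponding transformations. The cleanest route is to phrase everything as an isomorphism of hom-$2$-categories. The bijection of \Cref{prop:p-maps-into-PA} extends to a $2$-isomorphism between $\psd_p(\ca,\cb)(F,G)$ and the fibre of $(s,t)_* \colon \psd_p(\ca,\psd_p(D^3_1,\cb)) \to \psd_p(\ca,\cb)^2$ over $(F,G)$, where in the fibre $1$-cells are required to map to identities. This follows from the biclosedness isomorphism being an isomorphism of Gray-categories, natural in $\cb$, restricted to the fibre over $(F,G)$. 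Under this $2$-isomorphism, $\rho$ is an invertible $1$-cell in the fibre, hence corresponds to an invertible $2$-cell $\beta \cong \alpha$.
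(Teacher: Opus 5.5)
Your steps (1)--(4) match the paper's proof exactly. Transposing $\rho$ through the isomorphism of \Cref{prop:biclosed} is a genuinely different way of producing the modification. A $1$-cell of $\psd_p(D^3_1,\psd_p(\ca,\cb))$ with identity components at $0$ and $1$ is a pseudo-modification between $\alpha$ and $\beta$, so the modification axioms come for free. The paper instead unpacks $\gamma=\rho^{-1}$ by hand as a modification $\alpha^o\Rightarrow\beta^o$ of the associated oplax transformations. It checks the three axioms using that $\gamma_\phi$ and $\gamma^2_{f_2,f_1}$ lie over identities, and then passes to the mate $\gamma^l\colon\alpha\to\beta$. Note, however, what your transposed $1$-cell actually is: a pseudo-modification \emph{equipped with the structure of an adjoint equivalence} in $\psd_p(\ca,\cb)(F,G)$. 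That only gives $\alpha\simeq\beta$, which would yield a biequivalence. A $2$-equivalence needs an \emph{invertible} pseudo-modification.

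The gap is your invertibility argument. The isomorphism $\lambda$ of \Cref{prop:biclosed} is only an isomorphism in $\GCat_p$, i.e.\ a pseudomap, not a Gray-functor. It therefore preserves composition of $1$-cells only up to its cocycle. You get $\lambda(\rho^{-1})\circ\lambda(\rho)\cong\lambda(\rho^{-1}\rho)=1$ via an invertible $2$-cell, and on the filling pseudo-modification $\Gamma$ this gives only an invertible perturbation $\Gamma'\cdot\Gamma\cong 1$, i.e.\ again just an equivalence.

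For the same reasons, your proposed $2$-isomorphism between $\psd_p(\ca,\cb)(F,G)$ and the fibre of $(s,t)_*$ is false. The fibre's $1$-cells correspond to pseudo-modifications with adjoint-equivalence structure, not to all pseudo-modifications. Its $2$-cells correspond only to invertible perturbations, and the comparison through $\lambda$ is only pseudo-functorial.

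The missing step is a componentwise one. Using the explicit form of the transposition (\Cref{explicitbiclosed}, \cite[Remark 5.10]{BourkeLobbia2}), the $2$-cell component $\Gamma_x$ is the filler of the identity $1$-cell $\rho_x$ of $\psd_s(D^3_1,\cb)$, so it is an identity because $\rho$ is an icon. The $3$-cell components $\Gamma_f$ are invertible because $\Gamma$ is a pseudo-modification. A modification with invertible components is invertible by \cite[Corollary B.3]{BourkeLobbia2}, and this is exactly how the paper concludes.
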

\begin{proof}
 Since it is a full inclusion, it remains to check essential surjectivity on objects.  By \Cref{prop:p-maps-into-PA}, a  pseudo-transformation $\alpha \colon F \to G$ bijectively corresponds to a pseudomap $\overline{\alpha}$ below left
\[\begin{tikzcd}[ampersand replacement=\&]
	\&\& {\psd_s(D^3_1,\cb)} \&\&\&\& {\psd_s(D^3_1,\cb)} \\
	\ca \&\& \cb \&\& \ca \&\& \cb
	\arrow["s"', from=1-3, to=2-3]
	\arrow["t", shift left=3, from=1-3, to=2-3]
	\arrow["s"', from=1-7, to=2-7]
	\arrow["t", shift left=3, from=1-7, to=2-7]
	\arrow["{\overline{\alpha}}", squiggly, from=2-1, to=1-3]
	\arrow["F", shift left, from=2-1, to=2-3]
	\arrow["G"', shift right=2, from=2-1, to=2-3]
	\arrow[""{name=0, anchor=center, inner sep=0}, "{\overline{\alpha}}", squiggly, from=2-5, to=1-7]
	\arrow[""{name=1, anchor=center, inner sep=0}, "{\overline{\beta} = K(\overline{\alpha})}", shift left=3, curve={height=-12pt}, from=2-5, to=1-7]
	\arrow["F", shift left, from=2-5, to=2-7]
	\arrow["G"', shift right=2, from=2-5, to=2-7]
	\arrow["\rho"{pos=0.4}, between={0.2}{0.6}, Rightarrow, nfold, from=1, to=0]
\end{tikzcd}\]

making the diagram commute.  Since $\ca$ is $1$-free, by \Cref{thm:replacement}, we obtain the strict map $K(\overline{\alpha})$ depicted above right and invertible icon $\rho \colon K(\overline{\alpha}) \cong \overline{\alpha}$ for which $s \circ \rho = id_F$ and $t \circ \rho = id_G$.  Again using \Cref{prop:p-maps-into-PA}, $K(\overline{\alpha}) = \overline{\beta}$ for a unique pseudo-transformation $\beta$, which is semi-strict by \Cref{semistrictasstrict}.

Now consider the inverse invertible icon $\gamma = \rho^{-1}$.  As $\psd_s(\ca,s)$ and $\psd_s(\ca,t)$ are Gray-functors, it also satisfies $s \circ \gamma = id_F$ and $t \circ \gamma = id_G$.  Using $\gamma$, we will construct an invertible modification $\alpha \cong \beta$.  As an icon,  $\gamma_x \colon \overline{\beta}_x \to \overline{\alpha_x} \in \psd_s(D^3_1,\cb)$ is the identity, so we have the identity square as depicted below.

\[\begin{tikzcd}[ampersand replacement=\&]
	Fx \& Fx \\
	Gx \& Gx
	\arrow[""{name=0, anchor=center, inner sep=0}, equals, nfold, from=1-1, to=1-2]
	\arrow["{\beta_x}"', from=1-1, to=2-1]
	\arrow["{\alpha_x}", from=1-2, to=2-2]
	\arrow[""{name=1, anchor=center, inner sep=0}, equals, nfold, from=2-1, to=2-2]
	\arrow["{\gamma_x=1}"', shift left, between={0.3}{0.7}, equals, nfold, from=0, to=1]
\end{tikzcd}\]

 Note that here we are using the elementary description of $\psd_s(D^3_1,\cb)$ from \Cref{sect:pathobject}.  Furthermore, at $f \colon x \to y$, consider the component $\gamma_f$ below left.

\[\begin{tikzcd}[ampersand replacement=\&]
	{\overline{\beta}_x} \& {\overline{\alpha}_x} \&\& Fx \& Fy \& Fy \&\& Fx \& Fx \& Fy \\
	{\overline{\beta}_y} \& {\overline{\alpha}_y} \&\& Gx \& Gy \& Gy \&\& Gx \& Gx \& Gy
	\arrow[""{name=0, anchor=center, inner sep=0}, equals, nfold, from=1-1, to=1-2]
	\arrow["{\overline{\beta}_f}"', from=1-1, to=2-1]
	\arrow["{\overline{\alpha}_f}", from=1-2, to=2-2]
	\arrow[""{name=1, anchor=center, inner sep=0}, "Ff", from=1-4, to=1-5]
	\arrow["{\beta_x}"', from=1-4, to=2-4]
	\arrow[""{name=2, anchor=center, inner sep=0}, equals, nfold, from=1-5, to=1-6]
	\arrow["{\beta_y}", from=1-5, to=2-5]
	\arrow[""{name=3, anchor=center, inner sep=0}, "{\alpha_y}", from=1-6, to=2-6]
	\arrow[""{name=4, anchor=center, inner sep=0}, equals, nfold, from=1-8, to=1-9]
	\arrow[""{name=5, anchor=center, inner sep=0}, "{\beta_x}"', from=1-8, to=2-8]
	\arrow[""{name=6, anchor=center, inner sep=0}, "Ff", from=1-9, to=1-10]
	\arrow["{\alpha_x}", from=1-9, to=2-9]
	\arrow["{\alpha_y}", from=1-10, to=2-10]
	\arrow[""{name=7, anchor=center, inner sep=0}, equals, nfold, from=2-1, to=2-2]
	\arrow[""{name=8, anchor=center, inner sep=0}, "Gf"', from=2-4, to=2-5]
	\arrow[""{name=9, anchor=center, inner sep=0}, equals, nfold, from=2-5, to=2-6]
	\arrow[""{name=10, anchor=center, inner sep=0}, equals, nfold, from=2-8, to=2-9]
	\arrow[""{name=11, anchor=center, inner sep=0}, "Gf"', from=2-9, to=2-10]
	\arrow["{\gamma_f}", between={0.2}{0.8}, Rightarrow, nfold, from=0, to=7]
	\arrow["{\beta^0_f}", between={0.2}{0.8}, Rightarrow, nfold, from=1, to=8]
	\arrow[between={0.2}{0.8}, equals, nfold, from=2, to=9]
	\arrow["{\gamma_f}", between={0.2}{0.8}, Rightarrow, scaling nfold=3, from=3, to=5]
	\arrow[between={0.2}{0.8}, equals, nfold, from=4, to=10]
	\arrow["{\alpha_f^o}", between={0.2}{0.8}, Rightarrow, nfold, from=6, to=11]
\end{tikzcd}\]

As a $2$-cell in $\psd_s(D^3_1,\cb)$, this involves a pair of $2$-cells in $\cb$ and an invertible $3$-cell.  The requirement that $s \circ \gamma$ and $t \circ \gamma$ are identities forces the $2$-cells to be identities.  As such $\gamma_f$ is specified by a single invertible $3$-cell as above right.

Altogether, the components $(\gamma_x,\gamma_f)$ capture exactly the data for a modification  $\gamma \colon \alpha^o \Rightarrow \beta^o$ of the associated \emph{oplax transformations} $\alpha^o,\beta^o \colon F \rightrightarrows G$ described in Proposition 3.1 of \cite{BourkeLobbia2}.   Let us now show that this data satisfies the three axioms for a modification of oplax transformations, described in Appendix A.2 of \cite{BourkeLobbia}.  The first of these asserts that the $3$-cell $\gamma_f$ above right is an identity if $f$ is an identity $1$-cell.  This holds since, by the axioms for a pseudo-transformation, $\gamma_f$ above left is an identity if $f$ is.

To obtain the other two axioms, observe that as a pseudo-transformation, $\gamma$ comes equipped with invertible $3$-cells 

\[\begin{tikzcd}[ampersand replacement=\&]
	{\overline{\alpha}_f\gamma_x} \&\& {\gamma_y \overline{\beta}_f} \\
	{\overline{\alpha}_g\gamma_x} \&\& {\gamma_y \overline{\beta}_g}
	\arrow[""{name=0, anchor=center, inner sep=0}, "{\gamma_f}", from=1-1, to=1-3]
	\arrow["{\overline{\alpha}_\phi\circ1}"', from=1-1, to=2-1]
	\arrow["{1\circ \overline{\beta}_\phi}", from=1-3, to=2-3]
	\arrow[""{name=1, anchor=center, inner sep=0}, "{\gamma_{g}}"', from=2-1, to=2-3]
	\arrow["{\gamma_\phi =id}", between={0.3}{0.7}, Rightarrow, nfold, from=0, to=1]
\end{tikzcd}
\hspace{1cm}
\begin{tikzcd}[ampersand replacement=\&]
	{\overline{\alpha}_{f_2}\overline{\alpha}_{f_1}\gamma_x} \& {\overline{\alpha}_{f_2}\gamma_y \overline{\beta}_{f_1}} \& {\gamma_z \overline{\beta}_{f_2}\overline{\beta}_{f_1}} \\
	{\overline{\alpha}_{f_2f_1}\gamma_x} \&\& {\gamma_z \overline{\beta}_{f_2f_1}}
	\arrow["{1\circ\gamma_{f_1}}", from=1-1, to=1-2]
	\arrow["{\overline{\alpha}^2_{f_2,f_1}\circ1}"', from=1-1, to=2-1]
	\arrow["{\gamma_{f_2}\circ1}", from=1-2, to=1-3]
	\arrow["{1\circ \overline{\beta}^2_{f_2,f_1}}", from=1-3, to=2-3]
	\arrow[""{name=0, anchor=center, inner sep=0}, "{\gamma_{f_2f_1}}"', from=2-1, to=2-3]
	\arrow["{\gamma^2_{f_2,f_1} = id}", between={0.3}{0.7}, Rightarrow, nfold, from=1-2, to=0]
\end{tikzcd}\]
in $\psd_s(D^3_1,\cb)$ for $\phi \colon f \Rightarrow g$ a $2$-cell in $\ca$ and $(f_1 \colon x \to y,f_2 \colon y \to z)$ a composable pair of $1$-cells in $\ca$.  Now since $s \circ \gamma$ and $t \circ \gamma$ are identities, and since postcomposition by $s$ and $t$ reflect identity $3$-cells, it follows that the above depicted $3$-cells are identities.  Thus they give a pair of equations between parallel $2$-cells in $\psd_s(D^3_1,\cb)$.  These two equations --- expanded out using the whiskering and composition operations in   $\psd_s(D^3_1,\cb)$ \cite[Appendix B2.1/3.1]{BourkeLobbia} --- are exactly the two remaining equations for a modification $\gamma \colon \alpha^o \to \beta^o$.  By Proposition 3.4 of \cite{BourkeLobbia2} this bijectively corresponds to a modification $\gamma^l \colon \alpha \to \beta$ where $\gamma^l_x = \gamma_x$ and $\gamma^l_f$ is obtained as the mate of the invertible $3$-cell $\gamma_f$ through an adjoint equivalence.  As such, $\gamma^l$ has invertible components and is, therefore, itself invertible by \cite[Corollary B.3]{BourkeLobbia2}.
\end{proof}\black

\section{The characterisation of equivalences in $\GCat^4$}\label{sect:char}

In this section, we characterise the equivalence $n$-cells in the Gray-categories $\psd_p(\ca,\cb)$ as the pointwise equivalences.  Building on this, we characterise the equivalences $F \colon \ca \to \cb$ between cofibrant Gray-categories in the left-sharp $4$-category $\GCat^4$ as the triequivalences.

The only hard part is to show that given a pseudo-transformation $\alpha \colon F \to G$ whose $1$-cell components $\alpha_x \colon Fx \to Gx$ are equivalences (that is, biequivalences), we can assemble their equivalence inverses $\beta_x \colon Gx \to Fx$ into a pseudo-transformation $\beta \colon G \to F$, equivalence-inverse to $\alpha$.  An elementary proof using pasting diagrams, whilst possible, is extremely complicated.  Fortunately, we manage to give a model-categorical proof avoiding any diagram chases.


\subsection{The easy direction}
One direction is, of course, straightforward.

\begin{prop}\label{prop:char-equiv-in-internal-hom-trivial}
For $n=1,2,3$, each equivalence $n$-cell $\Theta$ in $\psd_p(\ca,\cb)$ is a pointwise equivalence: that is, for each $x \in \ca$, its component $\Theta_x$ is an equivalence $n$-cell in $\cb$.
\end{prop}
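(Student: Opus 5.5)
The idea is to realise ``taking the component at $x$'' as a Gray-functor (or pseudomap), since such maps preserve equivalence cells by \Cref{pseudomapEquivs}. Fix $x \in \ca$ and consider evaluation at $x$,
$$ev_x \colon \psd_p(\ca,\cb) \to \cb,$$
which sends a pseudomap $F$ to $Fx$, a pseudo-transformation $\alpha$ to $\alpha_x$, a pseudo-modification $\Lambda$ to $\Lambda_x$, and a perturbation $\Theta$ to $\Theta_x$. So an $n$-cell $\Theta$ of $\psd_p(\ca,\cb)$ is sent to its component $\Theta_x$.

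I would obtain $ev_x$ formally rather than check it by hand. The object $x$ is a nullary multimap $x \colon \diamond \to \ca$ in $\GCatmp^l$. Substituting it into the evaluation binary map $e \colon \psd_p(\ca,\cb),\ca \rightsquigarrow \cb$ in the second variable gives a unary multimap $e \circ_2 x \colon \psd_p(\ca,\cb) \rightsquigarrow \cb$, that is, a pseudomap. From the descriptions of $\psd_p(\ca,\cb)$ and of binary maps in \Cref{sect:base}, this map is exactly $ev_x$.

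Alternatively, one can check directly that $ev_x$ is a strict Gray-functor. Composition of pseudo-transformations along $1$-cells has component $(\beta\alpha)_x = \beta_x \alpha_x$ at $x$, and likewise for whiskerings and vertical composites of modifications and perturbations. The interchange $3$-cells of $\psd_p(\ca,\cb)$ also have components at $x$ given by the interchangers in $\cb$. Either route is enough, since pseudomaps already preserve equivalences.

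Now apply \Cref{pseudomapEquivs}. It says that $ev_x$ preserves equivalence $n$-cells for $n=1,2,3$. Hence if $\Theta$ is an equivalence $n$-cell in $\psd_p(\ca,\cb)$, its image $\Theta_x = ev_x(\Theta)$ is an equivalence $n$-cell in $\cb$. This holds for each $x$, which is the claim.

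The only step needing care is identifying $e \circ_2 x$ with componentwise evaluation on all cells. This includes checking that its cocycle $2$-cells are the identities, $(\beta\alpha)_x = \beta_x\alpha_x$. I expect this to be immediate from the explicit composition in $\psd_p(\ca,\cb)$, and it is the main point to verify. Even if some cocycle were non-trivial, the argument would go through unchanged, since \Cref{pseudomapEquivs} applies to arbitrary pseudomaps.
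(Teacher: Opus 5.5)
Your proposal is correct and is essentially the paper's proof. The paper also realises $\Theta \mapsto \Theta_x$ as a Gray-functor $ev_x$, obtained there as restriction along $x \colon 1 \to \ca$ followed by the isomorphism $\psd_p(1,\cb)\cong\cb$; this is the same map as your $e \circ_2 x$, and the paper likewise concludes by \Cref{pseudomapEquivs}.
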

\begin{proof}
Restriction along the Gray-functor $x \colon 1 \to \ca$ induces the Gray-functor $ev_x \colon \psd_p(\ca,\cb) \to \psd_p(1,\cb) \cong \cb$ which sends $\Theta$ to $\Theta_x$.  Since, by \Cref{pseudomapEquivs}, Gray-functors preserve equivalence $n$-cells, this proves the claim.
\end{proof}

The central result of this section, Theorem \ref{thm:char-equiv-in-internal-hom}, establishes the converse statement.  A consequence of Proposition \ref{prop:char-equiv-in-internal-hom-trivial} is the following.

\begin{prop}\label{weak-equivalences}
Each equivalence $1$-cell $F \colon \ca \to \cb$ in the left-sharp $4$-category $\GCat^4$ is a weak equivalence of Gray-categories.
\end{prop}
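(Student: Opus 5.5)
Let $F \colon \ca \to \cb$ be an equivalence $1$-cell in $\GCat^4$. Then there is a Gray-functor $G \colon \cb \to \ca$ together with equivalence $2$-cells $\alpha \colon GF \simeq 1_{\ca}$ in $\psd_s(\ca,\ca)$ and $\beta \colon FG \simeq 1_{\cb}$ in $\psd_s(\cb,\cb)$. These are pseudo-transformations that are equivalence $1$-cells in the respective hom Gray-categories. The plan is to check directly the two conditions for $F$ to be a triequivalence: local biequivalence and tri-essential surjectivity. Everything will be read off from the components of $\alpha$ and $\beta$.

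First, since $\psd_s(\ca,\ca) \hookrightarrow \psd_p(\ca,\ca)$ is a full sub-Gray-category, $\alpha$ and $\beta$ are also equivalence $1$-cells in $\psd_p$. By \Cref{prop:char-equiv-in-internal-hom-trivial} with $n=1$, each component $\alpha_x \colon GFx \to x$ and $\beta_y \colon FGy \to y$ is then an equivalence $1$-cell in $\ca$, respectively $\cb$. Tri-essential surjectivity follows at once: for $y \in \cb$ the object $Gy$ satisfies $F(Gy) \simeq y$ via $\beta_y$.

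For the local condition, I would argue that $F_{x,x'} \colon \ca(x,x') \to \cb(Fx,Fx')$ is a biequivalence by showing that $G_{Fx,Fx'} \circ F_{x,x'}$ and $F_{GFx\ldots}\circ G_{Fx,Fx'}$ are biequivalences and then using $2$-out-of-$6$ (or $2$-out-of-$3$ twice) for biequivalences of $2$-categories. The pseudo-transformation $\alpha$ supplies, for each $f \colon x \to x'$, an equivalence $2$-cell $\alpha_f$ relating $\alpha_{x'} \circ GFf$ and $f \circ \alpha_x$. It also supplies invertible $3$-cells $\alpha_\phi$ natural in $2$-cells $\phi$. Together these exhibit a pseudonatural equivalence in $2\text{-}\Cat$ between the $2$-functors
\[
\alpha_{x'} \circ (GF)_{x,x'}(-) \quad\text{and}\quad (-)\circ \alpha_x \colon \ca(x,x') \to \ca(GFx,x').
\]
Pre- and postcomposition by an equivalence $1$-cell in a Gray-category are biequivalences of hom $2$-categories. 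So $(-)\circ\alpha_x$ is a biequivalence, and since $\alpha_{x'} \circ -$ is as well, $(GF)_{x,x'}$ is a biequivalence. The same argument with $\beta$ shows that $(FG)_{y,y'}$ is a biequivalence. It then follows that $F_{x,x'}$ is a biequivalence, since it is a retract, up to equivalence, in the standard way: $G_{Fx,Fx'}F_{x,x'}$ and $F_{GFx,GFx'}G_{Fx,Fx'}$ are both biequivalences.

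The main obstacle is making precise the step that a pseudo-transformation with equivalence components induces pseudonatural equivalences between hom $2$-functors. This requires keeping track of the local data $\alpha_f$ and $\alpha_\phi$ and their axioms. I would handle it more slickly using \Cref{pseudomapEquivs} and \Cref{pathprojection}. Via \Cref{prop:p-maps-into-PA}, the pointwise equivalence $\alpha$ corresponds to a pseudomap $\overline{\alpha} \colon \ca \rightsquigarrow P\ca$ with $s\overline{\alpha} = GF$ and $t\overline{\alpha} = 1$. Because $t$ and $s$ are triequivalences by \Cref{pathprojection}, $2$-out-of-$3$ gives that $\overline{\alpha}$, and hence $GF = s\overline{\alpha}$, is a triequivalence, provided $2$-out-of-$3$ holds for triequivalences that include pseudomaps. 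That extension is the point to verify; local biequivalence and essential surjectivity are preserved and reflected in the usual way for normal pseudofunctors. Similarly $FG$ is a triequivalence, and $2$-out-of-$6$ for weak equivalences then shows that $F$ is a weak equivalence.
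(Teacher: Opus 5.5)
Your primary argument is correct, and it takes a genuinely different, more elementary route than the paper. You verify the definition of triequivalence directly:
\begin{itemize}
\item tri-essential surjectivity comes from the components $\beta_y$;
\item for local biequivalence, on a single hom $\ca(x,x')$ the data $(\alpha_f,\alpha_\phi)$ is a pseudonatural transformation $(-)\circ\alpha_x \Rightarrow \alpha_{x'}\circ(GF)_{x,x'}(-)$ of $2$-functors with equivalence components, so $(GF)_{x,x'}$ is a biequivalence, and likewise $(FG)_{y,y'}$;
\item $2$-out-of-$6$ for biequivalences then gives $F_{x,x'}$.
\end{itemize}
The step you call the main obstacle is in fact routine. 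The lax-transformation axioms that do not involve $\alpha^2$ are literally the pseudonaturality axioms for this family. Invertibility of $\alpha_\phi$ and the adjoint equivalence structure on $\alpha_f$ are part of the definition of a pseudo-transformation. So you do not need to fall back on your second argument.

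That second, ``slicker'' argument is essentially the paper's proof. The point you leave open, $2$-out-of-$3$ for triequivalences when one map is only a pseudomap, is exactly what the paper sidesteps with the pseudomap classifier. The pseudomap $\overline{\alpha}$ corresponds to a strict $H_1\colon Q\ca\to P\ca$ with $tH_1=p_{\ca}$ and $sH_1=GF\circ p_{\ca}$. Since $p_{\ca}$ is a ($2$-)trivial fibration, every $2$-out-of-$3$ and $2$-out-of-$6$ step then takes place among Gray-functors in Lack's model structure.

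The trade-off is this. Your direct route needs some standard $2$-categorical bookkeeping, namely that whiskering by an equivalence $1$-cell is a biequivalence of homs, and that a $2$-functor pseudonaturally equivalent to a biequivalence is itself one. It needs no homotopy theory beyond the definition. The paper's route avoids that bookkeeping and reuses the path object and $Q$, which it needs later anyway.
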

\begin{proof}
By definition, $F$ is an equivalence in $\GCat^4$ just when there exists a strict map $G \colon \cb \to \ca$ and equivalences $GF \to 1_{\ca} \in \psd_s(\ca,\ca)$ and $FG \to 1_\cb \in \psd_s(\cb,\cb)$ in the hom Gray-categories.  By the preceding result such equivalences are, in particular, pseudo-transformations which are pointwise equivalences.  Therefore, by \Cref{prop:p-maps-into-PA}, they induce pseudomaps as depicted below making the two diagrams serially commute.

\[\begin{tikzcd}
	&& {P\ca} &&&& {P\cb} \\
	\ca && \ca && \cb && \cb
	\arrow["s"', from=1-3, to=2-3]
	\arrow["t", shift left=3, from=1-3, to=2-3]
	\arrow["s"', from=1-7, to=2-7]
	\arrow["t", shift left=3, from=1-7, to=2-7]
	\arrow["\exists", squiggly, from=2-1, to=1-3]
	\arrow["{1_{\ca}}"'{pos=0.7}, shift right=2, from=2-1, to=2-3]
	\arrow["GF"{pos=0.7}, shift left, from=2-1, to=2-3]
	\arrow["\exists", squiggly, from=2-5, to=1-7]
	\arrow["{1_{\cb}}"'{pos=0.7}, shift right=2, from=2-5, to=2-7]
	\arrow["FG"{pos=0.7}, shift left, from=2-5, to=2-7]
\end{tikzcd}\]

By the universal property of the pseudomap classifiers $Q\ca$ and $Q\cb$, this is equally to say that there exist strict lifts as below making the two diagrams serially commute.

\[\begin{tikzcd}
	{Q\ca} && {P\ca} && {Q\cb} && {P\cb} \\
	\ca && \ca && \cb && \cb
	\arrow["{\exists  H_1}", from=1-1, to=1-3]
	\arrow["{p_{\ca}}"', from=1-1, to=2-1]
	\arrow["s"', from=1-3, to=2-3]
	\arrow["t", shift left=3, from=1-3, to=2-3]
	\arrow["{\exists H_2}", from=1-5, to=1-7]
	\arrow["{p_{\cb}}"', from=1-5, to=2-5]
	\arrow["s"', from=1-7, to=2-7]
	\arrow["t", shift left=3, from=1-7, to=2-7]
	\arrow["{1_{\ca}}"'{pos=0.7}, shift right=2, from=2-1, to=2-3]
	\arrow["GF"{pos=0.7}, shift left, from=2-1, to=2-3]
	\arrow["{1_{\cb}}"'{pos=0.7}, shift right=2, from=2-5, to=2-7]
	\arrow["FG"{pos=0.7}, shift left, from=2-5, to=2-7]
\end{tikzcd}\]

By \Cref{counitnormal} and \Cref{pathprojection} the vertical maps are weak equivalences.  Since the horizontal identities are weak equivalences, it therefore follows by $2$-out-of-$3$ that $H_1$ and $H_2$ are weak equivalences and hence that $FG$ and $GF$ are weak equivalences by $2$-out-of-$3$ again.  Therefore, by $2$-out-of-$6$, $F$ and $G$ are both weak equivalences.

\end{proof}

\subsection{An auxiliary construction}
 In \Cref{sect:hard}, we will make use of Gray-categories of the form $\psd_p(\ca,\cb)$ where $\ca$ is the (co)domain of a generating (trivial) cofibration.  In most cases, such $\ca$ are uniquely determined by $2$-categories, in the sense of the following definition, and in that case the Gray-categories  $\psd_p(\ca,\cb)$ can be described in purely $2$-categorical terms.  This makes dealing with such cases rather easy.
%
%
\begin{defn}
We define a functor $D \colon \twocat \to \GCat$ as follows.
\begin{itemize}
\item Given a $2$-category $\cc$, $D\cc$ is the Gray-category with two objects $0,1$ and hom 2-categories defined by $D\cc(0,1) = \cc, D\cc(1,0) = \emptyset, D\cc(0,0)=1, D\cc(1,1)=1$.  
\item Given a $2$-functor $F \colon \cb \to \cc$, $DF \colon D\cb \to D\cc$ is the identity-on-objects Gray-functor whose hom $2$-functor $D\cb(0,1) \to D\cc(0,1)$ is $F$ itself.
\end{itemize}
\end{defn}

 The following lemma describes the Gray-category $\psd_p(D\cc,\ca)$ in $2$-categorical terms, using $2$-functors, pseudonatural transformations and modifications.
  
 \begin{lemma}\label{lem:aux}
 The Gray-category $\psd_p(D\cc,\ca)$ can be described as follows:
 \begin{enumerate}
 \item The $0$-cells $F$ are specified by objects $F0,F1 \in \ca$ together with a $2$-functor $F \colon \cc \to \ca(F0,F1)$;
 \item The $1$-cells $\alpha \colon F \to G$ are specified by $0$-cells $\alpha_i \in \ca(Fi,Gi)$ for $i=0,1$ together with a pseudonatural transformation
\[\begin{tikzcd}[ampersand replacement=\&]
	\& {} \\
	\cc \& {\ca(G0,G1)} \\
	{\ca(F0,F1)} \& {\ca(F0,G1)}
	\arrow["G", from=2-1, to=2-2]
	\arrow[""{name=0, anchor=center, inner sep=0}, "F"', from=2-1, to=3-1]
	\arrow[""{name=1, anchor=center, inner sep=0}, "{(\alpha_0)^\star}", from=2-2, to=3-2]
	\arrow["{(\alpha_1)_\star}"', from=3-1, to=3-2]
	\arrow["\alpha"', between={0.3}{0.7}, Rightarrow, nfold, from=1, to=0]
\end{tikzcd}\]
 whose components $\alpha_x$ are equipped with the structure of adjoint equivalences;
  \item The $2$-cells $\Gamma \colon \alpha \to \beta$ are specified by $1$-cells $\Gamma_i \colon \alpha_i \to \beta_i \in \ca(Fi,Gi)$ for $i=0,1$ together with an invertible modification
\[\begin{tikzcd}
	& {} & \\
	{(\alpha_0)^\star G} && {(\alpha_1)_\star F} \\
	{(\beta_0)^\star G} && {(\beta_1)_\star F}
	\arrow["\alpha", from=2-1, to=2-3]
	\arrow[""{name=0, anchor=center, inner sep=0}, "{(\Gamma_0)^\star G}"', from=2-1, to=3-1]
	\arrow[""{name=1, anchor=center, inner sep=0}, "{(\Gamma_1)_\star F}", from=2-3, to=3-3]
	\arrow["\beta"', from=3-1, to=3-3]
	\arrow["\Gamma"', between={0.3}{0.7}, Rightarrow, from=1, to=0]
\end{tikzcd}\]
 \item The $3$-cells $\Theta \colon \Gamma \to \Delta$ are specified by $2$-cells $\Theta_i \colon \Gamma_i \to \Delta_i \in \ca(Fi,Gi)$ for $i=0,1$ such that we have an equality of modifications
\[\begin{tikzcd}
	& {} &&&&&&& \\
	{(\alpha_0)^\star G} && {(\alpha_1)_\star F} &&&& {(\alpha_0)^\star G} && {(\alpha_1)_\star F} \\
	{(\beta_0)^\star G} && {(\beta_1)_\star F} &&&& {(\beta_0)^\star G} && {(\beta_1)_\star F}
	\arrow["\alpha", from=2-1, to=2-3]
	\arrow[""{name=0, anchor=center, inner sep=0}, "{(\Gamma_0)^\star G}", curve={height=-18pt}, from=2-1, to=3-1]
	\arrow[""{name=1, anchor=center, inner sep=0}, "{(\Delta_0)^\star G}"', curve={height=18pt}, from=2-1, to=3-1]
	\arrow[""{name=2, anchor=center, inner sep=0}, "{(\Gamma_1)_\star F}", curve={height=-18pt}, from=2-3, to=3-3]
	\arrow["\alpha", from=2-7, to=2-9]
	\arrow[""{name=3, anchor=center, inner sep=0}, "{(\Delta_0)^\star G}"', curve={height=18pt}, from=2-7, to=3-7]
	\arrow[""{name=4, anchor=center, inner sep=0}, "{(\Gamma_1)_\star F}", curve={height=-18pt}, from=2-9, to=3-9]
	\arrow[""{name=5, anchor=center, inner sep=0}, "{(\Delta_1)_\star G}"', curve={height=24pt}, from=2-9, to=3-9]
	\arrow["\beta"', from=3-1, to=3-3]
	\arrow["\beta"', from=3-7, to=3-9]
	\arrow["{(\Theta_0)^\star G}"', shift left, between={0.2}{0.8}, Rightarrow, from=0, to=1]
	\arrow["\Gamma"'{pos=0.4}, between={0.2}{0.6}, Rightarrow, from=2, to=0]
	\arrow[between={0.4}{0.6}, equals, from=2, to=3]
	\arrow["\Delta"'{pos=0.6}, between={0.4}{0.8}, Rightarrow, from=5, to=3]
	\arrow["{(\Delta_1)_\star G}"', shift left, between={0.2}{0.6}, Rightarrow, from=4, to=5]
\end{tikzcd}\] 
 \end{enumerate}
 
Consider a $2$-functor $H \colon \cd \to \cc$.  Under the above correspondence, the induced Gray-functor $(DH)^{\star} \colon \psd_p(D\cd,\ca) \to \psd_p(D\cc,\ca)$ sends $(F,F0,F1)$ to $(FH,F0,F1)$, sends $(\alpha,\alpha_0,\alpha_1)$ to $(\alpha \circ H,\alpha_0,\alpha_1)$, with a similar action on $2$-cells and $3$-cells.
 \end{lemma}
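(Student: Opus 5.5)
We unpack the definitions of pseudomaps, pseudo-transformations, pseudo-modifications and perturbations from \cite[Appendix A]{BourkeLobbia}, specialised to the domain $D\cc$. The key structural fact is that $D\cc$ has only two objects $0,1$ with $D\cc(0,1)=\cc$, trivial endo-homs and empty $D\cc(1,0)$. Hence $D\cc$ has no non-trivially composable pairs of $1$-cells: any composite $f_2f_1$ has $f_1$ or $f_2$ an identity. We will exploit this throughout.

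For (1), a pseudomap $F\colon D\cc\rightsquigarrow\ca$ consists of objects $F0,F1$, a $2$-functor $F_{0,1}\colon\cc\to\ca(F0,F1)$, and trivial $2$-functors on the endo-homs. Its cocycle $F^2_{f_2,f_1}$ is only indexed on pairs involving an identity. By normality (pseudomaps preserve identities strictly and the cocycle axioms force unit cocycles to be identities), the cocycle data is trivial and the axioms are vacuous. For the same reason $\psd_p(D\cc,\ca)=\psd_s(D\cc,\ca)$, exactly as observed for $D^3_1$ in \Cref{lem:functorial}.

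For (2), a pseudo-transformation $\alpha\colon F\to G$ has components $\alpha_0,\alpha_1$. At each $1$-cell $c\in\cc$ it has an adjoint-equivalence $2$-cell $\alpha_c\colon Gc\circ\alpha_0\to\alpha_1\circ Fc$ in $\ca(F0,G1)$, and at each $2$-cell $\phi$ of $\cc$ an invertible $3$-cell $\alpha_\phi$. The components $\alpha^2_{f_2,f_1}$ are forced to be identities by the unit axioms, and the identity-$1$-cell components in the endo-homs are identities. What remains are the axioms relating $\alpha_\phi$ to vertical composition and identities of $2$-cells in $\cc$, together with the axiom involving the interchanger in $\ca$. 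We expect these to translate exactly into the naturality and functoriality axioms of a pseudonatural transformation between the $2$-functors $(\alpha_1)_\star F$ and $(\alpha_0)^\star G$ from $\cc$ to the hom $2$-category $\ca(F0,G1)$.

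The main care point is orientation. The lax components go $Gc\,\alpha_0\to\alpha_1 Fc$, which is why the arrow in the displayed square points from $(\alpha_0)^\star G$ to $(\alpha_1)_\star F$. The interchange $3$-cells in $\ca$ appearing in the Gray-category axioms become, after whiskering by the fixed $\alpha_i$, the ordinary composition structure of $\ca(F0,G1)$. This matching against the seven axioms of \cite[Appendix A.2]{BourkeLobbia} is the step I expect to be the main, if routine, obstacle: one must check that each axiom either degenerates or becomes a pseudonaturality axiom.

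Items (3) and (4) are analogous and simpler. A pseudo-modification gives $\Gamma_0,\Gamma_1$ together with invertible $3$-cells $\Gamma_c$ indexed by $1$-cells of $\cc$, and its three axioms become the two modification axioms (the unit one being automatic). A perturbation gives $\Theta_0,\Theta_1$, and its single axiom is exactly the displayed equality of modifications.

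The final claim about $(DH)^\star$ follows because precomposition with the strict Gray-functor $DH$ restricts the data along $H$ at every level while leaving the components at $0,1$ unchanged.
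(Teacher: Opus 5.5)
Your proposal is correct and is the same argument as the paper's, whose proof is simply the remark that this is a direct comparison of definitions; your observation that $D\cc$ has no non-trivially composable $1$-cells is exactly what makes the cocycle and $\alpha^2$ data trivial. One small bookkeeping correction: in item (2) the axioms involving interchangers (naturality of $\alpha^2$) degenerate completely, since one of the two composable $1$-cells is always an identity and whiskering by the $1$-cells $\alpha_i$ is strict; interchangers only genuinely enter in items (3)--(4), as the pseudonaturality constraints of $(\Gamma_0)^\star G$ and $(\Gamma_1)_\star F$, and correspondingly only one pseudo-modification axiom survives there (the one indexed by $1$-cells of $\cc$), not two.
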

\begin{proof}
This is simply a matter of comparing the definitions.
\end{proof}

\begin{rmk}
Note that for Gray-categories of the form $D\cc$ we have $\psd_s(D\cc,\ca)= \psd_p(D\cc,\ca)$.  Indeed, as $D\cc$ has no non-trivial composite $1$-cells, each pseudomap with domain $D\cc$ is strict.  We will freely use this fact in later sections.
\end{rmk}
 
 \begin{lemma}\label{lem:bo}
Let $H$ be a trivial cofibration of $2$-categories.  Then $(DH)^{\star} \colon \psd_p(D\cd,\ca) \to \psd_p(D\cc,\ca)$ is a $2$-trivial fibration of Gray-categories
 \end{lemma}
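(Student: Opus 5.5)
The plan is to verify the four conditions for a $2$-trivial fibration directly, using the $2$-categorical description of $\psd_p(D\cc,\ca)$ in \Cref{lem:aux}. The key input is that $\twocat$ with the Gray tensor product is a monoidal model category (\Cref{sect:2model}) in which every object is fibrant. Consequently, for a trivial cofibration $H \colon \cd \to \cc$ and any $2$-category $\cx$, the restriction $2$-functor
$$H^\star \colon \psd^2(\cc,\cx) \to \psd^2(\cd,\cx)$$
is a trivial fibration of $2$-categories: it is surjective on objects, full on $1$-cells and fully faithful on $2$-cells. We only need this for $\cx = \ca(F0,F1)$ and related homs. From it we also obtain the analogous statement for pseudonatural transformations between $2$-functors into $\cx$: it follows from the hom-level fullness and faithfulness of $H^\star$ together with the fact that adjoint-equivalence data is determined in the hom $2$-categories.

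By \Cref{lem:aux}, $(DH)^\star$ keeps $F0, F1, \alpha_0, \alpha_1, \Gamma_0, \Gamma_1, \Theta_0, \Theta_1$ fixed and precomposes the $2$-categorical data with $H$. We then check each condition in turn.

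\emph{Surjective on objects.} Given $(F,F0,F1)$ with $F \colon \cd \to \ca(F0,F1)$, lift $F$ along $H^\star$ to a $2$-functor $\cc \to \ca(F0,F1)$. The lift exists because $H$ is a trivial cofibration and $\ca(F0,F1)$ is fibrant; it is extension along a trivial cofibration, i.e.\ surjectivity of $H^\star$ on objects.

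\emph{Full on $1$-cells.} Given $G,F$ on $\cc$ and a pseudonatural $\alpha \colon (\alpha_0)^\star GH \Rightarrow (\alpha_1)_\star FH$ with adjoint-equivalence components, fullness of $H^\star \colon \psd^2(\cc,\ca(F0,G1)) \to \psd^2(\cd,\ca(F0,G1))$ gives a pseudonatural extension. Its components at objects of $\cc$ must still be equipped with adjoint-equivalence structure. Since $H$ is a biequivalence, every object of $\cc$ is equivalent to one in the image of $H$, and pseudonatural components are preserved under such equivalences up to isomorphism. So the extended components are equivalences, and we choose adjoint structure on them, taking the given structure on the image of $H$. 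I expect this step to be the main obstacle, since the adjoint-equivalence structure is data rather than a property and must agree with the given choices on the image of $H$. If extending chosen adjoint data is awkward, I would instead encode $\alpha$ together with its adjoint structure as a $2$-functor $\cc \to \cx'$, where $\cx'$ is a suitable $2$-category of adjoint-equivalence squares, and apply the lifting property once.

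\emph{Fully faithful on $2$- and $3$-cells.} A $2$-cell is a pair $(\Gamma_0,\Gamma_1)$ together with an invertible modification. Modifications between pseudonatural transformations out of $\cc$ correspond bijectively to those out of $\cd$, because $H^\star$ is fully faithful on $2$-cells of $\psd^2$. Invertibility is reflected, since the inverse also lifts uniquely. For $3$-cells, the condition is an equation of modifications, and this equation holds on $\cc$ iff it holds on $\cd$ by faithfulness. This gives fullness and faithfulness on $3$-cells.
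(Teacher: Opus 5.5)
Your proof is correct. Surjectivity on objects and full faithfulness on $2$- and $3$-cells go exactly as in the paper. The only real divergence is the step you flag as the main obstacle, fullness on $1$-cells.

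\emph{The paper's route.} The paper does not work componentwise there. It treats the pseudonatural transformation together with its adjoint-equivalence data as a single adjoint equivalence in $\psd^2(-,\cb)$, implicitly identifying componentwise adjoint structures with such an adjoint equivalence via mates. It lifts the right adjoint using fullness of $H^\star$. It then lifts the remaining adjoint data in one step, using the lifting property of the trivial fibration $H^\star$ against the cofibration $j^2_1\colon D^2_1\to E^2_1$. Agreement with the given structure after restriction along $H$ then comes for free. Your fallback, encoding everything as a map out of a free adjoint equivalence and lifting once, is essentially this.

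\emph{Your route.} You lift only the bare pseudonatural transformation. You then use bi-essential surjectivity of $H$ and pseudonaturality to show that every new component is an equivalence. Finally you choose adjoint data objectwise. This is more elementary and avoids the mate identification, but it rests on two facts you did not state.
\begin{itemize}
\item $H$ is a trivial cofibration with fibrant domain, so lifting the identity against the map to the terminal $2$-category gives a retraction. Hence $H$ is a split mono, in particular injective on objects, so ``the given structure on the image of $H$'' is well defined. The paper uses this split-mono observation for surjectivity on objects.
\item The pseudo-transformation axioms impose no condition linking the adjoint structures at different $1$-cells $0\to1$ of the domain of the form $D(-)$. Normalisation only concerns identity $1$-cells. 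So the remaining objectwise choices are unconstrained.
\end{itemize}
With these two points added, your argument is complete.

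A minor point of notation: you have $H$ pointing the opposite way to the statement, where $(DH)^\star\colon\psd_p(D\cd,\ca)\to\psd_p(D\cc,\ca)$ forces $H\colon\cc\to\cd$. This is only a relabelling and affects nothing.
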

 \begin{proof}
 Since all objects are fibrant in $\twocat$, $H$ is a split mono.  Therefore $DH$ is a split mono too, whence $(DH)^{\star}$ is a split epi.  In particular, it is surjective on objects.
 Now consider $F,G \colon D\cd \rightrightarrows \ca$ and set $\cb = \ca(F0,G1)$.  As explained in \Cref{sect:2model}, the Gray-tensor product equips $\twocat$ with the structure of a monoidal model category.  Therefore since $H$ is a trivial cofibration and $\cb$, like any $2$-category, is fibrant, it follows that the $2$-functor $H^{\star} \colon \psd^2(\cd, \cb) \to \psd^2(\cc, \cb)$ is a trivial fibration.
 
 Now consider $P=(\alpha_0)^\star G$ and $Q=(\alpha_1)_\star F$.  Then, following \Cref{lem:aux}, to say that $(DH)^{\star}$ is full on $1$-cells is to say that given a pseudonatural transformation $\beta \colon PH \to QH$ equipped with the structure of an adjoint equivalence in $\psd^2(\cc, \cb)$, there exists a pseudonatural transformation  $\alpha \colon P \to Q$ equipped with the structure of an adjoint equivalence in $\psd^2(\cc, \ca)$ such that $\alpha \circ H = \beta$ and likewise $H^\star$ preserves the adjoint equivalences.  This holds since $H^{\star}$, like any trivial fibration, is full on $1$-cells and also lifts adjoint equivalences (since $j^2_1 \colon D^2_1 \to E^2_1$ is a cofibration).
 
%
%
%

 To say that $(DH)^{\star}$ is fully faithful on $2$-cells amounts to showing that certain invertible modifications with domain $\cd$ lift uniquely along $H^{\star} \colon \psd^2(\cd, \cb) \to \psd^2(\cc, \cb)$ whilst the corresponding condition for $3$-cells amounts to showing that certain equations between invertible modifications hold just when they hold under restriction along $H$.  According, these conditions hold if $(H)^{\star}$ is fully faithful on invertible $2$-cells which, like any trivial fibration of 2-categories, it is.
 \end{proof}
 
 \begin{cor}\label{twofibrations}
Consider the generating trivial cofibrations $l_n \colon D^3_n \to E^3_{n+1}$ for $n=1,2$.  The Gray-functor $(l_n)^{\star} \colon \psd_s(E^3_{n+1},\ca) \to \psd_s(D^3_n,\ca)$ is a $2$-trivial fibration. \end{cor}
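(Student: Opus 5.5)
The plan is to exhibit $l_n$ for $n=1,2$ as the image under $D$ of a trivial cofibration of $2$-categories, and then apply \Cref{lem:bo} directly.

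Recall $l_n = j^3_{n+1}\circ\rho_n$ with $n=1,2$. In each case the underlying category of the domain and codomain is the parallel pair $\{0 \rightrightarrows 1\}$ (or the single arrow $0 \to 1$), as noted in the examples following the definition of $\Free_1$. So all cells of dimension $\geq 1$ live in the single hom from $0$ to $1$. Concretely:
\begin{itemize}
\item $D^3_1 = D(D^2_0)$, $D^3_2 = D(D^2_1)$ and $D^3_3 = D(D^2_2)$, since the free Gray-category on an $(n+1)$-cell has two objects and hom from $0$ to $1$ the free $2$-category on an $n$-cell, with trivial endo-homs and empty hom from $1$ to $0$.
\item $E^3_2 = D(E^2_1)$ and $E^3_3 = D(E^2_2)$, since a free adjoint equivalence $2$-cell (resp. $3$-cell) between $0$ and $1$ is generated entirely inside the hom $2$-category.
\end{itemize}
Because $D$ is a functor, we then get $\rho^3_n = D(\rho^2_{n-1})$ and $j^3_{n+1} = D(j^2_n)$, and hence $l_n = D(l^2_{n-1})$.

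Next, $l^2_{n-1}\colon D^2_{n-1}\to E^2_n$ for $n=1,2$ are exactly the two generating trivial cofibrations of Lack's model structure on $\twocat$ recalled in \Cref{sect:2model}, so in particular they are trivial cofibrations. \Cref{lem:bo} therefore says that $(l_n)^\star=(Dl^2_{n-1})^\star\colon \psd_p(E^3_{n+1},\ca)\to\psd_p(D^3_n,\ca)$ is a $2$-trivial fibration.

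Finally, by the remark after \Cref{lem:aux}, $\psd_s(D\cc,\ca)=\psd_p(D\cc,\ca)$, so the statement holds with $\psd_s$ as claimed.

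The only step needing care is the identification $E^3_{n+1}\cong D(E^2_n)$. I would check it by comparing universal properties. A Gray-functor $D\cc\to\ca$ is the same as a pair of objects $x,y$ together with a $2$-functor $\cc\to\ca(x,y)$. Under this correspondence, an adjoint equivalence $2$-cell (resp. $3$-cell) in $\ca$ between $1$-cells (resp. $2$-cells) from $x$ to $y$ is precisely a $2$-functor $E^2_1\to\ca(x,y)$ (resp. $E^2_2\to\ca(x,y)$). This matches the definition of adjoint equivalence $n$-cells in Gray-categories for $n=2,3$ given in \Cref{sect:equiv-gray}, which is why the case $n=0$ (involving $E^3_1$) is excluded.
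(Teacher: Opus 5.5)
Your proposal is correct and matches the paper's proof: the paper also observes that $l_1,l_2$ are $D$ applied to the two generating trivial cofibrations of $\twocat$ and then invokes \Cref{lem:bo}. Your Yoneda-style identification $E^3_{n+1}\cong D(E^2_n)$ and your explicit appeal to $\psd_s(D\cc,\ca)=\psd_p(D\cc,\ca)$ fill in details that the paper leaves implicit.
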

\begin{proof}
These are $DH$ for the two generating trivial cofibrations in $\twocat$.  Therefore the claim holds by the preceding lemma.
\end{proof}

\subsection{The hard direction and the full characterisation}\label{sect:hard}

We now turn to the hard part.  Following Section \ref{sect:equiv-gray}, consider the free adjoint equivalence $1$-cell $E^3_1$ and the morphism $j^3_1\colon D^3_1 \to E^3_1 \in \GCat$ selecting the equivalence $1$-cell.  Then the induced restriction along the diagonal below

\begin{equation}\label{K}
\begin{tikzcd}
	{\psd_s(E^3_1,\ca)} & {P\ca} \\
	& {\psd_s(D^3_1,\ca)}
	\arrow["K", from=1-1, to=1-2]
	\arrow["{(j^3_1)^{\star}}"', from=1-1, to=2-2]
	\arrow[hook, from=1-2, to=2-2]
\end{tikzcd}
\end{equation}

factors through $P\ca \hookrightarrow Ps(D^3_1,\ca)$ as depicted.  We will prove in Proposition \ref{prop:kn} that the induced $K \colon \psd_s(E^3_1,\ca) \to P\ca$ is a $2$-trivial fibration.  

\begin{rmk} Why will this be useful?  A pointwise equivalence  $\eta \colon F \to G \in \psd_p(\cb,\ca)$ amounts to a certain pseudomap $\eta\colon \cb \rightsquigarrow P\ca$.  Now since $K$, as a $2$-trivial fibration, admits a section, we can form the composite $\cb \rightsquigarrow P\ca \rightsquigarrow \psd_s(E^3_1,\ca) \hookrightarrow \psd_p(E^3_1,\ca) $, whose adjoint transpose $E^3_1 \rightsquigarrow \psd_p(\cb,\ca)$  exhibits $\eta$ as an equivalence $1$-cell in $\psd_p(\cb,\ca)$
\end{rmk}

Firstly, we need a lemma.

\begin{lemma}\label{semistrict2}
Let $\cc$ be a $2$-category.  A Gray-functor $F \colon D\cc \to \psd_s(\ca,\cb)$ takes values in semi-strict transformations if and only if its transpose $\overline{F} \colon \ca \rightsquigarrow \psd_s(D\cc,\cb)$ is a Gray-functor.
\end{lemma}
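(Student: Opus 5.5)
Since $D\cc$ has no non-trivial composite $1$-cells, a Gray-functor $F \colon D\cc \to \psd_s(\ca,\cb)$ amounts to a pair of Gray-functors $F0,F1 \colon \ca \to \cb$ (the values at the two objects) together with a $2$-functor $\cc \to \psd_s(\ca,\cb)(F0,F1)$. Its transpose $\overline{F} \colon \ca \rightsquigarrow \psd_s(D\cc,\cb)$ comes from the biclosedness isomorphism of \Cref{prop:biclosed}. The idea is to reduce the statement to the case $\cc = D^2_1$ treated in \Cref{semistrictasstrict}. We take that case as given, bearing in mind that $\psd_s(D^3_1,\cb) = \psd_p(D^3_1,\cb)$ and $D^3_1 = D(D^2_0)$.

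First we note that $\overline{F}$ is a Gray-functor exactly when its cocycle $2$-cells $\overline{F}^2_{f_2,f_1}$ are identities in $\psd_s(D\cc,\cb)$. By \Cref{lem:aux}, a $2$-cell of $\psd_s(D\cc,\cb)$ is a pair of $2$-cells $\Gamma_0,\Gamma_1$ together with an invertible modification between pseudonatural transformations $\cc \to \cb(\,\cdot\,,\cdot\,)$. Such a $2$-cell is an identity precisely when:
\begin{enumerate}
\item $\Gamma_0$ and $\Gamma_1$ are identities;
\item every component of the modification is an identity;
\item its source and target $1$-cells coincide, including their adjoint equivalence structures.
\end{enumerate}
Condition (2) is checked objectwise in $\cc$, and condition (3) compares pseudonatural transformations objectwise together with their $2$-cell components at $1$-cells of $\cc$.

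Next we pull back along the objects $c \in \cc$, viewed as $2$-functors $c \colon D^2_0 \to \cc$. By the naturality statement in \Cref{lem:aux}, restriction along $Dc$ yields Gray-functors $\psd_s(D\cc,\cb) \to \psd_s(D^3_1,\cb)$. By naturality of the biclosedness isomorphism, the composite of $\overline{F}$ with such a restriction is the transpose of $F \circ Dc$, and $F\circ Dc$ picks out the pseudo-transformation $F(c)$. Cocycles are preserved under postcomposition with a Gray-functor, so the component of $\overline{F}^2_{f_2,f_1}$ at $c$ is exactly $\overline{F(c)}^2_{f_2,f_1}$. By \Cref{semistrictasstrict}, all these vanish precisely when every $F(c)$ is semi-strict. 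This already gives one direction: if $\overline{F}$ is a Gray-functor, each $\overline{F(c)}$ is, so $F$ takes values in semi-strict transformations.

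For the converse, objectwise vanishing takes care of conditions (1), (2) and the objectwise part of (3). What remains is the part of (3) concerning $1$-cells $u \colon c \to c'$ of $\cc$. We must check that the $2$-cell component at $u$ of the composite pseudonatural transformation $\overline{F}_{f_2}\overline{F}_{f_1}$ equals that of $\overline{F}_{f_2f_1}$. Following \Cref{explicitbiclosed}, these components are built from the $3$-cells $F(u)_{f}$, which are the components of the modification $F(u)$, whiskered by $1$-cells. They agree by the modification axiom for $F(u)$ at a composite $f_2f_1$, once the cocycles $(F(c)^o)^2$ are identities.

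This last verification is the main obstacle. It requires unpacking how the biclosedness isomorphism acts on $1$-cells of $\cc$, and possibly translating the modification axiom through mates, since the transpose uses the oplax data $\alpha^o$. The first approach is to apply the restriction argument again, this time along $u \colon D^2_1 \to \cc$. That reduces the check to $\cc = D^2_1$, where the $1$-cell of $\cc$ is a modification between semi-strict transformations, and a direct comparison via \cite[Remark 5.10]{BourkeLobbia2} should be manageable.
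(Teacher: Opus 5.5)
Your first half matches the paper's proof. You restrict along the objects $c\in\cc$, use naturality of the biclosedness bijection to identify the image of $\overline{F}^2_{f_2,f_1}$ with $\overline{F(c)}^2_{f_2,f_1}$, and apply \Cref{semistrictasstrict}. This gives the forward direction, and for the converse it gives your conditions (1), (2) and the objectwise part of (3).

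The gap is the last step, which you leave as ``the main obstacle''. You must show that the source $\alpha:=\overline{F}_{f_2}\overline{F}_{f_1}$ and target $\beta:=\overline{F}_{f_2f_1}$ of the cocycle have equal $3$-cell components $\alpha_u=\beta_u$ at $1$-cells $u\colon c\to c'$ of $\cc$. Your suggested route is not carried out. It invokes the modification axiom for $F(u)$ at $f_2f_1$, transported through mates, and it needs an explicit description of how the transpose acts on $1$-cells of $\cc$ and of composition in $\psd_s(D\cc,\cb)$. Neither description is established. The proposed reduction to $\cc=D^2_1$ only relocates the problem, since that case is not covered by \Cref{semistrictasstrict}. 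That proposition is the case $\cc=D^2_0$, as $D^3_1=D(D^2_0)$; your opening sentence conflates the two.

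The missing idea is that you do not need to unpack the transpose at all. Use the modification axiom of the cocycle $\Gamma:=\overline{F}^2_{f_2,f_1}$ itself. By \Cref{lem:aux}, $\Gamma$ is a $2$-cell of $\psd_s(D\cc,\cb)$, so it consists of $\Gamma_0$, $\Gamma_1$ and an invertible modification between the pseudonatural transformations underlying $\alpha$ and $\beta$, whiskered by $\Gamma_0$ and $\Gamma_1$. That modification satisfies the modification axiom at every $1$-cell $u$ of $\cc$ (Axiom A.3(iii) of \cite{BourkeLobbia}). Your objectwise step already shows three things: $\Gamma_0$, $\Gamma_1$ and every component $\Gamma_c$ are identities; the whiskered transformations are then identities, since interchange with an identity $2$-cell is trivial; and $\alpha$, $\beta$ agree on objects of $\cc$. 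So the axiom at $u$ collapses to $\alpha_u=\beta_u$, and $\Gamma$ is an identity. This is how the paper concludes: the restriction Gray-functors $\psd_s(D\cc,\cb)\to\psd_s(D^3_1,\cb)$ along the objects of $\cc$ jointly reflect identity $2$-cells. No mates and no knowledge of the transpose on $1$-cells of $\cc$ are required.
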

\begin{proof}
Each object $f$ of $\cc$ specifies a morphism $f \colon 0 \to 1$ of $D\cc$ and so a Gray-functor $f \colon D^3_1 \to D\cc$.  Then to say that $F$ takes values in semi-strict transformations is to to say that each composite $F \circ f \colon D^3_1 \to \psd_s(\ca,\cb)$ specifies a semi-strict transformation.  By \Cref{semistrictasstrict}, this is equally to say that the transpose $\overline{F \circ f} \colon \ca \to  \psd_s(D^3_1,\cb)$ is strict and by naturality of the biclosedness bijections, we have that $\overline{F \circ f} = \psd_s(f,\cb) \circ \overline{F} \colon \ca \rightsquigarrow \psd_s(D\cc,\cb) \to \psd_s(D^3_1,\cb)$.  

\black Now to say that $\overline{F}$ is strict is to say that the pseudo-modifications $\overline{F}^2_{f_1,f_2}$ are identities.  Therefore it suffices to show that  the Gray-functors $\psd_s(f,\cb) \colon \psd_s(D\cc,\cb) \to \psd_s(D^3_1,\cb)$, which evaluate at $f$, jointly reflect identity $2$-cells.  Consider a pseudo-modification $\Gamma \colon \alpha \to \beta$ and suppose that each $\psd_s(f,\cb) (\Gamma)$ is the identity.  This is to say that its domains and codomains agree --- that is $(\alpha_0, \alpha_f^o \dashv \alpha_f,\alpha_1) = (\beta_0, \beta_f^o \dashv \beta_f,\beta_1)$ --- and that the components $\Gamma_0$, $\Gamma_1$ and $\Gamma_f$ are identities for all $f$.  Since these are all the components of $\Gamma$, it remains to show that $\alpha = \beta$.  Here, the only remaining thing to show is that $\alpha_{\phi} = \beta_{\phi}$ at a $1$-cell $\phi$ of $\cc$.  But this is precisely the modification condition A.3(iii) \cite{BourkeLobbia} for $\Gamma$ at $\phi$ given that the components of $\Gamma$ are identities.\black
%
\end{proof}

\begin{prop}\label{prop:kn}
$K \colon \psd_s(E^3_1,\ca) \to P\ca$ is a 2-trivial fibration.
\end{prop}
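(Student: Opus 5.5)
The plan is to verify the three defining conditions of a $2$-trivial fibration one at a time, treating $E^3_1$ as the Gray-category freely generated by an adjoint biequivalence. Its data are a $1$-cell $f\colon 0\to 1$, a $1$-cell $g\colon 1\to 0$, unit and counit $2$-cells, and invertible triangulator $3$-cells satisfying the swallowtail equations. With this presentation, $K$ is restriction along the Gray-functor $j^3_1$ that selects $f$.

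\emph{Surjectivity on objects.} An object of $P\ca$ is an equivalence $1$-cell $A\colon a_0\to a_1$. By \cite[Proposition 3.1]{Lack2011A-quillen}, as recalled in \Cref{sect:equiv-gray}, $A$ underlies an adjoint biequivalence. By freeness of $E^3_1$, this adjoint biequivalence is a Gray-functor $E^3_1\to\ca$ whose image under $K$ is $A$. No normality is needed here, so this step is routine.

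\emph{Fullness on $1$-cells.} Take objects $X,Y\colon E^3_1\to\ca$, with $X$ given by an adjoint biequivalence $(A,A^\bullet,\ldots)$ and $Y$ by $(B,B^\bullet,\ldots)$. A $1$-cell $KX\to KY$ in $P\ca$ consists of $f_0,f_1$ together with an adjoint equivalence $2$-cell $f\colon Bf_0\Rightarrow f_1A$. We must extend this to a pseudo-transformation $X\to Y$, and we do so in three steps.
\begin{enumerate}
\item At $g$ we take the adjoint-equivalence $2$-cell $B^\bullet f_1\Rightarrow f_0A^\bullet$ obtained as the mate of $f^o$ through the two adjoint biequivalences. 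This is doctrinal-adjunction style: whisker by the unit of $B$ and the counit of $A$.
\item At composites of $f$ and $g$ the data are then forced by the cocycle $3$-cells, which we set to be the canonical identities or mates.
\item At the $2$-cells (unit and counit), the required invertible $3$-cells are built from the triangulators of $X$ and $Y$ together with the triangle identities for $f^o\dashv f$.
\end{enumerate}
The pseudo-transformation axioms at the generating $3$-cells (the triangulators) must then be checked. This reduces to the swallowtail equations and the calculus of mates.

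\emph{Full faithfulness on $2$- and $3$-cells.} A $2$-cell in $\psd_s(E^3_1,\ca)$ between such transformations has components $\Gamma_0,\Gamma_1$ and an invertible $3$-cell at every $1$-cell of $E^3_1$. The modification axioms at the unit and counit of $E^3_1$ force the component at $g$ to be the mate of the component at $f$. Components at composites are forced by the composition axiom. Hence the data at $f$, which is exactly a $2$-cell of $P\ca$, extends uniquely, and the remaining axioms at the triangulators follow from the axioms at $f$ by taking mates. For $3$-cells the argument is easier still, since they are just pairs $(\Theta_0,\Theta_1)$ satisfying a condition. That condition at $g$ is the mate of the condition at $f$, so it holds automatically.

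\emph{Main obstacle.} I expect the hard step to be fullness on $1$-cells: proving that the mate-defined data at $g$ and at the unit and counit satisfy all the pseudo-transformation axioms of \cite[Appendix A.2]{BourkeLobbia}, especially those involving the triangulator $3$-cells. My first attempt would be to organise it via \Cref{lem:aux}. I would restrict along the subcategories of $E^3_1$ of the form $D\cc$ (hom $0\to1$ and hom $1\to0$ separately) to turn the checks into $2$-categorical statements about pseudonatural transformations. Then I would invoke the fact that $H^\star$ lifts adjoint equivalences, as in \Cref{lem:bo}, to transport the needed data.
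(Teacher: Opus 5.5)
Your surjectivity step is fine and matches the paper's step (i). The steps that carry the content of the proposition, however, are asserted rather than proved.

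\textbf{Fullness on $1$-cells.} You must extend $(f_0,f_1,f^o\dashv f)$ to a pseudo-transformation on all of $E^3_1$. The substance is showing that your chosen $\alpha_g$, $\alpha_\eta$, $\alpha_\epsilon$ satisfy the $3$-cell naturality axiom at both triangulators. They must also respect the further generators: $E^3_1$ is the free \emph{adjoint} biequivalence, so $\eta$ and $\epsilon$ are themselves equivalence $2$-cells carrying inverse data (extra generating $2$- and $3$-cells), which your description of $E^3_1$ omits. These conditions are independent equations between large pastings of $3$-cells. Saying ``this reduces to the swallowtail equations and the calculus of mates'' is not a verification, and you flag the step yourself as unresolved. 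It is exactly the pasting-diagram computation the paper calls possible but extremely complicated, and deliberately avoids.

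\textbf{Fullness on $2$-cells.} The same gap recurs. The axioms at $1_0$, at $\eta$ and at the composite $gf$ do determine $\Gamma_g$, because whiskering by the equivalence $Xf$ is fully faithful on $3$-cells. But for general $\alpha,\beta$ the result is a pasting involving $\alpha_\eta$, $\beta_\eta$, $\alpha^2_{g,f}$, $\beta^2_{g,f}$, not a mate of $\Gamma_f$. What then has to be proved is the axiom at $\epsilon$; modifications have no axioms at triangulators. Also, the data at composites are not forced: identity cocycles are a choice, namely that of a semi-strict transformation.

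\textbf{Your fallback.} It cannot close the gap. $E^3_1$ is not of the form $D\cc$. Restricting along the inclusions of $D(E^3_1(0,1))$ and $D(E^3_1(1,0))$ forgets composition. It sees neither $E^3_1(0,0)$ and $E^3_1(1,1)$, where $\eta$ and $\epsilon$ live, nor the cocycle data expressing $\alpha_{f\eta}$ or $\alpha_{fgf}$ through $\alpha_f,\alpha_g,\alpha_\eta$. So pieces built separately cannot be assembled without exactly the compatibilities at issue.

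\textbf{The missing ideas} are the paper's two reductions.
\begin{enumerate}
\item Show that $K$ is a triequivalence. We have $s\circ K=ev_0=\psd_s(l_0,\ca)$, and $l_0\colon 1\to E^3_1$ is an equivalence in $\GCat^4$ because $\psd_s(E^3_1,E^3_1)$ is contractible by \Cref{mc2}. Hence the hom $2$-functors of $K$ are biequivalences. This gives full faithfulness on $3$-cells outright, and surjectivity and fullness only up to equivalence.
\item Make these strict by lifting $l_1$ and $l_2$ against $(j^3_1)^\star$. First replace by semi-strict transformations (\Cref{thm:replacement2}, as $E^3_1$ is cofibrant). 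Then \Cref{semistrict2} makes the transposed maps strict. Biclosedness now turns each problem into lifting the cofibration $j^3_1$ against $(l_n)^\star\colon\psd_s(E^3_{n+1},\ca)\to\psd_s(D^3_n,\ca)$, which is a trivial fibration by \Cref{lem:bo} and \Cref{twofibrations}.
\end{enumerate}
So \Cref{lem:bo} is the right tool, but it enters after transposing, with $D^3_n$ and $E^3_{n+1}$ playing the role of $D\cc$, not pieces of $E^3_1$.
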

\begin{proof}
Since the proof is non-trivial, we break it into steps.
\begin{enumerate}[leftmargin=*, label=(\roman*)]
\item \emph{Firstly, observe that $K$ is surjective-on-objects (s.o.).}  Indeed, by \cite[Proposition 3.1]{Lack2011A-quillen}, each equivalence $1$-cell in a Gray-category can be promoted to an adjoint biequivalence.
\item \emph{Next, we prove that $K$ is a weak equivalence.}  Indeed, we have a commutative triangle
\[\begin{tikzcd}[ampersand replacement=\&]
	{\psd_s(E^3_1,\ca)} \&\& {P\ca} \\
	\& \ca
	\arrow["K", from=1-1, to=1-3]
	\arrow["{ev_0}"', from=1-1, to=2-2]
	\arrow["s", from=1-3, to=2-2]
\end{tikzcd}\]
in which $s$ is a weak equivalence, since $P\ca$ is a path object for the model structure on Gray-categories.  Therefore, to show that $K$ is a weak equivalence, it suffices to show that $ev_0$ is a weak equivalence.  Now this is obtained by applying the loose $4$-functor $\psd_s(-,\ca) \colon (\GCat^4)^{op} \to \GCat^4$  to the generating trivial cofibration $l_0 \colon 1= D^3_0 \to E^3_1$.  Since, by \Cref{looseequivalence}, loose $4$-functors preserve equivalence $1$-cells, it suffices to show $l_0$ is an equivalence $1$-cell in $\GCat^4$.  

Since $l_0$ is a (generating) trivial cofibration, its retraction $! \colon E^3_1 \to 1$ is a weak equivalence, by $2$-out-of-$3$.  In fact, since all objects in $\GCat$ are fibrant, $!$ is therefore a trivial fibration.

Since $! \circ l_0$ is the identity, to prove that $l_0$ is an equivalence, it remains to construct an equivalence $1$-cell $\Delta_0 :=  l_0\circ ! \simeq id_{E_{1}} \in \psd_s(E^3_1,E^3_1)$.    Now as $E^3_1$ is cofibrant, it follows from \Cref{mc2} below that $\psd_s(E^3_1,E^3_1) \to \psd_s(E^3_1,1)=1$ is a trivial fibration, hence $\psd_s(E^3_1,E^3_1)$ is contractible --- in particular, any two objects therein are equivalent, so that $\Delta_0 \simeq id$, as required.

\item \emph{It remains to prove that the $2$-functor $\psd_s(E^3_1,\ca)(F,G) \to \psd_s(D^3_1,\ca)(F\circ j^3_1,G \circ j^3_1)$ is a s.o. $2$-equivalence.}  As the hom $2$-functor action of $K$, it is a biequivalence and so fully faithful on $2$-cells automatically.  For faithfulness on $1$-cells, consider $\Lambda,\Gamma \colon \alpha \rightrightarrows \beta \in \psd_s(E^3_1,\ca)(F,G)$ and suppose that $ \Lambda \circ j^3_1 = \Gamma \circ j^3_1$.  Then by fully faithfulness on $2$-cells, there exists a unique invertible $3$-cell $\Theta \colon \Lambda \cong \Gamma$ such that $\Theta \circ j^3_1 = id$.   Since $j^3_1$ is identity-on-objects and $\Theta$ is specified by its $3$-cell components $\Theta$ for $x \in E^3_1$, it follows that each $\Theta$ is an identity, and therefore that $\Lambda = \Gamma$, as required.  
\item \emph{Therefore it remains to establish that $\psd_s(E^3_1,\ca)(F,G) \to \psd_s(D^3_1,\ca)(F \circ j^3_1,G \circ j^3_1)$ is s.o. and full on $1$-cells.}   Firstly, let $\psd^{ss}_s(E^3_1,\ca)(F,G) \hookrightarrow \psd_s(E^3_1,\ca)(F,G)$ be the full sub-$2$-category on the semi-strict transformations.  Since $E^3_1$ is cofibrant, by \Cref{thm:replacement2}, the inclusion is a $2$-equivalence.    Indeed, we have a commutative square 

\begin{equation}\label{semistrict}
\begin{tikzcd}[ampersand replacement=\&]
	{\psd^{ss}_s(E^3_1,\ca)(F,G)} \& {\psd_s(E^3_1,\ca)(F,G)} \\
	{\psd^{ss}_s(D_1,\ca)(F \circ j^3_1,G \circ j^3_1)} \& {\psd_s(D^3_1,\ca)(F \circ j^3_1,G \circ j^3_1)}
	\arrow[hook, from=1-1, to=1-2]
	\arrow["{(j^3_1)^{\star}}"', from=1-1, to=2-1]
	\arrow["{(j^3_1)^{\star}}", from=1-2, to=2-2]
	\arrow[equals, nfold, from=2-1, to=2-2]
\end{tikzcd}
\end{equation}
where the top is a $2$-equivalence and the bottom an identity, since $D^3_1$ contains no composable non-identity $1$-cells.  Note that since the right leg is a biequivalence, so too is \emph{the left leg}.

\item \emph{Let us prove that $({j^3_1})^\star \colon \psd_s(E^3_1,\ca)(F,G) \to \psd_s(D^3_1,\ca)(F \circ j^3_1,G \circ j^3_1)$ is s.o.} Consider a $0$-cell $\alpha \in \psd_s(D^3_1,\ca)(F \circ j^3_1,G \circ j^3_1)$.  Since the left leg in \eqref{semistrict} is a biequivalence, there exists $\beta \in \psd^{ss}_s(E^3_1,\ca)(F,G)$ and an equivalence $\Lambda \colon  \beta \circ j^3_1 \simeq \alpha \in \psd_s(D^3_1,\ca)(F \circ j^3_1,G \circ j^3_1)$.  Therefore, it suffices to show that given such an equivalence, i.e. one with $\beta$ semistrict, there exists an equivalence $\Gamma \colon \beta \simeq \alpha' \in \psd_s(E^3_1,\ca)(F, G)$ such that $ \Gamma  \circ j^3_1 = \Lambda$.  

We will prove a slightly stronger claim: namely, that given an \emph{adjoint equivalence} $2$-cell $\Lambda \colon  \beta \circ j^3_1 \simeq \alpha \in \psd_s(D^3_1,\ca)$, there exists an adjoint equivalence $2$-cell $\Gamma \colon \beta \simeq \alpha' \in \psd_s(E^3_1,\ca)$ such that  $\Gamma  \circ j^3_1 = \Lambda$.  Note that this implies the desired result since each adjoint equivalence $2$-cell can be made into an adjoint equivalence $2$-cell (see \Cref{sect:equiv-gray}).  Now the stronger claim says that given a commutative square as on the left below, in which the upper horizontal take values in semi-strict transformations, then it admits a diagonal filler as depicted.

\[\begin{tikzcd}[ampersand replacement=\&]
	{D^3_1} \& {\psd_s(E^3_1,\ca)} \&\& {D^3_1} \& {\psd_s(E^3_1,\ca)} \& {\psd_p(E^3_1,\ca)} \\
	{E^3_2} \& {\psd_s(D^3_1,\ca)} \&\& {E^3_2} \& {\psd_s(D^3_1,\ca)} \& {\psd_p(D^3_1,\ca)}
	\arrow["\beta", from=1-1, to=1-2]
	\arrow["{l_1}"', from=1-1, to=2-1]
	\arrow["{({j^3_1}) ^{\star}}", from=1-2, to=2-2]
	\arrow["\beta", from=1-4, to=1-5]
	\arrow["{l_1}"', from=1-4, to=2-4]
	\arrow["J", hook, from=1-5, to=1-6]
	\arrow["{({j^3_1}) ^{\star}}", from=1-6, to=2-6]
	\arrow["\exists", from=2-1, to=1-2]
	\arrow["\Lambda"', from=2-1, to=2-2]
	\arrow["\exists", from=2-4, to=1-6]
	\arrow["\Lambda"', from=2-4, to=2-5]
	\arrow["{J=Id}"', equals, nfold, from=2-5, to=2-6]
\end{tikzcd}\]

Now since $l_1$ is identity-on-objects and $J \colon \psd_s(E^3_1,\ca) \to \psd_p(E^3_1,\ca)$ is enriched fully-faithful, they are orthogonal.  Hence to give a filler in the square on the left above is equally to give one in the square on the right.  Note that any pseudomorphism which is a diagonal filler is necessarily strict since $E^3_2$ contains no composable non-identity $1$-cells.  Therefore, by biclosedness, such a diagonal filler corresponds to a diagonal filler for the transposed square below.

\[\begin{tikzcd}[ampersand replacement=\&]
	{D^3_1} \& {\psd_p(E^3_2,\ca)} \\
	{E^3_1} \& {\psd_p(D^3_1,\ca)}
	\arrow["{\overline{\Lambda}}", from=1-1, to=1-2]
	\arrow["{j^3_1}"', from=1-1, to=2-1]
	\arrow["{(l_1) ^{\star}}", from=1-2, to=2-2]
	\arrow["\exists", squiggly, from=2-1, to=1-2]
	\arrow["{\overline{J \circ \beta}}"', from=2-1, to=2-2]
\end{tikzcd}\]

The important thing here to note is that both horizontal transposes are strict.  The top one since all pseudomaps with domain $D^3_1$ are strict and the bottom one by Lemma \ref{semistrict2} since we assumed that $\beta$ takes values in semi-strict transformations.  Therefore it suffices to solve the above the lifting problem.  Now the left leg is a cofibration and the right leg a trivial fibration (by Proposition \ref{twofibrations} noting that the $s/p$ subscript makes no difference since pseudomaps and strict ones coincide in these cases).  Therefore such a filler exists (indeed, a strict one).

\item \emph{Finally we prove that $({j^3_1})^\star \colon \psd_s(E^3_1,\ca)(F,G) \to \psd_s(D^3_1,\ca)(F \circ j^3_1,G \circ j^3_1)$ is full on $1$-cells.}  Since the top horizontal in \eqref{semistrict} is an equivalence of underlying $1$-categories, it suffices to show that $({j^3_1})^\star$ is full on pseudo-modifications between semi-strict transformations.   (Indeed, this is a special case of the easy fact that given functors $U \colon A \to B$ and $V \colon B \to C$ with $U$ an equivalence, then $V$ is full if it is full on morphisms between objects in the image of $U$).  So consider $\alpha, \beta \in \psd^{ss}_s(E^3_1,\ca)(F,G)$ and $\Lambda \colon \alpha \circ j^3_1 \to \beta \circ j^3_1 \in \psd_s(D^3_1,\ca)(F \circ j^3_1,G \circ j^3_1)$.  As $(j^3_1)^{\star}$ is essentially full, there exists $\Gamma \colon \alpha \to \beta$ and an invertible $\Theta \colon \Gamma \circ j^3_1 \cong \Lambda$.  Therefore, given such a $\Theta$, it suffices to lift it to an invertible $\Theta' \colon \Gamma \cong \Lambda'$ such that $\Theta' \circ j^3_1 = \Theta$.  This is to say that each diagram whose top horizontal takes values in semi-strict transformations

\[\begin{tikzcd}[ampersand replacement=\&]
	{D^3_2} \& {\psd_s(E^3_1,\ca)} \\
	{E^3_3} \& {\psd_s(D^3_1,\ca)}
	\arrow["\Gamma", from=1-1, to=1-2]
	\arrow["{l_2}"', from=1-1, to=2-1]
	\arrow["{({j^3_1}) ^{\star}}", from=1-2, to=2-2]
	\arrow["\exists", from=2-1, to=1-2]
	\arrow["\Theta"', from=2-1, to=2-2]
\end{tikzcd}\]
admits a diagonal filler.  The proof is identical in form to the preceding case, but now using that $(l_2)^\star \colon \psd_s(E^3_3,\ca) \to \psd_s(D^3_2,\ca)$ is a trivial fibration, as established in Proposition \ref{twofibrations}.
\end{enumerate}

\end{proof}

\begin{theorem}\label{thm:char-equiv-in-internal-hom}
For $n=1,2,3$, the equivalence $n$-cells in $\psd_p(\ca,\cb)$ are the pointwise equivalences.
\end{theorem}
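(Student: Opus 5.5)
One direction is \Cref{prop:char-equiv-in-internal-hom-trivial}, so it remains to show that every pointwise equivalence is an equivalence. The cases $n=2,3$ are the easy ones and should be handled directly. A $3$-cell (pseudo-modification) $\Lambda$ with each $\Lambda_x$ an equivalence $2$-cell, i.e.\ an invertible $3$-cell, has inverse components $\Lambda_x^{-1}$. These assemble into an inverse perturbation, because the perturbation axiom is an equation between $3$-cells and may be inverted. For a $2$-cell $\Gamma\colon\alpha\to\beta$ whose components $\Gamma_x$ are equivalence $2$-cells, I would promote each $\Gamma_x$ to an adjoint equivalence $\Gamma_x^o\dashv\Gamma_x$ in the hom $2$-category. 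The $3$-cells $\Gamma_f$ are invertible, since $\Gamma$ is a pseudo-modification, so taking mates of $\Gamma_f^{-1}$ through these adjoint equivalences gives the $3$-cell components of a pseudo-modification $\Gamma^o\colon\beta\to\alpha$. The modification axioms transfer by the standard calculus of mates. The units and counits then assemble into invertible perturbations $\Gamma^o\Gamma\cong 1$ and $\Gamma\Gamma^o\cong 1$, where the perturbation axiom follows from the triangle identities. Alternatively, one can argue as in the $n=1$ case below using $E^3_2, E^3_3$ in place of $E^3_1$.

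The main case is $n=1$, and here I would follow the strategy indicated in the remark before \Cref{semistrict2}. Let $\eta\colon F\to G$ in $\psd_p(\ca,\cb)$ be a pseudo-transformation whose components $\eta_x$ are equivalence $1$-cells in $\cb$. By \Cref{prop:p-maps-into-PA}, $\eta$ corresponds to a pseudomap $\overline{\eta}\colon\ca\rightsquigarrow P\cb$ with $s\overline{\eta}=F$ and $t\overline{\eta}=G$. By \Cref{prop:kn}, $K\colon\psd_s(E^3_1,\cb)\to P\cb$ is a $2$-trivial fibration, so \Cref{section} provides a pseudomap section $S\colon P\cb\rightsquigarrow\psd_s(E^3_1,\cb)$ with $KS=1$. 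Form the composite
\[\ca\rightsquigarrow P\cb\rightsquigarrow\psd_s(E^3_1,\cb)\hookrightarrow\psd_p(E^3_1,\cb).\]
By the symmetric biclosedness of \Cref{prop:biclosed}, this transposes to a pseudomap $E^3_1\rightsquigarrow\psd_p(\ca,\cb)$, which is strict because $E^3_1$ has no non-trivial composable atomics, though only pseudoness matters here.

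Next, restrict along $j^3_1\colon D^3_1\to E^3_1$. By naturality of the biclosedness isomorphism in the first variable, the restriction transposes to $\ca\rightsquigarrow\psd_p(D^3_1,\cb)$. This map is $(j^3_1)^\star\circ J\circ S\circ\overline{\eta}$, which equals the inclusion $P\cb\hookrightarrow\psd_p(D^3_1,\cb)$ composed with $KS\overline{\eta}=\overline{\eta}$, by the factorisation \eqref{K}. By \Cref{prop:p-maps-into-PA}, this transpose corresponds exactly to $\eta$. Thus $\eta$ is the image of the generating equivalence $1$-cell of $E^3_1$ under a pseudomap into $\psd_p(\ca,\cb)$. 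By \Cref{pseudomapEquivs}, pseudomaps preserve equivalence $1$-cells, so $\eta$ is an equivalence $1$-cell.

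The hard input, \Cref{prop:kn}, is already available. I therefore expect the main obstacle to be bookkeeping rather than new mathematics: verifying that the biclosedness bijection is natural with respect to restriction along $j^3_1$, and that it matches the identification of \Cref{prop:p-maps-into-PA}. In particular, \Cref{explicitbiclosed} shows that the transpose uses the left adjoints $\alpha^o_f$ together with reversed adjoint equivalences. One must check that composing with $S$ and then $K$ recovers precisely $\overline{\eta}$ as a pseudomap into $\psd_p(D^3_1,\cb)$, including its cocycle, so that the transpose back is $\eta$ on the nose and not merely up to isomorphism. If this identification proves awkward, a fallback is available: it suffices to obtain $\eta$ up to an invertible $2$-cell, since being an equivalence $1$-cell is invariant under isomorphism.
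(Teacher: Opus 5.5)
Your proposal is correct and is essentially the paper's proof. For $n=1$ you use the same chain (\Cref{prop:p-maps-into-PA}, the pseudomap section of the $2$-trivial fibration $K$ from \Cref{prop:kn} via \Cref{section}, and biclosedness); the only difference is the last step, where you apply \Cref{pseudomapEquivs} directly to the pseudomap $E^3_1\rightsquigarrow\psd_p(\ca,\cb)$, whereas the paper first strictifies it using \Cref{thm:replacement} (as $j^3_1$ is $1$-free). For $n=2,3$ the paper simply cites the pasting-diagram results of \cite{BourkeLobbia2}, which your mate argument reproduces.

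One correction to an aside you do not rely on: $E^3_1$ does have composable non-identity $1$-cells, since its underlying category is free on $0\leftrightarrows 1$. So pseudomaps out of it need not be strict, which is exactly why the paper invokes \Cref{thm:replacement}, and your appeal to \Cref{pseudomapEquivs} correctly sidesteps that.
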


\begin{proof}
The routine direction was established in \Cref{prop:char-equiv-in-internal-hom-trivial}.  Conversely, the relatively straightforward cases $n=2,3$ are established in \cite[Proposition B.1 and Corollary B.3]{BourkeLobbia2} using pasting diagrams.   Therefore it remains to prove that if a $1$-cell $\alpha \colon F \to G \in \psd_p(\ca,\cb)$ is a pointwise equivalence, then it is an equivalence.  Now $\alpha$ is specified by a strict morphism $\alpha \colon D^3_1 \to \psd_p(\ca,\cb)$.  Since each equivalence $1$-cell in a Gray-category can be made into an adjoint equivalence $1$-cell (\cite[Proposition 3.1]{Lack2011A-quillen}) it follows that $\alpha$ is an equivalence just when there exists an extension to a strict map as below left.

\[\begin{tikzcd}[ampersand replacement=\&]
	{D^3_1} \& {\psd_p(\ca,\cb)} \& {D^3_1} \& {\psd_p(\ca,\cb)} \\
	{E^3_1} \&\& {E^3_1}
	\arrow["\alpha", from=1-1, to=1-2]
	\arrow["{j^3_1}"', from=1-1, to=2-1]
	\arrow["\alpha", from=1-3, to=1-4]
	\arrow["{j^3_1}"', from=1-3, to=2-3]
	\arrow["\exists"', from=2-1, to=1-2]
	\arrow["\exists"', squiggly, from=2-3, to=1-4]
\end{tikzcd}\]
%
%
On underlying categories, the Gray-functor $j^3_n \colon D^3_n \to E^3_n$ selects one of the two atomic $1$-cells of the free category on the graph $0 \leftrightarrows 1$.  As such it is a $1$-free morphism.  Therefore by \Cref{thm:replacement}, an extension as above left exists if and only if there exists a pseudomap making the diagram above right commute.  By biclosedness, this is equally to say that there exists a pseudomap $S$ making the diagram below left commute.

\[\begin{tikzcd}[ampersand replacement=\&]
	\& {\psd_p(E^3_1,\cb)} \&\&\& {P_1(\cb)} \\
	\ca \& {\psd_p(D^3_1,\cb)} \&\& \ca \& {\psd_p(D^3_1,\cb)}
	\arrow["{(j^3_1)^\star}", from=1-2, to=2-2]
	\arrow[hook, from=1-5, to=2-5]
	\arrow["{\exists S}", squiggly, from=2-1, to=1-2]
	\arrow["{\overline{\alpha}}"', squiggly, from=2-1, to=2-2]
	\arrow["{\exists T}", squiggly, from=2-4, to=1-5]
	\arrow["{\overline{\alpha}}"', squiggly, from=2-4, to=2-5]
\end{tikzcd}\]


 By \Cref{prop:p-maps-into-PA} on the other hand, $\Gamma_x$ is an equivalence for each $x \in \ca$ just when there exists a $T$ making the diagram above right commute.  

Therefore we must show that given such a $T$ as in the right diagram, we obtain an $S$ as in the left diagram.  Now by \Cref{prop:kn}, $K$ is a $2$-trivial fibration, and so by \Cref{section} it has a section $V$ in $\GCat_p$.  Therefore the diagram below left commutes.  
\[\begin{tikzcd}[ampersand replacement=\&]
	{\psd_s(E^3_1,\cb)} \& {P\cb} \&\&\& {\psd_s(E^3_1,\cb)} \& {\psd_p(E^3_1,\cb)} \\
	\ca \& { \psd_p(D^3_1,\cb)} \&\& \ca \& {P\cb} \& { \psd_p(D^3_1,\cb)}
	\arrow["K", from=1-1, to=1-2]
	\arrow[hook, from=1-2, to=2-2]
	\arrow[hook, from=1-5, to=1-6]
	\arrow["K"', from=1-5, to=2-5]
	\arrow["{(j^3_1)^{\star}}"', from=1-6, to=2-6]
	\arrow["{V \circ T}", squiggly, from=2-1, to=1-1]
	\arrow["T", squiggly, from=2-1, to=1-2]
	\arrow["{\overline{\alpha}}"', squiggly, from=2-1, to=2-2]
	\arrow["{V\circ T}", squiggly, from=2-4, to=1-5]
	\arrow["T", squiggly, from=2-4, to=2-5]
	\arrow[hook, from=2-5, to=2-6]
\end{tikzcd}\]

Now since the square in the diagram above right commutes, setting $S$ to be the composite diagonal provides the sought-for lifting.
\end{proof}

\subsection{Equivalences in $\GCat^4$ and further consequences}
With the above in place, it is straightforward to prove the following characterisation of equivalences in the left-sharp $4$-category $\GCat^4$.
\begin{theorem}\label{thm:char-equiv-in-GCat_s}
In the left-sharp $4$-category $\GCat^4$:
\begin{enumerate}
\item  For $n>1$, the $n$-cell equivalences are the pointwise equivalences;
\item Each $1$-cell equivalence $F \colon \ca \to \cb$ is a triequivalence and, if $\ca$ and $\cb$ are cofibrant, then $F$ is a $1$-cell equivalence if and only if it is a triequivalence.
\end{enumerate}
\end{theorem}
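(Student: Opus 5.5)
For part (1), I would reduce directly to \Cref{thm:char-equiv-in-internal-hom}. By definition, for $n\in\{2,3,4\}$ an equivalence $n$-cell of $\GCat^4$ between $1$-cells $F,G\colon\ca\to\cb$ is an equivalence $(n-1)$-cell in the hom $\psd_s(\ca,\cb)$. The inclusion $J\colon\psd_s(\ca,\cb)\hookrightarrow\psd_p(\ca,\cb)$ is a full sub-Gray-category inclusion on objects. Any witnessing data for an equivalence between cells whose boundaries lie in $\psd_s(\ca,\cb)$ therefore already lies in $\psd_s(\ca,\cb)$: for $n=2$ this means an inverse $1$-cell $G\to F$ together with equivalence $2$-cells, all living in the hom between strict maps. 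Hence a cell of $\psd_s(\ca,\cb)$ is an equivalence there if and only if it is one in $\psd_p(\ca,\cb)$. \Cref{thm:char-equiv-in-internal-hom} then identifies these with the pointwise equivalences.

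For part (2), the first assertion is exactly \Cref{weak-equivalences}. For the converse, suppose $\ca,\cb$ are cofibrant and $F\colon\ca\to\cb$ is a triequivalence. Since all objects are fibrant and $\ca,\cb$ are cofibrant, $F$ is a weak equivalence between bifibrant objects, so it has a homotopy inverse $G\colon\cb\to\ca$ that is a strict Gray-functor. Moreover $GF$ and $FG$ are right homotopic to the identities via the path objects $P\ca$ and $P\cb$ of \Cref{prop:PA-path-obj}. Concretely, I would obtain strict maps $H_1\colon\ca\to P\ca$ with $sH_1=GF$ and $tH_1=1_\ca$, and similarly $H_2\colon\cb\to P\cb$. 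Such homotopies exist since, for cofibrant domains, left and right homotopy agree and can be computed with any path object.

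By \Cref{prop:p-maps-into-PA}, each $H_i$, being in particular a pseudomap into $P(-)\subseteq\psd_p(D^3_1,-)$, corresponds to a pseudo-transformation $GF\to1_\ca$ (resp.\ $FG\to1_\cb$) whose components are equivalence $1$-cells. Since $GF$ and $1_\ca$ are strict, this is a $1$-cell of $\psd_s(\ca,\ca)$ that is a pointwise equivalence, and by part (1) it is an equivalence there. Thus $F$ is an equivalence $1$-cell in $\GCat^4$.

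The main obstacle is the model-categorical step producing a strict homotopy inverse and path-object homotopies. I would first invoke the standard fact that a weak equivalence between cofibrant-fibrant objects is a homotopy equivalence with respect to any chosen path object. The subtle points are making sure the chosen path object is the one from \Cref{prop:PA-path-obj}, and checking the orientation $s$ versus $t$; the latter is harmless because equivalences can be reversed.
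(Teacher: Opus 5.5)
Your proposal is correct and follows essentially the same route as the paper. Part (1) is deduced from \Cref{thm:char-equiv-in-internal-hom} via the full inclusion $\psd_s(\ca,\cb)\hookrightarrow\psd_p(\ca,\cb)$, and your remark that this full inclusion preserves and reflects equivalence cells makes explicit a point the paper leaves implicit. Part (2) likewise combines \Cref{weak-equivalences} with the homotopy-equivalence characterisation through the path objects $P\ca$, $P\cb$, turned into pointwise-equivalence pseudo-transformations by \Cref{prop:p-maps-into-PA} and then upgraded to equivalences by part (1).
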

\begin{proof}
Since $\GCat^4(\ca,\cb) = \psd_s(\ca,\cb) \hookrightarrow \psd_p(\ca,\cb)$, the first claim is a special case of \Cref{thm:char-equiv-in-internal-hom}.   

It remains to prove (2).  The first part of it was proved in Proposition \ref{weak-equivalences}.  For the second part, suppose that $\ca$ and $\cb$ are cofibrant.  Since all objects are fibrant, then $F \colon \ca \to \cb$ will be a weak equivalence if and only if it is a homotopy equivalence: that is, if and only if there exists a strict map $G \colon \cb \to \ca$ and strict morphisms $\ca \to P\ca$ and $\cb \to P\cb$ making the diagrams below commute.

\[\begin{tikzcd}
	&& {P\ca} &&&& {P\cb} \\
	\ca && \ca && \cb && \cb
	\arrow["s"', from=1-3, to=2-3]
	\arrow["t", shift left=3, from=1-3, to=2-3]
	\arrow["s"', from=1-7, to=2-7]
	\arrow["t", shift left=3, from=1-7, to=2-7]
	\arrow["\exists", from=2-1, to=1-3]
	\arrow["{1_{\ca}}"'{pos=0.7}, shift right=2, from=2-1, to=2-3]
	\arrow["GF"{pos=0.7}, shift left, from=2-1, to=2-3]
	\arrow["\exists", from=2-5, to=1-7]
	\arrow["{1_{\cb}}"'{pos=0.7}, shift right=2, from=2-5, to=2-7]
	\arrow["FG"{pos=0.7}, shift left, from=2-5, to=2-7]
\end{tikzcd}\]

These define pointwise equivalences $GF \to 1_\ca \in \psd_p(\ca,\ca)$ and $FG \to 1_\cb \in \psd_p(\cb,\cb)$.  But since each pointwise equivalence $2$-cell is, by the above, an equivalence, this completes the proof.
\end{proof}

The next result shows that the path object functor commutes with the internal hom $\psd_p(\ca,\cb)$, and will be important in the following section.  Our slight variation $P\ca$ of Lack's path object was designed so that we obtain a natural isomorphism in the following, rather than just an equivalence.

\begin{prop}\label{prop:P-and-psd}
Let $\ca,\cb\in\GCat_p$. There is a natural isomorphism $\phi_{\ca,\cb} \colon\psd_p(\ca,P\cb)\rightsquigarrow P(\psd_p(\ca,\cb)) \in \GCat_p$ making the triangles below commutative. 
\[\begin{tikzcd}[ampersand replacement=\&]
	{\psd_p(\ca,P\cb)} \&\& {P(\psd_p(\ca,\cb))} \\
	\& {\psd_p(\ca,\cb)}
	\arrow["{\phi_{\ca,\cb}}", squiggly, from=1-1, to=1-3]
	\arrow["{t_\ast}", shift left, from=1-1, to=2-2]
	\arrow["{s_\ast}"', shift right, from=1-1, to=2-2]
	\arrow["s"', shift right, from=1-3, to=2-2]
	\arrow["t", shift left, from=1-3, to=2-2]
\end{tikzcd}\]
\end{prop}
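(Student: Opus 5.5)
The plan is to use the symmetric biclosedness of $\GCatmp^l$ (\Cref{prop:biclosed}) together with the fact that $P\cb$ is a \emph{full} sub-Gray-category of $\psd_p(D^3_1,\cb)=\psd_s(D^3_1,\cb)$. First I build the unrestricted isomorphism. \Cref{prop:biclosed} gives an isomorphism of pseudomaps
\[
\lambda \colon \psd_p(\ca,\psd_p(D^3_1,\cb)) \rightsquigarrow \psd_p(D^3_1,\psd_p(\ca,\cb)),
\]
natural in all three variables. Naturality in the $D^3_1$ variable, applied to the two inclusions $D^3_0 \rightrightarrows D^3_1$, shows that $\lambda$ commutes with the projections. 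On the left these are $s_\ast,t_\ast$ (postcomposition with $s,t$). On the right they are $s,t$, because $\psd_p(D^3_0,-)\cong \mathrm{id}$. Naturality in $\ca$ and $\cb$ is inherited directly.

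Next I restrict $\lambda$ to the sub-Gray-categories. The domain $\psd_p(\ca,P\cb)$ is the full sub-Gray-category of $\psd_p(\ca,\psd_p(D^3_1,\cb))$ on those pseudomaps $\overline{\alpha}$ that factor through $P\cb$. Fullness needs a check here. A transformation or higher cell between pseudomaps landing in a full sub-Gray-category automatically lands there, since all its components are cells of $P\cb$ and $P\cb$ is full on $1$-cells and above. The codomain $P(\psd_p(\ca,\cb))$ is the full sub-Gray-category on those $\alpha\colon D^3_1\to\psd_p(\ca,\cb)$ that are equivalence $1$-cells.

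The key step is to show that $\lambda$ matches these object sets. By \Cref{prop:p-maps-into-PA}, which is exactly the object-level description of $\lambda$, a pseudo-transformation $\alpha$ corresponds to a pseudomap $\overline{\alpha}$ that factors through $P\cb$ iff each component $\alpha_x$ is an equivalence, that is, iff $\alpha$ is a pointwise equivalence. By \Cref{thm:char-equiv-in-internal-hom} (the case $n=1$), the pointwise equivalences in $\psd_p(\ca,\cb)$ are precisely the equivalence $1$-cells, so these are exactly the objects of $P(\psd_p(\ca,\cb))$.

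Since both sides are full sub-Gray-categories and $\lambda$ is an isomorphism restricting to a bijection on objects, it restricts to an isomorphism $\phi_{\ca,\cb}$. Commutativity of the triangles and naturality follow by restriction. For naturality in $\cb$ along a pseudomap $F$, I use that $PF$ is the restriction of $\psd_p(D^3_1,F)$, as in \Cref{lem:functorial}. Naturality in $\ca$ uses the analogous restriction, together with the fact that precomposition preserves pointwise equivalences.

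I expect the main obstacle to be the object-level matching, since the whole argument rests on the hard characterisation in \Cref{thm:char-equiv-in-internal-hom}. A smaller subtlety is that $\lambda$ sends a $1$-cell $f$ to the left adjoint $\alpha^o_f$ rather than to $\alpha_f$ (\Cref{explicitbiclosed}). This does not affect the argument, because membership in $P$ depends only on the object components $\alpha_x$.
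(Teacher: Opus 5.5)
Your proposal is correct and follows the paper's proof essentially step for step: you restrict the biclosedness isomorphism $\lambda_{\ca,D^3_1,\cb}$ of \Cref{prop:biclosed}, obtain the triangles from its naturality along $D^3_0 \rightrightarrows D^3_1$, and match the objects of the two full sub-Gray-categories using \Cref{prop:p-maps-into-PA} together with \Cref{thm:char-equiv-in-internal-hom}. Your extra checks are details the paper leaves implicit: fullness of $\psd_p(\ca,P\cb)$ inside $\psd_p(\ca,\psd_p(D^3_1,\cb))$, naturality via \Cref{lem:functorial}, and the point that $\lambda$ using the left adjoints $\alpha^o_f$ does not matter.
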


\begin{proof}
Since $\GCatmp^l$ is a symmetric biclosed multicategory, we have the natural isomorphism as depicted on the lower horizontal below

\[\begin{tikzcd}[ampersand replacement=\&]
	{\psd_p(\ca,P\cb)} \&\& {P(\psd_p(\ca,\cb))} \\
	{\psd_p(\,\ca,\psd_p(D^3_1,\cb)\,)} \&\& {\psd_p(\,D^3_1,\psd_p(\ca,\cb)\,)} \\
	\& {\psd(\ca,\cb)}
	\arrow["{\phi_{\ca,\cb}}", squiggly, from=1-1, to=1-3]
	\arrow[hook, from=1-1, to=2-1]
	\arrow[hook, from=1-3, to=2-3]
	\arrow["{\lambda_{\ca,D_1^3,\cb}}", squiggly, from=2-1, to=2-3]
	\arrow["{t_\ast}", shift left, from=2-1, to=3-2]
	\arrow["{s_\ast}"', shift right, from=2-1, to=3-2]
	\arrow["s"', shift right, from=2-3, to=3-2]
	\arrow["t", shift left, from=2-3, to=3-2]
\end{tikzcd}\]

and, by naturality in the two morphisms $1=D_0^3 \rightrightarrows D_1^3$, it commutes with the source and target mappings in the lower part of the diagram.  

Now $\psd_p(\ca,P\cb)$ is the full sub-Gray-category of $\psd_p(\,\ca,\psd_p(D^3_1,\cb)\,)$ consisting of the pseudo-transformations which are pointwise equivalence whilst $P(\psd_p(\ca,\cb))$ is the full sub-Gray-category of $\psd_p(\,D^3_1,\psd_p(\ca,\cb)\,)$ consisting of the equivalences.  Since by \Cref{thm:char-equiv-in-internal-hom} the pointwise equivalences and equivalences coincide, the isomorphism restricts as in the upper rectangle.
%
%
\end{proof}

\section{The homotopy category $\Ho(\GCat)$ is symmetric monoidal closed}\label{sect:homotopycat}

In this section we investigate the interaction between the left-sharp skew monoidal structure $(\GCat,\otimes_\sharp,1)$ and the Quillen model structure on $\GCat$.  Firstly, we show that the left-sharp skew structure induces a closed skew monoidal structure on the homotopy category $\Ho(\GCat)$.  Building on this, in \Cref{thm:smchomotopy} we show that the  homotopy category is actually symmetric monoidal closed.



\subsection{The left derived skew monoidal structure}

In Section 5 of \cite{Bourke2016Skew}, the first-named author introduced a weakening of the notion of a monoidal model structure, appropriate for closed skew monoidal categories.  In particular, it ensures that the associated homotopy category itself inherits a closed skew monoidal structure.  The key condition, Axiom MC\footnote{The equivalent conditions in Axiom MC weaken those for a left Quillen bifunctor of two variables, as appears in the notion of a monoidal model structure.  The reason for considering this weakening is that in the skew setting, we typically only have closedness on one side, and so co-continuity of the tensor product only in one variable.  This makes checking the full Quillen bifunctor condition difficult since it cannot be reduced to the usual technique of verifying it for generating (trivial) cofibrations in each variable.}, is as follows.  

\begin{defn}\label{mc}
Let $\cc$ be a Quillen model category and $(\cc,\otimes,I)$ be a closed left skew monoidal category.  Then $\cc$ satisfies Axiom MC if the unit $I$ is cofibrant and satisfies either of the equivalent conditions
\begin{enumerate}
 \item For cofibrant $X$ the functor $- \otimes X$ is left Quillen and for each cofibrant $Y$ the functor $Y \otimes -$ preserves weak equivalences between cofibrant objects.
\item For cofibrant $X$ the functor $[X,-]$ is right Quillen and for fibrant $Y$ the functor $[-,Y]$ preserves weak equivalences between cofibrant objects.
\end{enumerate}
\end{defn}

The first condition ensures that $\otimes \colon \cc^2 \to \cc$ preserves cofibrant objects and weak equivalences between them.  Therefore the skew monoidal structure restricts to the full subcategory $\cc_c$ of cofibrant objects in $\cc$.  Since the restricted tensor product $\otimes \colon{ \cc_c}^2 \to \cc_c$ preserves \emph{all} weak equivalences, it induces a skew monoidal structure $(\Ho(\cc_c),\otimes,I)$ on the homotopy category $\Ho(\cc_c)$, obtained by inverting the weak equivalences in $\cc_c$.  Since $\Ho(\cc) \simeq \Ho(\cc_c)$, this gives a skew monoidal structure on the homotopy category called the \emph{left derived skew monoidal structure} in \cite{Bourke2016Skew}.  Furthermore, the condition in Axiom MC concerning $[X,-]$ being right Quillen ensures that the left-derived skew monoidal structure is closed.  

Let us now verify Axiom MC for the left-sharp skew monoidal structure. 

\begin{prop}\label{mc1}
For each $\ca$, the functor $\psd_s(-,\ca) \colon \GCat^{op}_s \to \GCat$ preserve weak equivalences between cofibrant objects for each $\ca$.
\end{prop}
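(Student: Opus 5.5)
Let $F \colon \cc \to \cd$ be a weak equivalence between cofibrant Gray-categories. The plan is to exploit the theory of equivalences developed in \Cref{sect:char}. Cofibrant Gray-categories are bifibrant, since all objects are fibrant. Hence, by \Cref{thm:char-equiv-in-GCat_s}(2), $F$ is an equivalence $1$-cell in the left-sharp $4$-category $\GCat^4$.

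The next step is to transport this equivalence through the representable. By \Cref{basicenrichment} applied to the left closed multicategory $\GCatmpshl$, each Gray-category $\ca$ yields the loose $4$-functor $\psd_s(-,\ca) \colon (\GCat^4)^{op} \rightsquigarrow \GCat^4$; this is the representable $\GCat^4(-,\ca)$ of \Cref{eg:rep}, which \Cref{cor:RepEquiv} asserts lands in $\GCat^4$, since $\GCat^4$ is moderate. An equivalence $1$-cell in $\GCat^4$ is also an equivalence $1$-cell in $(\GCat^4)^{op}$, the definition being symmetric. By \Cref{looseequivalence} (equivalently \Cref{cor:RepEquiv}), the induced $1$-cell $F^\star = \psd_s(F,\ca) \colon \psd_s(\cd,\ca) \to \psd_s(\cc,\ca)$ is therefore an equivalence $1$-cell in $\GCat^4$. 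Finally, the first part of \Cref{thm:char-equiv-in-GCat_s}(2), namely \Cref{weak-equivalences}, says that every equivalence $1$-cell of $\GCat^4$ is a triequivalence, with no cofibrancy hypothesis. Thus $F^\star$ is a weak equivalence.

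I expect the only delicate point to be the bookkeeping. One must check that the action of the representable on the $1$-cell $F$ really is precomposition $\psd_s(F,\ca)$. This follows by unwinding the construction in \Cref{basicenrichment}: the representable's action on homs is the adjoint of the composition multimap, which here is precomposition. One must also check that it is legitimate to use the cofibrant-domain half of \Cref{thm:char-equiv-in-GCat_s} only for $F$ itself, while the conclusion about $F^\star$ uses the unconditional half. That is exactly the case here, since $\psd_s(\cd,\ca)$ need not be cofibrant. No further obstacle should arise, since the hard work (the characterisation of equivalences via \Cref{prop:kn}) has already been done.
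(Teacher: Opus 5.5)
Your proof is correct and is essentially the paper's own argument: \Cref{thm:char-equiv-in-GCat_s}(2) makes $F$ an equivalence $1$-cell of $\GCat^4$, \Cref{cor:RepEquiv} transports it along the representable $\GCat^4(-,\ca)$, and \Cref{weak-equivalences}, which needs no cofibrancy, turns the resulting equivalence $\psd_s(F,\ca)$ into a triequivalence. The only slip is in your attribution: $(\GCat^4)^{op}$ and the contravariant representable come from \Cref{basicenrichment}(3)--(4) applied to the symmetric multicategory $\GCatmp^l$ (then factored through $\GCat^4$ by moderateness), not from $\GCatmpshl$, which is not symmetric --- but since you then invoke \Cref{eg:rep} and \Cref{cor:RepEquiv}, the argument is unaffected.
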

\begin{proof}
Let $F \colon \cb \to \cc$ be a weak equivalence between cofibrant objects.  By Proposition \ref{thm:char-equiv-in-GCat_s} it is an equivalence in $\GCat^4$.  Therefore, by Corollary \ref{cor:RepEquiv}, it is sent by the loose $4$-functor $\GCat^4(-,\ca) \colon (\GCat^4_s)^{op} \to \GCat^{4}_s:F \mapsto \psd_s(F,\ca)$ to an equivalence in $\GCat^4_s$. By \Cref{weak-equivalences}, each equivalence in $\GCat^4_s$ is a weak equivalence, completing the proof.
\end{proof}

It remains to show that if $\ca$ is cofibrant, then $\psd_s(\ca,-)$ is right Quillen: that is, preserves fibrations and trivial fibrations.  The latter is not difficult.

\begin{prop}\label{mc2}
$\psd_s(\ca,-)$ preserves trivial fibrations if $\ca$ is cofibrant.
\end{prop}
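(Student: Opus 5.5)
Let $F \colon \cb \to \cc$ be a trivial fibration in $\GCat$: surjective on objects, full on $1$- and $2$-cells, and fully faithful on $3$-cells. We must show that $F_\ast \colon \psd_s(\ca,\cb) \to \psd_s(\ca,\cc)$ is again a trivial fibration whenever $\ca$ is cofibrant.

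\textbf{Step 1: reduce to a lifting problem.} Trivial fibrations are exactly the maps with the right lifting property against the generating cofibrations $\partial D^3_n \to D^3_n$, for $n=0,\ldots,4$ (with $\partial D^3_4 \to D^3_3$ the map identifying a parallel pair of $3$-cells). So it suffices to solve every lifting problem
\[
\begin{tikzcd}[ampersand replacement=\&]
	{\partial D^3_n} \& {\psd_s(\ca,\cb)} \\
	{D^3_n} \& {\psd_s(\ca,\cc)}
	\arrow[from=1-1, to=1-2]
	\arrow[from=1-1, to=2-1]
	\arrow["{F_\ast}", from=1-2, to=2-2]
	\arrow[dashed, from=2-1, to=1-2]
\end{tikzcd}
\]
in $\GCat$.

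\textbf{Step 2: transpose the problem.} A map $\cx \to \psd_s(\ca,\cb)$ is the same as a left-sharp binary map $\cx,\ca \rightsquigarrow \cb$ (\Cref{sharploose}). When $\cx$ is one of $D^3_n$ or $\partial D^3_n$, there are no non-trivial composite $1$-cells in $\cx$, so left-sharpness imposes no condition in the $\cx$ variable. The remaining requirement is that the induced maps out of $\ca$ be strict, and the sharpness conditions hold automatically.

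I would therefore describe the problem concretely, cell by cell:
\begin{itemize}
\item $n=0$: given a strict map $G \colon \ca \to \cc$, find a strict lift through $F$.
\item $n=1$: given strict maps $H,H' \colon \ca \to \cb$ and a pseudo-transformation $FH \to FH'$, lift it to a pseudo-transformation $H \to H'$.
\item $n \geq 2$: the analogous statements for modifications and perturbations.
\end{itemize}

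\textbf{Step 3: the case $n=0$.} Since $\ca$ is cofibrant, it is a retract of a cellular object. Lifting against a cofibration $\emptyset \to \ca$ along a trivial fibration gives the strict lift directly.

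\textbf{Step 4: the cases $n \geq 2$.} These are handled componentwise. A modification has $2$-cell components and $3$-cell components, and $F$ is full on $2$-cells and fully faithful on $3$-cells. So we can:
\begin{itemize}
\item lift each $2$-cell component;
\item then lift each $3$-cell component uniquely;
\item obtain the modification axioms, which are equations between $3$-cells, by faithfulness.
\end{itemize}
Perturbations are handled the same way.

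\textbf{Step 5: the case $n=1$ (main obstacle).} For pseudo-transformations we must choose $1$-cell components $\alpha_x$ lifting the given ones (possible since $F$ is full on $1$-cells). We then need $2$-cell components $\alpha_f$ with adjoint equivalence structure, together with $3$-cells $\alpha_\phi$ and $\alpha^2_{f_2,f_1}$.

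The $2$-cells lift by fullness on $2$-cells. An adjoint equivalence structure, however, is only preserved, not reflected, unless triangle identities among $3$-cells can be checked after applying $F$. These identities are equations between $3$-cells, so faithfulness of $F$ on $3$-cells handles them. The invertible $3$-cells lift uniquely, and their invertibility and the axioms are reflected because $F$ is fully faithful on $3$-cells.

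The subtle point is normality: $\alpha_{1}$ must be an identity. We arrange this by choosing the lift of the identity to be the identity.

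Cofibrancy of $\ca$ is essential in the $n=0$ case: it is what allows us to lift a strict map $\ca \to \cc$ strictly, rather than only as a pseudomap via \Cref{section}. For $n=1$ the components are chosen freely, since the sources and targets are already fixed.
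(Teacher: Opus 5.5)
Your proof is correct and is essentially the paper's argument: the lifting problems against the generating cofibrations amount to surjectivity on objects, fullness on $1$- and $2$-cells, and full faithfulness on $3$-cells of $\psd_s(\ca,F)$. The paper checks these four conditions directly, exactly as in your Steps 3--5: cofibrancy is used only for objects via lifting $\emptyset \to \ca$, the remaining cells are lifted componentwise, and normality is handled by taking the identity adjoint equivalence at identity $1$-cells. Your transposition in Step 2 is therefore an unnecessary detour.

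One small caveat concerns your closing remark. \Cref{section} applies only to $2$-trivial fibrations, so for a general trivial fibration there is no pseudomap section to fall back on either; this does not affect the argument itself.
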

\begin{proof}
Let $F \colon \cb \to \cc$ be a trivial fibration and consider $\psd_s(\ca,F) \colon \psd_s(\ca,\cb) \to \psd_s(\ca,\cc)$.  Surjectivity on objects follows from the lifting property of the cofibration $\emptyset \to \ca$ against $F$.

Given a pseudo-transformation $\alpha \colon FX \to FY$, we need to lift it to a pseudo-transformation $\beta \colon X \to Y$ such that $F\beta = \alpha$.  Given $a \in \ca$, we have the component $\alpha_a$ on the right below

\[\begin{tikzcd}
	Xa & Ya && FXa & FYa
	\arrow["{\exists\beta_a}", from=1-1, to=1-2]
	\arrow["F", between={0.3}{0.7}, maps to, from=1-2, to=1-4]
	\arrow["{\alpha_a}", from=1-4, to=1-5]
\end{tikzcd}\]
and so use the fullness of $F$ on $1$-cells to define $\beta_a$ as any lifting of it along $F$, as depicted.  Similarly, given a non-identity $f \colon a \to b \in \ca$, we use fullness of $F$ to define $\beta_a$ as any lifting as depicted below. 
\[\begin{tikzcd}
	Xa & Ya && FXa & FYa \\
	Xb & Yb && FXb & FYb
	\arrow[""{name=0, anchor=center, inner sep=0}, "{\beta_a}", from=1-1, to=1-2]
	\arrow["Xf"', from=1-1, to=2-1]
	\arrow[""{name=1, anchor=center, inner sep=0}, "Yf", from=1-2, to=2-2]
	\arrow[""{name=2, anchor=center, inner sep=0}, "{\alpha_a}", from=1-4, to=1-5]
	\arrow[""{name=3, anchor=center, inner sep=0}, "FXf"', from=1-4, to=2-4]
	\arrow["FYf", from=1-5, to=2-5]
	\arrow[""{name=4, anchor=center, inner sep=0}, "{\beta_b}"', from=2-1, to=2-2]
	\arrow[""{name=5, anchor=center, inner sep=0}, "{\alpha_b}"', from=2-4, to=2-5]
	\arrow["{\exists \beta_f}", shift right, between={0.2}{0.8}, Rightarrow, from=0, to=4]
	\arrow["F", between={0.4}{0.6}, maps to, from=1, to=3]
	\arrow["{\alpha_f}", between={0.2}{0.8}, Rightarrow, from=2, to=5]
\end{tikzcd}\]
Similarly, we define $\beta_f^o$ as a lifting of $\alpha_f^o$ and, by fully faithfulness of $3$-cells, this data then extends uniquely to an adjoint equivalence $\beta_f^o \dashv \beta_f$ lifting the adjoint equivalence $\alpha_f^o \dashv \alpha_f$.

In order to enforce the necessary degeneracy condition for $\beta_f^o \dashv \beta_f$, in the case of an identity $1$-cell $1_a \colon a \to a$ we define $\beta_{1_a} = 1_{\beta_a}$ together with the structure of the identity adjoint equivalence.  Again this is preserved by $F$ since $\alpha$ satisfies this condition.

The remainder of the structure for $\beta$ to be a pseudo-transformation involves invertible $3$-cells $\beta^2_{f_2,f_1}$ and $\beta_\phi$.  Using that $F$ is fully faithful on $3$-cells, these are defined as the unique liftings of $\alpha^2_{f_2,f_1}$ and $\alpha_\phi$ along $F$.  Apart from the degeneracy condition, which we have already handled by construction, the remaining conditiions for $\beta$ to be a pseudo-transformation involve equalities of parallel $3$-cells (see \cite[Appendix A.2]{BourkeLobbia}).  These follow from the fact that $F$ is faithful on $3$-cells and the corresponding conditions for $\alpha$.

The proof that $F$ is full on $2$-cells and fully faithful on $3$-cells is similar, but easier.\end{proof}

In order to prove that $\psd_s(\ca,-)$ preserves fibrations, we will need a couple of intermediate results.

\begin{lemma}\label{internalgraph}
Let $\cb$ be cofibrant.  There exist internal graph maps
\[\begin{tikzcd}
	{\psd_s(\cb,P\cc)} & {P(\psd_s(\cb,\cc))} && {P(\psd_s(\cb,\cc))} & {\psd_s(\cb,P\cc)} \\
	{\psd_s(\cb,\cc)} & {\psd_s(\cb,\cc)} && {\psd_s(\cb,\cc)} & {\psd_s(\cb,\cc)}
	\arrow["U", squiggly, from=1-1, to=1-2]
	\arrow["{t_\star}", shift left=3, from=1-1, to=2-1]
	\arrow["{s_\star}"', from=1-1, to=2-1]
	\arrow["s"', from=1-2, to=2-2]
	\arrow["t", shift left=3, from=1-2, to=2-2]
	\arrow["V", squiggly, from=1-4, to=1-5]
	\arrow["t", shift left=3, from=1-4, to=2-4]
	\arrow["s"', from=1-4, to=2-4]
	\arrow["{s_{\star}}"', from=1-5, to=2-5]
	\arrow["{t_\star}", shift left=3, from=1-5, to=2-5]
	\arrow[equals, from=2-1, to=2-2]
	\arrow[equals, from=2-4, to=2-5]
\end{tikzcd}\]
in $\GCat_p$.
\end{lemma}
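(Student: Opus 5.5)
The plan is to obtain both maps by restricting the isomorphism $\phi_{\cb,\cc}\colon\psd_p(\cb,P\cc)\rightsquigarrow P(\psd_p(\cb,\cc))$ of \Cref{prop:P-and-psd} to the strict parts, and then to use the adjoint equivalence $J\dashv K$ of \Cref{prop:adjointEquiv} to return to strict maps wherever $\phi$ leaves them. Cofibrancy of $\cb$ gives $1$-freeness, which is exactly what \Cref{prop:adjointEquiv} requires.

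For $U$, I take the composite
\[\psd_s(\cb,P\cc)\hookrightarrow \psd_p(\cb,P\cc)\overset{\phi}{\rightsquigarrow} P(\psd_p(\cb,\cc)) \overset{PK}{\rightsquigarrow} P(\psd_s(\cb,\cc)).\]
Here $PK$ is defined because $P$ is a functor on $\GCat_p$ (\Cref{lem:functorial}) and $K$ is a Gray-functor. For the source and target, the triangle in \Cref{prop:P-and-psd} gives $s\circ\phi=s_\ast$, and naturality of $s$ in \Cref{lem:functorial} gives $s\circ PK=K\circ s$. So $s\circ U$ is $K$ applied to a strict map, i.e.\ $K\circ J\circ s_\star$. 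By \Cref{thm:replacement}(2) we have $KJ=1$, so $s\circ U=s_\star$, and the same argument gives $t\circ U=t_\star$. This establishes that $U$ is a map of internal graphs over the identity.

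For $V$, I take
\[P(\psd_s(\cb,\cc))\overset{PJ}{\rightsquigarrow}P(\psd_p(\cb,\cc))\overset{\phi^{-1}}{\rightsquigarrow}\psd_p(\cb,P\cc)\overset{K}{\rightsquigarrow}\psd_s(\cb,P\cc).\]
The last map is $K$ for the pair $(\cb,P\cc)$, which again uses only that $\cb$ is $1$-free. The source and target are checked as follows. First, $s_\star\circ K=K\circ s_\ast$, by naturality of the adjoint equivalence in the strict map $s\colon P\cc\to\cc$ (\Cref{prop:adjointEquiv}). Second, $s_\ast\circ\phi^{-1}=s$, from the triangle in \Cref{prop:P-and-psd}. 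Third, $s\circ PJ=J\circ s$, by naturality in \Cref{lem:functorial}. Combining these, $s_\star\circ V=KJ\circ s=s$, and the same holds for $t$.

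I expect the main obstacle to be the naturality of $K$ in the codomain variable. That is, one must check that the Gray-functor extension of $K$ (not only its object assignment) commutes with postcomposition by the strict maps $s,t\colon P\cc\to\cc$. Part (3) of \Cref{thm:replacement} gives this on objects. On higher cells it follows because $K$ is defined on morphisms by conjugating with the components $\rho$, and $\rho$ is natural in strict maps. I would verify this carefully, using $s\circ\rho_F=\rho_{s\circ F}$. Everything else is formal bookkeeping with the triangles already established.
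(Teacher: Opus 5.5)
Your proposal is correct and coincides with the paper's proof. The paper defines $U = PK_{\cb,\cc}\circ\phi_{\cb,\cc}\circ J_{\cb,P\cc}$ and $V = K_{\cb,P\cc}\circ\phi^{-1}_{\cb,\cc}\circ PJ_{\cb,\cc}$, and checks the graph conditions exactly as you do: via the triangles of \Cref{prop:P-and-psd}, naturality of $s,t$ (\Cref{lem:functorial}), naturality of $J\dashv K$ in strict maps, and $KJ=1$. The naturality of $K$ on higher cells that you flag as the main obstacle is already part of the statement of \Cref{prop:adjointEquiv}, which the paper simply invokes, and your conjugation-by-$\rho$ argument is the right justification for it.
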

\begin{proof}
These are
\[\begin{tikzcd}[ampersand replacement=\&]
	{\psd_s(\cb,P\cc)} \& {\psd_p(\cb,P\cc)} \& {P(\psd_p(\cb,\cc))} \& {P(\psd_s(\cb,\cc))} \\
	{\psd_s(\cb,\cc)} \& {\psd_p(\cb,\cc)} \& {\psd_p(\cb,\cc)} \& {\psd_s(\cb,\cc)}
	\arrow["{J_{\cb,P\cc}}", from=1-1, to=1-2]
	\arrow["{t_\star}", shift left=3, from=1-1, to=2-1]
	\arrow["{s_\star}"', from=1-1, to=2-1]
	\arrow["{\phi_{\cb,\cc}}", squiggly, from=1-2, to=1-3]
	\arrow["{s_\star}"', from=1-2, to=2-2]
	\arrow["{t_\star}", shift left=3, from=1-2, to=2-2]
	\arrow["{PK_{\cb,\cc}}", from=1-3, to=1-4]
	\arrow["s"', from=1-3, to=2-3]
	\arrow["t", shift left=3, from=1-3, to=2-3]
	\arrow["s"', from=1-4, to=2-4]
	\arrow["t", shift left=3, from=1-4, to=2-4]
	\arrow["{J_{\cb,\cc}}"', from=2-1, to=2-2]
	\arrow[equals, from=2-2, to=2-3]
	\arrow["{K_{\cb,\cc}}"', from=2-3, to=2-4]
\end{tikzcd}\]
and
\[\begin{tikzcd}
	{P(\psd_s(\cb,\cc))} & {P(\psd_p(\cb,\cc))} & {\psd_p(\cb,P\cc)} & {\psd_s(\cb,P\cc)} \\
	{\psd_s(\cb,\cc)} & {\psd_p(\cb,\cc)} & {\psd_p(\cb,\cc)} & {\psd_s(\cb,\cc)}
	\arrow["{PJ_{\cb,\cc}}", from=1-1, to=1-2]
	\arrow["t", shift left=3, from=1-1, to=2-1]
	\arrow["s"', from=1-1, to=2-1]
	\arrow["{\phi^{-1}_{\cb,\cc}}", squiggly, from=1-2, to=1-3]
	\arrow["s"', from=1-2, to=2-2]
	\arrow["t", shift left=3, from=1-2, to=2-2]
	\arrow["{K_{\cb,P\cc}}", from=1-3, to=1-4]
	\arrow["{s_\star}"', from=1-3, to=2-3]
	\arrow["{t_\star}", shift left=3, from=1-3, to=2-3]
	\arrow["{s_\star}"', from=1-4, to=2-4]
	\arrow["{t_\star}", shift left=3, from=1-4, to=2-4]
	\arrow["{J_{\cb,\cc}}"', from=2-1, to=2-2]
	\arrow[equals, from=2-2, to=2-3]
	\arrow["{K_{\cb,\cc}}"', from=2-3, to=2-4]
\end{tikzcd}\]
In both diagrams, the central graph map is the isomorphism $\phi$ of \Cref{prop:P-and-psd} or its inverse.  In the other graph maps we use the Gray-enriched adjoint equivalence $J_{\cb,\cc} \dashv K_{\cb,\cc}$ of \Cref{prop:adjointEquiv} and its naturality in postcomposition by strict maps, as established there.  We also use that $s,t \colon P \rightrightarrows 1$ are natural as established in \Cref{lem:functorial}.  Finally, note that both composite lower horizontals are the identity since the adjoint equivalence is a retract equivalence.
\end{proof}

%
%
%
 \begin{defn}\label{def:homotopic-multimaps}
  Let $s,t \colon \ce \rightrightarrows \cb$ be a graph in $\GCat$.
\begin{itemize}
\item A \emph{loose homotopy} $H \colon F \sim_l G$ (rel $\ce$) is a pseudomap $H\colon \ca \rightsquigarrow \ce$ such that the diagram
\[\begin{tikzcd}[ampersand replacement=\&]
	\&\& \ce \\
	\ca \&\& \cb
	\arrow["s"', shift right=2, from=1-3, to=2-3]
	\arrow["t", shift left=2, from=1-3, to=2-3]
	\arrow["H", curve={height=-6pt}, squiggly, from=2-1, to=1-3]
	\arrow["F", shift left, squiggly, from=2-1, to=2-3]
	\arrow["G"', shift right, squiggly, from=2-1, to=2-3]
\end{tikzcd}\]
commutes.  If one such exists, we say that $F$ and $G$ are \emph{loosely homotopic}, written $F\sim_l G (rel \ce)$. 

\item If $F$ and $G$ are strict, a \emph{homotopy} $H \colon F \sim G$ (rel $\ce$) is a strict map $H$ as above making the diagram commute.  If one such exists, we say that $F$ and $G$ are homotopic rel $\ce$, written $F \sim G (rel \ce)$.
\end{itemize}
\end{defn}

 \begin{lemma}\label{relative}
 Let $\ca$ be $1$-free and consider strict maps $F,G \colon \ca \rightrightarrows \cb$.  Then $F \sim G (rel \ce) \iff F \sim_l G (rel \ce)$.
 \end{lemma}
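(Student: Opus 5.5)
The implication $F \sim G \Rightarrow F \sim_l G$ is trivial, since every strict homotopy is in particular a pseudomap. So the plan concentrates on the converse. We use the strictification of \Cref{thm:replacement}, relying on the fact that $\ca$ is $1$-free.

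Suppose $H \colon \ca \rightsquigarrow \ce$ is a loose homotopy, so $s \circ H = F$ and $t \circ H = G$ as pseudomaps. We will show that the strict map $K(H) \colon \ca \to \ce$ of \Cref{thm:replacement}(1) is a homotopy from $F$ to $G$ rel $\ce$. The key input is naturality of $K$ in strict maps (part (3) of \Cref{thm:replacement}). The maps $s,t \colon \ce \to \cb$ are strict, since the graph lives in $\GCat$, so naturality gives
\[ s \circ K(H) = K(s \circ H) = K(F), \qquad t \circ K(H) = K(t\circ H) = K(G). \]
Since $F$ and $G$ are strict, part (2) of \Cref{thm:replacement} gives $K(F) = F$ and $K(G) = G$. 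Hence $s\circ K(H) = F$ and $t \circ K(H) = G$, so $K(H)$ is a strict homotopy $F \sim G$ (rel $\ce$).

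The only point needing care is that naturality in strict maps applies to postcomposition of the pseudomap $H$ by the Gray-functors $s$ and $t$. This is exactly the first half of the proof of part (3): there $(s \circ H)' = s \circ H'$ by the universal property of $q_\ca$, and therefore $K(s\circ H) = s \circ H' \circ a = s \circ K(H)$. So there is no real obstacle, and the lemma is a direct corollary of \Cref{thm:replacement}. If desired, one can also note that the invertible icon $\rho_H$ satisfies $s\circ\rho_H = \rho_F = \mathrm{id}$ and $t \circ \rho_H = \mathrm{id}$, though this is not needed for the statement.
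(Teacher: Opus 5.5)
Your proposal is correct and is essentially the paper's own proof: take the strictification $K(H)$ from \Cref{thm:replacement}, use naturality in the strict maps $s,t$ to get $s\circ K(H)=K(s\circ H)=K(F)$, and conclude $K(F)=F$ (and likewise for $t$ and $G$) because $F$ and $G$ are strict. Your remark about $\rho_H$ is correct but, as you say, not needed.
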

 \begin{proof}
For the non-trivial direction, consider $H \colon \ca \rightsquigarrow \ce$ such that $s \circ H=F$ and $t \circ H=G$.  We take $K(H) \colon \ca \to \ce$ as per \Cref{thm:replacement} and by the same result, we then have $s \circ K(H) = K(s \circ H) = K(F) = F$ by naturality and since $s$ and $F$ are strict.  Similarly $t \circ K(H) = G$.
 \end{proof}

\begin{cor}\label{adjointhomotopy}
Let $\ca$ and $\cb$ be cofibrant.  Then $F,G \colon \ca \leftsharp \cb \rightrightarrows \cc$ are homotopic (rel $P\cc$) if and only if their transposes $\overline{F},\overline{G} \colon \ca \rightrightarrows \psd_s(\cb,\cc)$ are homotopic (rel $P(\psd_s(\cb,\cc))$).
\end{cor}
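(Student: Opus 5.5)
We pass to loose homotopies, where the graph maps of \Cref{internalgraph} can be used, and then come back to strict homotopies using \Cref{relative}. First we record that both domains involved are $1$-free. The Gray-category $\ca$ is cofibrant and hence $1$-free. Since Axiom MC is not yet available, we do not argue that $\ca \leftsharp \cb$ is cofibrant. Instead we check $1$-freeness directly: the underlying category of $\ca \leftsharp \cb$ is built from those of $\ca$ and $\cb$ as in the Gray/funny tensor of free categories, so it is free on the evident graph of generators. If this direct check turns out to be awkward, we would use the universal property of $\leftsharp$ to see that homotopies out of $\ca\leftsharp\cb$ are exactly left-sharp binary maps $\ca,\cb\to P\cc$, and only ever work with these.

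Forward direction. Let $H \colon \ca \leftsharp \cb \to P\cc$ be a strict homotopy with $sH=F$ and $tH=G$. Its transpose $\overline{H} \colon \ca \to \psd_s(\cb,P\cc)$ is a Gray-functor. By naturality of the closedness bijection in the codomain, we have $s_\star\overline{H}=\overline{F}$ and $t_\star\overline{H}=\overline{G}$. Composing with the internal graph map $U$ of \Cref{internalgraph} gives a pseudomap $U\overline{H} \colon \ca \rightsquigarrow P(\psd_s(\cb,\cc))$. Because $U$ is a map of graphs over the identity, $s U\overline{H}=\overline{F}$ and $tU\overline{H}=\overline{G}$. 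This is a loose homotopy $\overline F\sim_l\overline G$ rel $P(\psd_s(\cb,\cc))$. Since $\ca$ is $1$-free and $\overline F,\overline G$ are strict, \Cref{relative} upgrades it to a strict homotopy.

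Converse. Let $H' \colon \ca \to P(\psd_s(\cb,\cc))$ be a strict homotopy. Composing with $V$ from \Cref{internalgraph} gives a pseudomap $VH' \colon \ca \rightsquigarrow \psd_s(\cb,P\cc)$ with $s_\star VH'=\overline F$ and $t_\star VH'=\overline G$, that is, a loose homotopy rel $\psd_s(\cb,P\cc)$. By \Cref{relative} we may replace it by a strict map $K(VH') \colon \ca\to\psd_s(\cb,P\cc)$ satisfying the same equations. Transposing through the left-sharp closedness bijection gives a strict map $\ca\leftsharp\cb\to P\cc$. By naturality of the bijection in the codomain, its composites with $s$ and $t$ are $F$ and $G$.

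The main obstacle is bookkeeping rather than content. We must check carefully that the closedness bijection for $\leftsharp$ is natural in postcomposition by the strict maps $s,t \colon P\cc\to\cc$, which is immediate since the internal hom is functorial. We must also make sure that \Cref{relative} is applied with the correct $1$-free domain: in both directions it is $\ca$, which is cofibrant. Applied this way, the $1$-freeness of $\ca\leftsharp\cb$ is not actually needed, so the only real input is \Cref{internalgraph}, which needs $\cb$ to be cofibrant, together with \Cref{relative}.
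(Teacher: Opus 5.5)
Your proposal is correct and follows the paper's proof. Both arguments transpose along the left-sharp closedness bijection, pass between strict and loose homotopies via \Cref{relative} applied to the $1$-free domain $\ca$, and move between $\psd_s(\cb,P\cc)$ and $P(\psd_s(\cb,\cc))$ using the graph maps $U$ and $V$ of \Cref{internalgraph}. As you note yourself, the discussion of the $1$-freeness of $\ca\leftsharp\cb$ is not needed and can be dropped.
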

\begin{proof}
By adjointness, $F \sim G$ (rel $P\cc$) iff $\overline{F} \sim \overline{G}$ (rel $\psd_s(\cb,P\cc)$).  Since $\ca$ is cofibrant, by Lemma \ref{relative}, this is equally to say that $\overline{F} \sim_l \overline{G}$ (rel $\psd_s(\cb,P\cc))$.  Now since $\cb$ is cofibrant, by \Cref{internalgraph}, this is equivalently to say that  $\overline{F} \sim_l \overline{G}$ (rel $P(\psd_s(\cb,\cc))$).  Finally, again using that $\ca$ is cofibrant and Lemma \ref{relative}, this is to say that $\overline{F} \sim \overline{G}$ (rel $P(\psd_s(\cb,\cc))$).
\end{proof}

\begin{prop}\label{mc3}
$\psd_s(\ca,-)$ preserves fibrations whenever $\ca$ is cofibrant.
\end{prop}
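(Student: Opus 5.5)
We must show: if $F \colon \cb \to \cc$ is a fibration and $\ca$ is cofibrant, then $\psd_s(\ca,F)$ has the right lifting property against the generating trivial cofibrations $l_n \colon D^3_n \to E^3_{n+1}$, $n=0,1,2$. Since $\GCat$ is combinatorial and $\leftsharp$ is the left adjoint to $\psd_s(\ca,-)$ (the left-sharp structure is closed), a lifting problem of $l_n$ against $\psd_s(\ca,F)$ transposes to a lifting problem of $l_n \leftsharp \ca$ (more precisely, of the pushout-type map built from $l_n$ and $\emptyset\to\ca$, which here reduces simply to $l_n \leftsharp \ca$) against $F$. So it suffices to show that each $l_n \leftsharp \ca \colon D^3_n \leftsharp \ca \to E^3_{n+1}\leftsharp \ca$ is a trivial cofibration. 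I would first check it is a cofibration. Here the plan is to use that $-\leftsharp \ca$ is a left adjoint and that $\psd_s(\ca,-)$ preserves trivial fibrations by \Cref{mc2}; hence $-\leftsharp\ca$ preserves cofibrations, and $l_n$ is a cofibration.

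It then remains to show $l_n \leftsharp \ca$ is a weak equivalence. The domain and codomain are cofibrant, being images of cofibrant objects under $-\leftsharp\ca$. Because all objects are fibrant, a map $f \colon X \to Y$ between cofibrant objects is a weak equivalence iff $\GCat(f,\cc)$ induces a bijection on homotopy classes of maps into each (fibrant) $\cc$, homotopy taken rel the path object $P\cc$. Here \Cref{adjointhomotopy} is exactly the tool: maps $E^3_{n+1}\leftsharp \ca \to \cc$ up to homotopy rel $P\cc$ correspond to maps $E^3_{n+1} \to \psd_s(\ca,\cc)$ up to homotopy rel $P(\psd_s(\ca,\cc))$, and likewise for $D^3_n$. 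Both of these homotopy relations are the genuine right homotopy relations for the path object $P$ of \Cref{prop:PA-path-obj}. Since $l_n$ is a weak equivalence between cofibrant objects and $\psd_s(\ca,\cc)$ is fibrant, restriction along $l_n$ is a bijection on right homotopy classes into $\psd_s(\ca,\cc)$. Transporting through the transposition bijection, which is natural in the first variable, restriction along $l_n\leftsharp\ca$ is a bijection on homotopy classes into every $\cc$. Hence $l_n\leftsharp\ca$ is an isomorphism in $\Ho(\GCat)$, so it is a weak equivalence.

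An alternative ending would lift the problem directly. Given a square of $l_n$ against $\psd_s(\ca,F)$, the lifting property of $l_n$ against $F$ after transposing is what we want. For this one can equivalently show that $\psd_s(\ca,F)$ is a fibration by factoring $F$ through its mapping path object and using \Cref{mc2} together with the isomorphism $\phi$ of \Cref{prop:P-and-psd} and the graph maps of \Cref{internalgraph}. I would keep this as a backup.

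The main obstacle is the bookkeeping in the weak-equivalence step. One must verify that \Cref{adjointhomotopy} applies naturally in the first variable, so that the bijections commute with restriction along $l_n$ and $l_n\leftsharp\ca$. This needs the transposition bijection to be natural in maps $D^3_n\to E^3_{n+1}$, and the replacement $K$ in \Cref{relative} to be natural in $1$-free morphisms; the latter holds since the $l_n$ are $1$-free. One must also confirm that the right homotopy relation rel $P$ is an equivalence relation detecting $\Ho(\GCat)(-,\cc)$ for cofibrant domains, which is standard model category theory because $P$ is a path object and all objects are fibrant.
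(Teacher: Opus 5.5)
Your proposal is correct and follows the paper's proof essentially step for step. Both reduce to showing that $-\leftsharp\ca$ sends the generating trivial cofibrations, which have cofibrant domain and codomain, to trivial cofibrations; both obtain the cofibration part from \Cref{mc2}; and both obtain the weak-equivalence part by comparing homotopy classes via \Cref{adjointhomotopy} and \cite[Proposition 1.2.5]{Hovey}. The naturality you flag as the main obstacle is automatic: the bijection on homotopy classes is just the quotient of the closedness bijection, which is already natural in the first variable, so the backup route is unnecessary.
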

\begin{proof}
Firstly observe that the right adjoint $\psd_s(\ca,-)$ preserves fibrations just when its left adjoint $- \leftsharp \ca$ preserves trivial cofibrations.  Now, as a left adjoint, it will preserve trivial cofibrations just when it sends the generating trivial cofibrations to trivial cofibrations.  Since the generating trivial cofibrations, described in Section \ref{sect:Graymodel} have cofibrant domain and codomain, it suffices therefore to show that $- \leftsharp \ca$ sends trivial cofibrations between cofibrant objects to trivial cofibrations.  Now since the right adjoint $\psd_s(\ca,-)$ preserves trivial fibrations, certainly the left adjoint $- \leftsharp \ca$ preserves cofibrations.  Therefore, it suffices to prove $- \leftsharp \ca$ sends trivial cofibrations between cofibrant objects to weak equivalences.  This is what we will prove.

Let $F \colon \cb \to \cc$ be a trivial cofibration between cofibrant objects.  Certainly $F \leftsharp \ca \colon \cb \leftsharp \ca \to \cc \leftsharp \ca$ has cofibrant domain and codomain since $- \leftsharp \ca$ preserves cofibrations (and hence cofibrant objects).  Since all objects are fibrant, it will be a weak equivalence if and only is a homotopy equivalence.  This is equally to say that for all $\cd$, the induced function on the top horizontal below is a bijection 


\[\begin{tikzcd}
	{\GCat(\cc \leftsharp \ca,\cd)/\sim} &&&& {\GCat(\cb \leftsharp \ca,\cd)/\sim} \\
	{\GCat(\cc,\psd_s(\ca,\cd))/\sim} &&&& {\GCat(\cb, \psd_s(\ca,\cd))/\sim}
	\arrow["{\GCat(F\leftsharp \ca,\cd)/\sim}", from=1-1, to=1-5]
	\arrow["\cong"', from=1-1, to=2-1]
	\arrow["\cong", from=1-5, to=2-5]
	\arrow["{\GCat(F,\psd_s(\ca,\cd))/\sim}"', from=2-1, to=2-5]
\end{tikzcd}\]
where homotopy is taken relative to $P\cd$.  By \Cref{adjointhomotopy}, the adjoint transpose functions induce bijections on the verticals.  Therefore the top horizontal will be a bijection just when the lower one is, now for homotopy rel $P(\psd_s(\ca,\cd))$.  But by standard model categorical results --- see \cite[Proposition 1.2.5]{Hovey} --- this is true since $F$ is a trivial cofibration between cofibrant objects.
\end{proof}

\begin{theorem}\label{axiommc}
The left-sharp skew monoidal structure satisfies Axiom MC.
\end{theorem}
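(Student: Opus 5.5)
The plan is to verify the second formulation of Axiom MC in \Cref{mc}, since all the ingredients have now been established in that form. There are three things to check. First, the unit is cofibrant. Second, for cofibrant $\ca$, the internal hom $\psd_s(\ca,-)$ is right Quillen. Third, for fibrant $\cb$, the functor $\psd_s(-,\cb)$ preserves weak equivalences between cofibrant objects.

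For the unit, recall that it is the terminal Gray-category $\mathbf{1}$. This is the free Gray-category $D^3_0$ on a single object, which is the codomain of the generating cofibration $\emptyset \to D^3_0$. Hence $\mathbf{1}$ is cofibrant; equivalently, its underlying sesqui-category is free on a $2$-computad with one object and no cells, so \cite[Corollary 9.4]{Lack2011A-quillen} applies.

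For the right Quillen condition, let $\ca$ be cofibrant. Then $\psd_s(\ca,-)$ is a right adjoint, namely to $- \leftsharp \ca$, by closedness of the left-sharp skew monoidal structure. It preserves trivial fibrations by \Cref{mc2} and fibrations by \Cref{mc3}, so it is right Quillen.

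For the last condition, every Gray-category is fibrant, so the requirement is simply that $\psd_s(-,\cb)$ preserves weak equivalences between cofibrant objects for every $\cb$. This is exactly \Cref{mc1}.

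Combining these three points verifies condition (2) of \Cref{mc}, and so Axiom MC holds. There is no real obstacle left at this stage: the hard work was already done in \Cref{mc3}, which relied on the homotopy-transposition argument of \Cref{adjointhomotopy}, and in \Cref{mc1}, which relied on the characterisation of equivalences in \Cref{thm:char-equiv-in-GCat_s}. The only point needing care is the cofibrancy of the unit, and that is immediate as explained above.
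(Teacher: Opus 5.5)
Your proposal is correct and matches the paper's proof: cofibrancy of the unit $\mathbf{1}$, right Quillenness of $\psd_s(\ca,-)$ for cofibrant $\ca$ via \Cref{mc2} and \Cref{mc3}, and \Cref{mc1} for the condition on $\psd_s(-,\cb)$, together verifying condition (2) of \Cref{mc}. Your justification that $\mathbf{1}$ is cofibrant, as the codomain of the generating cofibration $\emptyset \to \mathbf{1}$, fills in a point the paper simply asserts.
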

\begin{proof}
The unit $1$ is cofibrant.  \Cref{mc2} and \Cref{mc3} together establish that $\psd_s(\ca,-)$ is right Quillen for cofibrant $\ca$.  Combined with \Cref{mc1}, this verifies Axiom MC.
\end{proof}

We now give a direct description of the left derived skew monoidal structure on $\Ho(\GCat)$.  For our purposes, the convenient description of $\Ho(\GCat)$, up to equivalence, is the following one.

\begin{defn/propn}\label{explicit1}
The homotopy category $\Ho(\GCat)$ can be constructed as the category whose objects are the cofibrant Gray-categories and whose morphisms $\ca \to \cb$ are homotopy classes of Gray-functors, relative to $P\cb$.
\end{defn/propn}
\begin{proof}
It is a basic result of model categories that the homotopy category $\Ho(\cc)$ of a model category $\cc$ can be constructed by taking the bifibrant objects and homotopy classes of morphisms between them, relative to any path object (see e.g. \cite[Theorem 1.2.10]{Hovey}).  Since all objects in $\GCat$ are fibrant and $P\cb$ is a path object, this establishes the claim.
\end{proof}

In what follows, we shall work with the above description of $\Ho(\GCat)$.  Let us now describe the derived skew monoidal structure on $\Ho(\GCat)$ in these terms.  (This is essentially just an instance of \cite[Theorem 5.11]{Bourke2016Skew} but since we are using the equivalent, though notationally different, description of the homotopy category using cofibrant objects, we find it simpler to give a direct proof.)  

\begin{theorem}\label{explicit2}
The closed skew monoidal structure $(\GCat,\leftsharp,1)$ induces a closed skew monoidal structure $(\Ho(\GCat),\leftsharp,1)$ on the homotopy category.  Explicitly:
\begin{itemize}
\item the tensor product $\leftsharp \colon \Ho(\GCat)^2 \to \Ho(\GCat)$ acts as $\leftsharp$ on objects and on homotopy classes of maps by $[f] \leftsharp [g] = [f \leftsharp g]$;
\item the associators and unitors for $(\Ho(\GCat),\leftsharp,1)$ are the homotopy classes of the associators and unitors for $(\GCat,\leftsharp,1)$;
\item the internal hom in $\Ho(\GCat)$ is the cofibrant replacement $C(\psd_s(\cb,\cc))$ of the internal hom $\psd_s(\cb,\cc)$.
\end{itemize}
\end{theorem}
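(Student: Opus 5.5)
We will work throughout with the description of $\Ho(\GCat)$ in \Cref{explicit1}: cofibrant Gray-categories and homotopy classes of Gray-functors relative to path objects. By \Cref{axiommc}, Axiom MC holds. Hence for cofibrant $\ca$ and $\cb$ the Gray-category $\ca \leftsharp \cb$ is cofibrant: $- \leftsharp \cb$ is left Quillen and so preserves cofibrant objects. The unit $1$ is also cofibrant. The first step is to show that $\leftsharp$ respects homotopy classes in each variable separately.

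In the first variable, let $f \sim f' \colon \ca \to \ca'$ (rel $P\ca'$). Then $f \leftsharp \cb \sim f' \leftsharp \cb$: since $- \leftsharp \cb$ is left Quillen between cofibrant objects, it preserves left homotopies through cylinder objects. With all objects fibrant and cofibrant, left and right homotopy coincide, so the relation is the same as homotopy relative to a path object. In the second variable, let $g \sim g' \colon \cb \to \cb'$. We transpose and use \Cref{adjointhomotopy}. A homotopy $f \leftsharp g \sim f \leftsharp g'$ (rel $P(\ca' \leftsharp \cb')$) corresponds to a homotopy of transposes $\ca \to \psd_s(\cb, \ca'\leftsharp\cb')$. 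That transpose is the composite of the unit $\ca \to \psd_s(\cb', \ca \leftsharp \cb')$ with $\psd_s(g,-)$, postcomposed with $f \leftsharp \cb'$. It therefore suffices to show that $\psd_s(g,\cd) \sim \psd_s(g',\cd)$ whenever $g \sim g'$. For this we use \Cref{internalgraph}: a homotopy $\cb \to P\cb'$ yields, via the internal graph map $V$ and \Cref{prop:P-and-psd}, a loose homotopy into $P\psd_s(\cb',\cd)$, which \Cref{relative} strictifies. Functoriality and interchange then let us define $[f]\leftsharp[g]=[f\leftsharp g]$, and this gives a functor $\Ho(\GCat)^2\to\Ho(\GCat)$.

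Next, the associators and unitors of $(\GCat,\leftsharp,1)$ are natural in strict maps. Passing to homotopy classes therefore gives families natural in $\Ho(\GCat)$, because every morphism there is represented by a strict map and $[-]$ is a functor. The five skew monoidal axioms hold in $\Ho(\GCat)$ because they already hold strictly in $\GCat$.

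For closedness, set $[\cb,\cc] := C\psd_s(\cb,\cc)$, with evaluation given by the class of $\cc \leftarrow \psd_s(\cb,\cc)\leftsharp\cb \leftarrow C\psd_s(\cb,\cc)\leftsharp \cb$, where the second map is $T\leftsharp\cb$. We need a natural bijection
\[\Ho(\GCat)(\ca\leftsharp\cb,\cc)\cong\Ho(\GCat)(\ca,C\psd_s(\cb,\cc)).\]
By \Cref{adjointhomotopy}, homotopy classes $\ca\leftsharp\cb\to\cc$ correspond to homotopy classes $\ca\to\psd_s(\cb,\cc)$. The trivial fibration $T$ then induces a bijection on homotopy classes out of the cofibrant object $\ca$, using lifting against $T$ together with the path object on $C\psd_s(\cb,\cc)$. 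Here \Cref{mc2} is what makes $T$ usable in this way. We expect the main obstacle to be the well-definedness of $\leftsharp$ in the second variable, since $\leftsharp$ is not closed on that side; our route there goes through \Cref{internalgraph}.
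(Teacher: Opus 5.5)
\textbf{Gap: the second-variable step.} Several parts of your proposal are sound. The first variable works: a left Quillen functor sends a good cylinder on a cofibrant object to a good cylinder, and for cofibrant domains left homotopy agrees with homotopy rel $P$. The treatment of the constraints is also fine. The closedness step is exactly the paper's argument: \Cref{adjointhomotopy}, followed by the bijection on homotopy classes induced by $T$. That bijection is just \cite[Proposition 1.2.5]{Hovey} and does not need \Cref{mc2}.

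The problem is the second variable. Your reduction to producing a homotopy $\psd_s(g,\cd)\sim\psd_s(g',\cd)$ rel $P\psd_s(\cb,\cd)$ is correct, but the tools you cite do not produce it.
\begin{itemize}
\item The graph maps $U,V$ of \Cref{internalgraph} only move $P$ past the \emph{covariant} variable of $\psd_s(\cb,-)$. Nothing in them acts on a homotopy $H\colon\cb\to P\cb'$ in the contravariant variable. You are missing some way for $\psd_s(\cb',\cd)$ to act on paths.
\item The target you name, $P\psd_s(\cb',\cd)$, is the wrong object: the homotopy must land in $P\psd_s(\cb,\cd)$.
\item \Cref{relative} only strictifies loose homotopies out of a $1$-free Gray-category, and $\psd_s(\cb',\cd)$ is not $1$-free in general.
\end{itemize}

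\textbf{Repair within your approach.} By \Cref{prop:p-maps-into-PA}, $H$ corresponds to a pseudo-transformation $\alpha\colon g\to g'$ with equivalence components. Substitute $\alpha$ into the left-sharp composition $\psd_s(\cb',\cd),\psd_s(\cb,\cb')\to\psd_s(\cb,\cd)$ of $\GCat^4$ and transpose. This gives a Gray-functor $\psd_s(\cb',\cd)\to\psd_s(D^3_1,\psd_s(\cb,\cd))$ whose value at $K$ is the $1$-cell $K\alpha$. Each $K\alpha$ is a pointwise equivalence, hence an equivalence by \Cref{thm:char-equiv-in-internal-hom}. So the functor factors through $P\psd_s(\cb,\cd)$ and is already a strict homotopy from $\psd_s(g,\cd)$ to $\psd_s(g',\cd)$. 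Neither \Cref{internalgraph} nor \Cref{relative} is needed.

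\textbf{The paper's simpler route.} Axiom MC (\Cref{axiommc}) already contains the clause that $\ca\leftsharp-$ preserves weak equivalences between cofibrant objects; this is where \Cref{mc1} enters. Hence $\leftsharp\colon\GCat_c^2\to\GCat_c$ preserves all weak equivalences. By the universal property of localisation it descends to $\Ho(\GCat_c)\simeq\Ho(\GCat)$, and $[f]\leftsharp[g]=[f\leftsharp g]$ and the description of the constraints follow at once.

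In your hands-on terms, take a left homotopy $K\colon\mathrm{Cyl}\,\cb\to\cb'$ through a good cylinder with weak equivalence $\sigma\colon\mathrm{Cyl}\,\cb\to\cb$. Then $\ca\leftsharp i_0$ and $\ca\leftsharp i_1$ are both sections of the weak equivalence $\ca\leftsharp\sigma$ between cofibrant objects, so they coincide in $\Ho(\GCat)$, giving $[\ca\leftsharp g]=[\ca\leftsharp g']$. The main obstacle you anticipated is therefore already absorbed into Axiom MC.
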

\begin{proof}
By Axiom MC, $\leftsharp$ preserves cofibrant objects and weak equivalences between them. Therefore it restricts to a skew monoidal structure $(\GCat_c,\leftsharp,1)$ on the full subcategory $\GCat_c$ of cofibrant objects.  Since the restricted tensor product $\leftsharp \colon{ \GCat_c}^2 \to \GCat_c$ preserves \emph{all} weak equivalences, by the universal property of the localization, it induces a skew monoidal structure $(Ho(\GCat_c),\leftsharp,1)$ on the homotopy category $\Ho(\GCat_c)$, obtained by inverting the weak equivalences in $\GCat_c$.  Now since all objects are fibrant in $\GCat$, $\Ho\GCat_c)$ is exactly $\Ho(\GCat)$ as described in \Cref{explicit1}, and the description of the induced skew monoidal structure as claimed.

With regards the internal hom, by \Cref{adjointhomotopy}, given $\ca,\cb$ cofibrant, the closedness bijections $\phi_{\ca,\cb,\cc} \colon \GCat(\ca \leftsharp \cb,\cc) \to \GCat(\ca, \psd_s(\cb,\cc))$ induce natural bijections $$\phi \colon \GCat(\ca \leftsharp \cb,\cc) / \sim \to \GCat(\ca, \psd_s(\cb,\cc)) / \sim$$ on homotopy classes of maps rel $P$.  Since $\psd_s(\cb,\cc)$ is not necessarily cofibrant, we take its cofibrant replacement $T \colon C(\psd_s(\cb,\cc)) \to \psd_s(\cb,\cc)$.  Now since $\ca$ is cofibrant and $T$ a weak equivalence between fibrant objects, by \cite[Proposition 1.2.5]{Hovey}, the natural transformation $$T_{\star} \colon \GCat(\ca, C(\psd_s(\cb,\cc)) / \sim \longrightarrow \GCat(\ca, \psd_s(\cb,\cc))/ \sim$$ is a natural bijection.  The composite natural bijection $$(T_{\star})^{-1} \circ \phi_{\ca,\cb,\cc} \colon \GCat(\ca \leftsharp \cb,\cc) / \sim \longrightarrow \GCat(\ca, C(\psd_s(\cb,\cc))) / \sim$$ verifies the claim.
\end{proof}

In \Cref{thm:smchomotopy} we will prove that the derived skew monoidal structure on $\Ho(\GCat)$ is in fact symmetric monoidal closed.

\subsection{The homotopy multicategory}\label{sect:homotopymulticategory}

The left-sharp skew monoidal structure $(\GCat,\leftsharp,1)$ is obtained as representing the multicategory $\GCatmpshl$.  In order to understand the derived skew monoidal structure, it will be easiest to first understand the associated homotopy multicategory, which is what we do in the present section.  

\begin{defn}\label{def:homotopic-multimaps}
Let $F,G \in \GCatmp^l_n(\overline{\ca}; \cb)$. 
\begin{itemize}
\item A \emph{loose homotopy} $H \colon F \sim_l G$ is a loose multimap $H\colon\overline{\ca}\rightsquigarrow P\cb$ in $\GCatmp^l$ such that the diagram
\[\begin{tikzcd}
	&& {P\cb} \\
	{\overline{\ca}} && \cb
	\arrow["s"', shift right=2, from=1-3, to=2-3]
	\arrow["t", shift left=2, from=1-3, to=2-3]
	\arrow["H", curve={height=-6pt}, squiggly, from=2-1, to=1-3]
	\arrow["F", shift left, squiggly, from=2-1, to=2-3]
	\arrow["G"', shift right, squiggly, from=2-1, to=2-3]
\end{tikzcd}\]
commutes.  If one such exists, we say that $F$ and $G$ are \emph{loosely homotopic}, written $F\sim_l G$. 

\item If $F$ and $G$ are left-sharp, a \emph{left-sharp homotopy} $H \colon F \sim_\sharp G$ is a left-sharp multimap $H$ as above making the diagram commute.  If one such exists, we say that $F$ and $G$ are left-sharp homotopic, written $F \sim_\sharp G$.
\end{itemize}
\end{defn}

\begin{lemma}\label{lem:biclosed-resp-hom-class}
The left and right closedness bijections $\GCatmp^l_{n+1}(\overline{\ca},\cb;\cc)\cong\GCatmp^l_{n}(\overline{\ca};\psd_p(\cb,\cc))$ and $\GCatmp^l_{n+1}(\ca,\overline{\cb};\cc)\cong\GCatmp^l_{n}(\overline{\cb};\psd_p(\ca,\cc))$ restrict to loose homotopy classes, as do the symmetry bijections.
\end{lemma}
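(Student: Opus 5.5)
The plan is to show that each bijection is compatible with postcomposition by $s,t\colon P\cc\rightrightarrows\cc$, and then to convert a homotopy through $P\cc$ into one through $P(\psd_p(\cb,\cc))$ using the isomorphism $\phi$ of \Cref{prop:P-and-psd}. I will treat the left closedness bijection first; the right closedness one and the symmetry bijections then follow by the same argument or by conjugating with symmetries.

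Consider $F,G\in\GCatmp^l_{n+1}(\overline{\ca},\cb;\cc)$ with transposes $\overline F,\overline G\in\GCatmp^l_n(\overline{\ca};\psd_p(\cb,\cc))$. Suppose $H\colon\overline{\ca},\cb\rightsquigarrow P\cc$ is a loose homotopy $F\sim_l G$. Its transpose $\overline H\colon\overline{\ca}\rightsquigarrow\psd_p(\cb,P\cc)$ satisfies $s_\ast\circ\overline H=\overline{s\circ H}=\overline F$, and likewise $t_\ast\circ\overline H=\overline G$. This uses only the naturality of the closedness bijection in the codomain variable, i.e.\ that substitution of unary maps commutes with transposition.

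Next compose with the isomorphism $\phi_{\cb,\cc}\colon\psd_p(\cb,P\cc)\rightsquigarrow P(\psd_p(\cb,\cc))$. The triangles in \Cref{prop:P-and-psd} give $s\circ\phi\circ\overline H=s_\ast\circ\overline H=\overline F$, and similarly $t\circ\phi\circ\overline H=\overline G$. Hence $\phi\circ\overline H$ is a loose homotopy $\overline F\sim_l\overline G$.

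For the converse, start from a homotopy $K\colon\overline F\sim_l\overline G$. Form $\phi^{-1}\circ K$ and transpose it back to $\overline{\ca},\cb\rightsquigarrow P\cc$. This is a homotopy $F\sim_l G$ by the same two commutations. Thus the bijection preserves and reflects the relation $\sim_l$.

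Since $\sim_l$ need not be an equivalence relation a priori, I would state the result for the equivalence relation it generates. The bijection respects the generated relation because it respects the generating one, so it descends to a bijection on loose homotopy classes.

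For the right closedness bijection $\GCatmp^l_{n+1}(\ca,\overline{\cb};\cc)\cong\GCatmp^l_n(\overline{\cb};\psd_p(\ca,\cc))$, the argument is identical. Right transposition is also natural in the codomain, because it is left transposition conjugated by a symmetry.

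For the symmetry bijections $\sigma\cdot-$, the key fact is that the action of $S_n$ commutes with postcomposition by unary maps; this is part of the symmetric multicategory axioms. Hence $\sigma\cdot H$ is a homotopy $\sigma\cdot F\sim_l\sigma\cdot G$ whenever $H$ is a homotopy $F\sim_l G$, and $\sigma^{-1}$ gives the converse.

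The main obstacle is justifying the naturality $s_\ast\circ\overline H=\overline{s\circ H}$ in the multicategory $\GCatmp^l$, and checking that $\phi$ is compatible with source and target. The latter is exactly \Cref{prop:P-and-psd}, which needed \Cref{thm:char-equiv-in-internal-hom} to identify pointwise equivalences with equivalences. The former is the standard fact that in a closed multicategory the map $e\circ_1-$ is natural in the codomain: $s\circ(e\circ_1\overline H)=(s_\ast e')\circ_1\overline H$, where $s_\ast$ is defined as the transpose of $s\circ e$.
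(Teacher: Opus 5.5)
Your proposal is correct and follows the paper's proof essentially step for step. You transpose the homotopy using naturality of the closedness bijection in the codomain, then paste with the source/target-compatible isomorphism $\phi$ of \Cref{prop:P-and-psd} (and its inverse for the converse), and handle the right-closed and symmetry cases by the same naturality. Your remark about passing to the generated equivalence relation is harmless, because you in fact show the bijection preserves and reflects $\sim_l$ itself, which is exactly what the paper later needs to prove that $\sim_l$ is an equivalence relation.
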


\begin{proof}
We start by showing that left closedness respect homotopy classes.
By definition, two multimaps $F,G \in \GCatmp^l_{n+1}(\overline{\ca},\cb;\cc)$ are loosely homotopic if and only if there exists a diagonal filler $H_1$ as in the diagram below left.
\[\begin{tikzcd}[ampersand replacement=\&]
	\&\& {P\cc} \&\&\&\& {\psd_p(\cb,P\cc)} \\
	{\overline{\ca},\cb} \&\& \cc \&\& {\overline{\ca}} \&\& {\psd_p(\cb,\cc)}
	\arrow["s"', shift right=2, from=1-3, to=2-3]
	\arrow["t", shift left=2, from=1-3, to=2-3]
	\arrow["{s_\star}"', shift right=2, from=1-7, to=2-7]
	\arrow["{t_\star}", shift left=2, from=1-7, to=2-7]
	\arrow["{\exists H_1}", squiggly, from=2-1, to=1-3]
	\arrow["F", shift left, squiggly, from=2-1, to=2-3]
	\arrow["G"', shift right, squiggly, from=2-1, to=2-3]
	\arrow["{\exists H_2}", squiggly, from=2-5, to=1-7]
	\arrow["{\overline{F}}"{pos=0.7}, shift left, squiggly, from=2-5, to=2-7]
	\arrow["{\overline{G}}"'{pos=0.7}, shift right, squiggly, from=2-5, to=2-7]
\end{tikzcd}\]
By naturality of the left closedness bijections, such corresponds to a diagonal filler $H_2$ for the diagram above right.  Now pasting with the graph isomorphism of \Cref{prop:P-and-psd}
\[\begin{tikzcd}
	{\psd_p(\cb,P\cc)} & {P(\psd_p(\cb,\cc))} &&& {P(\psd_p(\cb,\cc))} \\
	{\psd_p(\cb,\cc)} & {\psd_p(\cb,\cc)} && {\overline{\ca}} & {\psd_p(\cb,\cc)}
	\arrow["{\phi_{\cb,\cc}}", squiggly, from=1-1, to=1-2]
	\arrow["{s_\star}"', shift right=2, from=1-1, to=2-1]
	\arrow["{t_\star}", shift left=2, from=1-1, to=2-1]
	\arrow["s"', from=1-2, to=2-2]
	\arrow["t", shift left=3, from=1-2, to=2-2]
	\arrow["s"', from=1-5, to=2-5]
	\arrow["t", shift left=3, from=1-5, to=2-5]
	\arrow[equals, from=2-1, to=2-2]
	\arrow["{\exists H_3}", squiggly, from=2-4, to=1-5]
	\arrow["{\overline{F}}"{pos=0.7}, squiggly, from=2-4, to=2-5]
	\arrow["{\overline{G}}"'{pos=0.7}, shift right=3, squiggly, from=2-4, to=2-5]
\end{tikzcd}\]
this induces a bijection between fillers $H_2$ and fillers $H_3$ for the diagram above right.  Thus $F\sim_l G$ if and only if $\overline{F}\sim_l \overline{G}$.  The case of the right closedness bijections is entirely analogous.  That the symmetry bijections restrict to loose homotopy classes follows trivially from their naturality (in the codomain).
\end{proof}

\begin{lemma}\label{prop:equiv-rel}
The loose homotopy relation $\sim_l$ is an equivalence relation on each set $\GCatmp^l_{n}(\overline{\ca};\cb)$.
\end{lemma}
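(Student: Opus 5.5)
The plan is to reduce the multi-ary case to the nullary case, using \Cref{lem:biclosed-resp-hom-class}, and then to handle nullary maps directly via the characterisation of equivalences in \Cref{thm:char-equiv-in-internal-hom}. Iterating the left closedness bijection $n$ times gives
$$\GCatmp^l_{n}(\ca_1,\ldots,\ca_n;\cb)\cong \GCatmp^l_0(\diamond;\psd_p(\ca_1,\psd_p(\ldots,\psd_p(\ca_n,\cb)\ldots)))$$
and this bijection respects loose homotopy. It is therefore enough to show that $\sim_l$ is an equivalence relation on nullary maps $\diamond \rightsquigarrow \cd$, that is, on objects of an arbitrary Gray-category $\cd$.

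For nullary maps, a loose homotopy $x \sim_l y$ is a nullary map $\diamond \rightsquigarrow P\cd$ with source $x$ and target $y$. This is just an object of $P\cd$ lying over $(x,y)$, namely an equivalence $1$-cell $x\to y$ in $\cd$. So $\sim_l$ on $\GCatmp^l_0(\diamond;\cd)$ is the relation ``there is an equivalence $1$-cell $x\to y$''. It remains to check the three properties in a Gray-category.

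Reflexivity holds because $1_x$ is an equivalence. For symmetry, an equivalence $f\colon x\to y$ comes with a pseudo-inverse $g\colon y\to x$, and $g$ is itself an equivalence. For transitivity, given equivalences $f\colon x\to y$ and $f'\colon y\to z$ with inverses $g,g'$, we have $gg'f'f \simeq g1_yf = gf \simeq 1_x$, using whiskering $2$-functors and the invertible interchange $3$-cells. These preserve equivalence $2$-cells, and equivalence $2$-cells compose, so $f'f$ is an equivalence; the other side is symmetric. This step is routine.

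The main point needing care is the identification of loose homotopies in arity $0$ with objects of $P\cd$. This is immediate since nullary maps into a Gray-category are its objects. One must also check that the reduction commutes with the source and target maps. That compatibility is exactly what \Cref{lem:biclosed-resp-hom-class} and the graph isomorphism of \Cref{prop:P-and-psd} provide, because they transport $H\colon \overline{\ca}\rightsquigarrow P\cb$ to the corresponding map into $P$ of the iterated internal hom. With that in place, the equivalence-relation property transfers along the bijection.
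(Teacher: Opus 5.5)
Your proposal is correct and matches the paper's proof essentially step for step. You reduce to the nullary case by iterating the closedness bijections, which respect loose homotopy by \Cref{lem:biclosed-resp-hom-class}, and then observe that nullary loose homotopies are objects of the path object, i.e.\ equivalence $1$-cells, which contain identities, compose and have equivalence inverses; the nullary step itself needs neither \Cref{thm:char-equiv-in-internal-hom} nor interchange $3$-cells, only that whiskering by $1$-cells preserves equivalence $2$-cells.
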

\begin{proof}
Using that the closedness bijections respect loose homotopy classes, and that each $n$-ary map corresponds to a nullary one by iterated application of the closedness bijections, it suffices to verify the claim when $n=0$.  Now nullary maps $\diamond \rightsquigarrow \cb$ are just objects of $\cb$ and, since the objects of $P\cb$ are $1$-cell equivalences, two such objects $a,b \in \cb$ are loosely homotopic as nullary maps just when there exists a $1$-cell equivalence $a \to b$.  Since $1$-cell equivalences are composable, contain the identities and each has an equivalence inverse, this yields an equivalence relation.
\end{proof}

\begin{lemma}\label{thm:multicat-Homotopy}
Substitutions in $\GCatmp^l$ respects the loose homotopy relation.  
\end{lemma}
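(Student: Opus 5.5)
To show that substitution respects loose homotopy, I will reduce to two basic cases: postcomposition by a unary map, and substitution into one input. In a multicategory, a general substitution $G \circ (F_1,\ldots,F_k)$ is an iterate of single-slot substitutions $G \circ_i F$. It therefore suffices to show that if $F \sim_l F'$ then $G \circ_i F \sim_l G \circ_i F'$, and that if $G \sim_l G'$ then $G \circ_i F \sim_l G' \circ_i F$. Transitivity (\Cref{prop:equiv-rel}) then combines the two variables.

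\emph{Varying the outer map.} Suppose $H \colon \overline{\cb} \rightsquigarrow P\cc$ is a loose homotopy from $G$ to $G'$. Then $H \circ_i F$ is a loose multimap into $P\cc$, and associativity of substitution gives $s \circ (H \circ_i F) = (s\circ H)\circ_i F = G\circ_i F$; the same holds for $t$. So $H \circ_i F$ is a loose homotopy from $G \circ_i F$ to $G' \circ_i F$. This case is immediate.

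\emph{Varying the inner map.} Suppose $F \sim_l F' \colon \overline{\ca} \rightsquigarrow \cb_i$, and $G \colon \cb_1,\ldots,\cb_m \rightsquigarrow \cc$. I will use symmetry and closedness to move the $i$-th input into the last position. The symmetry bijections respect loose homotopy classes by \Cref{lem:biclosed-resp-hom-class}, and they are compatible with substitution by the symmetric multicategory axioms, so I may assume $i=m$.

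By iterated left closedness, $G$ corresponds to a unary map $\widetilde G \colon \cb_m \rightsquigarrow \psd_p(\cb_1,\psd_p(\ldots,\cc))$ with all the other variables curried out; here I again use right closedness and symmetry to put the remaining inputs on the other side. Substitution $G\circ_m F$ corresponds, under these bijections, to the composite $\widetilde G \circ F$; this follows from the naturality of the closedness bijections in the domain variable. Since the closedness bijections preserve and reflect loose homotopy (\Cref{lem:biclosed-resp-hom-class}), it suffices to treat the case where $G$ is a unary pseudomap $\cb \rightsquigarrow \cd$.

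In that case, take a loose homotopy $H \colon \overline{\ca} \rightsquigarrow P\cb$ and compose it with $PG \colon P\cb \rightsquigarrow P\cd$ from \Cref{lem:functorial}. The naturality of $s,t$ in pseudomaps gives $s\circ PG \circ H = G\circ s\circ H = G\circ F$, and likewise for $t$. So $PG\circ H$ is the required loose homotopy.

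The main obstacle is the currying step. I have to check carefully that single-slot substitution into the last variable corresponds, under iterated closedness and symmetry, to postcomposition of the curried unary map with the substituted map, and that the permutations of variables involved neither destroy this correspondence nor change homotopy classes. I will verify this using the universal property of the evaluation maps $e$ and the equivariance axioms of the symmetric multicategory $\GCatmp^l$.
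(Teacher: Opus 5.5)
Your proposal is correct and matches the paper's proof. The outer variable is handled by $H\circ_i F$, and the inner variable by $PG\circ H$ in the unary case; the general case is reduced to the unary one via \Cref{lem:biclosed-resp-hom-class} and naturality of the closedness bijections. The only difference is that the paper inducts on the arity of $G$, currying out one variable at a time (left closedness if $i=1$, right closedness if $i>1$), so your symmetry normalisation is unnecessary. Also note that with $i=m$ it is right, not left, closedness that curries out the other inputs.
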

\begin{proof}
Consider the composite multimap $G\circ_i F$. We must show that if $F\sim_l F'$ and $G\sim_l G'$, then $G\circ_i F\sim_l G'\circ_i F'$.  Since $\sim_l$ is an equivalence relation (see \Cref{prop:equiv-rel}), it suffices to prove $G\circ_i F\sim_l G'\circ_i F$ and $G\circ_i F\sim_l G\circ_i F'$. 

For the former:  if $H \colon G \sim_l G'$ then the composite multimap $H \circ_i F$ is the desired loose homotopy  $G\circ_i F \sim_l G\circ_i F'$.

We prove that $G\circ_i F\sim_l G\circ_i F'$ by induction on the arity of $G$.  If $G$ is unary, we use that $P$ is functorial --- see \Cref{lem:functorial} --- to take the composite loose homotopy depicted below. 
\[\begin{tikzcd}[ampersand replacement=\&]
	\&\& {P\cb} \&\& {P\cc} \\
	{\overline{\ca}} \&\& \cb \&\& \cc
	\arrow["PG", squiggly, from=1-3, to=1-5]
	\arrow["s"', from=1-3, to=2-3]
	\arrow["t", shift left=3, from=1-3, to=2-3]
	\arrow["s"', from=1-5, to=2-5]
	\arrow["t", shift left=3, from=1-5, to=2-5]
	\arrow["H", shift left=2, squiggly, from=2-1, to=1-3]
	\arrow["F"'{pos=0.7}, shift right, squiggly, from=2-1, to=2-3]
	\arrow["{F'}"{pos=0.7}, shift left, squiggly, from=2-1, to=2-3]
	\arrow["G"', squiggly, from=2-3, to=2-5]
\end{tikzcd}\]

For the inductive step, suppose that $G \colon (\ca_1,\ldots, \ca_{n+1}) \to \cb$ has arity $n+1$.  Suppose that $i = 1$.  Then by left closedness, we consider the adjoint map $\overline{G} \colon (\ca_1,\ldots, \ca_{n}) \to \psd_p(\ca_{n+1},\cb)$.  By induction, we have $\overline{G} \circ_1 F \sim_l \overline{G} \circ_1 F'$.  But since by \Cref{lem:biclosed-resp-hom-class} the right closedness bijections respect loose homotopy, we then have $\overline{\overline{G} \circ_1 F} \sim_l \overline{\overline{G} \circ_1 F'}$.  By naturality of the closedness bijections, this is the desired loose homotopy $G \circ_1 F \sim G \circ_1 F'$.  If $i  > 1$, we can argue in an analogous fashion but using right closedness.

\end{proof}

Combining the three preceding results we immediately obtain:

\begin{theorem}\label{thm:closedhomotopy}
There is a symmetric closed multicategory $\Ho(\GCatmp^l)$ whose objects are Gray-categories and with $\Ho(\GCatmp^l)(\overline{\ca};\cb) := \GCatmp^l_{n}(\overline{\ca};\cb) / \sim_l$.  The substitution operation of $\Ho(\GCatmp^l)$ is that of $\GCatmp^l$, applied to loose homotopy classes: that is, $[F] \circ_i [G] := [F \circ_i G]$.  
\end{theorem}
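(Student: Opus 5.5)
The plan is to assemble $\Ho(\GCatmp^l)$ as a quotient of $\GCatmp^l$ by the congruence $\sim_l$, and then to transport the symmetric and closed structure of $\GCatmp^l$ (\Cref{prop:biclosed}) along the quotient. There are three steps, each supplied by one of the three preceding lemmas.

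\textbf{The multicategory structure.} By \Cref{prop:equiv-rel}, each $\sim_l$ is an equivalence relation on $\GCatmp^l_n(\overline{\ca};\cb)$, so the quotient sets $\Ho(\GCatmp^l)(\overline{\ca};\cb)$ make sense. By \Cref{thm:multicat-Homotopy}, substitution respects $\sim_l$ in both arguments, so $[F]\circ_i[G]:=[F\circ_i G]$ is well defined. Unary identities are taken to be $[1_\ca]$. The associativity and unit axioms for substitution hold on representatives in $\GCatmp^l$, and therefore they descend to the classes. This already gives a multicategory, and the projection $\GCatmp^l\to\Ho(\GCatmp^l)$ is a multifunctor which is the identity on objects and surjective on multimaps.

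\textbf{Symmetry.} For $\sigma\in S_n$, set $\sigma.[F]:=[\sigma.F]$. This is well defined by the last clause of \Cref{lem:biclosed-resp-hom-class}. The compatibility equations between the actions and substitution hold on representatives, and so they pass to the quotient.

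\textbf{Closedness.} Keep the internal hom $\psd_p(\cb,\cc)$ and take as evaluation the class $[e_{\cb,\cc}]$ of the universal loose binary map. The map $[e]\circ_1-$ sends $[G]$ to $[e\circ_1 G]$. This is precisely the closedness bijection of $\GCatmp^l$ applied to a representative, and \Cref{lem:biclosed-resp-hom-class} says that this bijection and its inverse both respect $\sim_l$. It therefore induces a bijection on quotient sets, so $\Ho(\GCatmp^l)$ is left closed, and by the same lemma also right closed (which in any case follows from symmetry).

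The step I expect to need the most care is making sure that \Cref{lem:biclosed-resp-hom-class} really yields a bijection on classes rather than merely a well-defined map. The lemma's proof shows $F\sim_l G \iff \overline{F}\sim_l\overline{G}$, and this is an if-and-only-if, which is exactly what injectivity on classes requires. Beyond that point, everything is routine descent of equations to a quotient.
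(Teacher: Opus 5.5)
Your proposal is correct and follows the paper's proof, which obtains the theorem immediately by combining \Cref{prop:equiv-rel}, \Cref{thm:multicat-Homotopy} and \Cref{lem:biclosed-resp-hom-class} exactly as you do. You are also right about the one point needing care: injectivity on classes comes from the if-and-only-if $F\sim_l G \iff \overline{F}\sim_l\overline{G}$ established in that last lemma's proof.
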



We now turn our attention to the full sub-multicategory $$\Ho(\GCatmp^l_c) \hookrightarrow \Ho(\GCatmp^l)$$ of $\Ho(\GCatmp^l)$ containing the cofibrant objects.  It inherits its symmetry from $\Ho(\GCatmp^l)$ and we would like to show it is also closed.  For this we shall need the following lemma.

\begin{lemma}\label{lem:input-cofib-bij}
Let $\overline{\ca}$ be pointwise cofibrant and $F\colon\cb\to\cc$ be a weak equivalence in $\GCat$.  Then the induced function 
$$[F] \circ-\colon \Ho(\GCatmp^l)_n(\overline{\ca};\cb)\to \Ho(\GCatmp^l)_n(\overline{\ca};\cc)$$
is a bijection. 
\end{lemma}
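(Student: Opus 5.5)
We reduce to the nullary case using left closedness. By \Cref{lem:biclosed-resp-hom-class}, iterated application of the left closedness bijections gives bijections
$$\Ho(\GCatmp^l)_n(\ca_1,\ldots,\ca_n;\cb) \cong \Ho(\GCatmp^l)_{n-1}(\ca_1,\ldots,\ca_{n-1};\psd_p(\ca_n,\cb)) \cong \cdots \cong \Ho(\GCatmp^l)_0(\diamond;\psd_p(\ca_1,\psd_p(\ldots,\psd_p(\ca_n,\cb)\ldots))).$$
Naturality of these bijections in the codomain means that postcomposition by $[F]$ corresponds to postcomposition by the unary map $\psd_p(\ca_1,\psd_p(\ldots,\psd_p(\ca_n,F)\ldots))$. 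Nullary maps into a Gray-category $\cx$ up to loose homotopy are the objects of $\cx$ modulo the existence of equivalence $1$-cells (proof of \Cref{prop:equiv-rel}). So postcomposition by a pseudomap $G\colon\cx\rightsquigarrow\cy$ is a bijection on these classes precisely when $G$ is essentially surjective up to equivalence $1$-cells and reflects the equivalence relation. Both hold when $G$ is a triequivalence, or more generally an equivalence $1$-cell in $\GCat_p^4$, since such maps have pseudo-inverses up to pointwise equivalences. It therefore suffices to show that if $F$ is a weak equivalence and $\ca$ is cofibrant, then $\psd_p(\ca,F)\colon \psd_p(\ca,\cb)\to\psd_p(\ca,\cc)$ is again such an equivalence. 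Iterating this, noting that each intermediate $\ca_i$ is cofibrant, gives the result.

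For this key step, first use \Cref{prop:adjointEquiv}. Since $\ca$ is cofibrant and hence $1$-free, $J\colon\psd_s(\ca,-)\to\psd_p(\ca,-)$ is a Gray-enriched equivalence, natural in strict maps such as $F$. So $\psd_p(\ca,F)$ is conjugate, via Gray-equivalences, to $\psd_s(\ca,F)$, and it suffices to treat $\psd_s(\ca,F)$. Now $\psd_s(\ca,-)$ is right Quillen by \Cref{mc2} and \Cref{mc3}, and all objects are fibrant. By Ken Brown's lemma it therefore preserves all weak equivalences, so $\psd_s(\ca,F)$ is a triequivalence. Composing with the Gray-equivalences $J$ and $K$, which are triequivalences, shows that $\psd_p(\ca,F)$ is a triequivalence.

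It remains to see that a triequivalence $G$ that need not be between cofibrant objects induces a bijection on objects modulo equivalence $1$-cells. Surjectivity is tri-essential surjectivity. For injectivity, suppose $Gx\simeq Gy$. Local biequivalence lets us lift the equivalence $1$-cell and its inverse data, since $G$ is locally essentially surjective and fully faithful up to equivalence on $2$- and $3$-cells. The lifted cell is an equivalence because $G$ reflects invertibility of $2$-cells up to the local biequivalence. I expect this last verification, that triequivalences reflect equivalence $1$-cells, to be the main obstacle. It is standard, but it needs some care, and it could alternatively be phrased via the homotopy category description of \Cref{explicit1} applied after cofibrant replacement.
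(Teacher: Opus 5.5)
Your proposal is correct and follows essentially the same route as the paper: curry away the inputs via left closedness (the paper phrases this as an induction on $n$, you as an iteration), then show $\psd_p(\ca,F)$ is a weak equivalence by comparing it with $\psd_s(\ca,F)$ through the Gray-enriched equivalence of \Cref{prop:adjointEquiv}, using Ken Brown's lemma for the right Quillen functor $\psd_s(\ca,-)$, and finish with the nullary case. The only difference is that you spell out why a triequivalence induces a bijection on objects modulo equivalence $1$-cells, which the paper calls routine; your lifting argument, via local biequivalences reflecting equivalences and strict preservation of composites by the Gray-functor, is the right one.
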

\begin{proof}
For $n=0$, $\Ho(\GCatmp^l)(\diamond;\cb) = \pi_0(\cb)$, the set of equivalence classes of objects of $\cb$, modulo $1$-cell equivalence in $\cb$.  Therefore we must show that the induced function $\pi_0(F)  \colon \pi_0(\cb) \to \pi_0(\cc)$ is a bijection.  But as $F$ is a weak equivalence of Gray-categories (a triequivalence) this is routine.  

For $n \geq 1$, we argue by induction.  By left closedness of $\Ho(\GCatmp)^l$ as above, we have the commutative diagram below.
\[\begin{tikzcd}[ampersand replacement=\&]
	{Ho(\GCatmp^l)_n(\overline{\ca},\cx;\cb)} \&\&\& {Ho(\GCatmp^l)_n(\overline{\ca},\cx;\cc)} \\
	{Ho(\GCatmp^l)_{n-1}(\,\overline{\ca};\psd_p(\cx,\cb)\,)} \&\&\& {Ho(\GCatmp^l)_{n-1}(\,\overline{\ca};\psd_p(\cx,\cc)\,)}
	\arrow["{[F]\circ-}", from=1-1, to=1-4]
	\arrow["\cong"{marking, allow upside down}, draw=none, from=1-1, to=2-1]
	\arrow["\cong"{marking, allow upside down}, draw=none, from=1-4, to=2-4]
	\arrow["{[\psd_p(\cx,F)]\circ-}"', from=2-1, to=2-4]
\end{tikzcd}\]
Therefore, by induction, it suffices to prove that, for $\cx$ cofibrant and $F$ a weak equivalence, then $\psd_p(\cx,F)\colon\psd_p(\cx,\cb)\to\psd_p(\cx,\cc)$ is a weak equivalence.  
To prove this, consider the commuting square 

\[\begin{tikzcd}
	{\psd_s(\cx,\cb)} && {\psd_s(\cx,\cc)} \\
	{\psd_p(\cx,\cb)} && {\psd_p(\cx,\cc)}
	\arrow["{\psd_s(\cx,F)}", from=1-1, to=1-3]
	\arrow[hook, from=1-1, to=2-1]
	\arrow[hook, from=1-3, to=2-3]
	\arrow["{\psd_p(\cx,F)}"', from=2-1, to=2-3]
\end{tikzcd}\]

The vertical morphisms are Gray-enriched equivalences by \Cref{prop:adjointEquiv} and, in particular, weak equivalences of Gray-categories.   By 2-out-of-3, it therefore suffices to prove that the top horizontal is a weak equivalence.  Now, by \Cref{axiommc}, $\psd_s(\cx,-)$ is right Quillen.  Therefore by Ken Brown's lemma, \cite[ Lemma 1.1.12]{Hovey}, it preserves weak equivalences between fibrant objects.  Since all objects are fibrant, $\psd_s(\cx,F)$ is a weak equivalence. 
\end{proof}

\begin{prop}\label{thm:hom-cofib-loose-closed}
The symmetric multicategory $\Ho(\GCatmp_c^l)$ is biclosed. %
\end{prop}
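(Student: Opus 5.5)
Since $\Ho(\GCatmp_c^l)$ is symmetric (inheriting the symmetry of $\Ho(\GCatmp^l)$ from \Cref{thm:closedhomotopy}), left and right closedness are equivalent, so it suffices to prove left closedness. The natural candidate for the internal hom of cofibrant $\cb,\cc$ is the cofibrant replacement $C(\psd_p(\cb,\cc))$, together with the binary map
$$[e_{\cb,\cc}] \circ_1 [J\circ T] \colon C(\psd_p(\cb,\cc)),\cb \to \cc,$$
where $T \colon C(\psd_p(\cb,\cc)) \to \psd_p(\cb,\cc)$ is the counit of the cofibrant replacement and $e$ is the evaluation for the closed multicategory $\GCatmp^l$ (equivalently, its homotopy class in $\Ho(\GCatmp^l)$). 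One could instead use $C(\psd_s(\cb,\cc))$, but $\psd_p$ is more convenient since the closedness bijections of \Cref{lem:biclosed-resp-hom-class} are stated for it.

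The key step is to verify that, for pointwise cofibrant $\overline{\ca}$, the function
$$\Ho(\GCatmp^l)_n(\overline{\ca};C(\psd_p(\cb,\cc))) \to \Ho(\GCatmp^l)_{n+1}(\overline{\ca},\cb;\cc)$$
given by substitution into the evaluation is a bijection. I would factor it as the composite of
$$[T]\circ - \colon \Ho(\GCatmp^l)_n(\overline{\ca};C(\psd_p(\cb,\cc))) \to \Ho(\GCatmp^l)_n(\overline{\ca};\psd_p(\cb,\cc))$$
followed by the left closedness bijection of $\Ho(\GCatmp^l)$ from \Cref{thm:closedhomotopy}. This factorisation holds because substitution in $\Ho(\GCatmp^l)$ is computed on representatives, $[e]\circ_1([T]\circ[G]) = [e\circ_1(T\circ G)]$, and the closedness bijection sends $[G']$ to $[e\circ_1 G']$. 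The second map is a bijection by \Cref{lem:biclosed-resp-hom-class}. The first is a bijection by \Cref{lem:input-cofib-bij}, since $T$ is a weak equivalence (indeed a trivial fibration) and $\overline{\ca}$ is pointwise cofibrant. Because $C(\psd_p(\cb,\cc))$ is cofibrant, it is an object of the full sub-multicategory $\Ho(\GCatmp^l_c)$, and the multimaps involved all lie in it, so this gives left closedness of $\Ho(\GCatmp^l_c)$.

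Right closedness then follows by composing with the symmetry $(1,2)$, exactly as in the general remark on symmetric multicategories. The only delicate point I expect is bookkeeping: checking that the universal map is really induced by substitution of $[T]$ into the loose evaluation, and that naturality of the closedness bijection in the domain tuple holds on homotopy classes. Both follow from substitution being well defined on classes (\Cref{thm:multicat-Homotopy}).
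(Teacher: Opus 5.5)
Your proof is correct and follows the paper's argument. Both factor substitution into the evaluation as post-composition with a cofibrant-replacement weak equivalence, which is a bijection by \Cref{lem:input-cofib-bij}, followed by the closedness bijection of $\Ho(\GCatmp^l)$, and then use symmetry for right closedness.

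The only difference is the choice of internal hom. The paper takes $C(\psd_s(\cb,\cc))$ and post-composes with $J\circ T$, using \Cref{prop:adjointEquiv} and cofibrancy of $\cb$ to see that $J$ is a weak equivalence, so that it matches the internal hom of \Cref{explicit2} needed later in \Cref{thm:cofibrantmulticat}. Your choice $C(\psd_p(\cb,\cc))$ needs only $T$ and gives an isomorphic internal hom; consequently the $J$ in your displayed evaluation map is spurious.
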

\begin{proof}
Since it is symmetric, it suffices to show it is left closed.  The internal hom is given by the cofibrant replacement $C(\psd_s(\cb,\cc))$.  We consider the composite weak equivalence $J \circ T \colon C(\psd_s(\cb,\cc))\to\psd_s(\cb,\cc)\hookrightarrow \psd_p(\cb,\cc)$ whose first component is the weak equivalence $T$ for the cofibrant replacement and whose second component, the inclusion, is a Gray-enriched equivalence by \Cref{prop:adjointEquiv} since $\cb$ is cofibrant.  Now let  $e^{l} \colon \psd_p(\cb,\cc),\cb \rightsquigarrow \cc$ be the representing multimap for the closed structure on $\GCatmp^l$.  Then, we define the binary map which will provide the closedness bijection as the loose homotopy class $[e^l\circ_1 (J \circ T)] \colon C(\psd_s(\cb,\cc)), \cb \rightsquigarrow \cc$.
It remains to show that the composite function
\[\begin{tikzcd}[ampersand replacement=\&]
	{Ho(\GCatmp^l)_n(\,\overline{A};C(\psd_s(\cb,\cc)))} \& {Ho(\GCatmp^l)_n(\,\overline{A};\psd_p(\cb,\cc)\,)} \& {Ho(\GCatmp^l)_n(\overline{A},\cb;\cc)}
	\arrow["{[(J \circ T)] \circ-}", from=1-1, to=1-2]
	\arrow["{[e^l]\circ_1-}", from=1-2, to=1-3]
\end{tikzcd}\]
 is a bijection.  \Cref{lem:input-cofib-bij} above ensures that $[(J \circ T)] \circ -$ is a bijection whilst the second map is a bijection since $\Ho(\GCatmp^l)$ is closed (\Cref{thm:closedhomotopy}). 
\end{proof}

The following lemma shows that, given the restriction on objects in $\Ho(\GCatmp_c^l)$ to cofibrant ones, it suffices to consider left-sharp multimaps (up to left-sharp homotopy).

\begin{lemma}\label{sharpvsloose}
Let $\overline{\ca}$ be pointwise cofibrant.
\begin{enumerate}
\item Two left-sharp multimaps $F,G \colon \overline{\ca} \rightrightarrows \cb$ in $\GCatmp^l$ are loosely homotopic if and only if they are left-sharp homotopic. 
\item Each loose multimap  $F \colon \overline{\ca} \rightsquigarrow \cb$ in $\GCatmp^l$ is loosely homotopic to a left-sharp one. 
\end{enumerate}
\end{lemma}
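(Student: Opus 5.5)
We argue by induction on the arity $n$ of the multimaps, reducing to the unary case via closedness and \Cref{sharploose}. For $n=0$ both statements are trivial: every nullary map is left-sharp, and a nullary homotopy $\diamond \to P\cb$ is automatically left-sharp. For $n=1$, left-sharp unary maps are Gray-functors and a loose homotopy is a pseudomap $H \colon \ca \rightsquigarrow P\cb$ with $s\circ H = F$ and $t \circ H = G$. Since $\ca$ is cofibrant, hence $1$-free, \Cref{relative} (with $\ce = P\cb$) yields a strict homotopy $K(H)$, giving (1). For (2) with $n=1$, apply \Cref{thm:replacement}: the pseudomap $F$ admits an invertible icon $\rho_F \colon K(F) \cong F$. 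An invertible icon is a pointwise equivalence (identity components), so by \Cref{prop:p-maps-into-PA} it corresponds to a pseudomap $\ca \rightsquigarrow P\cb$ over $(K(F),F)$. This is a loose homotopy $K(F)\sim_l F$ with $K(F)$ strict.

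For the inductive step, write $\overline{\ca} = (\overline{\ca}',\cx)$ with $\cx$ cofibrant. For (2), take the transpose $\overline{F} \colon \overline{\ca}' \rightsquigarrow \psd_p(\cx,\cb)$. Compose it with the Gray-functor $K_{\cx,\cb} \colon \psd_p(\cx,\cb) \to \psd_s(\cx,\cb)$ of \Cref{prop:adjointEquiv}. The counit $\rho \colon JK \cong 1$ is an invertible Gray-natural transformation, and in particular has equivalence components. Whiskering it by $\overline{F}$ gives a loose homotopy $J K \overline{F} \sim_l \overline{F}$, again via \Cref{prop:p-maps-into-PA} and the biclosedness of \Cref{prop:biclosed}, which identifies such transformations with maps into $P$ of the hom. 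By induction, $K\overline{F}$ is loosely homotopic to a left-sharp $L \colon \overline{\ca}' \to \psd_s(\cx,\cb)$. Loose homotopy is preserved by postcomposition with $J$ (\Cref{thm:multicat-Homotopy}), so $\overline{F} \sim_l J L$. Transposing back using \Cref{lem:biclosed-resp-hom-class} and \Cref{sharploose}, $F$ is loosely homotopic to the left-sharp transpose of $L$.

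For (1) in the inductive step, suppose $F,G$ are left-sharp with $F \sim_l G$. Transposing gives left-sharp $\overline{F},\overline{G} \colon \overline{\ca}' \to \psd_s(\cx,\cb)$ with $J\overline{F} \sim_l J\overline{G}$ as maps into $\psd_p(\cx,\cb)$. The first task is to descend this to $\overline{F}\sim_l\overline{G}$ into $\psd_s(\cx,\cb)$. Apply the strict functor $K$: since $KJ = 1$, and $K$ preserves loose homotopies (functoriality of $P$, \Cref{lem:functorial}), we get $\overline{F} \sim_l \overline{G}$. Alternatively one can use the graph maps of \Cref{internalgraph}. By induction, $\overline{F} \sim_\sharp \overline{G}$ relative to $P(\psd_s(\cx,\cb))$.

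The main obstacle is converting this back into a left-sharp homotopy $F \sim_\sharp G$ relative to $P\cb$. This requires a left-sharp graph map $P(\psd_s(\cx,\cb)) \to \psd_s(\cx,P\cb)$ over the identity. \Cref{internalgraph} only supplies a pseudomap $V$ here. The plan is to repeat the proof of \Cref{adjointhomotopy} in multi-variable form, inducting on the arity of the homotopy itself. A left-sharp homotopy of arity $n-1$ into $P(\psd_s(\cx,\cb))$ is composed with $V$, producing a loose multimap into $\psd_s(\cx,P\cb)$ whose endpoints are strict. Each variable is then strictified using the relative strictification of \Cref{relative}, whose naturality (\Cref{thm:replacement}(3)) preserves the endpoints. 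Making this variable-by-variable strictification compatible with the sharpness equations of a left-sharp multimap is the step I expect to be delicate.
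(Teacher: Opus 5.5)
Your base cases and your proof of (2) are correct and agree with the paper. You apply $K_{\cx,\cb}$ before invoking the inductive hypothesis rather than after, which is immaterial.

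The gap is the inductive step of (1), which you flag yourself but do not close. Once you have descended to $\overline F\sim_\sharp\overline G$ relative to $P(\psd_s(\cx,\cb))$, you need a left-sharp way back to $\psd_s(\cx,P\cb)$. The proposed ``variable-by-variable strictification'' does not supply one. \Cref{relative} concerns unary maps only, and left-sharpness of a multimap of arity $\geq 2$ cannot be imposed one variable at a time: it also involves the cells $F^{A_2,A_1}_B$ and an equality of adjoint equivalences. So as written, (1) is unproved for $n\geq 2$.

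The missing idea is to strengthen (1) to homotopies relative to an arbitrary graph $s,t\colon\ce\rightrightarrows\cb$ of Gray-functors, exactly as \Cref{relative} is formulated for $n=1$. You then never pass through $P(\psd_s(\cx,\cb))$ at all.

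Given left-sharp $F,G\colon\overline\ca',\cx\to\cb$ and loose $H\colon\overline\ca',\cx\rightsquigarrow\ce$ with $sH=F$ and $tH=G$, the argument runs as follows.
\begin{enumerate}
\item Naturality of the loose closedness bijection, together with \Cref{sharploose}, gives $\overline H\colon\overline\ca'\rightsquigarrow\psd_p(\cx,\ce)$ with $s_\ast\overline H=J\overline F$ and $t_\ast\overline H=J\overline G$.
\item Since $K$ is natural in the strict maps $s,t$ (\Cref{prop:adjointEquiv}) and $KJ=1$, the composite $K_{\cx,\ce}\circ\overline H$ is a loose homotopy between the left-sharp maps $\overline F,\overline G$. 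It is relative to the graph $\psd_s(\cx,\ce)\rightrightarrows\psd_s(\cx,\cb)$, which is not a path object; this is exactly why the relative form is needed.
\item The inductive hypothesis makes this homotopy left-sharp.
\item Transposing back via \Cref{sharploose} and naturality of the left-sharp closedness bijection gives a left-sharp $\widetilde H\colon\overline\ca',\cx\to\ce$ with $s\widetilde H=F$ and $t\widetilde H=G$.
\end{enumerate}
Taking $\ce=P\cb$ recovers (1).

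This is in substance the paper's argument. It works with the homotopy valued in $\psd_p(\cb,P\cc)$, as \Cref{lem:biclosed-resp-hom-class} produces it, and applies $J_{\cb,P\cc}\circ K_{\cb,P\cc}$ before transposing. No left-sharp comparison $P(\psd_s(\cx,\cb))\to\psd_s(\cx,P\cb)$ is ever required.

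Your own route could also be completed this way. Composing your homotopy with $V$ gives a loose homotopy relative to $\psd_s(\cx,P\cb)\rightrightarrows\psd_s(\cx,\cb)$ with left-sharp endpoints, and the relative hypothesis applies to it.
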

\begin{proof}

We prove (1) by induction.   For nullary multimaps it is trivial since left-sharp and loose nullary multimaps coincide.   

For $n=1$, consider $F,G \colon \ca \rightrightarrows \cb$.  Since $\ca$ is cofibrant, and hence $1$-free, it is a special case of \Cref{relative} where $\ce = P\cb$.
%
%

 For $n\geq 2$, consider left-sharp $n$-ary multimaps $F,G\colon\overline{\ca},\cb  \rightrightarrows \cc$ and suppose that $F\sim_l G$.  By \Cref{lem:biclosed-resp-hom-class} the two adjoint maps $\overline{F}, \overline{G}\colon \overline{\ca}\rightrightarrows\psd_p(\cb,\cc)$ are loosely homotopic.  Therefore, by induction, there is a left-sharp homotopy $H \colon \overline{F} \sim_\sharp \overline{G}$. Now we define $H^\ast:=J_{\cb,P\cc} \circ K_{\cb, P\cc} \circ H$ as depicted on the top row below
\[\begin{tikzcd}[ampersand replacement=\&]
	\&\& {\psd_p(\cb,P\cc)} \& {\psd_s(\cb,P\cc)} \& {\psd_p(\cb,P\cc)} \\
	{\overline{\ca}} \&\& {\psd_p(\cb,\cc)} \& {\psd_s(\cb,\cc)} \& {\psd_p(\cb,\cc)}
	\arrow["{K_{\cb,P\cc}}", from=1-3, to=1-4]
	\arrow["{s_\ast}"', shift right=2, from=1-3, to=2-3]
	\arrow["{t_\ast}", shift left=2, from=1-3, to=2-3]
	\arrow["{J_{\cb,P\cc}}", hook, from=1-4, to=1-5]
	\arrow["{t_\ast}", shift left=2, from=1-4, to=2-4]
	\arrow["{s_\ast}"', shift right=2, from=1-4, to=2-4]
	\arrow["{t_\ast}", shift left=2, from=1-5, to=2-5]
	\arrow["{s_\ast}"', shift right=2, from=1-5, to=2-5]
	\arrow["H", curve={height=-12pt}, from=2-1, to=1-3]
	\arrow["{\overline{G}}"', shift right=2, from=2-1, to=2-3]
	\arrow["{\overline{F}}", shift left=2, from=2-1, to=2-3]
	\arrow["{K_{\cb,\cc}}"', from=2-3, to=2-4]
	\arrow["{J_{\cb,\cc}}"', hook, from=2-4, to=2-5]
\end{tikzcd}\]
where we use that $\cb$ is cofibrant and \Cref{prop:adjointEquiv} to form the enriched adjoint equivalences on the horizontals.  The diagram is serially commutative because, as proven in \Cref{prop:adjointEquiv}, the adjoint equivalences are natural with respect to postcomposition by strict maps.  Furthermore, as $F$ and $G$ are left-sharp, by \Cref{sharploose} both $\overline{F}$ and $\overline{G}$ factor through $J_{\cb,\cc} \colon \psd_s(\cb,\cc) \hookrightarrow \psd_s(\cb,\cc)_p$.  Therefore since $ K_{\cb, \cc} \circ  J_{\cb,\cc} = Id$, the two composites on the bottom row are just $\overline{F}$ and $\overline{G}$.  Furthermore, since $H^\ast$ factors through $J_{\cb,P\cc}$ via a left-sharp multimap, by \Cref{sharploose} again, its adjoint $\overline{H^\ast}$ is a left-sharp homotopy between $F$ and $G$. 

Turning to (2), we again prove it by induction on $n$.  The case $n=0$ is trivial as before.  For $n=1$, consider a pseudomap $F\colon\ca \rightsquigarrow \cb$ with $\ca$ cofibrant. Then, by \Cref{thm:replacement}, there exists a strict map $K(F)$ and an invertible icon $\rho_F\colon K(F) \cong F$.  By \Cref{prop:p-maps-into-PA}, this provides the required loose homotopy as depicted below.
\[\begin{tikzcd}[ampersand replacement=\&]
	\&\& {P\cb} \\
	\ca \&\& \cb
	\arrow[shift right=2, from=1-3, to=2-3]
	\arrow[shift left=2, from=1-3, to=2-3]
	\arrow["{\rho_F}", squiggly, from=2-1, to=1-3]
	\arrow["F"'{pos=0.6}, shift right, squiggly, from=2-1, to=2-3]
	\arrow["{K(F)}"{pos=0.6}, shift left, from=2-1, to=2-3]
\end{tikzcd}\]
 For $n\geq 2$, consider a loose $n$-ary multimap $F\colon\overline{\ca},\cb \rightsquigarrow \cc$ and its adjoint transpose $\overline{F} \colon\overline{\ca} \rightsquigarrow \psd_p(\cb,\cc)$  Since $\overline{\ca}$ is pointwise cofibrant, by induction there exists a left-sharp multimap $G\colon\overline{A}\to\psd(\cb,\cc)$ and loose homotopy $G \sim \overline{F}$.  Since $\cb$ is cofibrant, by \Cref{prop:adjointEquiv}  the inclusion $J\colon \psd_s(\cb,\cc) \hookrightarrow \psd_p(\cb,\cc)$ forms part of an enriched equivalence $J \dashv K$ with counit $\rho \colon J \circ K \cong Id$.  As before, this specifies a loose homotopy $J \circ K \sim_l Id$.  Since by \Cref{thm:multicat-Homotopy} substitution respects loose homotopy, we obtain a composite loose homotopy

\[\begin{tikzcd}[ampersand replacement=\&]
	{\overline{\ca}} \&\& {\psd_p(\cb,\cc)} \&\& {\psd_p(\cb,\cc)}
	\arrow[""{name=0, anchor=center, inner sep=0}, "G", curve={height=-18pt}, from=1-1, to=1-3]
	\arrow[""{name=1, anchor=center, inner sep=0}, "{\overline{F}}"', curve={height=18pt}, squiggly, from=1-1, to=1-3]
	\arrow[""{name=2, anchor=center, inner sep=0}, "Id"', curve={height=24pt}, from=1-3, to=1-5]
	\arrow[""{name=3, anchor=center, inner sep=0}, "{J \circ K}", curve={height=-24pt}, from=1-3, to=1-5]
	\arrow["{\sim_l}", shift right=2, draw=none, from=0, to=1]
	\arrow["{\sim_l}", shift right=3, draw=none, from=3, to=2]
\end{tikzcd}\]
Now by \Cref{{lem:biclosed-resp-hom-class}} the closedness bijections respect loose homotopy.   Therefore it remains to show that the adjoint multimap to $J \circ K \circ G \colon \colon\overline{\ca} \to \psd_p(\cb,\cc)$ is left-sharp, which holds again by \Cref{sharploose} since $J \circ K \circ G$ factors through $J$ via the left-sharp multimap $K \circ G$.

\end{proof}

The multicategory of prime interest to us is the following one.  That it is well-defined as a multicategory follows from the fact that $\GCatmpshl \hookrightarrow \GCatmp^l$ is a sub-multicategory and Part 1 of \Cref{sharpvsloose}.

\begin{defn}\label{leftsharphomulticat}
$\Ho(\GCatmpshl) \hookrightarrow \Ho(\GCatmp_c^l)$ is the sub-multicategory of $\Ho(\GCatmp_c^l)$ with the same objects and whose multimaps are left-sharp multimaps (modulo left-sharp homotopy).
\end{defn} 

By Part 2 of \Cref{sharpvsloose} we immediately obtain:

\begin{prop}
The inclusion $\Ho(\GCatmpshl) \hookrightarrow \Ho(\GCatmp_c^l)$ is an isomorphism of multicategories.
\end{prop}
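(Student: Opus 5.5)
To prove that the inclusion $\Ho(\GCatmpshl) \hookrightarrow \Ho(\GCatmp_c^l)$ is an isomorphism of multicategories, I will check three things: that it is bijective on objects, and that on each hom-set of multimaps it is both injective and surjective. A multifunctor that is bijective on objects and on each set of multimaps has an inverse which is automatically a multifunctor, since substitution is preserved. So this suffices.

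First, the objects. By \Cref{leftsharphomulticat} both multicategories have the same objects, namely the cofibrant Gray-categories. The inclusion is therefore the identity on objects.

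Next, fix a pointwise cofibrant tuple $\overline{\ca}$ and a cofibrant $\cb$. The component of the inclusion on multimaps is the function
$$\GCatmpshl_n(\overline{\ca};\cb)/\sim_\sharp \;\longrightarrow\; \GCatmp^l_n(\overline{\ca};\cb)/\sim_l ,$$
which sends the left-sharp homotopy class of a left-sharp multimap to its loose homotopy class. This is well defined, because a left-sharp homotopy is in particular a loose one.

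Injectivity is exactly Part 1 of \Cref{sharpvsloose}. If two left-sharp multimaps are loosely homotopic, they are left-sharp homotopic, so they already represent the same class on the left.

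Surjectivity is exactly Part 2 of \Cref{sharpvsloose}. Every loose multimap with pointwise cofibrant domain is loosely homotopic to a left-sharp one, so every loose homotopy class contains a left-sharp representative.

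Compatibility with substitution and identities needs no separate argument. The multicategory structure on $\Ho(\GCatmpshl)$ is inherited from $\Ho(\GCatmp_c^l)$ as a sub-multicategory, and this inheritance was already justified just before \Cref{leftsharphomulticat}.

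I expect no step here to be a real obstacle. All the work sits in \Cref{sharpvsloose}, and in the replacement results it depends on, namely \Cref{thm:replacement} and \Cref{prop:adjointEquiv}. The one point to watch is that both parts of \Cref{sharpvsloose} require the domains to be pointwise cofibrant. That hypothesis holds throughout, because we have restricted to the subcategory of cofibrant objects.
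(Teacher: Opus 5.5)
Your proposal is correct and matches the paper's argument. Part~1 of \Cref{sharpvsloose} is what makes $\Ho(\GCatmpshl)$ a well-defined sub-multicategory, which gives injectivity on multimaps, and the paper then obtains the isomorphism immediately from Part~2, which gives surjectivity exactly as you argue.
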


\begin{rmk}\label{strictification}
We expect the above result to be useful in strictifying loose $4$-categories to left-sharp ones.  Indeed, if $\ca$ is a loose $4$-category whose hom-objects are cofibrant Gray-categories, then it says that the loose composition bimap $\circ \colon \ca(y,z),\ca(x,y) \rightsquigarrow \ca(x,z)$ is loosely homotopic to a left-sharp one $\ca(y,z),\ca(x,y) \to \ca(x,z)$, as required in a left-sharp $4$-category.
\end{rmk}

Finally, we can give the main theorem of the section. 

\begin{theorem}\label{thm:cofibrantmulticat}
The multicategory $\Ho(\GCatmpshl)$ has the following properties.
\begin{enumerate}
\item Its category of unary maps is $\Ho(\GCat)$;
\item It is a symmetric, closed and representable multicategory.
\end{enumerate}
\end{theorem}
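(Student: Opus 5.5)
The plan is to treat the two descriptions of $\Ho(\GCatmpshl)$ in tandem: as a sub-multicategory of $\GCatmp^l$ via left-sharp multimaps, and, through the isomorphism $\Ho(\GCatmpshl) \cong \Ho(\GCatmp_c^l)$ of the preceding proposition, as the full sub-multicategory of $\Ho(\GCatmp^l)$ on the cofibrant objects.

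For (1), the unary maps of $\Ho(\GCatmpshl)$ between cofibrant $\ca,\cb$ are left-sharp unary maps, namely Gray-functors, modulo left-sharp homotopy. A left-sharp unary homotopy is a strict map $\ca \to P\cb$, so these are exactly homotopy classes of Gray-functors rel $P\cb$. By \Cref{explicit1} this is precisely the category $\Ho(\GCat)$. Composition agrees because substitution in $\Ho(\GCatmpshl)$ is computed on representatives.

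For (2), symmetry and closedness come from the loose side. $\Ho(\GCatmp_c^l)$ is symmetric, inheriting this from $\Ho(\GCatmp^l)$ (\Cref{thm:closedhomotopy}), and it is biclosed by \Cref{thm:hom-cofib-loose-closed}. Both properties transport across the isomorphism with $\Ho(\GCatmpshl)$.

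Representability is the substantive part, and I would prove it from the left-sharp side. The candidate binary map classifier is the class $[\theta_{\ca,\cb}]$ of the universal left-sharp bimap $\theta_{\ca,\cb} \colon \ca,\cb \to \ca \leftsharp \cb$. Its codomain is cofibrant by Axiom MC (\Cref{axiommc}). The candidate nullary classifier is $\diamond \to 1$. More generally, for a pointwise cofibrant tuple $\overline{\ca}$ I would take the classifier of $n$-ary left-sharp maps given by the iterated left-bracketed tensor $m(\overline{\ca}) = (\cdots(\ca_1\leftsharp\ca_2)\leftsharp\cdots)\leftsharp\ca_n$ together with its universal left-sharp map. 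Since $\GCatmpshl$ is represented by the skew monoidal structure, precomposition with this map gives a bijection $\GCat(m(\overline{\ca}),\cc) \cong \GCatmpshl_n(\overline{\ca};\cc)$. The task is to show that this bijection descends to homotopy classes.

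Here the main obstacle is that homotopy of maps out of $m(\overline{\ca})$ is taken rel $P\cc$ (a unary notion), whereas left-sharp homotopy of $n$-ary maps uses $n$-ary maps into $P\cc$. These are matched exactly by the same universal property applied with codomain $P\cc$: an $n$-ary left-sharp homotopy $\overline{\ca} \to P\cc$ corresponds to a strict map $m(\overline{\ca}) \to P\cc$, compatibly with $s$ and $t$ by naturality. So the bijection restricts to homotopy classes, and $[\theta]$ is universal in $\Ho(\GCatmpshl)$. \Cref{adjointhomotopy} is the binary instance of this, obtained through closedness, and it serves as a consistency check.

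It then remains to show that universal multimaps are closed under substitution. This follows because composites of universal maps in $\GCatmpshl$ are universal up to the comparison maps, and these comparisons are carried to the associated classes. Alternatively, in a closed multicategory, universal multimaps of all arities exist once binary and nullary ones do and they are stable under substitution by closedness. I would check that last point using left closedness, since $[\theta]\circ_1-$ is compatible with the internal hom $C(\psd_s(\cb,\cc))$ of \Cref{thm:hom-cofib-loose-closed}.
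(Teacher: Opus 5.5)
Your part (1), and the symmetry and closedness claims in (2), match the paper's proof. For representability the paper argues more abstractly. By \cite[Proposition 4.7]{BourkeLack:skew} it suffices to have the nullary classifier $1$ and a left adjoint to the internal hom $C(\psd_s(\cb,-))$ on $\Ho(\GCat)$, and that left adjoint is $-\leftsharp\cb$ by \Cref{explicit2}. You instead show directly that the classes of the left-bracketed classifiers are universal in $\Ho(\GCatmpshl)$, by lifting an $n$-ary left-sharp homotopy $\overline{\ca}\to P\cc$ along the universal property of $\theta$ with codomain $P\cc$. That argument is correct. It is the one the paper itself uses in part (1) of the proof of \Cref{thm:smchomotopy}, and it identifies the classifiers explicitly.

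The step that fails as written is closure under substitution.

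\emph{Your first justification.} This does not work. $\GCatmpshl$ is only left representable, so a composite such as $\theta_{\ca,\cb\leftsharp\cc}\circ_2\theta_{\cb,\cc}$ is not universal in general. Its comparison with the universal ternary map is the skew associator $\alpha$, which is not invertible in general. Saying the comparisons are ``carried to their classes'' does not show that $[\alpha]$ is invertible in $\Ho(\GCat)$. That invertibility is exactly what the paper deduces \emph{from} this theorem (the corollary after \Cref{thm:smchomotopy}), so the argument is circular.

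\emph{Your alternative.} Closedness is the right idea, but left closedness alone cannot deliver it. $\GCatmpshl$ is itself left closed, with nullary and binary classifiers, and is still only skew. Currying the inputs to the right of a slot gives universality only for substitution into the first position.

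\emph{The fix.} You must use the symmetry (equivalently, right closedness) of $\Ho(\GCatmp_c^l)$. Curry the inputs on both sides of the substituted slot and use naturality of the closedness bijections. Then a weakly universal $[\theta]$ induces bijections $\Ho(\GCatmpshl)(\overline{\cx},m(\overline{\ca}),\overline{\mathcal{Y}};\cc)\cong\Ho(\GCatmpshl)(\overline{\cx},\overline{\ca},\overline{\mathcal{Y}};\cc)$. Multimaps with this property are closed under substitution. This is the general fact behind the paper's appeal to \cite[Proposition 4.7]{BourkeLack:skew} for a symmetric closed multicategory, and with it your proof goes through.
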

\begin{proof}
By construction, its unary category has cofibrant Gray-categories as objects whilst morphisms are strict morphisms of Gray-categories (i.e. left-sharp unary maps) modulo the left-sharp homotopy relation.  This is precisely $\Ho(\GCat)$ as defined in \Cref{explicit1}. 

Let us turn to (2).  Since $\Ho(\GCatmp_c^l)$ is symmetric closed, the symmetric closed structure restricts along the isomorphism $\Ho(\GCatmpshl) \hookrightarrow \Ho(\GCatmp_c^l)$ of multicategories.  With regards representabillity, it suffices by \cite[Proposition 4.7]{BourkeLack:skew} to prove that the multicategory has a nullary map classifier and that the internal hom functor on unary maps has a left adjoint.  Clearly $1$ is a nullary map classifier for $\Ho(\GCatmp_c^l)$ whilst the internal hom functor on the category of unary maps is $C(\psd_s(\cb,-)) \colon \Ho(\GCat) \to \Ho(\GCat)$.  This has left adjoint $- \leftsharp \cb \colon \Ho(\GCat) \to \Ho(\GCat)$ by \Cref{explicit2}.
\end{proof}

\subsection{The homotopy category $\Ho(\GCat)$ is symmetric monoidal closed}
We are finally in a position to show that the derived skew monoidal structure $(\Ho(\GCat),\leftsharp,1)$ of \Cref{explicit2} is symmetric monoidal closed.  Our argument is as follows.
\begin{enumerate}
\item The construction of the derived skew monoidal structure $(\Ho(\GCat),\leftsharp,1)$ of \Cref{explicit2} had ultimately two steps.  We started, in \cite{BourkeLobbiaCorrected}, with the multicategory $\GCatmpshl$ and constructed $(\GCat,\otimes_\sharp,1)$ as the representing skew monoidal category.  In \Cref{explicit2} we showed that it induces a skew monoidal structure $(\Ho(\GCat),\leftsharp,1)$ on the homotopy category.
\item On the other hand, in \Cref{thm:cofibrantmulticat}, we constructed a multicategory $\Ho(\GCatmpshl)$ with underlying category the homotopy category $\Ho(\GCat)$.  Since we have proved that it is representable and symmetric, it therefore induces a symmetric monoidal category structure on $\Ho(\GCat)$ (see \Cref{representable} for more details.)
\item Accordingly, if we can show that the above two constructions coincide: that is, if we can show that $(\Ho(\GCat),\leftsharp,1)$ is the monoidal structure representing the representable multicategory $\Ho(\GCatmpshl)$, we will have proven the following theorem.
\end{enumerate}

%

\begin{theorem}\label{thm:smchomotopy}
The left derived skew monoidal structure $(\Ho(\GCat),\leftsharp,1)$ is symmetric monoidal closed.
\end{theorem}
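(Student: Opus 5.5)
Following the three-step outline above, I will identify the derived skew monoidal structure $(\Ho(\GCat),\leftsharp,1)$ of \Cref{explicit2} with the monoidal structure represented by $\Ho(\GCatmpshl)$. By \Cref{thm:cofibrantmulticat} that multicategory is symmetric, closed and representable with unary category $\Ho(\GCat)$, so by Hermida's correspondence (\Cref{representable}) it yields a symmetric monoidal closed structure on $\Ho(\GCat)$. Since a skew monoidal structure that agrees with a genuine monoidal one (same tensor, unit, and constraints $\alpha,l,r$) is itself monoidal, the theorem follows once the identification is made.

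First I show that for cofibrant $\ca,\cb$ the homotopy class $[\theta_{\ca,\cb}]$ of the universal left-sharp binary map $\theta_{\ca,\cb}\colon \ca,\cb \to \ca\leftsharp\cb$ is a binary map classifier in $\Ho(\GCatmpshl)$. This means that for cofibrant $\cc$, precomposition gives a bijection $\GCat(\ca\leftsharp\cb,\cc)/\!\sim\;\to \GCatmpshl_2(\ca,\cb;\cc)/\!\sim_\sharp$. I will factor this map through closedness. On the left, \Cref{adjointhomotopy} identifies homotopy classes of maps $\ca\leftsharp\cb\to\cc$ with homotopy classes of maps $\ca\to\psd_s(\cb,\cc)$ rel $P(\psd_s(\cb,\cc))$. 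On the right, \Cref{sharploose}, \Cref{lem:biclosed-resp-hom-class} and Part 1 of \Cref{sharpvsloose} identify left-sharp homotopy classes of binary maps with loose homotopy classes of unary maps $\ca\rightsquigarrow\psd_p(\cb,\cc)$ that factor through $J$. Using \Cref{relative} and the enriched equivalence $J\dashv K$ of \Cref{prop:adjointEquiv}, these correspond to homotopy classes of strict maps $\ca\to\psd_s(\cb,\cc)$. The two identifications commute by construction, because $\theta$ transposes to the identity on $\ca\leftsharp\cb$. Similarly, $1$ is the nullary map classifier, via $u\colon\diamond\to 1$.

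With these classifiers, Hermida's construction gives the represented tensor $\ca\otimes\cb=\ca\leftsharp\cb$. Its action on morphisms is $[f]\otimes[g]=[f\leftsharp g]$, because $(f\leftsharp g)\circ\theta=\theta\circ(f,g)$ already holds strictly in $\GCatmpshl$. For the constraints, in $(\GCat,\leftsharp,1)$ the associator $\alpha$ is the unique map with $\alpha\circ_1(\theta\circ_1\theta)=\theta\circ_2\theta$ (left skew case of \cite{BourkeLack:skew}), and similarly for $l$ and $r$. Passing to homotopy classes, these equations still hold, and in $\Ho(\GCatmpshl)$ the defining property of the represented associator picks out exactly such a class, now uniquely and invertibly. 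So the represented constraints are precisely the classes $[\alpha],[l],[r]$ described in \Cref{explicit2}. Hence the derived skew structure coincides with the monoidal one and is invertible. The symmetry comes from the symmetric structure of $\Ho(\GCatmp^l_c)$, and the internal hom is $C(\psd_s(\cb,\cc))$, consistent with \Cref{explicit2}.

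The main obstacle is the first step: checking that passing to homotopy classes is compatible on both sides. A strict homotopy rel $P(\ca\leftsharp\cb)$-targets must match a left-sharp binary homotopy into $P\cc$, so the homotopy relations on the two sides must correspond exactly. I would handle this entirely through transposition to unary maps into $\psd_s(\cb,-)$, where \Cref{adjointhomotopy} and \Cref{internalgraph} apply, rather than by working with binary homotopies directly.
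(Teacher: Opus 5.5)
Your argument is correct, and its architecture is the paper's. You identify $(\Ho(\GCat),\leftsharp,1)$ with the monoidal structure represented by $\Ho(\GCatmpshl)$. You then transport the action on morphisms, the associator and the unitors by passing their defining equations in $\GCatmpshl$ (such as $\alpha\circ_1(\theta\circ_1\theta)=\theta\circ_2\theta$) to homotopy classes.

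The difference lies in how you show that $[\theta_{\ca,\cb}]$ is a binary map classifier. You transpose both sides to strict maps $\ca\to\psd_s(\cb,\cc)$. On one side you use \Cref{adjointhomotopy}. On the other you use \Cref{sharploose}, Part 1 of \Cref{sharpvsloose}, \Cref{lem:biclosed-resp-hom-class}, \Cref{relative} and $J\dashv K$, postcomposing loose homotopies with $PK$ and using $KJ=1$. This works.

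The paper argues directly. Surjectivity of $[\theta_{\ca,\cb}]\circ_1-$ is inherited from the strict bijection. For injectivity, a left-sharp homotopy $H\colon\ca,\cb\to P\cc$ from $F\circ_1\theta_{\ca,\cb}$ to $G\circ_1\theta_{\ca,\cb}$ factors uniquely as $H'\circ_1\theta_{\ca,\cb}$, with $H'\colon\ca\leftsharp\cb\to P\cc$ a Gray-functor. Uniqueness then forces $s\circ H'=F$ and $t\circ H'=G$.

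So the ``main obstacle'' you flag disappears once you apply the universal property of $\theta_{\ca,\cb}$ at the object $P\cc$ and use its naturality in the codomain. Strict homotopies $\ca\leftsharp\cb\to P\cc$ and left-sharp binary homotopies $\ca,\cb\to P\cc$ are then in bijection over $s$ and $t$, with no transposition needed.

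The paper's route is shorter and essentially formal once $\Ho(\GCatmpshl)$ is known to be a well-defined multicategory. Yours re-invokes the cofibrancy-dependent machinery: $1$-freeness of $\ca$ and the adjoint equivalence $J\dashv K$ for cofibrant $\cb$. In exchange, your transposition shows the compatibility between the tensor and the derived internal hom of \Cref{explicit2} in the same stroke.
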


\begin{proof}
By the above, it suffices show that $(\Ho(\GCat),\leftsharp,1)$ is the monoidal structure associated to the representable multicategory $\Ho(\GCatmpshl)$.  In the proof, we will freely use that substitution in $\GCatmpshl$ respects the left-sharp homotopy relation, which we proved in showing that $\Ho(\GCatmpshl)$ is a well-defined multicategory as per \Cref{leftsharphomulticat}.
\begin{enumerate}[leftmargin=*]
\item Firstly, we must show that the tensor product $\ca\leftsharp \cb$ for $(\Ho(\GCat),\leftsharp,1)$ classifies binary maps in the multicategory $\Ho(\GCatmpshl)$.  To this end, let $\ca$ and $\cb$ be cofibrant and consider the left-sharp binary map classifier $\theta_{\ca,\cb}\colon\ca,\cb\to\ca\leftsharp \cb$ defining $\ca\leftsharp \cb$.  It has the universal property that the induced function $$\theta_{\ca,\cb} \circ_1 - \colon \GCatmpshl_1(\ca\leftsharp \cb,\cc) \to \GCatmpshl_2(\ca,\cb;\cc)$$ is a bijection.  We must show that its left-sharp homotopy class $[\theta_{\ca,\cb}] \colon \ca,\cb\to\ca\leftsharp \cb \in \Ho(\GCatmpshl)$ has the corresponding universal property: namely, that the induced function 
$$[\theta_{\ca,\cb}] \circ_1 - \colon \Ho(\GCatmpshl)_1(\ca\leftsharp \cb,\cc) \to \Ho(\GCatmpshl)_2(\ca,\cb;\cc)$$
is also a bijection.  

Certainly it is surjective since $\theta_{\ca,\cb} \circ_1 - \colon \GCatmpshl(\ca\leftsharp \cb,\cc) \to\GCatmpshl(\ca,\cb;\cc)$ is so and since substitution respects left-sharp homotopy classes.  For injectivity, consider $F,G \colon \ca,\cb \rightrightarrows \cc$ and a left-sharp homotopy $H$ as depicted below.

\[\begin{tikzcd}[ampersand replacement=\&]
	\&\&\&\& {P\cc} \\
	{\ca,\cb} \&\& {\ca \leftsharp \cb} \&\& \cc
	\arrow["s"', from=1-5, to=2-5]
	\arrow["t", shift left=3, from=1-5, to=2-5]
	\arrow["H", curve={height=-12pt}, from=2-1, to=1-5]
	\arrow["{\theta_{\ca,\cb}}"', from=2-1, to=2-3]
	\arrow["{\exists H'}", dashed, from=2-3, to=1-5]
	\arrow["F", shift left, from=2-3, to=2-5]
	\arrow["G"', shift right, from=2-3, to=2-5]
\end{tikzcd}\]

By the universal property of the left-sharp binary map classifier $\theta_{\ca,\cb}$, there exists a unique Gray-functor $H'$ as depicted such that $H' \circ_1 \theta_{\ca,\cb} = H$.  By uniqueness, it makes the whole diagram serially commute.  In particular, $H'$ is a left-sharp homotopy from $F$ to $G$, as required. 

Therefore $[\theta_{\ca,\cb}] \colon \ca,\cb\to\ca\leftsharp \cb$ is the binary map classifier for the multicategory $\Ho(\GCatmpshl)$, so that $\ca\leftsharp \cb$ is the representing tensor product.

\item Given morphisms $F \colon \ca \to \ca'$ and $G \colon \cb \to \cb'$ in $\GCat$, their tensor product $F \leftsharp G \colon \ca\leftsharp \cb \to \ca' \leftsharp \cb'$ is the unique morphism such that $(F \leftsharp G) \circ_1 \theta_{\ca,\cb}
 = \theta_{\ca',\cb'}(F,G)$.  For cofibrant objects, consider the corresponding morphisms $[F] \colon \ca \to \ca'$ and $[G] \colon \cb \to \cb'$ in $\Ho(\GCat)$.  Since substitution in $\GCatmpshl$ respects left-sharp homotopy, we then have that $[F \leftsharp G] \circ_1 [\theta_{\ca,\cb}]
 = [\theta_{\ca',\cb'}(F,G)] = [\theta_{\ca',\cb'}]([F],[G])$.  By the universal property of the binary map classifier $[\theta_{\ca,\cb}]  \in \Ho(\GCatmpshl)$, $[F \leftsharp G]$ is therefore the tensor product in the monoidal structure representing $\Ho(\GCatmpshl)$.   In summary, the tensor product functor for the representing monoidal structure coincides with that of the derived skew monoidal structure.
%
\item Turning to associators, the associator $\alpha_{\ca,\cb,\cc}\colon (\ca\otimes_\sharp\cb)\otimes_\sharp\cc\to\ca\otimes_\sharp(\cb\otimes_\sharp\cc)$ for the left-sharp skew structure is, by definition, the unique morphism such that 
$$\alpha_{\ca,\cb,\cc}\circ_1(\theta_{\ca\otimes_\sharp\cb,\cc}\circ_1\theta_{\ca,\cb})=\theta_{\ca,\cb\otimes_\sharp\cc}\circ_2 \theta_{\cb,\cc}.$$
Then for $\ca,\cb,\cc$ cofibrant, the associator for the derived skew structure is the left-sharp homotopy class $[\alpha_{\ca,\cb,\cc}]$.  Now since substitution respects left-sharp homotopy, we then have  
$$[\alpha_{\ca,\cb,\cc}]\circ_1([\theta_{\ca\otimes_\sharp\cb,\cc}]\circ_1[\theta_{\ca,\cb}])=[\theta_{\ca,\cb\otimes_\sharp\cc}] \circ_2 [\theta_{\cb,\cc}]$$
which proves that $[\alpha_{\ca,\cb,\cc}]$ is the associator for the representing monoidal structure, since it is the unique morphism satisfying this equation.
\item Similar calculations apply to the units: the core fact is that the left-sharp homotopy class $[u] \colon \diamond \to 1$ of the nullary map classifier is the nullary map classifier in $\Ho(\GCatmpshl)$.  The remainder follows, essentially as above, using the definition of the unitors for $\GCat,\leftsharp,1)$ in terms of the multicategory $\GCatmpshl$ and that substitution respects left-sharp homotopy.
\end{enumerate}
\end{proof}

We have proven, in particular, that the left derived skew monoidal structure $(\Ho(\GCat),\leftsharp,1)$ is monoidal.  In the language of \cite{Bourke2016Skew}, this says that the left-sharp skew monoidal structure $(\GCat,\leftsharp,1)$ is \emph{homotopy monoidal}.  In particular, since the associator for $(\Ho(\GCat),\leftsharp,1)$ is simply obtained by applying the localization functor $\GCat_c \to \Ho(\GCat_c)$ to the associator for the skew monoidal structure on the category of cofibrant objects, this proves the following result, which was promised in Section 7.1 of  \cite{BourkeLobbia}.  

\begin{cor}
Let $\ca,\cb$ and $\cc$ be cofibrant Gray-categories.  Then the associator $\alpha \colon (\ca \leftsharp \cb) \leftsharp \cc \to \ca \leftsharp (\cb \leftsharp \cc)$ for the left-sharp skew monoidal structure $(\GCat,\leftsharp,1)$ is a triequivalence or, equivalently by \Cref{thm:char-equiv-in-GCat_s}, an equivalence in the left-sharp $4$-category $\GCat^4$.
\end{cor}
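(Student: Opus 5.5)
The plan is to deduce the corollary directly from \Cref{thm:smchomotopy} and the construction of the left derived skew monoidal structure in \Cref{explicit2}, and then use \Cref{thm:char-equiv-in-GCat_s} for the reformulation in terms of $\GCat^4$.

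First, by Axiom MC (\Cref{axiommc}), $\leftsharp$ preserves cofibrant objects. Hence for cofibrant $\ca,\cb,\cc$ both $(\ca\leftsharp\cb)\leftsharp\cc$ and $\ca\leftsharp(\cb\leftsharp\cc)$ are cofibrant, and $\alpha$ is a morphism of $\GCat_c$. By \Cref{explicit2}, the associator of $(\Ho(\GCat),\leftsharp,1)$ at $(\ca,\cb,\cc)$ is the image $[\alpha]$ of $\alpha$ under the localization $\GCat_c\to\Ho(\GCat_c)\simeq\Ho(\GCat)$. By \Cref{thm:smchomotopy} this derived structure is monoidal, so $[\alpha]$ is an isomorphism in $\Ho(\GCat)$.

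Second, I use the standard fact that in a model category a morphism is a weak equivalence exactly when its image in the homotopy category is an isomorphism. Concretely, with the description of \Cref{explicit1}, an inverse $[\beta]$ to $[\alpha]$ gives homotopies (rel $P$) $\beta\alpha\sim 1$ and $\alpha\beta\sim 1$. Since all objects are fibrant and these objects are cofibrant, $\alpha$ is then a homotopy equivalence, hence a weak equivalence, that is, a triequivalence. This can also be cited from \cite{Hovey}.

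Finally, since source and target are cofibrant, part (2) of \Cref{thm:char-equiv-in-GCat_s} says that a triequivalence between them is the same as an equivalence $1$-cell in $\GCat^4$. This gives the stated equivalent formulation.

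The only point needing care is the identification in the first step: I must confirm that the associator of the derived structure is literally $[\alpha]$ and not conjugated by cofibrant-replacement maps. This holds because \Cref{explicit2} restricts the skew structure to cofibrant objects without replacing, and \Cref{thm:smchomotopy} (part (3) of its proof) shows $[\alpha]$ is the unique associator of the representing monoidal structure. Everything else is formal.
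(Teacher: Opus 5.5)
Your proposal is correct and matches the paper's argument: the derived associator is the image of $\alpha$ under localization on cofibrant objects. It is invertible because \Cref{thm:smchomotopy} shows the derived structure is monoidal, so $\alpha$ is a weak equivalence (a triequivalence), and \Cref{thm:char-equiv-in-GCat_s}(2) gives the reformulation in $\GCat^4$. You also spell out two steps the paper leaves implicit: the cofibrancy of source and target, and the saturation fact that a morphism inverted in the homotopy category is a weak equivalence.
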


\appendix 

\section{Axioms for a moderate 4-category}
\label{app:axioms-4-cats}

\black The data for a moderate $4$-category $\ca$ in \Cref{unpacked} specifies the data of a binary map $\circ_{x,y,z} \colon \ca(y,z),\ca(x,y) \rightsquigarrow \ca(x,z) \in \GCatmp^l$ (strict in each variable) and a nullary map $1_x \colon \diamond\rightsquigarrow \ca(x,x) \in \GCatmp^l$.  The axioms for a moderate $4$-category state exactly that this data gives a category enriched in $\GCatmp^l$.  As such, there are three groups of axioms: those asserting that the data forms a binary map plus the associativity equations and the unit equations for an enriched category.  We treat these in turn.

\subsection{The binary map axioms for $\circ_{x,y,z} \colon \ca(y,z),\ca(x,y) \rightsquigarrow \ca(x,z)$}\label{bimapaxioms}
The axioms for $\circ_{x,y,z}$ to be a binary map in $\GCatmp^l$ are the axioms for a \emph{lax binary map} in \cite[Appendix D.1]{BourkeLobbia}, with the two axioms \Cref{ax:bin_D_1a:B} and \Cref{ax:bin_D_A:1b} enhanced to require that the adjoint equivalences $\aexi^A_B$ are identities if $A$ or $B$ is (as required of a \emph{pseudo binary map}).  Our notation for the axioms matches that in \cite[Appendix D.1]{BourkeLobbia} but note that some of the axioms there simplify significantly because we are assuming that  $\circ_{x,y,z}$ is strict in each variable.	\black
	
\begin{equation}\label{ax:4-cat_A:Theta}
	\tag{$A:\Theta$}
\begin{tikzcd}[ampersand replacement=\&]
	{a'B.Ab} \& {Ab'.aB} \\
	\\
	{a'B'.Ab} \& {Ab'.aB'}
	\arrow["{A_B}", from=1-1, to=1-2]
	\arrow[""{name=0, anchor=center, inner sep=0}, "{a'\beta.1}"{description}, curve={height=-24pt}, from=1-1, to=3-1]
	\arrow[""{name=1, anchor=center, inner sep=0}, "{a'\beta'.1}"', curve={height=24pt}, from=1-1, to=3-1]
	\arrow[""{name=2, anchor=center, inner sep=0}, "{1.a\beta}", curve={height=-18pt}, from=1-2, to=3-2]
	\arrow["{A_{B'}}"', from=3-1, to=3-2]
	\arrow["{a'\Theta.1}"'{yshift=0.1cm}, between={0.4}{0.6}, Rightarrow, from=0, to=1]
	\arrow["{A_\beta}"', between={0.4}{0.6}, Rightarrow, from=2, to=0]
\end{tikzcd}
\hspace{0.25cm}
=
\hspace{0.25cm}
\begin{tikzcd}[ampersand replacement=\&]
	{a'B.Ab} \& {Ab'.aB} \\
	\\
	{a'B'.Ab} \& {Ab'.aB'}
	\arrow["{A_B}", from=1-1, to=1-2]
	\arrow[""{name=0, anchor=center, inner sep=0}, "{a'\beta'.1}"', curve={height=24pt}, from=1-1, to=3-1]
	\arrow[""{name=1, anchor=center, inner sep=0}, "{1.a\beta}", curve={height=-24pt}, from=1-2, to=3-2]
	\arrow[""{name=2, anchor=center, inner sep=0}, "{1.a\beta'}"{description}, curve={height=24pt}, from=1-2, to=3-2]
	\arrow["{A_{B'}}"', from=3-1, to=3-2]
	\arrow["{A_{\beta'}}"', between={0.4}{0.6}, Rightarrow, from=2, to=0]
	\arrow["{1.a\Theta}"'{yshift=0.1cm}, between={0.4}{0.6}, Rightarrow, from=1, to=2]
\end{tikzcd}
\end{equation}

\begin{equation}\label{ax:4-cat_Lambda:B}
	\tag{$\Lambda:B$}
\begin{tikzcd}[ampersand replacement=\&]
	{a'B.Ab} \&\& {a'B.A'b} \&\& {a'B.Ab} \&\& {a'B.A'b} \\
	\&\&\& {=} \\
	{Ab'.aB} \&\& {A'b'.aB} \&\& {Ab'.aB} \&\& {A'b'.aB}
	\arrow[""{name=0, anchor=center, inner sep=0}, "{1.\alpha b}", curve={height=-18pt}, from=1-1, to=1-3]
	\arrow["{A_B}"', from=1-1, to=3-1]
	\arrow["{A'_B}", from=1-3, to=3-3]
	\arrow[""{name=1, anchor=center, inner sep=0}, "{1.\alpha b}", curve={height=-18pt}, from=1-5, to=1-7]
	\arrow[""{name=2, anchor=center, inner sep=0}, "{1.\alpha'b}"{description}, curve={height=18pt}, from=1-5, to=1-7]
	\arrow["{A_B}"', from=1-5, to=3-5]
	\arrow["{A'_B}", from=1-7, to=3-7]
	\arrow[""{name=3, anchor=center, inner sep=0}, "{\alpha b'.1}", curve={height=-18pt}, from=3-1, to=3-3]
	\arrow[""{name=4, anchor=center, inner sep=0}, "{\alpha' b'.1}"', curve={height=18pt}, from=3-1, to=3-3]
	\arrow[""{name=5, anchor=center, inner sep=0}, "{\alpha' b'.1}"', curve={height=18pt}, from=3-5, to=3-7]
	\arrow["{\alpha_B}", between={0.3}{0.7}, Rightarrow, nfold, from=0, to=3]
	\arrow["{1.\Lambda b}", between={0.3}{0.7}, Rightarrow, nfold, from=1, to=2]
	\arrow["{\alpha'_B}", between={0.3}{0.7}, Rightarrow, nfold, from=2, to=5]
	\arrow["{\Lambda b'.1}", between={0.3}{0.7}, Rightarrow, nfold, from=3, to=4]
\end{tikzcd}
\end{equation}

\begin{equation}\label{ax:4-cat_A:beta_1,beta_2}
	\tag{$A:\beta_1,\beta_2$}
\begin{tikzcd}[ampersand replacement=\&]
	{a'B.Ab} \&\& {Ab'.aB} \\
	{a'B'.Ab} \&\& {Ab'.aB'} \\
	{a'B''.Ab} \&\& {Ab'.aB''}
	\arrow[""{name=0, anchor=center, inner sep=0}, "{A_B}", from=1-1, to=1-3]
	\arrow["{a'\beta_1.1}"', from=1-1, to=2-1]
	\arrow["{1.a\beta_1}", from=1-3, to=2-3]
	\arrow[""{name=1, anchor=center, inner sep=0}, "{A_{B'}}"{description}, from=2-1, to=2-3]
	\arrow["{a'\beta_2.1}"', from=2-1, to=3-1]
	\arrow["{1.a\beta_2}", from=2-3, to=3-3]
	\arrow[""{name=2, anchor=center, inner sep=0}, "{A_{B''}}"', from=3-1, to=3-3]
	\arrow["{A_{\beta_1}}", between={0.3}{0.7}, Rightarrow, nfold, from=0, to=1]
	\arrow["{A_{\beta_2}}", between={0.3}{0.7}, Rightarrow, nfold, from=1, to=2]
\end{tikzcd}
\hspace{0.25cm}
=
\hspace{0.25cm}
\begin{tikzcd}[ampersand replacement=\&]
	{a'B.Ab} \&\& {Ab'.aB} \\
	{a'B'.Ab} \&\& {Ab'.aB'} \\
	{a'B''.Ab} \&\& {Ab'.aB''}
	\arrow[""{name=0, anchor=center, inner sep=0}, "{A_B}", from=1-1, to=1-3]
	\arrow["{a'\beta_1.1}"', from=1-1, to=2-1]
	\arrow["{1.a\beta_1}", from=1-3, to=2-3]
	\arrow["{a'\beta_2.1}"', from=2-1, to=3-1]
	\arrow["{1.a\beta_2}", from=2-3, to=3-3]
	\arrow[""{name=1, anchor=center, inner sep=0}, "{A_{B''}}"', from=3-1, to=3-3]
	\arrow["{A_{\beta_2 \star \beta_1}}", between={0.3}{0.7}, Rightarrow, nfold, from=0, to=1]
\end{tikzcd}
\end{equation}

\begin{equation}\label{ax:4-cat_alpha_1,alpha_2:B}
	\tag{$\alpha_1,\alpha_2:B$}
\adjustbox{width=\textwidth}{
\begin{tikzcd}[ampersand replacement=\&,row sep=2.25em]
	{a'B.Ab} \& {a'B.A'b} \& {a'B.A''b} \\
	{Ab'.aB} \& {A'b'.aB} \& {A''b'.aB}
	\arrow[""{name=0, anchor=center, inner sep=0}, "{1.\alpha_1 b}", from=1-1, to=1-2]
	\arrow["{A_B}"', from=1-1, to=2-1]
	\arrow[""{name=1, anchor=center, inner sep=0}, "{1.\alpha_2 b}", from=1-2, to=1-3]
	\arrow["{A'_B}"{description}, from=1-2, to=2-2]
	\arrow["{A''_B}", from=1-3, to=2-3]
	\arrow[""{name=2, anchor=center, inner sep=0}, "{\alpha_1 b'.1}"', from=2-1, to=2-2]
	\arrow[""{name=3, anchor=center, inner sep=0}, "{\alpha_2 b'.1}"', from=2-2, to=2-3]
	\arrow["{({\alpha_1})_B}"', shift left=2, between={0.3}{0.7}, Rightarrow, from=0, to=2]
	\arrow["{({\alpha_2})_B}", between={0.3}{0.7}, Rightarrow, from=1, to=3]
\end{tikzcd}
\hspace{0.25cm}
=
\hspace{0.25cm}
\begin{tikzcd}[ampersand replacement=\&,row sep=2.25em]
	{a'B.Ab} \& {a'B.A'b} \& {a'B.A''b} \\
	{Ab'.aB} \& {A'b'.aB} \& {A''b'.aB}
	\arrow["{1.\alpha_1 b}", from=1-1, to=1-2]
	\arrow["{A_B}"', from=1-1, to=2-1]
	\arrow["{1.\alpha_2 b}", from=1-2, to=1-3]
	\arrow["{({\alpha_2\star \alpha_1})_B}", shift right=5, between={0.2}{0.8}, Rightarrow, from=1-2, to=2-2]
	\arrow["{A''_B}", from=1-3, to=2-3]
	\arrow["{\alpha_1 b'.1}"', from=2-1, to=2-2]
	\arrow["{\alpha_2 b'.1}"', from=2-2, to=2-3]
\end{tikzcd}
}
\end{equation}

\begin{equation}\label{ax:4-cat_A_1,A_2:beta}
	\tag{$A_1,A_2:\beta$}
\begin{tikzcd}[ampersand replacement=\&]
	{a''B.A_2b.A_1b} \&\& {A_2b'.A_1b'.aB} \& \\
	\& {A_2b'.a'B.A_1b} \&\& {=} \\
	{a''B'.A_2b.A_1b} \&\& {A_2b'.A_1b'.aB'} \\
	\& {A_2b'.a'B'.A_1b} \\
	{a''B.A_2b.A_1b} \&\& {A_2b'.A_1b'.aB} \\
	\\
	{a''B'.A_2b.A_1b} \&\& {A_2b'.A_1b'.aB'} \\
	\& {A_2b'.a'B'.A_1b}
	\arrow[""{name=0, anchor=center, inner sep=0}, "{(A_2.A_1)_B}", from=1-1, to=1-3]
	\arrow[""{name=1, anchor=center, inner sep=0}, "{(A_2)_B.1}"', from=1-1, to=2-2]
	\arrow["{a''\beta.1}"', from=1-1, to=3-1]
	\arrow["{1.a\beta}", from=1-3, to=3-3]
	\arrow[""{name=2, anchor=center, inner sep=0}, "{1.(A_1)_{B}}"', from=2-2, to=1-3]
	\arrow["{1.a'\beta.1}"{description}, from=2-2, to=4-2]
	\arrow[""{name=3, anchor=center, inner sep=0}, "{(A_2)_{B'}.1}"', from=3-1, to=4-2]
	\arrow[""{name=4, anchor=center, inner sep=0}, "{1.(A_1)_{B'}}"', from=4-2, to=3-3]
	\arrow[""{name=5, anchor=center, inner sep=0}, "{(A_2.A_1)_B}", from=5-1, to=5-3]
	\arrow["{a''\beta.1}"', from=5-1, to=7-1]
	\arrow["{1.a\beta}", from=5-3, to=7-3]
	\arrow[""{name=6, anchor=center, inner sep=0}, "{(A_2.A_1)_{B'}}"{description}, from=7-1, to=7-3]
	\arrow["{(A_2)_B'.1}"', from=7-1, to=8-2]
	\arrow["{1.(A_1)_{B'}}"', from=8-2, to=7-3]
	\arrow["{(A_2,A_1)_B}", shift right=2, between={0.3}{0.9}, Rightarrow, nfold, from=0, to=2-2]
	\arrow["{(A_2)_\beta.1}"', between={0.3}{0.7}, Rightarrow, nfold, from=1, to=3]
	\arrow["{1.(A_1)_{\beta}}", between={0.3}{0.7}, Rightarrow, nfold, from=2, to=4]
	\arrow["{(A_2.A_1)_\beta}"{description}, between={0.2}{0.8}, Rightarrow, nfold, from=5, to=6]
	\arrow["{(A_2,A_1)_{B'}}", between={0.2}{1}, Rightarrow, nfold, from=6, to=8-2]
\end{tikzcd}
\end{equation}

\begin{equation}\label{ax:4-cat_alpha:B_1,B_2}
	\tag{$\alpha:B_1,B_2$}
	\begin{tikzcd}[ampersand replacement=\&,column sep=scriptsize]
		{a'B_2.a'B_1.Ab} \&\& {a'B_2.a'B_1.A'b} \&\& \\
		\& {a'B_2.Ab'.aB_1} \&\& {a'B_2.Ab'.aB_1} \& {=} \\
		{Ab''.aB_2.aB_1} \&\& {A'b''.aB_2.aB_1} \\
		{a'B_2.a'B_1.Ab} \&\& {a'B_2.a'B_1.A'b} \\
		\&\&\& {a'B_2.Ab'.aB_1} \\
		{Ab''.aB_2.aB_1} \&\& {A'b''.aB_2.aB_1}
		\arrow[""{name=0, anchor=center, inner sep=0}, "{1.\alpha b}", from=1-1, to=1-3]
		\arrow["{1.A_{B_1}}"{description}, from=1-1, to=2-2]
		\arrow[""{name=1, anchor=center, inner sep=0}, "{A_{B_2.B_1}}"', from=1-1, to=3-1]
		\arrow["{1.A'_{B_1}}", from=1-3, to=2-4]
		\arrow[""{name=2, anchor=center, inner sep=0}, "{1.\alpha b'.1}"{description}, from=2-2, to=2-4]
		\arrow["{A_{B_2}.1}"{description}, from=2-2, to=3-1]
		\arrow["{A'_{B_2}.1}", from=2-4, to=3-3]
		\arrow[""{name=3, anchor=center, inner sep=0}, "{\alpha b''.1}"', from=3-1, to=3-3]
		\arrow[""{name=4, anchor=center, inner sep=0}, "{1.\alpha b}", from=4-1, to=4-3]
		\arrow["{A_{B_2.B_1}}"', from=4-1, to=6-1]
		\arrow["{1.A'_{B_1}}", from=4-3, to=5-4]
		\arrow[""{name=5, anchor=center, inner sep=0}, "{A'_{B_2.B_1}}"', from=4-3, to=6-3]
		\arrow["{A'_{B_2}.1}", from=5-4, to=6-3]
		\arrow[""{name=6, anchor=center, inner sep=0}, "{\alpha b''.1}"', from=6-1, to=6-3]
		\arrow["{1.\alpha_{B_1}}"{pos=0.6}, between={0.4}{0.6}, Rightarrow, from=0, to=2]
		\arrow["{A_{(B_2,B_1)}}", between={0.3}{0.7}, Rightarrow, from=2-2, to=1]
		\arrow["{\alpha_{B_2}.1}"{pos=0.4}, between={0.4}{0.6}, Rightarrow, from=2, to=3]
		\arrow["{\alpha_{B_2.B_1}}", between={0.3}{0.7}, Rightarrow, from=4, to=6]
		\arrow["{A'_{(B_2,B_1)}}", between={0.2}{0.6}, Rightarrow, from=5-4, to=5]
	\end{tikzcd}
\end{equation}

\begin{equation}\label{ax:4-cat_alpha:beta}
	\tag{$\alpha:\beta$}
	\begin{tikzcd}[ampersand replacement=\&,row sep=scriptsize]
		{a'B.Ab} \& {a'B.A'b} \& \\
		\& {Ab'.aB} \& {A'b'.aB} \\
		{a'B'.Ab} \\
		\& {Ab'.aB'} \& {A'b'.aB'}
		\arrow[""{name=0, anchor=center, inner sep=0}, "{1.\alpha b}", from=1-1, to=1-2]
		\arrow[""{name=1, anchor=center, inner sep=0}, "{A_B}"', from=1-1, to=2-2]
		\arrow["{a'\beta.1}"', from=1-1, to=3-1]
		\arrow["{A'_B}", from=1-2, to=2-3]
		\arrow[""{name=2, anchor=center, inner sep=0}, "{\alpha b'.1}"', from=2-2, to=2-3]
		\arrow["{1.a\beta}"{description}, from=2-2, to=4-2]
		\arrow["{1.a\beta}", from=2-3, to=4-3]
		\arrow[""{name=3, anchor=center, inner sep=0}, "{A_{B'}}"', from=3-1, to=4-2]
		\arrow[""{name=4, anchor=center, inner sep=0}, "{\alpha b'.1}"', from=4-2, to=4-3]
		\arrow["{\alpha_B}"{description}, between={0.3}{0.7}, Rightarrow, from=0, to=2]
		\arrow["{A_\beta}"', between={0.4}{0.6}, Rightarrow, from=1, to=3]
		\arrow[between={0.4}{0.6}, Rightarrow, dashed, from=2, to=4]
	\end{tikzcd}
\hspace{0.25cm}
=
\hspace{0.25cm}
\begin{tikzcd}[ampersand replacement=\&]
	{a'B.Ab} \& {a'B.A'b} \& \\
	\&\& {A'b'.aB} \\
	{a'B'.Ab} \& {a'B'.A'b} \\
	\& {Ab'.aB'} \& {A'b'.aB'}
	\arrow[""{name=0, anchor=center, inner sep=0}, "{1.\alpha b}", from=1-1, to=1-2]
	\arrow["{a'\beta.1}"', from=1-1, to=3-1]
	\arrow[""{name=1, anchor=center, inner sep=0}, "{A'_B}", from=1-2, to=2-3]
	\arrow["{a'\beta.1}"{description}, from=1-2, to=3-2]
	\arrow["{1.a\beta}", from=2-3, to=4-3]
	\arrow[""{name=2, anchor=center, inner sep=0}, "{1.\alpha b}", from=3-1, to=3-2]
	\arrow["{A_{B'}}"', from=3-1, to=4-2]
	\arrow[""{name=3, anchor=center, inner sep=0}, "{A'_{B'}}", from=3-2, to=4-3]
	\arrow[""{name=4, anchor=center, inner sep=0}, "{\alpha b'.1}"', from=4-2, to=4-3]
	\arrow[between={0.4}{0.6}, Rightarrow, nfold, dashed, from=0, to=2]
	\arrow["{A'_\beta}", between={0.4}{0.6}, Rightarrow, nfold, from=1, to=3]
	\arrow["{\alpha_{B'}}", between={0.4}{0.6}, Rightarrow, nfold, from=2, to=4]
\end{tikzcd}	
\end{equation}

\begin{equation}\label{ax:4-cat_A:B_1,B_2,B_3}
	\tag{$A:B_1,B_2,B_3$}
\begin{tikzcd}[ampersand replacement=\&,column sep=small]
	\& {a'B_3.a'B_2.Ab'.aB_1} \&\& \\
	{a'B_3.a'B_2.a'B_1.Ab} \&\& {a'B_3.Ab''.aB_2.aB_1} \\
	\&\&\& {=} \\
	\&\& {Ab'''.aB_3.aB_2.aB_1} \\
	\& {a'B_3.a'B_2.Ab'.aB_1} \\
	{a'B_3.a'B_2.a'B_1.Ab} \&\& {a'B_3.Ab''.aB_2.aB_1} \\
	\\
	\&\& {Ab'''.aB_3.aB_2.aB_1}
	\arrow["{1.A_{B_2}.1}", from=1-2, to=2-3]
	\arrow["{1.A_{B_1}}", from=2-1, to=1-2]
	\arrow[""{name=0, anchor=center, inner sep=0}, "{1.A_{B_2 . B_1}}"{description}, from=2-1, to=2-3]
	\arrow[""{name=1, anchor=center, inner sep=0}, "{A_{B_3. B_2 . B_1}}"', curve={height=12pt}, from=2-1, to=4-3]
	\arrow["{A_{B_3}.1}", from=2-3, to=4-3]
	\arrow["{1.A_{B_2}.1}", from=5-2, to=6-3]
	\arrow[""{name=2, anchor=center, inner sep=0}, "{A_{B_3 . B_2}.1}"{description, pos=0.2}, curve={height=18pt}, from=5-2, to=8-3]
	\arrow["{1.A_{B_1}}", from=6-1, to=5-2]
	\arrow[""{name=3, anchor=center, inner sep=0}, "{A_{B_3. B_2 . B_1}}"', curve={height=12pt}, from=6-1, to=8-3]
	\arrow["{A_{B_3}.1}", from=6-3, to=8-3]
	\arrow["{1.A_{(B_2 , B_1)}}", shift right=2, between={0.2}{0.6}, Rightarrow, from=1-2, to=0]
	\arrow["{A_{(B_3,B_2 . B_1)}}"{pos=0.4}, shift left=5, between={0.3}{0.7}, Rightarrow, from=0, to=1]
	\arrow["{A_{(B_3.B_2,B_1)}}"{pos=0.7}, shift right=5, between={0.4}{0.8}, Rightarrow, from=2, to=3]
	\arrow["{A_{(B_3,B_2)}.1}"{pos=0.4}, between={0.2}{0.6}, Rightarrow, from=6-3, to=2]
\end{tikzcd}
\end{equation}

\begin{equation}\label{ax:4-cat_A_1,A_2,A_3:B}
	\tag{$A_1,A_2,A_3:B$}
\begin{tikzcd}[ampersand replacement=\&]
	{a'''B.A_3 b.A_2 b.A_1 b} \&\&\& \\
	\&\&\& {=} \\
	{A_3 b'.a''B.A_2 b.A_1 b} \&\& {A_3 b'.A_2 b'.A_1 b'.aB} \\
	\& {A_3 b'.A_2 b'.a'B.A_1 b} \\
	{a'''B.A_3 b.A_2 b.A_1 b} \\
	\\
	{A_3 b'.a''B.A_2 b.A_1 b} \&\& {A_3 b'.A_2 b'.A_1 b'.aB} \\
	\& {A_3 b'.A_2 b'.a'B.A_1 b}
	\arrow["{({A_3})_{B}.1}"', from=1-1, to=3-1]
	\arrow[""{name=0, anchor=center, inner sep=0}, "{(A_3. A_2 . A_1)_{B}}", curve={height=-12pt}, from=1-1, to=3-3]
	\arrow[""{name=1, anchor=center, inner sep=0}, "{1.(A_2.A_1)_B}"{description, pos=0.7}, from=3-1, to=3-3]
	\arrow["{1.({A_2})_{B}.1}"', from=3-1, to=4-2]
	\arrow["{1.({A_1})_{B}}"', from=4-2, to=3-3]
	\arrow["{({A_3})_{B}.1}"', from=5-1, to=7-1]
	\arrow[""{name=2, anchor=center, inner sep=0}, "{(A_3. A_2 . A_1)_{B}}", curve={height=-12pt}, from=5-1, to=7-3]
	\arrow[""{name=3, anchor=center, inner sep=0}, "{(A_3.A_2)_B.1}"{description, pos=0.8}, curve={height=-12pt}, from=5-1, to=8-2]
	\arrow["{1.({A_2})_{B}.1}"', from=7-1, to=8-2]
	\arrow["{1.({A_1})_{B}}"', from=8-2, to=7-3]
	\arrow["{(A_3,A_2.A_1)_B}"', between={0.2}{0.6}, Rightarrow, from=0, to=1]
	\arrow["{1.(A_2.A_1)_B}"', shift left=2, between={0.3}{0.9}, Rightarrow, from=1, to=4-2]
	\arrow["{(A_3,A_2)_B.1}"'{pos=0.6}, between={0.2}{0.8}, Rightarrow, from=3, to=7-1]
	\arrow["{(A_3.A_2,A_1)_B}", between={0.3}{0.7}, Rightarrow, from=2, to=3]
\end{tikzcd}
\end{equation}

\begin{equation}\label{ax:4-cat_A:B_1,beta}
	\tag{$A:B_1,\beta$}
\begin{tikzcd}[ampersand replacement=\&]
	\&\& {a'B_2.Ab'.aB_1} \&\& \\
	{a'B_2.a'B_1.Ab} \&\&\&\& {Ab''.aB_2.aB_1} \\
	\\
	{a'B'_2.a'B_1.Ab} \&\&\&\& {Ab''.aB'_2.aB_1} \\
	\&\& {=} \\
	\&\& {a'B_2.Ab'.aB_1} \\
	{a'B_2.a'B_1.Ab} \&\&\&\& {Ab''.aB_2.aB_1} \\
	\&\& {a'B'_2.Ab'.aB_1} \\
	{a'B'_2.a'B_1.Ab} \&\&\&\& {Ab''.aB'_2.aB_1}
	\arrow["{A_{B_2}.1}", from=1-3, to=2-5]
	\arrow["{1.A_{B_1}}", from=2-1, to=1-3]
	\arrow[""{name=0, anchor=center, inner sep=0}, "{A_{B_2.B_1}}"{description}, from=2-1, to=2-5]
	\arrow["{a'\beta.1.1=a'(\beta.B_1).1}"', from=2-1, to=4-1]
	\arrow["{1.a(\beta.B_1)=1.a\beta.1}", from=2-5, to=4-5]
	\arrow[""{name=1, anchor=center, inner sep=0}, "{A_{B'_2.B_1}}"', from=4-1, to=4-5]
	\arrow[""{name=2, anchor=center, inner sep=0}, "{A_{B_2}.1}", from=6-3, to=7-5]
	\arrow["{a'\beta.1.1}", from=6-3, to=8-3]
	\arrow[""{name=3, anchor=center, inner sep=0}, "{1.A_{B_1}}", from=7-1, to=6-3]
	\arrow["{a'\beta.1.1=a'(\beta.B_1).1}"', from=7-1, to=9-1]
	\arrow["{1.a(\beta.B_1)=1.a\beta.1}", from=7-5, to=9-5]
	\arrow[""{name=4, anchor=center, inner sep=0}, "{A_{B'_2}.1}", from=8-3, to=9-5]
	\arrow[""{name=5, anchor=center, inner sep=0}, "{1.A_{B_1}}", from=9-1, to=8-3]
	\arrow[""{name=6, anchor=center, inner sep=0}, "{A_{B'_2.B_1}}"', from=9-1, to=9-5]
	\arrow["{A_{(B_2,B_1)}}"{pos=0.4}, between={0.1}{0.7}, Rightarrow, nfold, from=1-3, to=0]
	\arrow["{A_{\beta.B_1}}", between={0.3}{0.7}, Rightarrow, nfold, from=0, to=1]
	\arrow["{A_\beta.1}", between={0.2}{0.7}, Rightarrow, nfold, from=2, to=4]
	\arrow[between={0.2}{0.7}, Rightarrow, nfold, dashed, from=3, to=5]
	\arrow["{A_{(B'_2,B_1)}}", between={0}{0.8}, Rightarrow, nfold, from=8-3, to=6]
\end{tikzcd}%
\end{equation}

\begin{equation}\label{ax:4-cat_A:beta,B_2}
	\tag{$A:\beta,B_2$}
\begin{tikzcd}[ampersand replacement=\&]
	\&\& {a'B_2.Ab'.aB_1} \&\& \\
	{a'B_2.a'B_1.Ab} \&\&\&\& {Ab''.aB_2.aB_1} \\
	\\
	{a'B_2.a'B'_1.Ab} \&\&\&\& {Ab''.aB_2.aB'_1} \\
	\&\& {=} \\
	\&\& {a'B_2.Ab'.aB_1} \\
	{a'B_2.a'B_1.Ab} \&\&\&\& {Ab''.aB_2.aB_1} \\
	\&\& {a'B_2.Ab'.aB'_1} \\
	{a'B_2.a'B'_1.Ab} \&\&\&\& {Ab''.aB_2.aB'_1}
	\arrow["{A_{B_2}.1}", from=1-3, to=2-5]
	\arrow["{1.A_{B_1}}", from=2-1, to=1-3]
	\arrow[""{name=0, anchor=center, inner sep=0}, "{A_{B_2.B_1}}"{description}, from=2-1, to=2-5]
	\arrow["{1.a'\beta.1=a'(B_2.\beta).1}"', from=2-1, to=4-1]
	\arrow["{1.a(B_2.\beta)=1.1.a\beta}", from=2-5, to=4-5]
	\arrow[""{name=1, anchor=center, inner sep=0}, "{A_{B_2.B'_1}}"', from=4-1, to=4-5]
	\arrow[""{name=2, anchor=center, inner sep=0}, "{A_{B_2}.1}", from=6-3, to=7-5]
	\arrow["{1.1.a\beta}", from=6-3, to=8-3]
	\arrow[""{name=3, anchor=center, inner sep=0}, "{1.A_{B_1}}", from=7-1, to=6-3]
	\arrow["{1.a'\beta.1=a'(B_2.\beta).1}"', from=7-1, to=9-1]
	\arrow["{1.a(B_2.\beta)=1.1.a\beta}", from=7-5, to=9-5]
	\arrow[""{name=4, anchor=center, inner sep=0}, "{A_{B_2}.1}", from=8-3, to=9-5]
	\arrow[""{name=5, anchor=center, inner sep=0}, "{1.A_{B'_1}}", from=9-1, to=8-3]
	\arrow[""{name=6, anchor=center, inner sep=0}, "{A_{B_2.B'_1}}"', from=9-1, to=9-5]
	\arrow["{A_{(B_2,B_1)}}"{pos=0.4}, between={0.1}{0.7}, Rightarrow, nfold, from=1-3, to=0]
	\arrow["{A_{B_2.\beta}}", between={0.3}{0.7}, Rightarrow, nfold, from=0, to=1]
	\arrow[between={0.2}{0.7}, Rightarrow, nfold, dashed, from=2, to=4]
	\arrow["{1.A_\beta}", between={0.2}{0.7}, Rightarrow, nfold, from=3, to=5]
	\arrow["{A_{(B_2,B'_1)}}", between={0}{0.8}, Rightarrow, nfold, from=8-3, to=6]
\end{tikzcd}
\end{equation}

\begin{equation}\label{ax:4-cat_A_1,alpha:B}
	\tag{$A_1,\alpha:B$}
	\begin{tikzcd}
		& {a''B.A_2b.A_1b} && {a''B.A'_2b.A_1b} & \\
		{A_2b'.a'B.A_1b} &&&& {=} \\
		& {A_2b'.A_1b'.aB} && {A'_2b'.A_1b'.aB} \\
		& {a''B.A_2b.A_1b} && {a''B.A'_2b.A_1b} \\
		{A_2b'.a'B.A_1b} && {A'_2b'.a'B.A_1b} \\
		& {A_2b'.A_1b'.aB} && {A'_2b'.A_1b'.aB}
		\arrow["{1.(\alpha.A_1)b}", from=1-2, to=1-4]
		\arrow["{(A_2)_B.1}"', from=1-2, to=2-1]
		\arrow[""{name=0, anchor=center, inner sep=0}, "{(A_2.A_1)_B}", from=1-2, to=3-2]
		\arrow[""{name=1, anchor=center, inner sep=0}, "{(A'_2.A_1)_B}", from=1-4, to=3-4]
		\arrow["{1.(A_1)_B}"', from=2-1, to=3-2]
		\arrow["{(\alpha.A_1)b'.1}"', from=3-2, to=3-4]
		\arrow[""{name=2, anchor=center, inner sep=0}, "{1.(\alpha.A_1)b}", from=4-2, to=4-4]
		\arrow["{(A_2)_B.1}"', from=4-2, to=5-1]
		\arrow["{(A'_2)_B.1}"'{pos=0.7}, from=4-4, to=5-3]
		\arrow[""{name=3, anchor=center, inner sep=0}, "{(A'_2.A_1)_B}", from=4-4, to=6-4]
		\arrow[""{name=4, anchor=center, inner sep=0}, "{(\alpha b').1}"{description}, from=5-1, to=5-3]
		\arrow["{1.(A_1)_B}"', from=5-1, to=6-2]
		\arrow["{1.(A_{1})_B}"'{pos=0.1}, from=5-3, to=6-4]
		\arrow[""{name=5, anchor=center, inner sep=0}, "{(\alpha.A_1)b'.1}"', from=6-2, to=6-4]
		\arrow["{(A_2,A_1)_B}"', between={0.3}{0.8}, Rightarrow, from=0, to=2-1]
		\arrow["{(\alpha.A_1)_B}"', between={0.2}{0.6}, Rightarrow, from=1, to=0]
		\arrow["{\alpha_B.1}"'{pos=0.7}, between={0.2}{0.8}, Rightarrow, from=2, to=4]
		\arrow["{(A'_2,A_1)_B}"', shift left, between={0.2}{0.8}, Rightarrow, from=3, to=5-3]
		\arrow[between={0.2}{0.8}, Rightarrow, dashed, from=4, to=5]
	\end{tikzcd}
\end{equation}

\begin{equation}\label{ax:4-cat_alpha,A_2:B}
	\tag{$\alpha,A_2:B$}
\begin{tikzcd}[ampersand replacement=\&]
	\& {a''B.A_2b.A_1b} \&\& {a''B.A_2b.A'_1b} \& \\
	{A_2b'.a'B.A_1b} \&\&\&\& {=} \\
	\& {A_2b'.A_1b'.aB} \&\& {A_2b'.A'_1b'.aB} \\
	\& {a''B.A_2b.A_1b} \&\& {a''B.A_2b.A'_1b} \\
	{A_2b'.a'B.A_1b} \&\& {A_2b'.a'B.A'_1b} \\
	\& {A_2b'.A_1b'.aB} \&\& {A'_2b'.A'_1b'.aB}
	\arrow["{1.(A_2.\alpha)b}", from=1-2, to=1-4]
	\arrow["{(A_2)_B.1}"', from=1-2, to=2-1]
	\arrow[""{name=0, anchor=center, inner sep=0}, "{(A_2.A_1)_B}", from=1-2, to=3-2]
	\arrow[""{name=1, anchor=center, inner sep=0}, "{(A_2.A'_1)_B}", from=1-4, to=3-4]
	\arrow["{1.(A_1)_B}"', from=2-1, to=3-2]
	\arrow["{(A_2.\alpha)b'.1}"', from=3-2, to=3-4]
	\arrow[""{name=2, anchor=center, inner sep=0}, "{1.(A_2.\alpha)b}", from=4-2, to=4-4]
	\arrow["{(A_2)_B.1}"', from=4-2, to=5-1]
	\arrow["{(A_2)_B.1}"'{pos=0.7}, from=4-4, to=5-3]
	\arrow[""{name=3, anchor=center, inner sep=0}, "{(A_2.A'_1)_B}", from=4-4, to=6-4]
	\arrow[""{name=4, anchor=center, inner sep=0}, "{(\alpha b').1}"{description}, from=5-1, to=5-3]
	\arrow["{1.(A_1)_B}"'{pos=0.4}, from=5-1, to=6-2]
	\arrow["{1.(A'_1)_B}"'{pos=0.2}, from=5-3, to=6-4]
	\arrow[""{name=5, anchor=center, inner sep=0}, "{(A_2.\alpha)b'.1}"', from=6-2, to=6-4]
	\arrow["{(A_2,A_1)_B}"', between={0.3}{0.8}, Rightarrow, nfold, from=0, to=2-1]
	\arrow["{(A_2.\alpha)_B}"', between={0.2}{0.6}, Rightarrow, nfold, from=1, to=0]
	\arrow[between={0.2}{0.8}, Rightarrow, nfold, dashed, from=2, to=4]
	\arrow["{(A_2,A'_1)_B}"', shift left, between={0.2}{0.8}, Rightarrow, nfold, from=3, to=5-3]
	\arrow["{1.\alpha_B}"'{pos=0.3}, between={0.2}{0.8}, Rightarrow, nfold, from=4, to=5]
\end{tikzcd}
\end{equation}

\begin{equation}\label{ax:4-cat_A_1,A_2:B_1,B_2}
	\tag{$A_1,A_2:B_1,B_2$}
	\begin{split}
	\begin{tikzcd}[ampersand replacement=\&]
		\& {a''B_2.a''B_1.A_2b.A_1b} \&\& \\
		\\
		\\
		\& {a''B_2.A_2b'.a'B_1.A_1b} \\
		{A_2b''.a'B_2.a'B_1.A_1b} \&\& {a''B_2.A_2b'.A_1b'.aB_1} \& {=} \\
		\& {A_2b''.a'B_2.A_1b'.aB_1} \\
		\\
		\\
		\& {A_2b''.A_1b''.aB_2.aB_1}
		\arrow["{1.{(A_2)}_{B_1}.1}"{description}, from=1-2, to=4-2]
		\arrow[""{name=0, anchor=center, inner sep=0}, "{{A_2}_{(B_2.B_1)}.1}"', curve={height=35pt}, from=1-2, to=5-1]
		\arrow[""{name=1, anchor=center, inner sep=0}, "{1.(A_2.A_1)_{B_1}}", curve={height=-35pt}, from=1-2, to=5-3]
		\arrow["{{(A_2)}_{B_2}.1}"{description}, from=4-2, to=5-1]
		\arrow["{1.{(A_1)}_{B_1}}"{description}, from=4-2, to=5-3]
		\arrow["{1.{(A_1)}_{B_1}}"{description}, from=5-1, to=6-2]
		\arrow[""{name=2, anchor=center, inner sep=0}, "{1.{A_1}_{(B_2.B_1)}}"', curve={height=35pt}, from=5-1, to=9-2]
		\arrow[between={0.4}{0.6}, Rightarrow, dashed, from=5-3, to=5-1]
		\arrow["{{(A_2)}_{B_2}.1}"{description}, from=5-3, to=6-2]
		\arrow[""{name=3, anchor=center, inner sep=0}, "{(A_2.A_1)_{B_2}.1}", curve={height=-35pt}, from=5-3, to=9-2]
		\arrow["{1.{(A_1)}_{B_2}.1}"{description}, from=6-2, to=9-2]
		\arrow["{1.(A_2,A_1)_{B_1}}"'{pos=0.4}, shift left=5, between={0.3}{0.5}, Rightarrow, from=1, to=4-2]
		\arrow["{{A_2}_{(B_2,B_1)}.1}"'{pos=0.6}, shift left=5, between={0.5}{0.7}, Rightarrow, from=4-2, to=0]
		\arrow["{(A_2,A_1)_{B_2}.1}"'{pos=0.6}, shift left=3, between={0.4}{0.6}, Rightarrow, from=3, to=6-2]
		\arrow["{1.{A_1}_{(B_2,B_1)}}"'{pos=0.3}, shift left=3, between={0.4}{0.6}, Rightarrow, from=6-2, to=2]
	\end{tikzcd} \\
	\begin{tikzcd}[ampersand replacement=\&]
		\& {a''B_2.a''B_1.A_2b.A_1b} \& \\
		\& {} \& {} \\
		{A_2b''.a'B_2.a'B_1.A_1b} \&\& {a''B_2.A_2b'.A_1b'.aB_1} \\
		{} \& {} \\
		\& {A_2b''.A_1b''.aB_2.aB_1}
		\arrow["{{A_2}_{(B_2.B_1)}.1}"'{pos=0.6}, curve={height=25pt}, from=1-2, to=3-1]
		\arrow["{1.(A_2.A_1)_{B_1}}"{pos=0.6}, curve={height=-25pt}, from=1-2, to=3-3]
		\arrow["{(A_2.A_1)_{(B_2.B_1)}}"{description}, from=1-2, to=5-2]
		\arrow["{{A_2.A_1}_{(B_2,B_1)}}"'{pos=0.6,yshift=0.1cm}, shift left=5, between={0.6}{0.8}, Rightarrow, from=2-3, to=2-2]
		\arrow["{1.{A_1}_{(B_2.B_1)}}"'{pos=0.4}, curve={height=25pt}, from=3-1, to=5-2]
		\arrow["{(A_2.A_1)_{B_2}.1}"{pos=0.4}, curve={height=-25pt}, from=3-3, to=5-2]
		\arrow["{{(A_2,A_1)}_{B_2.B_1}}"{pos=0.4,yshift=-0.1cm}, shift right=5, between={0.3}{0.5}, Rightarrow, from=4-2, to=4-1]
	\end{tikzcd}
\end{split}
\end{equation}

%
%
%
%
%
%
%
%
%
%
%
%
%
%
%
%
%
%
%

\subsubsection{Degeneracy axioms}\label{bimapdegeneracy}\begin{footnote}{The axioms \eqref{ax:bin_D_1A:B} and \eqref{ax:bin_D_A:1B} are in fact redundant --- for the slightly more restrictive Gray-4-categories of \cite{Fukuda}, this was noted in \cite[Lemma 2.10]{Fukuda}.  It is a special case of the fact that a function between monoids which preserves the multiplication and sends the unit to an invertible element also preserves the unit.  Since the argument depends heavily upon invertibility of the $\alpha_B$ and $A_\beta$, the axioms are not redundant in the lax setting of \Cref{laxsetting} nor would they be redundant in higher-dimensional generalisations.  As such, we prefer to include them.}\end{footnote}

\hfill \break

\begin{minipage}{0.5\textwidth}
	
	\begin{equation}\label{ax:bin_D_1a:B}
         \tag{D-$1_a:B$} \aexi^{1_a}_{B}=1^{\aeq}_{aB} 
	\end{equation}
	
	\begin{equation}\label{ax:bin_D_1A:B}
	\tag{D-$1_A:B$} {(1_A)_B}=1_{A_B}
	\end{equation}
	
	\begin{equation}\label{ax:bin_D_1a:bet}
	\tag{D-$1_a:\beta$} (1_a)_\beta=1_{a\beta} 
	\end{equation}
	
	\begin{equation}\label{ax:bin_D_A1A2:B}
		\tag{D-$A_1,A_2:B$}
		(1_{a'},A)_B=(A,1_a)_B=1_{A_B}
	\end{equation} 
	
	\begin{equation}\label{ax:bin_D_A1A2:1b}
		\tag{D-$A_1,A_2:1_b$}
		(A_2,A_1)_{1_b}=1_{1_{A_2b.A_1b}}
	\end{equation} 
\end{minipage}
\begin{minipage}{0.5\textwidth}
	
	\begin{equation}\label{ax:bin_D_A:1b}
        \tag{D-$A:1_b$} \aexi^A_{1_b}=1^{\aeq}_{Ab} 
	\end{equation}

	\begin{equation}\label{ax:bin_D_A:1B}
	\tag{D-$A:1_B$} {A_{1_B}}=1_{A_B}
	\end{equation}
	
	\begin{equation}\label{ax:bin_D_alph:1b}
		\tag{D-$\alpha:1_b$} \alpha_{1_b}=1_{\alpha b} 
	\end{equation}

	\begin{equation}\label{ax:bin_D_A:B1B2}
		\tag{D-$A:B_1,B_2$}
		A_{(1_{b'},B)}=A_{(B,1_b)}=1_{A_B}
	\end{equation}
	
	\begin{equation}\label{ax:bin_D_1a:B1B2}
		\tag{D-$1_a:B_1,B_2$}
		{(1_a)}_{(B_2,B_1)}=1_{1_{aB_2.aB_1}}
	\end{equation}
\end{minipage}

\subsection{Associativity} \label{bimapassociativity}
For objects $w,x,y,z\in\ca$ we require that the diagram
\[\begin{tikzcd}[ampersand replacement=\&]
	{\ca(y,z),\ca(x,y),\ca(w,x)} \&\& {\ca(x,z),\ca(w,x)} \\
	{\ca(y,z),\ca(w,y)} \&\& {\ca(w,z)}
	\arrow["{\circ,1}", from=1-1, to=1-3]
	\arrow["{1,\circ}"', from=1-1, to=2-1]
	\arrow["{\circ }", from=1-3, to=2-3]
	\arrow["\circ"', from=2-1, to=2-3]
\end{tikzcd}\]
commutes in $\GCatmp^l$.  As an equality of loose ternary maps, this will hold just when the composite ternary maps have the same binary map components and the same incubators.  
\begin{enumerate}
	\item[\textbf{(AssBin)}] The binary map components coincide if, for any $a,A,\alpha\in\ca(y,z)$, $b,B,\beta\in\ca(x,y)$, and $c,C,\gamma\in\ca(w,x)$, the following equations hold.
	\begin{equation}
		\tag{AssBin:1}
		\begin{split}
			(a\circ-)\circ c= a\circ(-\circ c), \quad (a\circ b)\circ-=a\circ(b\circ-), \quad 
			(-\circ b)\circ c=-\circ(b\circ c)
		\end{split}
	\end{equation}
	\vspace{0.1cm}
	\begin{equation}
		\tag{AssBin:2}
		\begin{split}
	\black	 a \circ \aexi^{B_C}=\aexi^{aB}_C, \quad \aexi^{A}_{bC}= \aexi^{Ab}_C, \quad
			 \aexi^{A}_{Bc}= \aexi^{A_B} \circ c
		\end{split}
	\end{equation}
	\vspace{0.1cm}
	\begin{equation}
		\tag{AssBin:3}
		\begin{alignedat}{2}
			a(B_{(C_2,C_1)})
			&= (aB)_{(C_2,C_1)},
			&\qquad a(\,(B_2,B_1)_C\,)
			&= (aB_2,aB_1)_C
			\\[0.1cm]
			A_{(bC_2,bC_1)}
			&= (Ab)_{(C_2,C_1)},
			&\qquad (A_2b,A_1b)_C
			&= (A_2,A_1)_{bC}
			\\[0.1cm]
			(A_{(B_2,B_1)})c
			&= A_{(B_2c,B_1c)},
			&\qquad (\,(A_2,A_1)_B\,)c
			&= (A_2,A_1)_{Bc}
		\end{alignedat}
	\end{equation}	
	\vspace{0.1cm}
\begin{equation}
	\tag{AssBin:4}
	\begin{alignedat}{2}
		(aB)_\gamma
		&= a(B_\gamma),
		&\qquad a(\beta_C)
		&= (a\beta)_C
		\\[0.1cm]
		(Ab)_\gamma
		&= A_{b\gamma},
		&\qquad (\alpha b)_C
		&= \alpha_{bC}
		\\[0.1cm]
		(A_\beta)c
		&= A_{\beta c},
		&\qquad (\alpha_B)c
		&= \alpha_{Bc}
	\end{alignedat}
\end{equation}
	
	\item[\textbf{(AssInc)}] The last associativity axioms says that the incubators of the two composites coincide. Let us notice that we will use some of the above equations in the outer part of the two diagrams below. For any 
	\begin{tikzcd}[ampersand replacement=\&]
		w \& x \& y \& z
		\arrow[""{name=0, anchor=center, inner sep=0}, "c", curve={height=-12pt}, from=1-1, to=1-2]
		\arrow[""{name=1, anchor=center, inner sep=0}, "{c'}"', curve={height=12pt}, from=1-1, to=1-2]
		\arrow[""{name=2, anchor=center, inner sep=0}, "b", curve={height=-12pt}, from=1-2, to=1-3]
		\arrow[""{name=3, anchor=center, inner sep=0}, "{b'}"', curve={height=12pt}, from=1-2, to=1-3]
		\arrow[""{name=4, anchor=center, inner sep=0}, "a", curve={height=-12pt}, from=1-3, to=1-4]
		\arrow[""{name=5, anchor=center, inner sep=0}, "{a'}"', curve={height=12pt}, from=1-3, to=1-4]
		\arrow["C", shift right, between={0.3}{0.7}, Rightarrow, from=0, to=1]
		\arrow["B", shift right, between={0.3}{0.7}, Rightarrow, from=2, to=3]
		\arrow["A", shift right, between={0.3}{0.7}, Rightarrow, from=4, to=5]
	\end{tikzcd},
	
	\begin{equation}
		\label{ax:4-cat_A_B_C}
		\tag{$A:B:C$}
		\begin{split}
		\begin{tikzcd}[ampersand replacement=\&]
			\&\& {a'C.Ab'c.aBc} \&\&\& \\
			{a'b'C.a'Bc.Abc} \&\&\&\& {Ab'c'.ab'C.aBc} \\
			\&\&\&\&\& {=} \\
			{aBc'.a'bC.Abc} \&\&\&\& {Ab'c'.aBc'.abC} \\
			\&\& {a'Bc'.Abc'.abC}
			\arrow["{(Ab')_C.1}", from=1-3, to=2-5]
			\arrow[""{name=0, anchor=center, inner sep=0}, "{(Ab'.aB)_C}"{description, pos=0.8}, shift right=2, from=1-3, to=4-5]
			\arrow["{1.(A_B)c}", from=2-1, to=1-3]
			\arrow["{(a'B)_C.1}"', from=2-1, to=4-1]
			\arrow[""{name=1, anchor=center, inner sep=0}, "{(a'B.Ab)_C}"{description, pos=0.3}, shift left, from=2-1, to=5-3]
			\arrow["{1.(aB)_C}", from=2-5, to=4-5]
			\arrow["{1.(Ab)_C}"', from=4-1, to=5-3]
			\arrow["{(A_B)c'.1}"', from=5-3, to=4-5]
			\arrow["{{(A_B)}_C}"', between={0.4}{0.6}, Rightarrow, from=0, to=1]
			\arrow["{(a'B,Ab)_C}"{pos=0.65}, shift left=3, between={0.35}{0.65}, Rightarrow, from=1, to=4-1]
			\arrow["{{(Ab',aB)_C}^{\scalebox{0.75}{-1}}}"'{pos=0.25}, shift right=2, between={0.4}{0.6}, Rightarrow, from=2-5, to=0]
		\end{tikzcd}\vspace{0.25cm}\\
		\begin{tikzcd}[ampersand replacement=\&]
			\&\& {a'C.Ab'c.aBc} \&\& \\
			{a'b'C.a'Bc.Abc} \&\&\&\& {Ab'c'.ab'C.aBc} \\
			\\
			{aBc'.a'bC.Abc} \&\&\&\& {Ab'c'.aBc'.abC} \\
			\&\& {a'Bc'.Abc'.abC}
			\arrow["{A_{(b'C)}.1=(Ab')_C.1}", from=1-3, to=2-5]
			\arrow["{1.(A_B)c=1.A_{Bc}}", from=2-1, to=1-3]
			\arrow[""{name=0, anchor=center, inner sep=0}, "{A_{(b'C.Bc)}}"', from=2-1, to=2-5]
			\arrow["{(a'B)_C.1=a'(B_C).1}"', from=2-1, to=4-1]
			\arrow["{1.a(B_C)=1.(aB)_C}", from=2-5, to=4-5]
			\arrow[""{name=1, anchor=center, inner sep=0}, "{{A_{(Bc'.bC)}}}", from=4-1, to=4-5]
			\arrow["{1.(Ab)_C=1.A_{bC}}"', from=4-1, to=5-3]
			\arrow["{A_{Bc'}.1=(A_B)c'.1}"', from=5-3, to=4-5]
			\arrow["{A_{(b'C,Bc)}}"{xshift=0.1cm}, between={0.35}{0.65}, Rightarrow, from=1-3, to=0]
			\arrow["{A_{(B_C)}}"{xshift=0.1cm}, between={0.4}{0.6}, Rightarrow, from=0, to=1]
			\arrow["{{A_{(Bc',bC)}}^{\scalebox{0.75}{-1}}}"{xshift=0.1cm}, between={0.35}{0.65}, Rightarrow, from=1, to=5-3]
		\end{tikzcd}
	\end{split}
	\end{equation}

\end{enumerate}

\subsection{Unitality}\label{bimapunit}
We require also that the binary map $\circ_{x,y,z} \colon \ca(y,z),\ca(x,y) \rightsquigarrow \ca(x,z)$ is unital, meaning that for any $x,y\in\ca$, $1_y\circ-=1_{\ca(x,y)}=-\circ1_x$.
\black

\bibliographystyle{plain}

\end{document}